\newcommand{\fka}{\ensuremath{\mathfrak{a}}\xspace}
\newcommand{\fkd}{\ensuremath{\mathfrak{d}}\xspace}
\newcommand{\fkp}{\ensuremath{\mathfrak{p}}\xspace}
\newcommand{\BA}{\ensuremath{\mathbb{A}}\xspace}
\newcommand{\BC}{\ensuremath{\mathbb{C}}\xspace}
\newcommand{\BE}{\ensuremath{\mathbb{E}}\xspace}
\newcommand{\BF}{\ensuremath{\mathbb{F}}\xspace}
\newcommand{\BG}{\ensuremath{\mathbb{G}}\xspace}
\newcommand{\BH}{\ensuremath{\mathbb{H}}\xspace}
\newcommand{\BL}{\ensuremath{\mathbb{L}}\xspace}
\newcommand{\BP}{\ensuremath{\mathbb{P}}\xspace}
\newcommand{\BQ}{\ensuremath{\mathbb{Q}}\xspace}
\newcommand{\BR}{\ensuremath{\mathbb{R}}\xspace}
\newcommand{\BV}{\ensuremath{\mathbb{V}}\xspace}
\newcommand{\BW}{\ensuremath{\mathbb{W}}\xspace}
\newcommand{\BX}{\ensuremath{\mathbb{X}}\xspace}
\newcommand{\BZ}{\ensuremath{\mathbb{Z}}\xspace}
\newcommand{\bB}{\ensuremath{\mathbf{B}}\xspace}
\newcommand{\bH}{\ensuremath{\mathbf{H}}\xspace}
\newcommand{\bK}{\ensuremath{\mathbf{K}}\xspace}
\newcommand{\bT}{\ensuremath{\mathbf{T}}\xspace}
\newcommand{\ba}{\ensuremath{\mathbf{a}}\xspace}
\newcommand{\bb}{\ensuremath{\mathbf{b}}\xspace}
\newcommand{\bi}{\ensuremath{\mathbf{i}}\xspace}
\newcommand{\bk}{\ensuremath{\mathbf{k}}\xspace}
\newcommand{\bt}{\ensuremath{\mathbf{t}}\xspace}
\newcommand{\bu}{\ensuremath{\mathbf{u}}\xspace}
\newcommand{\bv}{\ensuremath{\mathbf{v}}\xspace}
\newcommand{\bw}{\ensuremath{\mathbf{w}}\xspace}
\newcommand{\bx}{\ensuremath{\mathbf{x}}\xspace}
\newcommand{\by}{\ensuremath{\mathbf{y}}\xspace}
\newcommand{\bz}{\ensuremath{\mathbf{z}}\xspace}
\newcommand{\CA}{\ensuremath{\mathcal{A}}\xspace}
\newcommand{\CC}{\ensuremath{\mathcal{C}}\xspace}
\newcommand{\CD}{\ensuremath{\mathcal{D}}\xspace}
\newcommand{\CE}{\ensuremath{\mathcal{E}}\xspace}
\newcommand{\CF}{\ensuremath{\mathcal{F}}\xspace}
\newcommand{\CG}{\ensuremath{\mathcal{G}}\xspace}
\newcommand{\CL}{\ensuremath{\mathcal{L}}\xspace}
\newcommand{\CM}{\ensuremath{\mathcal{M}}\xspace}
\newcommand{\CN}{\ensuremath{\mathcal{N}}\xspace}
\newcommand{\CO}{\ensuremath{\mathcal{O}}\xspace}
\newcommand{\CS}{\ensuremath{\mathcal{S}}\xspace}
\newcommand{\CV}{\ensuremath{\mathcal{V}}\xspace}
\newcommand{\CX}{\ensuremath{\mathcal{X}}\xspace}
\newcommand{\CY}{\ensuremath{\mathcal{Y}}\xspace}
\newcommand{\CZ}{\ensuremath{\mathcal{Z}}\xspace}
\newenvironment{altenumerate}
   {\begin{list}
      {(\theenumi) }
      {\usecounter{enumi}
       \setlength{\labelwidth}{0pt}
       \setlength{\labelsep}{0pt}
       \setlength{\leftmargin}{0pt}
       \setlength{\itemsep}{\the\smallskipamount}
       \renewcommand{\theenumi}{\arabic{enumi}}
      }}
   {\end{list}}
\newenvironment{altenumerate2}
   {\begin{list}
      {\textup{(\theenumii)} }
      {\usecounter{enumii}
       \setlength{\labelwidth}{0pt}
       \setlength{\labelsep}{0pt}
       \setlength{\leftmargin}{2em}
       \setlength{\itemsep}{\the\smallskipamount}
       \renewcommand{\theenumii}{\roman{enumii}}
      }}
   {\end{list}}
	\renewcommand{\to}{%
		\ifbool{@display}{\longrightarrow}{\rightarrow}%
	}
	\let\shortmapsto\mapsto
	\renewcommand{\mapsto}{%
		\ifbool{@display}{\longmapsto}{\shortmapsto}%
	}
\newcommand{\lr}{\longrightarrow}
\newcommand{\simlr}{\overset{\sim}{\lr}}
\newcommand{\Aut}{\mathrm{Aut}}
\newcommand{\Ch}{\mathrm{Ch}}
\newcommand{\Gal}{\mathrm{Gal}}
\newcommand{\End}{\mathrm{End}}
\newcommand{\Hom}{\mathrm{Hom}}
\newcommand{\GL}{\mathrm{GL}}
\DeclareMathOperator{\Int}{\ensuremath{\mathrm{Int}}\xspace}
\DeclareMathOperator{\Lie}{Lie}
\DeclareMathOperator{\Nm}{Nm}
\DeclareMathOperator{\Orb}{Orb}
\DeclareMathOperator{\ord}{ord}
\DeclareMathOperator{\Pic}{Pic}
\renewcommand{\Re}{{\mathrm{Re}}}
\newcommand{\reg}{{\mathrm{reg}}}
\DeclareMathOperator{\Res}{Res}
\newcommand{\Sh}{\mathrm{Sh}}
\newcommand{\SL}{{\mathrm{SL}}}
\DeclareMathOperator{\Spec}{Spec}
\DeclareMathOperator{\Spf}{Spf}
\newcommand{\SO}{{\mathrm{SO}}}
\newcommand{\val}{{\mathrm{val}}}
\DeclareMathOperator{\tr}{tr}
\newcommand{\triv}{\mathrm{aux}}
\newcommand{\U}{\mathrm{U}}
\DeclareMathOperator{\vol}{vol}
\newcommand{\wt}{\widetilde}
\newcommand{\wh}{\widehat}
\newcommand{\ov}{\overline}
\newcommand{\Herm}{\mathrm{Herm}}
\newcommand{\rd}{\mathrm{d}}
\newcommand{\Diff}{\mathrm{Diff}}
\newcommand{\Ei}{\mathrm{Ei}}
	\DeclareFontFamily{U}{matha}{\hyphenchar\font45}
	\DeclareFontShape{U}{matha}{m}{n}{
		<5> <6> <7> <8> <9> <10> gen * matha
		<10.95> matha10 <12> <14.4> <17.28> <20.74> <24.88> matha12
	}{}
	\DeclareSymbolFont{matha}{U}{matha}{m}{n}
	\DeclareFontFamily{U}{mathx}{\hyphenchar\font45}
	\DeclareFontShape{U}{mathx}{m}{n}{
		<5> <6> <7> <8> <9> <10>
		<10.95> <12> <14.4> <17.28> <20.74> <24.88>
		mathx10
	}{}
	\DeclareSymbolFont{mathx}{U}{mathx}{m}{n}
	\DeclareMathSymbol{\obot}         {2}{matha}{"6B}
	\newtheorem{theorem}[subsubsection]{Theorem}
	\newtheorem{proposition}[subsubsection]{Proposition}
	\newtheorem{lemma}[subsubsection]{Lemma}
	\newtheorem{corollary}[subsubsection]{Corollary}
	\theoremstyle{definition}
	\newtheorem{definition}[subsubsection]{Definition}
	\newtheorem{example}[subsubsection]{Example}
	\newtheorem{remark}[subsubsection]{Remark}
	\numberwithin{equation}{subsection}
    \newtheorem{notation}[subsubsection]{Notation}
\DeclareMathOperator{\Span}{Span}
\newcommand{\Eis}{\mathrm{Eis}}
\newcommand{\DCZ}{\prescript{\BL}{}{\CZ}}
\newcommand{\tCM}{\wt{\CM}}
\newcommand{\tCZ}{\wt{\CZ}}
\newcommand{\tDCZ}{\prescript{\BL}{}{\wt\CZ}}
\newcommand{\tCG}{\wt{\CG}}
\newcommand{\GZ}{\mathcal{Z}}
\newcommand{\DGZ}{\prescript{\BL}{}{\GZ}}
\newcommand{\Dotimes}{\otimes^{\BL}}
\newcommand{\btau}{\mathbold{\tau}}
\newcommand{\angler}{\langle r\rangle}
\newcommand{\Stab}{\mathrm{Stab}}
\newcommand{\FJ}{\mathrm{FJ}}
\newcommand{\PFJ}{\partial\mathrm{FJ}}
\newcommand{\KM}{\mathrm{KM}}
\newcommand{\PEis}{\partial\mathrm{Eis}}
\newcommand{\PC}{\partial C}
\newcommand{\PDen}{\partial\mathrm{Den}}
\newcommand{\corr}{\mathrm{corr}}
\newcommand{\modi}{\mathrm{mod}}
\newcommand{\hol}{\mathrm{hol}}
\newcommand{\charfun}{1}
\title{Kudla-Rapoport conjecture for unramified maximal parahoric level}
\author{Yu LUO}
\address{University of Wisconsin-Madison, Department of Mathematics, 480 Lincoln Drive,	Madison, WI 53706, USA}
\email{yluo237@wisc.edu}
\date{\today}
\begin{document}

\begin{abstract}
We prove the Kudla-Rapoport conjecture for unramified unitary groups with maximal parahoric level structure.
Our approach differs from the local proof given in \cite{LZ21}. We reduce the conjecture to a global intersection problem using local-global compatibility. Then we apply an inductive procedure based on the modularity of generating series of global special divisors. This strategy follows the framework developed in the proof of the arithmetic fundamental lemma from \cite{Zhang21,Mihatsch-Zhang} and arithmetic transfer identities from \cite{ZZhang24,LMZ25}.
\end{abstract}
\maketitle
\tableofcontents
\section{Introduction}
\subsection{Background}
The classical \emph{Siegel--Weil formula} (\cite{Siegel-35,Siegel1951,Weil1964,Weil1965}) establishes a fundamental connection between central values of Siegel Eisenstein series and  theta functions -- generating series that encode the arithmetic of quadratic forms. 
In a groundbreaking program initiated by Kudla (\cite{Kudla97, Kudla04}), the \emph{arithmetic Siegel--Weil formula} extends this relationship into arithmetic geometry.
It aims to express the central derivative of Siegel Eisenstein series as arithmetic theta functions -- generating series of arithmetic intersection numbers of $n$ special divisors on Shimura varieties associated to $\SO(n-1,2)$ or $\U(n-1,1)$. These special divisors include Heegner points on modular or Shimura curves appearing in the Gross--Zagier formula (\cite{Gross-Zagier86, Yuan-Zhang-Zhang}) ($n=2$), modular correspondences on the product of two modular curves in the Gross--Keating formula (\cite{Gross-Keating}) and Hirzebruch--Zagier cycles on Hilbert modular surfaces (\cite{Hirzebruch-Zagier}) ($n=3$).

The arithmetic Siegel--Weil formula has been established in great generality for $n=1,2$ (orthogonal case) through the seminal work of Kudla, Rapoport, and Yang (\cite{Kudla97, KR99, KRY99,KR00, KRY04,Kudla-Rapoport-Yang}). The \emph{archimedean} component of the formula is also known, due to Liu \cite{Liu11-I} (unitary case), and Garcia--Sankaran \cite{Garcia-Sankaran} in full generality (cf. Bruinier--Yang \cite{Bruinier-Yang21} for an alternative proof in the orthogonal case).

In their works \cite{KR11,KR14}, Kudla and Rapoport refined the non-archimedean component of the conjectural formula by defining arithmetic models of special cycles in the unitary case for arbitrary $n$. These models, now known as \emph{Kudla--Rapoport special cycles}, led to their formulation of the \emph{global Kudla--Rapoport conjecture}, addressing the nonsingular part of the arithmetic Siegel-Weil formula. They prove that this global conjecture would follow from the \emph{local Kudla--Rapoport conjecture} --- a deep relationship between derivatives of local representation densities of hermitian forms and arithmetic intersection numbers of Kudla--Rapoport cycles on unitary Rapoport--Zink spaces.

This conjecture was proved in the celebrated paper of Li and W. Zhang \cite{LZ21} for unramified unitary groups with hyperspecial level and nonsingular coefficients. Their breakthrough has inspired numerous extensions to new cases: Li and Liu \cite{Li-Liu22} and Yao \cite{Yao24} addressed ramified unitary groups with exotic good reduction, He, Li, Shi, and Yang \cite{He-Li-Shi-Yang} explored the Krämer model, and Li and W. Zhang \cite{Li-Zhang22} extended the results to orthogonal groups. For a comprehensive introduction to these developments, we refer the reader to \cite{Li-survey}. 
In parallel developments, Bruinier and Howard \cite{Bruinier-Howard} and Chen \cite{RChenI,RChenII,RChenIII,RChenIV} have made substantial progress in understanding singular coefficients.

The present paper focuses on the Kudla--Rapoport conjectures with maximal parahoric level structures for unramified unitary groups, building on the work of Cho \cite{Cho22} and Cho, He, and Z. Zhang \cite{Cho-He-Zhang}. In this setting, special divisors appear in two distinct classes: $\CZ$-divisors and $\CY$-divisors. Cho \cite{Cho22} formulated an extended version of the local Kudla--Rapoport conjecture for intersection products of arbitrary combinations of these $\CZ$- and $\CY$-divisors.
Cho, He, and Z. Zhang \cite{Cho-He-Zhang} proved the conjectures for $\CZ$-cycles, conditionally on a strong version of the Tate conjecture, while obtaining unconditional results for the cases $\CN_n^{[n-1]}$ and $\CN_4^{[2]}$.
Their method follows \cite{LZ21}, where the Tate conjecture is used to prove local modularity, which is essential for their induction argument.

In this paper, we reformulate and prove the Kudla–Rapoport conjectures with maximal parahoric level structures for unramified unitary groups and arbitrary mixed cycles. Our approach differs from the local inductive method of \cite{LZ21}, which relies on local modularity, see \cite[\S 1.4]{LZ21}. Instead, we employ an induction argument based on local-global compatibility and global modularity.
Our method builds on W. Zhang's proof of the arithmetic fundamental lemma \cite{Zhang21}, which has been subsequently generalized by Mihatsch and W. Zhang \cite{Mihatsch-Zhang} to arbitrary
$p$-adic fields for $p$ large enough, by Z. Zhang \cite{ZZhang24} to maximal parahoric level over finite unramified extensions of $\BQ_p$, and by the author, Mihatsch, and Z. Zhang \cite{LMZ25} to maximal parahoric level over arbitrary $p$-adic fields.

\subsection{The Kudla–Rapoport conjecture}
Let $p$ be an odd prime. Let $F_0$ be a finite extension of $\BQ_p$ with residue field $\BF_q$ and a uniformizer $\varpi$. Let $F$ be the unramified quadratic extension of $F_0$ and let $\breve{F}_0$ be the completion of the maximal unramified extension of $F$. 
For any integers $n\geq 1$ and $0\leq h\leq n$, the \emph{unitary Rapoport-Zink space} $\CN_n^{[h]}$ (\S \ref{sec:RZ space}) is the formal scheme over $S=\Spf O_{\breve{F}}$ parameterizing hermitian $O_F$-modules of signature $(n-1,1)$ and type $h$, within the supersingular quasi-isogeny class (\S \ref{sec:strict modules}).

Let $\BE$ (resp. $\BX^{[h]}$) be the framing hermitian $O_F$-module of signature $(1,0)$ and type $0$ (resp. $(n-1,1)$ and type $h$) over $\ov{\BF_q}$. The space of \emph{quasi-homomorphisms} $\BV:=\Hom_{O_F}^{\circ}(\BE,\BX^{[h]})$ carries a natural $F/F_0$-hermitian form, which makes $\BV$ the unique (up to isomorphism) non-degenerate hermitian space that does not contain any type $h$ vertex lattice, i.e., a hermitian lattice $L\subset \BV$ such that $\varpi L^\sharp\subseteq L\overset{h}\subseteq L^\sharp$, where $L^\sharp$ is the hermitian dual of $L$.
Let $V$ be its nearby hermitian space which contains a type $h$ vertex lattice $L^{[h]}$. 

When $h\neq 0$, for any $u\in \BV^{[h]}$, there are two classes of special divisors on $\CN_n^{[h]}$: $\CZ$-divisors $\CZ(u)$ and $\CY$-divisors $\CY(u)$ (\S \ref{def:special divisor}), arising due to the non-principal polarization. 
Let $L^{[h]}$ be a vertex lattice of type $h$ and let
$$
\CZ(u,\charfun_{L^{[h]}}):=\CZ(u),\quad\text{and}\quad\CZ(u,\charfun_{L^{[h],\sharp}}):=\CY(u).
$$

Let $1\leq r\leq n$ be an integer. A \emph{weight vector} is an element $\bw\in \{\CZ,\CY\}^r$. An\footnote{A priori, the function also depends on the choice of the vertex lattice $L^{[h]}$, but all statements are in fact independent of this choice.} \emph{admissible Schwartz function (of weight $\bw$ and type $t$)} is a function $\phi:=\phi^{[t]}_{\bw}\in\CS(V^r)$ such that 
$$
\phi=\bigotimes_{i=1}^r \phi_i,\quad\text{where}\quad \phi_i=\left\{\begin{array}{ll}
\charfun_{L^{[h]}}     &\text{if }\bw_i=\CZ;\\
\charfun_{L^{[h],\sharp}}  &\text{if }\bw_i=\CY.
\end{array}\right.
$$

For any $\bu\in \BV^n$ and any admissible Schwartz function $\phi^{[h]}=\phi^{[h]}_{\bw}$, we define the \emph{arithmetic intersection number} for mixed cycles (Definition \ref{def:local special cycle})
\begin{equation}\label{equ:arithmetic intersection number}
\Int(\bu,\phi_{\bw}^{[h]}):=\chi(\CN_n^{[h]},\CO_{\CZ(u_1,\phi_1)}\Dotimes\cdots\Dotimes \CO_{\CZ(u_n,\phi_n)})\in\BZ,
\end{equation}
where $\CO_{\CZ(u_i,\phi_i)}$ is the structure sheaf of the special divisor $\CZ(u_i,\phi_i)$ and $\Dotimes$ is the derived tensor product of coherent sheaves on $\CN_n^{[h]}$ and $\chi$ is the Euler-Poincar\'e characteristic.

When $h=0$, the hermitian lattice $L=L^{[0]}$ is \emph{unimodular}, meaning that $L=L^\sharp$. Let $\phi^{[0]}=1_{L^n}\in \CS(V^n)$, the work of Li and W. Zhang \cite{LZ21} shows the identity
\begin{equation}\label{equ:hyperspecial KR}
\Int(\bu,\phi^{[0]})=\frac{\alpha'(1_n,T)}{\alpha(1_n,1_n)}
\end{equation}
where $1_n$ is the identity matrix of rank $n$, and $T=(\bu,\bu)$ is the moment matrix associated with a collection of linearly independent vectors $\bu\in \BV^n$. 
The function $\alpha(1_n,T)$ is the \emph{representation density function} and $\alpha'(1_n,T)$ is its central derivative, see \cite[\S 10]{KR14} or \S \ref{sec:base case} for precise definitions.

However, in the general maximal parahoric case, the intersections between $\CZ$-divisors and $\CY$-divisors fail to satisfy the \emph{linear invariance property}, in contrast to the representation density function, which always does; see \cite[\S 3.3]{LZ21}.
To address this, we introduce the \emph{weighted local density function}, as defined by Kudla \cite[Appendix]{Kudla97}, which generalizes the representation density functions appearing on the right-hand side of \eqref{equ:hyperspecial KR}.

\begin{definition}
Let $r$ be any non-negative integer and let 
$V_{r,r}$ be the split hermitian space of dimension $2r$.
Given a Schwartz function $\phi\in\CS(V^n)$ in a hermitian space $V$ of dimension $n$ and a hermitian matrix $T\in\Herm_n(F)$, we define the \emph{weighted local density function} as
\begin{equation*}
    \alpha_T(r,\phi):=\int_{\Herm_n(F)}\int_{(V^{\angler})^n}\psi\Bigl(\tr\bigl(b((x,x)-T)\bigr)\Bigr)\phi^{\angler}(x)dxdb.
\end{equation*}
where $V^{\angler}:=V\oplus V_{r,r}$ and  $\phi^{\angler}(x):=\phi(x)\otimes 1_{L^n_{r,r}}\in \CS((V^{\angler})^n)$, with $L_{r,r}\subset V_{r,r}$ being a self-dual lattice. Here $dx$ and $db$ are self-dual Haar measures on $V^{\angler}$ and $\Herm_n(F)$, respectively.
\end{definition}

This provides a natural generalization of the representation density function: when $\phi=1_{\Lambda^n}$ is the characteristic function of a lattice $\Lambda\subset V$, for any $r\geq 0$, let $\bu_\Lambda^{\angler}$ be a basis of the lattice $\Lambda\oplus L_{r,r}$ and let $S_\Lambda^{\angler}=(\bu_{\Lambda}^{\angler},\bu_{\Lambda}^{\angler})$ be its moment matrix. By \cite[Proposition A.6]{Kudla97}, for any hermitian matrix $T\in\Herm_n(T)$, we have
\begin{equation}\label{equ:relation of weighted local density}
\alpha_T(r,1_{\Lambda^n})=|\mathrm{Nm}_{F/F_0}\det(S_\Lambda^0)|_{F_0}\cdot \alpha(S_\Lambda^{\angler},T).
\end{equation}
where $|-|_{F_0}$ is the metric in $F_0$.
See also \cite[\S 10]{KR14} for further details of this relation.

Let $\phi^{[h]}\in \CS(V^n)$ be an admissible Schwartz function of type $h$.
It is known that there exists a polynomial $F_T(X,\phi^{[h]})\in \BQ[X]$ such that for each non-negative integer $r$, we have $\alpha_T(r,\phi^{[h]})=F_T(q^{-r},\phi^{[h]})$.
Let $T\in\Herm_n(F)$ be any hermitian matrix that does not represent the space $V$. By definition $\alpha_T(0,\phi^{[h]})=0$ vanishes at zero.
In this case, we define the \emph{derivative of the weighted local density function}
$$
\alpha'_T(0,\phi^{[h]}):=-\frac{\rd}{\rd X}\Big|_{X=1}F_T(X,\phi^{[h]}).
$$
As already observed in \cite{KR00,Kudla-Rapoport-Yang,He-Li-Shi-Yang,Cho22,Cho-He-Zhang}, the naive identity analogous to \eqref{equ:hyperspecial KR} does not hold in the general maximal parahoric setting, and correction terms must be added to the analytic side. We will explain it after stating the main result.

\begin{theorem}[Theorem \ref{thm:KR-conj}, local Kudla-Rapoport conjecture]\label{thm:local KR conj}
For any $0\leq h\leq n$ and for any weight vector $\bw\in\{\CZ,\CY\}^n$, let $\beta^{[h]}_{\bw,t}$ be the explicit constants  for $0 \leq t < h$, $t \equiv h-1 \pmod{2}$, given by Definition \ref{def:error-coeff}. Then for any collection of linearly independent vectors $\bu\in \BV^n$, we have
\begin{equation}\label{equ:local KR conj}
\Int(\bu,\phi_{\bw}^{[h]})=\frac{1}{\vol(K_n^{[h]},dg)}\Bigl(
\alpha_T'(0,\phi_{\bw}^{[h]})-\sum_{\substack{0\leq t< h\text{ and}\\ t\equiv h-1\,\mathrm{mod}\, 2}} \beta_{\bw,t}^{[h]}\alpha_T(0,\phi_{\bw}^{[t]})
\Bigr),
\end{equation}
where $T=(\bu,\bu)$ and $\vol(K^{[h]},dg)$ is the volume of the stabilizer $K_n^{[h]}=\mathrm{Stab}_{\U(V)}(L^{[h]})$ with respect to the Haar measure $dg$ constructed from self-dual measures, see \eqref{not:gauge form}.
\end{theorem}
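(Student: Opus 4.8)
The plan is to deduce the local identity \eqref{equ:local KR conj} from a global one by means of $p$-adic uniformization, and to prove that global identity by an induction on $n$ powered by the modularity of generating series of global special \emph{divisors}. I would begin by globalizing: choose a CM extension $\bF/\bF_0$ of number fields with a place $v$ above $p$ such that $\bF_v/\bF_{0,v}\cong F/F_0$, an incoherent $\bF/\bF_0$-hermitian space $\mathbb{V}$ over $\BA_{\bF_0}$ that is positive definite at all archimedean places except one, where it has signature $(n-1,1)$, with $\mathbb{V}_v\cong\BV$, and an $O_\bF$-lattice $\Lambda$ which is self-dual away from $v$ and equals the type $h$ vertex lattice $L^{[h]}$ at $v$. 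Attached to these data (together with an auxiliary CM datum, as in \cite{LZ21}) is a unitary Shimura variety $\Sh_K$ of dimension $n-1$ with $K=\Stab(\Lambda)$, and, by the work of Cho \cite{Cho22} and Cho--He--Z.~Zhang \cite{Cho-He-Zhang} on integral models at maximal parahoric level, a regular integral model $\CM$ over $O_{\bF_v}$ whose formal completion along the basic locus at $v$ is identified, via Rapoport--Zink uniformization, with a quotient of $\CN_n^{[h]}\widehat{\otimes}_{O_{\breve F}}O_{\breve{\bF}_v}$. This uniformization supplies the local--global compatibility.

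I would then introduce the two generating series. On $\CM$ there are Kudla--Rapoport special divisors $\CZ(x)$ and $\CY(x)$, and, using the Green currents from the archimedean theory of Garcia--Sankaran \cite{Garcia-Sankaran} and Liu \cite{Liu11-I}, arithmetic generating series $\widehat{\CZ}_{\bw_i}(\tau_i,\phi_i)$ valued in $\widehat{\mathrm{CH}}^1(\CM)_{\BC}$. The modularity of such codimension-one generating series --- available as a theorem, with no appeal to the Tate conjecture --- is the engine: applying it one variable at a time to the iterated arithmetic intersection $\widehat{\deg}\bigl(\widehat{\CZ}_{\bw_1}(\tau_1,\phi_1)\cdots\widehat{\CZ}_{\bw_n}(\tau_n,\phi_n)\bigr)$ produces a genus-$n$ Hilbert--Siegel modular form $\CI(\tau;\phi_{\bw})$ in $\tau=(\tau_1,\dots,\tau_n)$. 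For $T$ nonsingular and not represented by $\mathbb{V}$, the $T$-th Fourier coefficient of $\CI$ decomposes, through the uniformization at $v$ and the projection formula, into a sum of purely local arithmetic intersection numbers: at $v$ these are precisely the numbers $\Int(\bu,\phi^{[h]}_{\bw})$ of \eqref{equ:arithmetic intersection number}; at the remaining finite places they are governed by the hyperspecial identity \eqref{equ:hyperspecial KR} of \cite{LZ21}; and the archimedean places contribute through \cite{Garcia-Sankaran}. On the analytic side, the incoherent Siegel Eisenstein series $E(\tau,s,\phi^{[h]}_{\bw})$ attached to $\phi^{[h]}_{\bw}$ and to the fixed data elsewhere vanishes at $s=0$, and its central derivative $E'(\tau,0,\phi^{[h]}_{\bw})$ is a genus-$n$ modular form whose Fourier coefficients factor through local weighted density integrals, yielding $\alpha'_T(0,\phi^{[h]}_{\bw})$ at $v$ up to the volume of \eqref{not:gauge form}. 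I would then form
\[
\CA(\tau)\ :=\ \CI(\tau;\phi_{\bw})\ -\ \frac{1}{\vol(K_n^{[h]},dg)}\,E'(\tau,0,\phi^{[h]}_{\bw})\ +\ \Bigl(\text{terms built from the }\beta^{[h]}_{\bw,t}\text{ and }E(\tau,0,\phi^{[t]}_{\bw})\Bigr),
\]
which is again a Hilbert--Siegel modular form, in fact holomorphic because the non-holomorphic part of the Kudla Green currents cancels that of $E'(\tau,0,-)$; the constants $\beta^{[h]}_{\bw,t}$ of Definition \ref{def:error-coeff} are exactly those forced by matching the contribution of the type $t$ boundary strata of $\CM$ to the relevant piece of the constant term of $E(\tau,s,-)$, consistently with the unconditional cases $\CN_n^{[n-1]}$ and $\CN_4^{[2]}$ of \cite{Cho-He-Zhang}.

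I would then run the induction on $n$, the cases $n=1$ and $h=0$ being classical and \cite{LZ21} respectively. Granting \eqref{equ:local KR conj} in all ranks $<n$ and all types, every singular Fourier coefficient of $\CA(\tau)$ vanishes: geometrically, the improper intersection of special divisors is controlled by lower-rank intersection numbers through Kudla--Rapoport's analysis of the derived tensor product; analytically, the singular coefficients of $E'(\tau,0,-)$ and $E(\tau,0,-)$ reduce compatibly; and the correction terms match across ranks. Thus $\CA$ is a holomorphic genus-$n$ Hilbert--Siegel cusp form, and its nonsingular $T$-th coefficient is a positive multiple of a sum, over the finitely many relevant $\bu$, of the local discrepancies
\[
\Int(\bu,\phi^{[h]}_{\bw})\ -\ \frac{1}{\vol(K_n^{[h]},dg)}\Bigl(\alpha'_T(0,\phi^{[h]}_{\bw})-\sum_{t}\beta^{[h]}_{\bw,t}\,\alpha_T(0,\phi^{[t]}_{\bw})\Bigr),
\]
all the other local contributions being already known to vanish. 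Varying the global data over the (abundant) admissible incoherent globalizations, and using the vanishing of $\CA$ --- equivalently a uniqueness statement for the incoherent Eisenstein series among modular forms with the prescribed constant term --- isolates each individual discrepancy and forces it to be zero, which is exactly \eqref{equ:local KR conj}.

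The main obstacle, I expect, lies in the two preceding paragraphs at the parahoric place: making the $p$-adic uniformization of $\CM$ and the induced decomposition of Fourier coefficients precise when the level at $v$ is the non-special parahoric $\Stab(L^{[h]})$, so that the $\CY$-divisors --- which have no analogue in the hyperspecial setting --- are matched correctly on both the geometric and the analytic sides, together with the exact identification of the constants $\beta^{[h]}_{\bw,t}$ and the verification that they are consistent with the reduction of singular Fourier coefficients in the induction. The remaining ingredients --- incoherent globalization, codimension-one modularity, the archimedean Siegel--Weil formula, and the hyperspecial Kudla--Rapoport identity --- are available in the literature.
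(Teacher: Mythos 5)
Your overall strategy --- globalize, exploit modularity of the generating series of special \emph{divisors}, induct on $n$, and isolate the discrepancy at the distinguished place --- is indeed the paper's strategy, but two of the key steps are set up in a way that would not go through. First, the object you make modular is not available: you propose a genus-$n$ Hilbert--Siegel form $\CI(\tau;\phi_{\bw})$ obtained by ``applying modularity one variable at a time'' to the $n$-fold arithmetic intersection, which amounts to arithmetic modularity in codimension $n$ and is not a theorem. The paper instead fixes a totally positive $T_\flat\in\Herm_{n-1}^+(F)$ and pairs the single codimension-one generating series $\tCZ^{\bB}(\tau,\phi_1)$ against the \emph{fixed} derived $1$-cycle $\tDCZ(T_\flat,\phi_\flat)$ (modified by auxiliary cycles so as to lie in $\wt{\CZ}_1(\tCM)^{\perp}$), producing a weight-$n$ Hilbert modular form in one variable; the rank-$(n-1)$ induction hypothesis then enters through its Fourier coefficients. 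Relatedly, singular $T$ are not handled by an inductive reduction of improper intersections (that is exactly the open problem of singular coefficients); they are eliminated outright by taking the test function regular at two auxiliary places as in \eqref{equ:assumption on schwartz function}. And the constants $\beta^{[h]}_{\bw,t}$ are determined by a purely local upper-triangular linear system (Proposition \ref{prop:unique-error-coeff}), with the correction term realized as a \emph{coherent} Eisenstein series for the nearby space $V^{(v_0)}$; they have nothing to do with boundary strata of $\CM$, which is projective since $F_0\neq\BQ$.

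The more serious gap is the vanishing mechanism. Having formed the holomorphic difference form, you conclude it vanishes by appeal to ``a uniqueness statement for the incoherent Eisenstein series,'' but a holomorphic Hilbert modular form of weight $n$ with nonnegative Fourier support does not vanish for free. At hyperspecial level one can invoke the vanishing lemma of \cite[Lemma 13.6]{Zhang21} because the cancellation laws confine the nonzero Fourier coefficients to a single residue class of valuations at $v_0$; at maximal parahoric level this fails, and the paper must run the double-induction lemma of \cite[Lemma 15.1]{ZZhang24} on the \emph{pair} of series $\bigl(\CE_{T_\flat}(\tau,\phi_{1\CZ},\phi_\flat),\,\CE_{T_\flat}(\tau,\phi_{1\CY},\phi_\flat)\bigr)$ attached to the $\CZ$- and $\CY$-weighted test functions, which are interchanged by the Weyl element at $v_0$ (Proposition \ref{prop:dual-relation}). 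Making this work requires the analytic cancellation law for the modified densities $\PDen$ (Theorem \ref{thm:ana-cancel}, Corollary \ref{cor:ana-cancel}) mirroring the geometric one, which in turn rests on the recursion for weighted densities, the duality of \cite{Cho22}, and the behaviour of the error coefficients under Fourier transform (Proposition \ref{prop:Fourier-transform-for-error}). None of this local input appears in your outline, and without it the induction does not close at parahoric level.
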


\begin{remark}\label{rem:error terms}
The coefficients of the error terms are determined in Proposition \ref{prop:unique-error-coeff}. We illustrate the construction in the case where $\bw=\CZ^n$ and $\phi^{[h]}_{\bw}=1_{L^{[h],n}}$ following \cite[Definition 1.2]{Cho-He-Zhang}.

For each $0 \leq t < h$ with $t \equiv h-1 \pmod{2}$, let $\bu_t\in V^n$ be a basis of a vertex lattice $L^{[t]}$ of type $t$ and let $S_n^{[t]}=(\bu_t,\bu_t)$ be its moment matrix.
We consider the linear combination:
$$
\alpha_T^{\modi}(0,\phi^{[h]}_\bw):=\alpha_T'(0,\phi_{\bw}^{[h]})-\sum_{\substack{0\leq t< h\text{ and}\\  t\equiv h-1\,\mathrm{mod}\, 2}} \beta_{\bw,t}^{[h]}\alpha_T(0,\phi_{\bw}^{[t]}),
$$
where $\phi^{[t]}_\bw\in \CS(\BV^n)$ are admissible Schwartz functions of weight $\bw$ and type $t$ (while $\phi_{\bw}^{[h]}\in\CS(V^n)$).
The constants $\beta^{[h]}_{\bw,t}\in\BQ$ are chosen to satisfy
$$\alpha^{\modi}_{S_n^{[t]}}(0,\phi^{[h]}_{\bw})=0\quad \text{for all }0\leq t<h\text{ with }t\equiv h-1\pmod{2}.$$
Since $\alpha_{S_n^{[i]}}(0,\phi^{[j]}_{\bw})=0$ for any $i>j$ and $\alpha_{S_n^{[i]}}(0,\phi^{[i]}_{\bw})\neq 0$ for any $i$, this forms a linear system with an invertible upper triangular coefficient matrix, which uniquely determines the constants  $\beta^{[h]}_{\bw,t}$.
\end{remark}

\begin{example}
\begin{altenumerate}
\item When $h=0$ and $\phi^{[h]}=\charfun_{L^n}$ for a self-dual lattice $L$, there is no error term in \eqref{equ:local KR conj}. By Lemma \ref{lem:local density volumn}, we have $\vol(K_n^{[0]},dg)=\alpha(S_n^{[0]},S^{[0]}_n)$, where $S_n^{[0]}$ is the identity matrix of rank $n$.
By \eqref{equ:relation of weighted local density}, equation \eqref{equ:local KR conj} in this case is identical to $$\chi(\CN_n^{[0]},\CO_{\CZ(u_1)}\Dotimes\cdots\Dotimes\CO_{\CZ(u_n)})=\frac{\alpha_T'(0,\phi^{[0]})}{\alpha_S(0,\phi^{[0]})}=\frac{\alpha'(T,S_n^{[0]})}{\alpha(S_n^{[0]},S_n^{[0]})},$$ 
which reproves the Kudla-Rapoport conjecture at hyperspecial level \cite[Theorem 3.4.1]{Li-Zhang22}.

\item When $h=1$ and $\phi^{[1]}=1_{L^{[1],n}}$ for an almost self-dual lattice $L^{[1]}$, we have $\phi^{[0]}=1_{L^{[0],n}}$ for a self-dual lattice $L^{[0]}$.
By Lemma \ref{lem:local density volumn}, we have $$\vol(K^{[1]},dg)=\alpha(S_n^{[1]},S_n^{[1]})\quad\text{where}\quad S_n^{[1]}=\mathrm{diag}(1,\cdots,1,\varpi).$$
By relation \eqref{equ:relation of weighted local density}, equation \eqref{equ:local KR conj} in this case is identical to (after modifying $\beta_{\bw,0}^{[1]}$ by constant factors in \eqref{equ:relation of weighted local density}) 
\begin{equation}\label{equ:almost self dual}
\chi(\CN_n^{[1]},\CO_{\CZ(u_1)}\Dotimes\cdots\Dotimes\CO_{\CZ(u_n)})=\frac{1}{\alpha(S_n^{[1]},S_n^{[1]})}\left(
\alpha'(S_n^{[1]},T)-\beta_{\bw,0}^{[1]}\alpha(S_n^{[0]},T)
\right),
\end{equation}
As explained in Remark \ref{rem:error terms}, we have
\begin{equation*}
    \beta^{[1]}_{\bw,0}=\frac{\alpha'_{S_n^{[0]}}(0,\phi^{[1]})}{\alpha_{S_n^{[0]}}(0,\phi^{[0]})}=\frac{\alpha'(S_n^{[1]},S_n^{[0]})}{\alpha(S_n^{[0]},S_n^{[0]})}=\frac{1}{1-(-q)^{-n}},
\end{equation*}
where the last equality is due to \cite[Proposition A.5]{Cho22}.
By \cite[after (13.6.1.1)]{Li-Zhang22}, we have
\begin{equation*}
\frac{1}{1-(-q)^{-n}}\frac{\alpha(S_n^{[0]},S_n^{[0]})}{\alpha(S_n^{[1]},S_n^{[1]})}=\frac{\alpha(S_{n-1}^{[0]},S_{n-1}^{[0]})}{\alpha(S_n^{[1]},S_n^{[1]})}=q+1.
\end{equation*}
Hence \eqref{equ:almost self dual} is equivalent to
$$
\chi(\CN_n^{[1]},\CO_{\CZ(u_1)}\Dotimes\cdots\Dotimes\CO_{\CZ(u_n)})=\frac{1}{q+1}\left(
\frac{\alpha'(S_n^{[1]},T)}{\alpha(S_{n-1}^{[1]},S_{n-1}^{[1]})}-\frac{\alpha(S_{n}^{[1]},T)}{\alpha(S_{n}^{[0]},S_{n}^{[0]})}
\right).
$$
This recovers the Kudla-Rapoport conjecture at almost self-dual level for pure $\CZ$-cycles proved in \cite[Theorem 10.5.1]{Li-Zhang22}. Note that all the other cases are new in Theorem \ref{thm:local KR conj}.
\end{altenumerate}
\end{example}

\subsection{The arithmetic Siegel–Weil formula}\label{sec:ASWF}
Next, we describe the global applications of our local theorem to the arithmetic Siegel-Weil formula for semi-global integral models.

Let us now switch to global notations.
Let $F/F_0$ be a CM extension with $F\neq \BQ$. We fix a CM type $\Phi$ for $F$ with a distinguished element $\varphi_0\in \Phi$. Let $V$ be a hermitian space over $F$ of dimension $n$ with signature $(n-1,1)$ at $\varphi_0$ and signature $(n,0)$ for $\varphi\in\Phi\setminus\{\varphi_0\}$. 
Consider the following group data:
\begin{align*}
    Z^\BQ:={}&\{z\in\Res_{F/\BQ}\BG_m\mid \Nm_{F/F_0}(z)\in\BG_m\},\\
    G^\BQ:={}&\{g\in\Res_{F_0/\BQ}\mathrm{GU}(V)\mid c(g)\in\BG_m\},\\
    \wt{G}:={}&Z^\BQ\times_{\BG_m}G^\BQ\xrightarrow{\sim}Z^\BQ\times \Res_{F_0/\BQ}G.
\end{align*}
Associated to $\wt{G}$ is a natural Shimura datum $(\wt{G},\{h_{\wt{G}}\})$ of PEL-type. 

Let $K_{\wt{G}}=K_{Z^\BQ}\times K_G$ be a compact open subgroup. Then the associated Shimura variety $\wt{M}=\Sh_{K_{\wt{G}}}(\wt{G},\{h_{\wt{G}}\})$ is of dimension $n-1$ and has a canonical model over the reflex field $E$.

Let $L$ be an $O_F$-lattice and let $\fkd\in\BZ_{>0}$ be chosen to satisfy the following conditions:
\begin{itemize}
    \item We require that $2\mid \fkd$\footnote{While this assumption may be unnecessary, we do not pursue it further in this paper.}.
    \item If $v_0$ is a finite place of $F_0$ that is ramified in $F$, then $v_0\mid \fkd$.
    \item For any finite place $v_0\nmid \fkd$ of $F_0$, the localization $L_{v_0}$ in $V_{v_0}=V \otimes_{F_0} F_{0, v_0}$ is a vertex lattice.
\end{itemize}
We make the following choice of level structure:
\begin{itemize}
\item The level $K_G\subset G(\BA_{F_0,f})$ factors as 
    \begin{equation*}
        K_G=K_{G,\fkd}\times K_G^\fkd
    \end{equation*}
    where $K_{G,\fkd}\subset G(F_{0,\fkd})$ is any open compact subgroup and where $K_G^\fkd=\mathrm{Stab}(\wh{L}^\fkd)$ is the stabilizer of the completion $\wh{L}$ in $G(\BA_{0,f}^\fkd)$;
\item The level $K_{Z^\BQ}\subset Z^{\BQ}(\BA_{\BQ,f})$ factors similarly as
    \begin{equation*}
        K_{Z^\BQ}=K_{Z^\BQ,\fkd}\times K_{Z^\BQ}^{\fkd}
    \end{equation*}
    where $K_{Z^\BQ,\fkd}\subset Z^\BQ(\BQ_\fkd)$ is any open compact subgroup and where $K_{Z^\BQ}^{\fkd}$  is the maximal compact subgroup in $Z^\BQ(\BA^\fkd_{\BQ,f})$.
\end{itemize}
We construct a \emph{punctured integral model} 
$\wt\CM\to \Spec O_{E}[\fkd^{-1}]$
of $\wt M$ (\S \ref{sec:RSZ-integral}). In general, $\wt\CM$ is not smooth nor regular, but is a locally complete intersection, see \cite[Remark 6.10]{LMZ25}.

Let $\phi\in\CS(V(\BA_{F_0,f})^n)^{K_G}$ be an \emph{admissible Schwartz function}. Here, admissible means that 
$$\phi=\phi_{\fkd} \otimes\phi^{\fkd}\quad\text{and}\quad\phi^{\fkd}=\bigotimes_{i=1}^n \phi^{\fkd}_{i}$$ where each $\phi^{\fkd}_{i}$ is either $\charfun_{\wh L^{\fkd}}$ or $\charfun_{\wh L^{\sharp,\fkd}}$.

For any  $T\in\Herm_n(F)$, we define the \emph{weighted Kudla-Rapoport special cycle} (\S \ref{sec:global KR cycle}) $\tCZ(T,\phi)\to \tCM$. Furthermore, for any hermitian matrix $T\in\Herm_n(F)$ with totally positive diagonal entries, we define the \emph{derived special cycles} $\DGZ(T,\phi)\in \Ch^n(\CM)$ (\S \ref{sec:derived special cycle}).

On the other hand, let $\BV$ be the incoherent $\BA/\BA_0$-hermitian space nearby $V$, namely $\BV_{\varphi}$ is positive definite for $\varphi\in\Phi$ and $\BV_{v_0}\cong V_{v_0}$ for all finite places $v_0$ over $F_0$.
Let $\phi\otimes\phi_\infty \in\CS(\BV^n)$, where $\phi_\infty$ is the standard Gaussian function \eqref{equ:standard-gaussian}, then there is a \emph{incoherent Eisenstein series} $E(\btau,s,\phi\otimes\phi_{\infty})$ (see \S \ref{sec:Eisenstein series}) on the hermitian upper half space
\begin{equation*}
    \BH_n(F_{\infty})=\{\btau=\bx+i\by\mid  \bx\in\Herm_n(F_{\infty}), \by\in \Herm_{n}^+(F_{\infty})\},\quad\text{where}\quad F_\infty:=F\otimes_{\BQ}\BR\cong \BC^{\Phi}.
\end{equation*}
This is the Siegel Eisenstein series associated to a standard Siegel–Weil section of the degenerate principal series. 
Since we will always fix the standard Gaussian function at archimedean places, we write the Eisenstein series as $E(\btau,s,\phi)$ for saving notation.

The Eisenstein series has a meromorphic continuation and a functional equation relating $s\leftrightarrow -s$.
The central value $\Eis(\btau,0,\phi)=0$ due to incoherence. We thus consider its \emph{central derivative}
\begin{equation*}
    \PEis(\btau,\phi):=\frac{\rd}{\rd s}\Big|_{s=0}\Eis(\btau,s,\phi).
\end{equation*}
Let $\Diff(T,\BV)$ be the set of places $v_0$ over $F_0$ where $T$ does not represent $\BV_{v_0}$. Let $\tau(Z^{\BQ})=\#(Z^{\BQ}(\BQ)\backslash Z^{\BQ}(\BA_{\BQ,f})\slash K_{Z^{\BQ}})$.
\begin{theorem}[Arithmetic Siegel-Weil formula]
There exists an explicit coherent Eisenstein series $\Eis(\btau,\phi_{\corr})$ associated with the correction terms in Theorem \ref{thm:local KR conj} (see \S \ref{sec:local-factors}), such that for any positive definite $T\in\Herm_n^+(F)$ satisfying $\Diff(T,\BV)=\{v_0\}$ with $v_0\nmid \fkd$, we have
\begin{equation*}
   \frac{\vol(K_G)}{\tau(Z^{\BQ})[E:F]}\cdot {\deg}\tDCZ(T,\phi)q^T\cdot \log q_{v_0}^2=c_V\cdot \bigl(\partial\Eis_T(\btau,\phi)+\Eis_T(\btau,\phi_{\corr})\bigr),
\end{equation*}
where $c_V$ is an explicit constant determined by local factors at archimedean places (see \eqref{equ:ASWF-constant}).
\end{theorem}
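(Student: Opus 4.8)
The plan is to deduce the arithmetic Siegel--Weil formula from the local Kudla--Rapoport conjecture (Theorem \ref{thm:local KR conj}) by the standard uniformization-plus-density argument, exactly in the spirit of \cite{KR14,LZ21,Li-Zhang22,LMZ25}. The hypothesis $\Diff(T,\BV)=\{v_0\}$ with $v_0\nmid\fkd$ forces the cycle $\tDCZ(T,\phi)$ to be supported on the supersingular locus of the mod-$v_0$ fiber of $\tCM$, so that $\deg\tDCZ(T,\phi)$ is an arithmetic intersection number which, by the $p$-adic uniformization of this locus (a Rapoport--Zink uniformization theorem for the punctured integral model $\tCM$), decomposes as a sum over a double coset set of local arithmetic intersection numbers $\Int(\bu,\phi_{v_0})$ on the Rapoport--Zink space $\CN_n^{[h]}$, where $h$ is the type of $L_{v_0}$. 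This is the content of \S\ref{sec:derived special cycle} combined with local-global compatibility, and the orbit integral that appears matches (up to the volume factors $\vol(K_G)$, $\tau(Z^\BQ)$, $[E:F]$ displayed on the left) the product of local orbital integrals of $\phi^{v_0}$ against the Eisenstein series' local Whittaker functions away from $v_0$.

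\textbf{Step by step.} First I would recall the geometric side: since $T$ is positive definite and $\Diff(T,\BV)=\{v_0\}$, every $\bu$ with moment matrix $T$ lies in a definite hermitian space at all places except $v_0$, and the corresponding special cycle on $\tCM$ meets the special fiber only over $v_0$ and only in the basic (supersingular) locus; uniformize to write $\deg\tDCZ(T,\phi)\cdot\log q_{v_0}$ as $\vol$-normalized sum of $\Int(\bu,\phi_{\bw}^{[h]})\cdot\log q_{v_0}$ over the relevant global orbit, with $\phi_{\bw}^{[h]}$ the local admissible function determined by $\phi^{v_0}_{v_0}$ (this is where $h$ = type of $L_{v_0}$ enters). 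Second, apply Theorem \ref{thm:local KR conj} to rewrite each local term as $\vol(K_n^{[h]})^{-1}\bigl(\alpha'_T(0,\phi_{\bw}^{[h]})-\sum_t\beta^{[h]}_{\bw,t}\alpha_T(0,\phi_{\bw}^{[t]})\bigr)$. Third, on the analytic side, use the standard factorization of the Fourier coefficient $\partial\Eis_T(\btau,\phi)$: because $\Diff(T,\BV)=\{v_0\}$, the $T$-th Whittaker function vanishes to order exactly one, its derivative at $v_0$ supplies $\alpha'_{T}(0,\phi_{v_0})\cdot\log q_{v_0}$ (up to the archimedean factor $c_V$ built from the Gaussian, see \eqref{equ:ASWF-constant}), and the product of the remaining local Whittaker functions at places $\neq v_0$ reproduces precisely the global volume and orbital-integral factors on the left-hand side. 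Fourth, identify the correction Eisenstein series $\Eis(\btau,\phi_{\corr})$: its $T$-th coefficient is, by the construction in \S\ref{sec:local-factors}, exactly $c_V^{-1}$ times the sum $\sum_t\beta^{[h]}_{\bw,t}\alpha_T(0,\phi_{\bw}^{[t]})\cdot\log q_{v_0}$ (times the same away-from-$v_0$ factors), so that adding $\Eis_T(\btau,\phi_{\corr})$ to $\partial\Eis_T(\btau,\phi)$ cancels the error terms and matches the geometric side coefficient by coefficient. Assembling these four identifications gives the claimed equality of $T$-th Fourier coefficients, hence the theorem.

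\textbf{Main obstacle.} The technical heart is the uniformization step together with the precise bookkeeping of measures. One must check that the Rapoport--Zink uniformization applies to the \emph{punctured} and possibly non-regular model $\tCM$ (it is only a local complete intersection, per \cite[Remark 6.10]{LMZ25}), that the derived special cycle $\DGZ(T,\phi)$ is represented by the expected local complete intersection so that $\chi$ on $\CN_n^{[h]}$ is the right local contribution, and that the normalization $\vol(K^{[h]}_n,dg)$ coming from self-dual measures in Theorem \ref{thm:local KR conj} is compatible with the global $\vol(K_G)$, $\tau(Z^\BQ)$, and $[E:F]$ factors and with the self-dual measures used to define the Eisenstein series. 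A secondary point is verifying that $\phi_{\corr}$, defined purely locally at the bad places, indeed assembles into a \emph{coherent} Siegel Eisenstein series whose $T$-th coefficient is the stated combination of $\alpha_T(0,\phi^{[t]}_{\bw})$'s — this is a matter of matching the Kudla $\alpha_T(r,\cdot)$ with values of the corresponding Siegel--Weil section, and is where the hypothesis $2\mid\fkd$ and the "does not represent $\BV_{v_0}$" condition are used to guarantee the vanishing/order-of-vanishing statements that make the comparison clean.
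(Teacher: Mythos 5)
Your proposal is correct and follows essentially the same route as the paper: the geometric side is computed via the basic-locus uniformization of the derived cycle (Proposition \ref{prop:KR-p-unif} and the computation in Proposition \ref{prop:inert-local-int}), the local terms are converted by Theorem \ref{thm:KR-conj}, and the analytic side is matched term-by-term through the local Siegel--Weil comparison of Theorem \ref{thm:local-const}, with $\Eis(\btau,\phi_{\corr})$ from \S\ref{sec:local-factors} absorbing exactly the error terms $\beta^{[h]}_{\bw,t}\alpha_T(0,\phi^{[t]}_{\bw})$. The measure and normalization issues you flag as the main obstacle are precisely what those two results resolve, so the plan assembles into the paper's proof.
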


\subsection{Proof strategy}
We present the essential idea for proving Theorem \ref{thm:local KR conj} in the unramified hyperspecial level case. It should be noted that the generating series $\CE_{T_\flat}(\tau,\phi_1,\phi_\flat)$ presented below are formal constructions - the genuine generating series requires a careful definition in an appropriate function space, see \S \ref{sec:modular form}.

The proof of the local Kudla-Rapoport conjecture proceeds by induction. For $n=1$, this follows directly from calculation, see \S \ref{sec:base case}.
Let us assume the conjecture holds for all unramified $p$-adic field ($p\neq 2$) extensions $F/F_0$ and all $F/F_0$-hermitian spaces of dimension $n-1$. Our goal is to establish the conjecture for a specific unramified extension $F/F_0$ of $p$-adic fields and a hermitian space $V$ of dimension $n$, and a collection of linearly independent vectors $\bu\in \BV^n$.

We embed our local data into a global data in \S \ref{sec:ASWF}.
By the $p$-adic uniformization of derived special cycles (\S \ref{sec:p adic unitofmization of special cycle}),
the local constancy of both the geometric and analytic sides, we reduce the problem to proving the arithmetic Siegel-Weil formula for
\begin{equation}\tag{$\star$}\label{equ:ASWF}
\tDCZ(\bT,\phi),\quad\text{where}\quad \bT=\begin{pmatrix}
 t_1 & t_{12}\\
t_{21}& T_\flat
\end{pmatrix}\quad\text{with}\quad\Diff(\bT,\BV)=\{ v_0\},
\end{equation}
where the matrices and Schwartz functions are chosen as follows:
\begin{itemize}
    \item $T_\flat$ is a $(n-1)\times (n-1)$ matrix.
    \item $\phi_f^{\fkd}=\charfun_{L^{n,\fkd}}$ with a self-dual lattice $L$.
    \item $\phi_f=\phi_{1,f}\otimes\phi_{\flat,f}$ takes rational values.
\end{itemize}
We consider the following generating series
\begin{equation}\label{equ:KR proof generating series}
\CE_{T_\flat}(\tau,\phi_1,\phi_\flat)=\sum_{w_0}\sum_{\xi\in F_0}\sum_{{\substack{T=\begin{psmallmatrix}\xi& *\\ * & T_\flat\end{psmallmatrix}\\ \Diff(T,\BV)=\{w_0\}}}}\Bigl(\frac{\vol(K_G)}{\tau(Z^{\BQ})[E:F]}\wh{\deg}(\tCZ^{\bK}(T,\phi))-c_V\PEis_T(\tau,\phi)\Bigr)q^\xi,
\end{equation}
where $q^\xi:=\psi_\infty(\tr(\xi\tau))$.
This is a Hilbert modular form of weight $n$ involving two parts:
\begin{itemize}
\item The geometric part is the explicit expansion of the following intersection product
\begin{equation*}
    \langle \tCZ^{\bK}(\tau,\phi_{1}), \tCZ^{\bK}(T_\flat,\phi_{\flat})\rangle, \quad\text{where}\quad \tCZ^{\bK}(\tau,\phi_1)=\sum_{\xi\in F_0}\tCZ^{\bK}(\xi,\phi_1)q^\xi\in\wh{\Ch}^1(\CM)[\![q]\!].
\end{equation*}
The modularity of the intersection product was proved by \cite{BHKRY} combined with \cite{Stephan-Siddarth} when $F=\BQ$, and by \cite{ZZhang24} and \cite{LMZ25} when $F\neq \BQ$. 
\item The analytic part is a Fourier-Jacobi type series of the Siegel Eisenstein series, which is also modular from definition.
\end{itemize}

When $\phi\in\CS(V(\BA_{F_0,f})^n)$ is \emph{regular} at two places (see Theorem \ref{thm:sing-vanish}), only non-singular terms contribute to the summation \eqref{equ:KR proof generating series}, allowing us to compute the local contributions term by term.
Since $T_\flat$ is totally positive definite, by archimedean Siegel-Weil formula (Theorem \ref{thm:archi Siegel Weil}(1)), the hermitian matrices $T$ must either be totally positive definite or have signature $(n-1,1),(n,0),\cdots,(n,0)$.
In the latter case, the corresponding terms in $\CE_{T_\flat}(\tau,\phi_1,\phi_\flat)$ vanish by the archimedean arithmetic Siegel-Weil formula \cite{Liu11-I}, see also Theorem \ref{thm:archi Siegel Weil}. Consequently, the summation in \eqref{equ:KR proof generating series} involves only totally positive definite terms.

By induction and applying the cancellation laws on both the geometric side (\cite[Lemma 2.11.1]{LZ21}) and analytic side (\cite[(3.2.1.1)]{LZ21}), almost all Fourier coefficients in the summation vanish. 
By a careful choice of the global embedding, we can apply the vanishing result from \cite[Lemma 13.6]{Zhang21} to conclude that $\CE_{T_\flat}(\tau,\phi_1,\phi_\flat)\equiv 0$.
Taking $t_1$'s Fourier coefficient, we get
$$
\sum_{\substack{
T=\begin{psmallmatrix}t_1& *\\ * & T_\flat\end{psmallmatrix}>0\\ 
\Diff(T,\BV)=\{v_{0}\}}
}
\left(\frac{\vol(K_G)}{\tau(Z^{\BQ})[E:F]}\wh{\deg}(\tCZ^{\bK}(T,\phi))-c_V\PEis_T(\tau,\phi)\right)=0.
$$

This is a finite sum which includes the index $\bT$ in \eqref{equ:ASWF}. 
By modifying the Schwartz function (a step that should have been included in our choice of global data), we can ensure that only the terms corresponding to $\bT$ are nonzero, thus completing our induction step.

To extend the result to arbitrary maximal parahoric level, two main modifications are required. First, additional error terms must be incorporated into the analytic side. Second, the vanishing result needs to be replaced by the double induction method developed in \cite[Lemma 15.1]{ZZhang24}. A key local ingredient in this generalization is the proof of the cancellation law on the analytic side (Corollary \ref{cor:ana-cancel}).

%\begin{remark}
%The error terms in previous work \cite{He-Li-Shi-Yang,Cho22,Cho-He-Zhang} were constructed by analyzing certain ``trivial'' test cases (see Proposition \ref{prop:unique-error-coeff}). The global proof provides a natural explanation for the sufficiency of these error terms: the Kudla-Rapoport conjecture at a place $v_0$ can be deduced from the ``trivial cases'' appearing in the generating series at other places.
%\end{remark}

\begin{remark}
Theorem \ref{prop:recursion} shows that the cancellation law on the analytic side holds term by term when $h\neq n$. The case $h=n$ is exceptional: in this case, the cancellation law holds only for the entire analytic function, not for individual terms (see the proof of Theorem \ref{thm:ana-cancel}). This is the unique pathological case where term-wise cancellation fails.
\end{remark}

\subsection{Open problems}
We conclude this introduction by discussing several open problems.
First, the formulation of the Kudla-Rapoport conjecture at deeper levels (e.g., Iwahori) or for weighted special cycles remain open problems. At these levels, additional classes of special divisors emerge, and the almost modularity results become harder and more interesting.
Another promising direction is to establish a connection between our results and the Hecke algebra formulation of the Kudla-Rapoport conjecture, similar to \cite{Li-Rapoport-Zhang-HeckeAFL} and \cite{Li-Rapoport-Zhang-QuasiAFL}.

Second, in the global proof, we eliminate all singular terms through the regularity assumption on $\phi_f$. This condition is fragile and fails to be preserved under the Weil representation $\omega(h_f)\phi_f$ in the adelic setting.
A natural open problem is to relax this regularity assumption and obtain results for singular coefficients. This relates to the works of Chen \cite{RChenI,RChenII,RChenIII,RChenIV}.

Third, in contrast to the local proof in \cite{LZ21}, we employ global modularity rather than local modularity. An open problem is to elucidate the relationship between these two forms of modularity, or to utilize global modularity to establish local modularity conjectures such as \cite[Conjecture 8.2]{ZZhang24}.

Next, the global proof does not directly extend to ramified cases, as we lack a suitable definition of the weighted integral special divisors under Fourier transform - a prerequisite for applying the double induction method.
A natural question is to formulate the correct special divisors in the Krämer model (and more broadly, in splitting models \cite{He-Luo-Shi-regular}) that arise as Fourier transforms of special 
$\CZ$-divisors and $\CY$-divisors\footnote{In fact, in this case, even the geometric cancellation law becomes more subtle, see \cite[\S 6]{LRZ25}}. See also the very recent work of He, Z. Zhang and Zhu \cite{HZZ25}.

Finally, the global method appears applicable to the orthogonal case. However, this extension requires a precise definition of relative GSpin Rapoport-Zink spaces and corresponding comparison results in the totally real case, or a more careful treatment with compactification for $F_0=\BQ$ case.

\subsection{The structure of the paper}
In Part \ref{part:local}, we review the local geometric construction in \S \ref{sec:RZ space and special cycles}. 
In \S \ref{sec:weighted local density}, we define and study the weighted local density functions, we then formulate the Kudla-Rapoport conjecture and prove the cancellation law on the analytic side. 
In \S \ref{sec:symmetric space}, we review the local result in the archimedean places.
In Part \ref{part:global}, we define integral models of Shimura varieties and its special cycles in \S \ref{sec:RSZ} and \S \ref{sec:KR cycle in global}. 
In \S \ref{sec:Eisenstein series}, we review results on Eisenstein series.
In Part \ref{part:proof}, we define the arithmetic Picard group and its arithmetic intersection pairing in \S \ref{sec:generating}.
In \S \ref{sec:local-int}, we compute and compare the local contribution of the intersection numbers. 
In \S \ref{sec:generating modularity}, we construct generating series \eqref{equ:KR proof generating series} and prove its modularity.
In \S \ref{sec:proof}, we prove the Kudla-Rapoport conjecture.

\subsection{Acknowledgments}
The author wishes to thank Michael Rapoport for his interest, encouragement for this work, and for his comments and corrections on an earlier version of this manuscript.
The author wants to thanks Andreas Mihatsch, Tonghai Yang and Wei Zhang for their encouragement and helpful discussions. 
The author gratefully acknowledges the valuable insights gained from \cite{LMZ25}, and thanks Andreas Mihatsch and Zhiyu Zhang for their collaboration.
%The author wishes to thank Khalil Fong for his support during the writing of this paper.

\subsection{Notations and Conventions.}

\subsubsection{Hermitian space}\label{notation:herm}
Let $k/k_0$ be a quadratic extension of local fields or ring of adeles and let $(V,h)$ be a $k/k_0$-hermitian space of dimension $n$. Let $G=\U(V)$ be the corresponding unitary group.
\begin{enumerate}[label= H\arabic*,leftmargin=0cm]
\item Let $\Herm_m(k)$ be the set of $m\times m$ hermitian matrices and $\Herm_m^\reg(k)$ be the subset of all regular (non-singular) hermitian matrices.
\item We define the map
\begin{equation*}
    h:V^m\to \Herm_m(k),\quad \bu=(u_1,\cdots,u_m)\mapsto (h(u_i,u_j))_{i,j}.
\end{equation*}
\item\label{not:V_T} For any $T\in \Herm_m(k)$, we define the inverse image
\begin{equation*}
    V_T:=\{\bu\in V^m : h(\bu)=T\}=h^{-1}(T).
\end{equation*}
\item We define the regular subset
\begin{equation*}
    V^n_{\reg}:=\{\bu\in V^n\mid \det(h(\bu))\neq 0\}.
\end{equation*}
For any $\bu\in V^n_{\reg}$ with $h(\bu)=T$, the map
\begin{equation*}
    i_\bu:\U(V)\simlr V_T=h^{-1}(T),\quad g\mapsto g\cdot \bu,
\end{equation*}
is an isomorphism.

\item\label{not:gauge form} Fix basis elements $\alpha\in \wedge^{2n^2}(V^n)^*$ and $\beta \in \wedge^{n^2}(\Herm_n)^*$ and write $\alpha$ and $\beta$ for the translation invariant forms they define, where $(-)^*$ is the dual vector space. By \cite[after Corollary 10.4]{KR14}\footnote{Note that the construction is purely algebraic hence also carries over to the local setup, see \cite[Lemma 5.3.1 and Lemma 5.3.2]{Kudla-Rapoport-Yang}.}, there is a form $\omega$ of degree $n^2$ on $V^n_{\reg}$ such that
\begin{itemize}
\item $\alpha=h^*\beta\wedge \omega$,
\item $\omega$ is invariant under the action of $\U(V)\times \GL(n)$ on $V^n$,
\item for all points $\bu\in V^n_{\reg}$, the restriction of $\omega$ to $\ker(dh_\bu)$ is non-zero.
In particular, the pullback $\omega_{G}=i_\bu^*\omega$ is a gauge form on $G$, which is independent of the choice of $\bu$.
\end{itemize}
\item\label{not:gauge form 2} We define a constant
\begin{equation*}
    d_{\alpha}x=c(\alpha,\psi)dx.
\end{equation*}
Here $d_{\alpha}x$ is the measure on $V^n$ determined by the gauge form $\alpha$ and $dx$ is the self-dual measure with respect to $\langle x,y\rangle=\psi(\tr_{F/F_0}(\tr(h(x,y))))$.
\item\label{not:gauge form 3} Similarly, we define a constant:
\begin{equation*}
d_{\beta}b=c(\beta,\psi)db.
\end{equation*}
Here $d_{\beta}x$ is the measure on $\Herm_n(F)$ determined by the gauge form $\beta$ and $db$ is the self-dual measure with respect to $\langle b,c\rangle=\psi(\tr(bc))$.
\end{enumerate}

\subsubsection{Local data}\label{not:loc}
\begin{enumerate}[label= L\arabic*,leftmargin=0cm]
\item\label{not:local fields} In the local setup, we fix an unramified quadratic extension of $p$-adic fields $F/F_0$ with uniformizer $\varpi$. Assume $p\neq 2$.
\item\label{not:vertex lattice} Let $V$ be a hermitian space and let $\Lambda\subseteq V$ be a hermitian lattice. The hermitian dual $\Lambda^\sharp$ is defined by
$
\Lambda^\sharp:=\{x\in V \mid (x,\Lambda)\subset O_F\}.
$
A \emph{vertex lattice} is a hermitian lattice $\Lambda\subset V$ satisfying $\varpi\Lambda^\sharp\subseteq \Lambda\subseteq\Lambda^\sharp$. The \emph{type} of a vertex lattice $\Lambda$ is $t(\Lambda):=\dim\Lambda^\sharp/\Lambda$.

\item\label{not:admissible function} Let $0\leq r\leq n$ and let $L\subset V$ be a vertex lattice. A Schwartz function $\phi\in \CS(V^r)$ is called \emph{admissible} if $\phi=\bigotimes_{i=1}^r \phi_i$ such that $\phi_i=\charfun_L$ or $\charfun_{L^\sharp}$. It is of \emph{type $t$} if the vertex lattice $L$ has type $t$.
A \emph{weight vector} is an element $\bw\in \{\CZ,\CY\}^r$. An \emph{admissible Schwartz function of weight $\bw$ and type $t$} is a function $\phi:=\phi^{[t]}_{\bw}\in\CS(V^r)$ such that 
$$
\phi=\bigotimes_{i=1}^r \phi_i,\quad \phi_i=\left\{\begin{array}{cc}
\charfun_{L}     &\text{if }\bw_i=\CZ;\\
\charfun_{L^\sharp}  &\text{if }\bw_i=\CY,
\end{array}\right.
$$
where $L=L^{[t]}$ is a vertex lattice of type $t$. An admissible Schwartz function is an admissible function of some weight $\bw$ and type $t$.  
\item\label{not:local match schwartz function}
Two admissible Schwartz functions $\phi_i\in \CS(V^r)$ ($i\in\{1,2\}$) are \emph{matched} if they share the same weight vector. For an admissible Schwartz function $\phi$ of type $0$, we consider it matched with any other admissible Schwartz function.
\end{enumerate}

\subsubsection{Global data}
\begin{enumerate}[label= G\arabic*,leftmargin=0cm]
\item For a global number field $K$, let $\Sigma_{K}$ be the set of all places. Let $\Sigma_{K,f}$ (resp. $\Sigma_{K,\infty}$) be the set of finite (resp. infinite) places.
Let $\fkd\subset O_\bk$ be an ideal and let $\Sigma_{K,\Box}^{\fkd}:=\Sigma_{K,\Box}\setminus\{v\in \Sigma_K : v\mid \fkd\}$ for $\Box\in\{\text{empty},f,\infty\}$.

\item\label{not:nearby herm} 
Let $F/F_0$ be a CM extension with fixed CM type $\Phi$ and an distinguished element $\varphi_0\in\Phi$. Let $V$ be a hermitian space of signature $(n-1,1)$ at $\varphi_0$ and $(n,0)$ for $\varphi\in\Phi\setminus \{\varphi_0\}$.
Let $v_0$ be a place of $F_0$. If $v_0$ is non-split or archimedean, let $V^{(v_0)}$ be the \emph{$v_0$-nearby hermitian space} of $V$. 
We recall that $V^{(v_0)}\simeq V$ if $\varphi_0\in\Phi$ is the unique archimedean place over $v_0$, and otherwise $V^{(v_0)}$ is characterized by the following conditions:
\begin{itemize}
\item For all $w_0$ other than $v_0$ and $\varphi_0$, we have $V^{(v_0)}_{w_0}\simeq V_{w_0}$.
\item $V^{(v_0)}_{\varphi_0}$ is positive definite.
\item $V^{(v_0)}_{v_0}$ has signature $(n-1,1)$ if $v_0$ is archimedean and $\varphi_0$ is not over $v_0$.
\end{itemize}

\item For a CM extension $F/F_0$ with CM type $\Phi$, let $\Herm_{m}^+(F)$ be the set of all totally positive definite matrices of rank $m$, and $\Herm_{n}^\dagger(F)$ be the set of hermitian matrices with signature $(n-1,1)$ for one $\varphi\in\Phi$, and positive definite for all $\varphi'\in\Phi\setminus \{\varphi\}$.

\item Let $\BA_0=\BA_{F_0}$ and let $\BA=\BA_F$. We fix the non-trivial additive character $\psi=\psi_\infty:F_0/\BA_0\to\BC^\times$ such that $\psi$ is unramified outside $\mathrm{Spl}(F/F_0)$ (the set of finite places of $F_0$ split in $F$). See \cite[\S 12.2]{LZ21}.

\item We let $\bH=\SL_2$ as an algebraic group over $F_0$. Let $\CA_\infty(\bH(\BA_0),K_{\bH},k)$ be the space of smooth functions on $\bH(\BA_0)$, invariant under left $\bH(F_0)$ and right $K_\bH\subset \bH(\BA_{0,f})$ translation, and of parallel weight $k$ under the action of $\prod_{v_0\in\Hom(F_0,\BR)}\SO(2,\BR)$ (we do not impose any finiteness condition under the center of the universal enveloping algebra).

\item In this paper, we will restrict to the classical modular form notation in the \emph{hermitian upper half space}
\begin{equation*}
\BH_{1}(F_\infty)=\{\tau=x+iy\in F_\infty :\frac{1}{2i}(\tau-\ov{\tau})>0\},\quad\text{where}\quad F_\infty:=F\otimes_{\BQ}\BR\cong \BC^{\Phi}.
\end{equation*}
For any $f\in \CA_\infty(\bH(\BA_0),K_{\bH},k)$, define the classical modular function 
$$
f(\tau):\BH_{1}(F_\infty)\to \BC,\quad \tau\mapsto \chi_\infty(\det(a))^{-1}\det(y)^{-k/2}\cdot f(g_\tau),
$$
where $\tau=x+iy$ and $a\in F_\infty^\times$ such that $y=a\ov{a}$, and $$g_\tau=\begin{pmatrix}
1&x\\&1
\end{pmatrix}\cdot \begin{pmatrix}
a&\\ &\ov{a}
\end{pmatrix}\cdot 1_f\in\bH(\BA_0).$$ 
Thanks to the strong approximation theorem $\bH(\BA_{0})=\bH(F_0)\bH(F_{0,\infty})K_{\bH}$, we will not loose information. The Fourier expansion becomes
\begin{equation*}
    f(\tau)=\sum_{\xi\in F_0}f_\xi(y)q^\xi,\quad q^\xi=\psi_\infty(\tr(\xi\tau)).
\end{equation*}

\item $\CA_{\hol}(\bH(\BA_0),K_\bH,k)$: the space of automorphic forms (with moderate growth) on $\bH(\BA_0)$, invariant under $K_{\bH}\subset \bH(\BA_0)$, of parallel weight $k$ under the action of $\prod_{v_0\in\Hom(F_0,\BR)}\SO(2,\BR)$, and holomorphic (i.e., annihilated by the element $\frac{1}{2}\begin{psmallmatrix}i&1\\ 1&-i\end{psmallmatrix}$ in the complexfied Lie algebra of $\bH(F_{0,v_0})=\SL_2(\BR)$ for every $v_0\in\Hom(F_0,\BR)$. This is a finite dimensional vector space over $\BC$, and it has a $\ov{\BQ}$-structure via Fourier expansion.
For any subfield $L\subset \BC$ (for our application, it suffices to assume $L\supset \ov{\BQ}$), we define $\CA_{\hol}(\bH(\BA_0),K_\bH,k)_L$ as the subspace of $\CA_{\hol}(\bH(\BA_0),K_\bH,k)$ consisting of functions with Fourier coefficients all in $L$. Then for any $L$-vector space $W$, we have an $L$-vector space $\CA_{\hol}(\bH(\BA_0),K_\bH,k)_L\otimes_L W$. Since the differential operator commutes with the right $\bH(\BA_f)$-action, by strong approximation, we may verify the holomorphicity when $h_f=1$.
\end{enumerate}

\part{Local Theory}\label{part:local}
\section{Rapoport-Zink spaces and special cycles}\label{sec:RZ space and special cycles}
In this section, we define unramified basic relative Rapoport-Zink spaces with maximal parahoric level, and weighted special cycles on them.
Throughout the section, we fix $F/F_0$, a unramified quadratic extension of $p$-adic fields ($p\neq 2$) with uniformizer $\varpi\in F_0$. We assume that the residue field of $O_{F_0}$ has cardinality $q$.

\subsection{Strict $O_{F_0}$-modules}\label{sec:strict modules}
We begin with a review of $O_{F_0}$-strict modules. For a comprehensive treatment, we refer to \cite{Mih22,KRZ,LMZ25}.

For an $\Spf O_{F_0}$-scheme $S$, a \emph{strict $O_{F_0}$-module} over $S$ is a pair $(X,\iota)$ where $X$ is a $p$-divisible group over $S$ and $\iota: O_{F_0}\rightarrow \End(X)$ is an action 
such that $O_{F_0}$ acts on $\Lie(X)$ via the structure morphism $O_{F_0}\rightarrow \CO_S$.
A strict $O_{F_0}$-module $(X,\iota)$ is called \emph{formal} if its underlying $p$-divisible group $X$ is formal. The dimension of a strict $O_{F_0}$-module is the dimension of its underlying $p$-divisible group. For the notion of (relative) \emph{height} of strict $O_{F_0}$-modules, we direct the reader to the aforementioned references.

Let $D(X)$ be the (relative) de Rham homology of $X$, which is a locally free $\CO_S$-module whose rank equals the height of $X$. It fits into a canonical short exact sequence of $\CO_S$-modules:
\begin{equation*}
    0\rightarrow \mathrm{Fil}(X)\rightarrow D(X)\rightarrow \Lie(X)\rightarrow 0.
\end{equation*}

We now specialize to the case where $X=(X,\iota)$ is \emph{biformal}, a notion introduced in \cite[Definition 11.9]{Mih22}. For biformal strict $O_{F_0}$-modules, we can define the \emph{relative dual} $X^\vee$ and consequently the notion of \emph{relative polarization} (see \cite[Definition 11.9]{Mih22}). The notation of relative dual defined here identifies with the one defined in \cite{Faltings-dual}, see \cite[Appendix]{LMZ25}.

\subsection{Rapoport-Zink space}\label{sec:RZ space}
For any $\Spf O_{F}$-scheme $S$, a \emph{hermitian $O_F$-module of dimension $n$ and type $h$} (and signature $(n-1, 1)$) over $S$ is a triple $(X, \iota, \lambda)$, where:
\begin{itemize}
\item
$X$ is a strict biformal $O_{F_0}$-module over $S$ of relative height $2n$ and dimension $n$. 
\item $\iota: O_F \to \End(X)$ is an action of $O_F$ on $X$ that extends the action of $O_{F_0}$. We require that the \emph{Kottwitz signature condition} of signature $(n-1, 1)$ holds for all $a \in O_{F} $:
\begin{equation*}
\mathrm{charac} (\iota(a)\mid \Lie X) = (T-a)^{n-1}(T-\ov a) \in \CO_S[T].
\end{equation*} 
\item $\lambda: X \to X^\vee$ is a polarization on $X$ as strict $O_{F_0}$-module, which is $O_F/O_{F_0}$ semi-linear in the sense that the Rosati involution $\mathrm{Ros}_\lambda$ induces the non-trivial involution $\ov{(-)} \in \Gal({F}/{F_0})$ on $\iota: O_{F} \to \End(X)$.
\item We require that the finite flat group scheme $\ker\lambda$ over $S$ is contained in $X[\pi]$ and is of order $q^{2h}$.
\end{itemize}

An isomorphism $(X_1, \iota_1, \lambda_1) \overset{\sim}{\longrightarrow} (X_2, \iota_2, \lambda_2)$ between two such triples is an $O_{F}$-linear isomorphism $\varphi\colon X_1 \overset{\sim}{\longrightarrow} X_2$ such that $\varphi^*(\lambda_2)=\lambda_1$.   

Up to $O_F$-linear quasi-isogeny compatible with the polarization, there exists a unique hermitian $O_F$-module $(\BX_n^{[h]}, \iota_{\BX_n^{[h]}}, \lambda_{\BX_n^{[h]}})$ over $\BF=\ov{\BF_q}$ of dimension $n$ and type $h$, within the
supersingular quasi-isogeny class. Fix one choice of $(\BX_n^{[h]}, \iota_{\BX_n^{[h]}}, \lambda_{\BX_n^{[h]}})$ as the \emph{(basic) framing object}.

\begin{definition}\label{defn: local RZ space}
The (relative) Rapoport-Zink space of dimension $n$ and type $h$ is the functor 
$$\CN_n^{[h]} \to \Spf O_{\breve F} $$
sending $S$ to the set of isomorphism classes of tuples $(X, \iota, \lambda, \rho)$, where:
\begin{itemize}
\item $(X, \iota, \lambda)$ is a hermitian $O_F$-module of dimension $n$ and type $h$ over $S$.
\item $\rho: X \times_{S} {\ov S} \to \BX_n^{[h]} \times_\BF \ov S $ an $O_F$-linear quasi-isogeny of height $0$ over the reduction $\ov S:=S \times_{O_{\breve F}} \mathbb F$ such that $\rho^*(\lambda_{\BX_n^{[h]}, \ov S}) = \lambda_{\ov S}$.
\end{itemize}
\end{definition}
	
By \cite[Theorem 2.16]{RZ96}, the functor $\CN_n^{[h]}$ is representable by a formal scheme that is locally formally of finite type over $\Spf O_{\breve F}$ dimension $n$. 

\subsection{Special cycles}\label{sec:non-archi-cycle}

Let $\BE$ be a formal $O_{F_0}$-module over $\BF$ of relative height $2$ and dimension $1$. We can complete it into a triple $(\BE, \iota_{\BE}, \lambda_{\BE})$ of \emph{signature $(1, 0)$}, dimension $1$ and type $0$ over $\BF$, which we still denote by $\BE$. By modifying the $O_F$-action on $\BE$ via the Galois involution $\ov{(-)}$, we obtain a framing object $(\ov \BE, \iota_{\ov \BE} \circ \sigma, \lambda_{\ov \BE})$ for $\CN_1^{[0]}$, which we denote by $\ov{\BE}$.  The theory of canonical liftings \cite{Gross86} yields an isomorphism $\CN_1^{[0]} \simeq \Spf O_{\breve F}$, where the universal triple $\ov{\CE}$ over $\CN_1^{[0]} \simeq \Spf O_{\breve F}$ is the canonical lifting of $\ov{\BE}$. 
	
Let \emph{the space of special quasi-homomorphisms} be the $F$-vector space 
\begin{equation}\label{eq: space of special quasi-homos}
		\BV=\BV_n^{[h]}=\Hom_{O_F}^{\circ}(\BE, \BX_n^{[h]}):=\Hom_{O_F}(\BE, \BX_n^{[h]})\otimes F
\end{equation}
equipped with the hermitian form such that for any $x, \, y \in \BV$
\begin{equation}
	(x,y)_{\BV}:= \lambda_{\BE}^{-1} \circ y^{\vee} \circ \lambda_{\BX} \circ x \in \Hom_{O_F}^{\circ}(\BE, \BE) \cong F.
\end{equation}
	
The automorphism group of quasi-isogenies $\Aut^\circ(\BX, \iota_{\BX}, \lambda_{\BX})$ acts on $\BV$ naturally. By Dieudonne theory, we have $\Aut^\circ(\BX, \iota_{\BX}, \lambda_{\BX}) \cong \U(\BV)(F_0)$. The group $\U(\BV)(F_0)$ acts on $\CN_n^{[h]}$ by
\begin{equation}
g.(X, \iota, \lambda, \rho)=(X, \iota, \lambda, g \circ \rho).
\end{equation}

\begin{definition}\label{def:special divisor}
For any $u\in \BV_n^{[h]}$, the \emph{Kudla–Rapoport special divisor} $\CZ(u) \subseteq \CN_n^{[h]}$ is the closed formal subscheme of $\CN_n^{[h]}$ characterized as follows: for each $\Spf O_{\breve{F}}$-scheme $S$, the set $\CZ(u)(S)$ consists of those points $(X, \iota_X, \lambda_X, \rho_X) \in \CN_n^{[h]}(S)$ for which the quasi-homomorphism
\begin{equation*}
\rho_X^{-1} \circ u: \BE\times_{\BF} \ov{S} \longrightarrow X \times_S \ov{S}
\end{equation*}
extends to a homomorphism $\CE_S \to X$ over $S$.

Similarly, the \emph{Kudla–Rapoport special divisor} $\CY(u) \subseteq \CN_n^{[h]}$ is the closed formal subscheme of $\CN_n^{[h]}$ characterized as follows: for each $\Spf O_{\breve{F}}$-scheme $S$, the set $\CY(u)(S)$ consists of those points $(X, \iota_X, \lambda_X, \rho_X) \in \CN_n^{[h]}(S)$ for which the quasi-homomorphism
\begin{equation*}
\rho_{X^\vee}^{-1} \circ \lambda_{\BX} \circ u: \BE \times_{\BF} \ov{S} \longrightarrow X^\vee \times_S \ov{S}
\end{equation*}
extends to a homomorphism $\CE_S \to X^\vee$ over $S$.
\end{definition}

\begin{proposition}\label{prop:special-div}
The following properties hold for special divisors:
\begin{altenumerate}
\item For any nonzero $u\in \BV$, the functors $\CZ(u)$ and $\CY(u)$ are representable by Cartier divisors on $\CN_n^{[h]}$.
\item For $0\leq h\leq n-1$, if $\val((u,u))=0$, then $\CZ(u)\cong \CN_{n-1}^{[h]}$.
\item For $1\leq h\leq n$, if $\val((u,u))=-1$, then $\CY(u)\cong \CN_{n-1}^{[h-1]}$.
\item The following inclusion relations hold:
\begin{equation*}
    \CZ(u)\subseteq \CY(u)\subseteq \CZ(\varpi u).
\end{equation*}
Moreover, for $h=0$, we have $\CZ(u)=\CY(u)$, and for $h=n$, we have $\CY(u)=\CZ(\varpi u)$.
\end{altenumerate}
\end{proposition}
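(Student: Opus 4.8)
The strategy is to reduce each assertion to a statement about Dieudonné modules and lattices, using the equivalence between $p$-divisible groups with $O_{F_0}$-action and their (relative) de Rham homology $D(X)$. First, for part (1), I would follow the now-standard argument of Kudla--Rapoport: the condition defining $\CZ(u)$ (resp. $\CY(u)$) is that a fixed quasi-homomorphism extends to an honest homomorphism, which on the level of de Rham homology amounts to the vanishing of a single linear functional composed with $u$; this is visibly represented by a closed formal subscheme cut out locally by one equation. The nontrivial point is that this equation is not a zero divisor, i.e.\ that $\CZ(u)$ and $\CY(u)$ are genuine \emph{Cartier} divisors. This follows by the standard trick: the universal deformation ring of a point of $\CN_n^{[h]}$ is regular (or at least a complete intersection that is flat over $O_{\breve F}$), and the special divisor does not contain any irreducible component of $\CN_n^{[h]}$ because its generic fiber is a proper closed subset (the locus where $u$ becomes a genuine homomorphism in characteristic zero is a divisor), so the defining equation is a nonzerodivisor. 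One must also rule out that $\CZ(u)$ equals all of $\CN_n^{[h]}$, which is clear since $u\neq 0$.

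For parts (2) and (3), the key is the interpretation of $\CZ(u)$ and $\CY(u)$ in terms of sublattices. If $\val((u,u))=0$, then $u$ spans a unimodular (self-dual) rank-one $O_F$-sublattice $\langle u\rangle$ of $\BV$; on $\CZ(u)$ the $p$-divisible group $X$ acquires a quasi-isogeny factoring $\CE$ off as a direct summand (using that $\CN_1^{[0]}\cong\Spf O_{\breve F}$ carries the canonical lift $\ov\CE$), so that $X\cong \CE\oplus X'$ with $X'$ a hermitian $O_F$-module of dimension $n-1$; the polarization type is unchanged because $\langle u\rangle$ is self-dual, so its orthogonal complement $\langle u\rangle^\perp$ still has the same elementary divisors as $\BV$, hence type $h$. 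This yields the isomorphism $\CZ(u)\cong\CN_{n-1}^{[h]}$. For part (3) the argument is parallel but uses the $\CY$-condition, which is the $\CZ$-condition transported through the polarization $\lambda_\BX$: when $\val((u,u))=-1$, the sublattice $\langle u\rangle$ satisfies $\langle u\rangle\subsetneq \langle u\rangle^\sharp$ with colength one, so splitting off the corresponding rank-one piece \emph{decreases} the type by exactly one, giving $\CY(u)\cong\CN_{n-1}^{[h-1]}$. In both cases one must check compatibility of the signature condition with the direct sum decomposition, which is automatic since $u$ generates a positive-definite (signature $(1,0)$) line in $\BV$.

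For part (4), the inclusion $\CZ(u)\subseteq\CY(u)$ follows directly from the definitions together with the inclusion relation $\ker\lambda_\BX\subseteq\BX[\varpi]$: if $\rho_X^{-1}\circ u$ extends to $\CE_S\to X$, then post-composing with $\lambda_X$ extends $\rho_{X^\vee}^{-1}\circ\lambda_\BX\circ u$ to $\CE_S\to X^\vee$. For $\CY(u)\subseteq\CZ(\varpi u)$, one uses that $\lambda_X$ has kernel killed by $\varpi$, so $\varpi\cdot\lambda_X^{-1}$ makes sense as a homomorphism $X^\vee\to X$; composing the extension $\CE_S\to X^\vee$ for $\CY(u)$ with this gives the required extension $\CE_S\to X$ realizing $\CZ(\varpi u)$. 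The two boundary cases are then special: when $h=0$ the polarization $\lambda_\BX$ is an isomorphism, so the $\CZ$- and $\CY$-conditions are literally the same, giving $\CZ(u)=\CY(u)$; when $h=n$ the kernel of $\lambda_\BX$ is all of $\BX[\varpi]$ (it has order $q^{2n}=q^{2h}$), so $\varpi\lambda_X^{-1}$ is the full inverse, and the composition identifies the $\CY(u)$-condition with the $\CZ(\varpi u)$-condition exactly, giving $\CY(u)=\CZ(\varpi u)$.

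The main obstacle I anticipate is part (1), specifically the verification that the defining equations are nonzerodivisors in the general maximal parahoric ($h\neq 0$) setting, where $\CN_n^{[h]}$ is not regular; here one needs to know that $\CN_n^{[h]}$ is flat over $O_{\breve F}$ with no embedded components and that no irreducible component is contained in the special divisor, which requires either a direct deformation-theoretic computation of the local equations or a reference to the flatness/local-model results for these parahoric Rapoport--Zink spaces. The lattice-theoretic reductions in (2)--(4) are more routine once the dictionary between the moduli problem and Dieudonné lattices is in place, though care is needed to track the polarization and the precise colength statements.
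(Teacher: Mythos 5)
Your proposal is correct and follows essentially the route the paper itself takes: the paper delegates part (1) to \cite[Proposition 5.9]{KR11}, parts (2)--(3) to \cite[Proposition 5.10]{Cho18}, and checks (4) directly from the definitions, and your sketch faithfully reconstructs those arguments, including the correct identification of the one delicate point (that the single local equation cutting out $\CZ(u)$, resp.\ $\CY(u)$, is a nonzerodivisor on the possibly non-regular $\CN_n^{[h]}$, which rests on flatness/normality of these parahoric Rapoport--Zink spaces). One small correction in your part (3): for $\val((u,u))=-1$ the inclusion goes the other way, $\langle u\rangle^\sharp=\varpi\langle u\rangle\subsetneq\langle u\rangle$, and the rank-one piece split off from an ambient vertex lattice $L$ is $\langle\varpi u\rangle$, a type-one vertex lattice of rank one (cf.\ Lemma \ref{lem:lattice-orth}(2)), which is exactly what makes the type drop by one as you conclude.
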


\begin{proof}
Part (1) follows from the same argument as in \cite[Proposition 5.9]{KR11}. Parts (2) and (3) are established in \cite[Proposition 5.10]{Cho18}. Part (4) follows directly from the definition.
\end{proof}

Next, we introduce weighted special cycles.

\begin{definition}\label{def:local special cycle}
Let $0\leq m\leq n$ be an integer, and let $V$ be the ``nearby'' hermitian space associated with $\BV=\BV^{[h]}_n$, i.e., the unique hermitian space of dimension $n$ containing a vertex lattice $L$ of type $h$. 

Recall from \eqref{not:admissible function} that a Schwartz function $\phi\in\CS(V^m)$ is \emph{admissible of type $h$} if $\phi=\bigotimes_i\phi_i$, where each $\phi_i$ is either $\charfun_{L^{[h]}}$ or $\charfun_{L^{[h],\sharp}}$, where $L^{[h]}$ is a vertex lattice of type $h$.

\begin{altenumerate}
\item For any nonzero $u\in \BV$, we define
\begin{equation*}
    \CZ(u,\charfun_{L^{[h]}}) := \CZ(u), \quad \text{and} \quad \CZ(u,\charfun_{L^{[h],\sharp}}) := \CY(u).
\end{equation*}

\item For nonzero vectors $\bu = (u_1,\dots,u_m) \in \BV^m$ and an admissible Schwartz function $\phi\in\CS(V^m)$ of type $h$, the \emph{weighted special cycle} is defined as the scheme-theoretic intersection:
\begin{equation*}
    \CZ(\bu,\phi) := \CZ(u_1,\phi_1) \cap \dots \cap \CZ(u_m,\phi_m).
\end{equation*}

\item The \emph{derived weighted special cycle} $\DCZ(\bu,\phi)$ is defined as the image of
\begin{equation*}
    \CO_{\CZ(u_1,\phi_1)} \Dotimes \dots \Dotimes \CO_{\CZ(u_m,\phi_m)}
\end{equation*}
in the $m$-th graded piece $\mathrm{Gr}^m K_0^{\CZ(\bu,\phi)}(\CN_n^{[h]})$. 

\item When $m=n$, the \emph{weighted arithmetic intersection number} is defined as
\begin{equation*}
    \Int(\bu,\phi) := \chi(\CN_n^{[h]},\DCZ(\bu,\phi)),
\end{equation*}
where $\chi$ denotes the Euler–Poincaré characteristic. 
\end{altenumerate}
\end{definition}

\begin{example}\label{exmp:base case geo}
If $n=1$, then by the theory of canonical liftings \cite{Gross86}, we have:

\begin{altenumerate}
\item For any $u$ in the non-split hermitian space $\BV_1^{[0]}$, 
\begin{equation*}
    \Int(u, 1_{L^{[0]}}) =
    \begin{cases}
        \frac{\val(u,u) + 1}{2}, & \text{if } \val(u,u) \geq 0; \\
        0, & \text{otherwise}.
    \end{cases}
\end{equation*}

\item For any $u$ in the split hermitian space $\BV_1^{[1]}$,
\begin{equation*}
    \Int(u, 1_{L^{[1]}}) =
    \begin{cases}
        \frac{\val(u,u)}{2}, & \text{if } \val(u,u) \geq 1; \\
        0, & \text{otherwise}.
    \end{cases}
\end{equation*}
This follows from the observation that, after identifying the hermitian modules $\BX^{[1]}$ and $\BX^{[1],\vee}$ with $\BE$, the polarization $\BX^{[1]} \to \BX^{[1],\vee}$ corresponds to multiplication by $[\varpi]: \BE \to \BE$. 

\item For any $u$ in the split hermitian space $\BV_1^{[1]}$,
\begin{equation*}
    \Int(u, 1_{L^{[1],\sharp}}) =
    \begin{cases}
        \frac{\val(u,u)}{2} + 1, & \text{if } \val(u,u) \geq -1; \\
        0, & \text{otherwise}.
    \end{cases}
\end{equation*}
This follows from (2) and Proposition \ref{prop:special-div}(4).
\end{altenumerate}
\end{example}

By Proposition \ref{prop:special-div}(2), (3), and (4), we obtain the following geometric cancellation law.

\begin{proposition}[\protect{\cite[Proposition 5.10]{Cho18}}]\label{prop:geo-cancel}

Let $\phi^{[h]} = \phi^{[h]}_\bw \in \CS(V^{n})$ be an admissible Schwartz function of weight $\bw\in\{\CZ,\CY\}^n$ \eqref{not:admissible function}, and let $\bu = (u_1, \dots, u_n) \in \BV^n$ a collection of linearly independent vectors.
For any fixed $1 \leq i \leq n$, define:
\begin{itemize}
    \item $\BV_{\flat} := \langle u_i \rangle^\perp$ and set $u_{j,\flat}$ being the orthogonal projection of $u_j$ for $j\neq i$.
    \item $\bu_{\flat} := (u_{1,\flat}, \dots, \wh{u_{i,\flat}}, \dots, u_{n,\flat})\in \BV_\flat^{n-1}$.
    \item $\bw_\flat := (\bw_1, \dots, \wh{\bw_i}, \dots, \bw_n)\in\{\CZ,\CY\}^{n-1}$.
\end{itemize}
Suppose we are in one of the following situations:
\begin{itemize}
\item $\text{When $\bw_i=\CZ$ and }\val(u_i,u_i)=\left\{\begin{array}{ll}
0&\text{if }h\neq n;\\
1&\text{if }h=n.
\end{array}\right.$
\item $\text{When $\bw_i=\CY$ and }\val(u_i,u_i)=\left\{\begin{array}{ll}
-1&\text{if }h\neq 0;\\
0&\text{if }h=0.
\end{array}\right.$
\end{itemize}
Then we have $\Int(\bu,\phi_{\bw}^{[h]})=\Int(\bu_{\flat},\phi_{\bw_\flat}^{[h-1]})$.\qed
\end{proposition}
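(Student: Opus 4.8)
The plan is to reduce the $n$-fold intersection number to an $(n-1)$-fold one on a smaller Rapoport--Zink space by using the isomorphism from Proposition~\ref{prop:special-div}(2) or (3). First I would treat the two cases of the hypothesis in parallel: if $\bw_i=\CZ$, then by Proposition~\ref{prop:special-div}(2) (when $h\neq n$, since $\val(u_i,u_i)=0$) the divisor $\CZ(u_i,\phi_i)=\CZ(u_i)$ is isomorphic to $\CN_{n-1}^{[h]}$; if $\bw_i=\CY$, then by Proposition~\ref{prop:special-div}(3) (when $h\neq 0$, since $\val(u_i,u_i)=-1$) the divisor $\CZ(u_i,\phi_i)=\CY(u_i)$ is isomorphic to $\CN_{n-1}^{[h-1]}$. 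The remaining sub-cases $h=n$ (with $\bw_i=\CZ$, $\val(u_i,u_i)=1$) and $h=0$ (with $\bw_i=\CY$, $\val(u_i,u_i)=0$) are handled by first invoking Proposition~\ref{prop:special-div}(4): for $h=n$ one has $\CY(u)=\CZ(\varpi u)$ and for $h=0$ one has $\CZ(u)=\CY(u)$, which translates the $h=n$, $\CZ$-case into a $\CY$-type statement with $\val(\varpi u_i,\varpi u_i)=\val(u_i,u_i)+2$ that is again accessible via part~(3), and symmetrically for $h=0$; so after this bookkeeping it suffices to treat the ``generic'' isomorphism case.

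Next I would identify, under the isomorphism $\CZ(u_i,\phi_i)\simeq \CN_{n-1}^{[h']}$ (with $h'=h$ or $h-1$ accordingly), the pullbacks of the remaining special divisors $\CZ(u_j,\phi_j)$ for $j\neq i$. The key geometric point is that the isomorphism of Proposition~\ref{prop:special-div}(2)/(3) is induced by splitting off the line spanned by $u_i$: the framing object for the smaller space is the orthogonal complement, and a special homomorphism $u_j$ restricts/projects to $u_{j,\flat}\in\BV_\flat$. One needs that $\CZ(u_j,\phi_j)\cap \CZ(u_i,\phi_i)$, viewed inside $\CN_{n-1}^{[h']}$, equals $\CZ(u_{j,\flat},\phi_j)$ with the same weight $\bw_j$ — i.e., a $\CZ$-divisor pulls back to a $\CZ$-divisor and a $\CY$-divisor to a $\CY$-divisor, because the polarization datum used to define $\CY$ on $\CN_n^{[h]}$ restricts to the one used on $\CN_{n-1}^{[h']}$. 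Given this, the scheme-theoretic intersection $\CZ(\bu,\phi^{[h]}_\bw)$ is identified with $\CZ(\bu_\flat,\phi^{[h-1]}_{\bw_\flat})$ inside $\CN_{n-1}^{[h']}$, and the same identification passes to derived tensor products since $\CO_{\CZ(u_i,\phi_i)}$ is the structure sheaf of a Cartier divisor isomorphic to the whole target space, so $\CO_{\CZ(u_i,\phi_i)}\Dotimes(-)$ is just restriction along a closed immersion that is an isomorphism onto $\CN_{n-1}^{[h']}$. Hence the Euler--Poincar\'e characteristics agree, giving $\Int(\bu,\phi^{[h]}_\bw)=\Int(\bu_\flat,\phi^{[h-1]}_{\bw_\flat})$.

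I expect the main obstacle to be the careful verification that the isomorphisms of Proposition~\ref{prop:special-div}(2)--(3) are compatible with the remaining divisors and with the \emph{type} bookkeeping of the $\CY$-divisors, in particular tracking how $L^{[h]}$ versus $L^{[h],\sharp}$ (equivalently, the two classes of divisors) behave under passing to the orthogonal complement — this is exactly where the non-principal polarization makes the argument more delicate than in the hyperspecial case of \cite{LZ21}. Concretely, one must check that the self-dual/vertex lattice structure on $\BV_\flat = \langle u_i\rangle^\perp$ inherited from $\BV$ has type $h$ (resp.\ $h-1$) matching the target RZ space, and that the orthogonal projection $u_j\mapsto u_{j,\flat}$ is compatible with the moment-matrix/derived-cycle formalism on the nose. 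Fortunately, all of this is precisely the content of \cite[Proposition 5.10]{Cho18}, so the proof amounts to assembling these inputs; I would cite that result for the divisor-level statement and then note that the identification of derived cycles and Euler characteristics is formal.
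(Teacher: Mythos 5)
Your proposal follows essentially the same route as the paper, which simply deduces the statement from Proposition~\ref{prop:special-div}(2),(3),(4) and defers the divisor-level compatibility to \cite[Proposition 5.10]{Cho18}. One minor bookkeeping slip: in the $h=n$, $\bw_i=\CZ$ case with $\val(u_i,u_i)=1$ you should write $u_i=\varpi u_i'$ and use $\CZ(u_i)=\CZ(\varpi u_i')=\CY(u_i')$ with $\val(u_i',u_i')=-1$ (lowering the valuation by $2$, not raising it), after which part (3) applies; otherwise the argument is correct.
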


The following result plays a crucial role in the globalization procedure.
\begin{theorem}[\cite{Mih22}]\label{thm:local-constancy}
For any collection of linearly independent vectors $\bu \in \BV^n$ and any admissible Schwartz function $\phi \in \CS(V^n)$ of type $h$, the weighted arithmetic intersection number $\Int(\bu, \phi)$ is locally constant as $\bu$ varies in $\BV^n$. 
\end{theorem}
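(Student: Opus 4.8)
\textbf{Proof proposal for Theorem \ref{thm:local-constancy} (local constancy of $\Int(\bu,\phi)$).}

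The plan is to reduce the statement to an intersection-theoretic assertion on the formal scheme $\CN_n^{[h]}$ and then invoke the machinery of Mihatsch \cite{Mih22} on the local constancy of intersection multiplicities for special cycles on Rapoport--Zink spaces. First I would treat the case of a pure $\CZ$-weight $\phi=\charfun_{L^{[h],n}}$, where $\CZ(\bu,\phi)=\CZ(u_1)\cap\cdots\cap\CZ(u_n)$; here one observes that for $\bu$ with fixed moment matrix $T=h(\bu)$ of full rank, the support of $\DCZ(\bu,\phi)$ is a proper (indeed $0$-dimensional after derived intersection) closed formal subscheme, so $\chi$ is a well-defined integer, and the key input is that $\CZ(u)$ depends on $u$ only through a bounded amount of $\varpi$-adic information: if $u\equiv u'\bmod \varpi^N\BV$ for $N$ large relative to $T$, then $\CZ(u)$ and $\CZ(u')$ agree on the relevant truncated neighborhood. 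This is precisely the content of the local constancy results in \cite[\S 2--3]{Mih22} (Mihatsch proves that $u\mapsto \CZ(u)$ and the resulting arithmetic intersection numbers are locally constant on $\BV^n_{\reg}$); I would cite that and note that the argument is insensitive to the parahoric level because it only uses representability by Cartier divisors (Proposition \ref{prop:special-div}(1)) and the boundedness of the intersection.

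Next I would handle the $\CY$-divisors. By Definition \ref{def:special divisor}, $\CY(u)$ is characterized by the extendability of $\rho_{X^\vee}^{-1}\circ\lambda_{\BX}\circ u$, which is literally a $\CZ$-type condition for the composed quasi-homomorphism $\lambda_{\BX}\circ u\in\Hom^\circ_{O_F}(\BE,\BX^\vee)$ into the dual framing object; equivalently, via the inclusions $\CZ(u)\subseteq\CY(u)\subseteq\CZ(\varpi u)$ of Proposition \ref{prop:special-div}(4), $\CY(u)$ is squeezed between two $\CZ$-divisors whose defining data vary $\varpi$-adically continuously in $u$. Hence local constancy of $u\mapsto\CY(u)$ (on the locus where $\val(u,u)$ is fixed, which is itself a locally constant function of $u$) follows from the $\CZ$-case applied on $\CN_n^{[h]}$ and on the dual side. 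For a general admissible $\phi=\bigotimes_i\phi_i$ of weight $\bw$, one then intersects the corresponding $\CZ(u_i,\phi_i)$'s and takes the derived tensor product; since each factor is locally constant in $u_i$ and the total intersection is supported on a bounded region (linear independence of $\bu$ forces the scheme-theoretic intersection to be proper over $\Spf O_{\breve F}$, and after deriving, finite), the Euler--Poincar\'e characteristic $\chi(\CN_n^{[h]},\CO_{\CZ(u_1,\phi_1)}\Dotimes\cdots\Dotimes\CO_{\CZ(u_n,\phi_n)})$ is locally constant in $\bu$. The passage from ``each divisor locally constant'' to ``the derived intersection number locally constant'' uses that derived tensor products and $\chi$ commute with the relevant finite-level truncations — again exactly the setup of \cite{Mih22}.

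The main obstacle I anticipate is making precise the claim that the $\CZ$- and $\CY$-divisors, and then their derived intersection, are ``determined by a bounded amount of $\varpi$-adic data'': one must produce, for each full-rank moment matrix $T$, an explicit $N=N(T)$ such that $\bu\equiv\bu'\bmod\varpi^N\BV^n$ and $h(\bu)=h(\bu')=T$ implies $\Int(\bu,\phi)=\Int(\bu',\phi)$, and one wants this uniformly on compact sets of $T$. For the hyperspecial case this is carried out in \cite{Mih22}; the only thing to check here is that the argument is unaffected by the non-principal polarization and the presence of $\CY$-divisors, which follows since $\CY(u)=\CZ$ of a dual special homomorphism and the inclusions in Proposition \ref{prop:special-div}(4) give two-sided bounds. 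I would therefore present the proof as a reduction to \cite{Mih22}, spelling out only the $\CY$-to-$\CZ$ translation and the boundedness of the intersection locus coming from linear independence of $\bu$. $\qed$
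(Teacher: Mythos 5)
Your proposal is correct and follows essentially the same route as the paper, whose proof simply observes that $\CZ(u)$ and $\CY(u)$ satisfy the local constancy axiom of \cite[Proposition 4.2]{Mih22} and then invokes the general intersection-number statement \cite[Theorem 1.2]{Mih22}. One small caution: the inclusions $\CZ(u)\subseteq\CY(u)\subseteq\CZ(\varpi u)$ alone do not force local constancy of $u\mapsto\CY(u)$ (being squeezed between two locally constant families is not enough), but your primary reduction — that $\CY(u)$ is the $\CZ$-type extendability condition for $\lambda_{\BX}\circ u$ valued in the dual framing object — is the right argument and is exactly what makes Mihatsch's axiom applicable to the $\CY$-divisors.
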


\begin{proof}
By \cite[Proposition 4.2]{Mih22}, the special cycles $\CZ(u)$ and $\CY(u)$ satisfy the local constancy axiom in \textit{loc. cit.}. The result then follows from \cite[Theorem 1.2]{Mih22}.
\end{proof}

\section{Weighted local density functions and Kudla-Rapoport conjecture}\label{sec:weighted local density}
In this section, we study the weighted local density function introduced by \cite{Kudla97} and formulate the Kudla–Rapoport conjecture for weighted special cycles. 
We show that the weighted local density function satisfies the \emph{cancellation law}, mirroring the corresponding geometric cancellation law (Proposition \ref{prop:geo-cancel}). Finally, we recall local Whittaker functions and the local Siegel–Weil formula, and their relation to the weighted local density function.

\subsection{Weighted local density function}\label{sec:def weighted local density}
Let $F/F_0$ be an unramified quadratic extension of $p$-adic fields, equipped with a fixed unramified additive character $\psi$. Let $(V, h)$ be a hermitian space of dimension $n$. When no confusion arises, we denote $h(x,y)$ simply by $(x,y)$. 

This choice uniquely determines self-dual Haar measures $dx$ on $V$ and $db$ on $\Herm_n(F)$. Let $dg = dg^{\mathrm{self}}$ be the induced Haar measure on $\U(V)$ as given in \eqref{not:gauge form}.

\begin{definition}\label{def:weighted local density function}
For any natural number $r \geq 0$, define $V^{\angler} := V \oplus V_{r,r}$, where $V_{r,r}$ is the split hermitian space of dimension $2r$, i.e., a hermitian space of dimension $2r$ with basis $\{e_i, f_j \mid 1 \leq i,j \leq r\}$ satisfying $(e_i, f_j) = \delta_{ij}$.

\begin{altenumerate}
\item For any Schwartz function $\phi \in \CS(V^n)$, define
\begin{equation*}
    \phi^{\angler} = \phi \otimes \phi_{r,r} \in \CS((V^{\angler})^n),
\end{equation*}
where $\phi_{r,r}$ is the characteristic function of $(L_{r,r})^n$ for a self-dual lattice $L_{r,r}$ in $V_{r,r}$.

\item For any $T \in \Herm_n(F)$, define the complex-valued \emph{weighted local density function} $\alpha_T(r, \phi)$ as the integral
\begin{equation*}
    \alpha_T(r, \phi) := \int_{\Herm_n(F)} \int_{(V^{\angler})^n} \psi\Bigl(\tr\bigl(b((\bu,\bu) - T)\bigr)\Bigr) \phi^{\angler}(x) \, d\bu \, db.
\end{equation*}
This notion was introduced by Kudla \cite[Appendix]{Kudla97}. 

When $\phi$ is the characteristic function of $L^n$ for some lattice $L$, it coincides with the representation density function; see \cite[Proposition A.6]{Kudla97} or \S \ref{sec:base case}. The definition is independent of the choice of the self-dual lattice $L_{r,r}$.
\end{altenumerate}
\end{definition}

In this paper, we focus on admissible functions $\phi \in \CS(V^m)$ as defined in \eqref{not:admissible function}.  
For clarity, throughout this section, we restrict our attention to Schwartz functions of the form:
\begin{equation}\label{equ:special phi ab}
    \phi^{[t]}_{a,b} := (\charfun_{L^{[t]}})^{\otimes a} \otimes (\charfun_{L^{[t],\sharp}})^{\otimes b},
\end{equation}
where $L^{[t]}$ is a vertex lattice of type $t$.
The results presented in this section extend naturally to general admissible Schwartz functions.

Recall that in the unramified case, the type of a hermitian lattice uniquely determines the Hasse invariant of the hermitian space.
By a direct change of variables, we obtain the following result.

\begin{lemma}\label{lem:ind-lattice}
Let $\phi$ be an admissible Schwartz function of type $t$, and let $T \in \Herm_n(F)$.  
The integral $\alpha_T(r, \phi)$ is independent of the choice of type $t$ vertex lattices $L^{[t]}$.\qed
\end{lemma}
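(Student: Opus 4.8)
\textbf{Proof plan for Lemma \ref{lem:ind-lattice}.}

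The plan is to reduce the claim to the observation that in the unramified quadratic case any two vertex lattices of the same type $t$ in a hermitian space are related by an element of the unitary group, and that the weighted local density integral transforms correctly under this action. First I would recall the classification of hermitian lattices over $O_F$ with $F/F_0$ unramified: a lattice is determined up to isometry by its type (equivalently, by its fundamental invariants $\val(\det)$ modulo the structure of the Jordan splitting), and moreover the hermitian space $V$ in which $\phi$ lives is itself pinned down by $t$ since the type of a vertex lattice forces the Hasse invariant. Thus, given two type-$t$ vertex lattices $L^{[t]}$ and $L'^{[t]}$ in $V$, there exists $g\in\U(V)(F_0)$ with $g L^{[t]} = L'^{[t]}$, hence also $g L^{[t],\sharp} = L'^{[t],\sharp}$ since $g$ is an isometry and duals are intrinsic. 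Writing $\phi = \phi^{[t]}_{a,b}$ as in \eqref{equ:special phi ab} built from $L^{[t]}$ and $\phi' = \phi'^{[t]}_{a,b}$ the analogous function built from $L'^{[t]}$, we get $\phi' = \phi\circ g^{-1}$ componentwise on $V^n$, i.e. $\phi'(\bu) = \phi(g^{-1}\bu)$.

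Next I would track this through the definition of $\alpha_T(r,\phi)$. Extend $g$ to $\tilde g := g\oplus\id_{V_{r,r}} \in \U(V^{\angler})(F_0)$; since $g$ fixes (setwise) the relevant lattices and $\tilde g$ acts as the identity on $V_{r,r}$, we have $(\phi')^{\angler}(x) = \phi^{\angler}(\tilde g^{-1}x)$. Now in the integral
\begin{equation*}
\alpha_T(r,\phi') = \int_{\Herm_n(F)}\int_{(V^{\angler})^n}\psi\bigl(\tr(b((x,x)-T))\bigr)\phi^{\angler}(\tilde g^{-1}x)\,dx\,db,
\end{equation*}
substitute $x = \tilde g y$. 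The Haar measure $dx$ on $(V^{\angler})^n$ is self-dual with respect to the pairing $\langle x,y\rangle = \psi(\tr_{F/F_0}\tr(h(x,y)))$, and $\tilde g$ is an isometry for $h$, hence preserves this pairing and therefore preserves $dx$; so the change of variables introduces no Jacobian factor. Since $\tilde g$ is an isometry we also have $(\tilde g y,\tilde g y) = (y,y)$, so the integrand becomes $\psi(\tr(b((y,y)-T)))\phi^{\angler}(y)$, and the inner integral is unchanged. Integrating over $b\in\Herm_n(F)$ (unaffected) gives $\alpha_T(r,\phi') = \alpha_T(r,\phi)$, which is exactly the claim.

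The only genuinely substantive input is the transitivity of $\U(V)(F_0)$ on type-$t$ vertex lattices in the unramified setting — equivalently the uniqueness up to isometry of a vertex lattice of given type — which is standard (it follows from the Jordan decomposition theory of hermitian lattices over an unramified quadratic extension, as in \cite{Jacobowitz}); I would cite this rather than reprove it. The remaining steps are the bookkeeping above: the compatibility of duals with isometries, the invariance of the self-dual Haar measure under isometries, and the fact that extending by the identity on $V_{r,r}$ keeps everything intact. I do not anticipate a real obstacle here; the lemma is a formal change-of-variables argument, and the phrase "by a direct change of variables" in the statement already signals that this is the intended route. The one point to state carefully is that admissibility is preserved under $g$ (so that $\phi'$ is again of the form \eqref{equ:special phi ab}), which is immediate since $g$ carries the pair $(L^{[t]}, L^{[t],\sharp})$ to $(L'^{[t]}, L'^{[t],\sharp})$.
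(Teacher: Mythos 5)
Your proposal is correct and follows essentially the same route as the paper, which simply invokes the fact that in the unramified case the type determines the vertex lattice up to isometry and then performs "a direct change of variables"; your write-up fills in exactly those details (transitivity of $\U(V)(F_0)$ on type-$t$ vertex lattices, compatibility of duals with isometries, and invariance of the self-dual Haar measure under the extended isometry $g\oplus\id_{V_{r,r}}$). No gaps.
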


%\begin{proof}
%Without loss of generality, we may assume $r=0$. Let $L_1$ and $L_2$ be two vertex lattices of type $t$. There exists $g\in \U(V)$ that $gL_1=L_2$. Denote the corresponding Schwartz functions by $\phi_1$ and $\phi_2$ resp. By substituting from $x$ to $gx$, we have
%\begin{align*}
%\alpha_T(0,\phi_1)={}&\int_{\Herm_n(F)}\int_{V_{\reg}^n}\psi\Bigl(\tr\bigl(b((x,x)-T)\bigr)\Bigr)\phi_1(x)dxdb,\\
%={}&\int_{\Herm_n(F)}\int_{V_{\reg}^n}\psi(\tr(b(gx,gx)-T))\phi_1(gx)dgx db,\\
%={}&\psi(\tr_{F/F_0}(\det(g)))\int_{\Herm_n(F)}\int_{V_{\reg}^n}\psi(\tr(b(x,x)-T))\phi_2(x)dx db,\\
%={}&\alpha_T(0,\phi_2).\qedhere
%\end{align*}
%\end{proof}

\subsection{A Fubini-type result}  
In this subsection, we state and prove a Fubini-type result, which was suggested and proven by T. Yang.  

Recall from \eqref{not:V_T} that for any $T \in \Herm^{\reg}_n(F)$, the inverse image $V_T := h^{-1}(T)$ consisting of vectors $\bu \in V^n$ satisfying $(\bu, \bu) = T$.  

\begin{theorem}\label{thm:bertini}  
Let $T \in \Herm^{\reg}_n(F)$ be a nonsingular matrix. For any admissible Schwartz function $\phi \in \CS(V^n)$, we have the equality:  
\begin{equation*}
    \alpha_T(r, \phi) = \int_{(V^{\angler})^n} \int_{\Herm_n(F)} \psi\Bigl(\tr\bigl(b((u,u) - T)\bigr)\Bigr) \phi^{\angler}(u) \, db \, du
    = \int_{V_T^{\angler}} \phi^{\angler}(u) \, du.
\end{equation*}
\end{theorem}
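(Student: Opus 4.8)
The plan is to interpret $\alpha_T(r,\phi)$ as an oscillatory integral and justify exchanging the order of integration, then identify the inner integral over $\Herm_n(F)$ with a Dirac-type distribution supported on the fiber $V_T^{\angler}$.

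\textbf{Step 1: Compactly supported integrands and a partition.} Fix $T\in\Herm_n^{\reg}(F)$ and an admissible Schwartz function $\phi$. The function $\phi^{\angler}$ on $(V^{\angler})^n$ is the characteristic function of a (finite union of cosets of a) lattice, hence compactly supported and locally constant. I would first observe that on the compact support of $\phi^{\angler}$, the hermitian form $b\mapsto \tr(b((u,u)-T))$ is, for each fixed $u$, an additive character of $\Herm_n(F)$, and this character is trivial exactly when $(u,u)=T$. The subtlety is that the outer integral $\int_{\Herm_n(F)}db$ is not over a compact set, so Fubini does not apply verbatim; one must exhaust $\Herm_n(F)$ by the compact open subgroups $\varpi^{-N}\Herm_n(O_F)$ and control the tail.

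\textbf{Step 2: Computing the inner $b$-integral.} For fixed $u$ in the support of $\phi^{\angler}$, consider $\int_{\varpi^{-N}\Herm_n(O_F)}\psi(\tr(b((u,u)-T)))\,db$. Writing $S:=(u,u)-T\in\Herm_n(F)$, this integral equals $\vol(\varpi^{-N}\Herm_n(O_F))$ if $\varpi^{-N}\Herm_n(O_F)$ pairs trivially with $S$ under $\psi\circ\tr$ — equivalently if $S\in \varpi^{N}\fkd^{-1}\Herm_n(O_F)$ for the relevant different/level of $\psi$ — and equals $0$ otherwise, by orthogonality of characters on the finite quotient. Since $\psi$ is unramified and the self-dual measure is used, for $N$ large this is $q^{N\cdot n^2}$ times $\charfun$ of the condition $S\in\varpi^{N}\Herm_n(O_F)$ (up to a harmless power of $q$). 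As $N\to\infty$, this family of functions of $u$ converges, in the sense of distributions against the fixed locally constant compactly supported function $\phi^{\angler}(u)$, to the delta distribution on the locally closed (here, by regularity of $T$, in fact closed and $p$-adic-manifold-smooth) subset $\{u:(u,u)=T\}=V_T^{\angler}$, normalized by the quotient measure $du/($ the pushforward Haar measure on $\Herm_n)$. This is precisely the local Weil/Siegel computation, and the normalization matches because both $dx$ and $db$ were chosen self-dual (see the discussion around \eqref{not:gauge form 2}, \eqref{not:gauge form 3}); concretely the gauge form relation $\alpha=h^*\beta\wedge\omega$ from \eqref{not:gauge form} gives $du = (h^*db)\wedge \omega$ on $V^n_{\reg}$ and hence the fiber integral against $\omega$ recovers $\int_{V_T^{\angler}}\phi^{\angler}\,du$ with the correct constant $1$.

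\textbf{Step 3: Interchange and conclusion.} Having computed, for each truncation level $N$, that $\int_{(V^{\angler})^n}\int_{\varpi^{-N}\Herm_n(O_F)}\psi(\tr(b((u,u)-T)))\phi^{\angler}(u)\,db\,du$ equals $\int_{(V^{\angler})^n}\phi^{\angler}(u)\charfun_{\{(u,u)-T\in \varpi^{N}\Herm_n(O_F)\}}\,du\cdot q^{Nn^2}$, I would on the other hand compute the same truncated double integral in the opposite order: for fixed $b$, $\int_{(V^{\angler})^n}\psi(\tr(b(u,u)))\phi^{\angler}(u)\,du$ is a Gauss-sum-type integral that is absolutely integrable in $b$ over all of $\Herm_n(F)$ because $\phi^{\angler}$ is compactly supported and the cycle $\{u:(u,u)=T\}$ is smooth of the expected dimension (regularity of $T$), so the full integral $\alpha_T(r,\phi)=\int_{\Herm_n}\int_{(V^{\angler})^n}$ is an absolutely convergent iterated integral; the truncated versions converge to it as $N\to\infty$. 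Both orders therefore have the same limit, and by Step 2 this common limit is $\int_{V_T^{\angler}}\phi^{\angler}(u)\,du$.

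\textbf{Main obstacle.} The genuine work is the justification of absolute convergence / the interchange of integrals: the $b$-integral is only conditionally convergent a priori, and one needs the nonsingularity of $T$ (so that $V_T^{\angler}$ is a smooth $p$-adic analytic submanifold of codimension $n^2$, with the fiber measure well-defined) to see that the inner $u$-integral decays appropriately in $b$. I would handle this by the truncation argument above — replacing $\Herm_n(F)$ by $\varpi^{-N}\Herm_n(O_F)$, where Fubini is unconditional since the domain is compact — and then showing the $N$-limit exists and agrees on both sides, using the local constancy and compact support of $\phi^{\angler}$ together with the implicit function theorem over $F_0$ to linearize $h$ near any point of $V_T^{\angler}$. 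This is exactly the content of T. Yang's argument, and once the measure-theoretic bookkeeping is in place the identification with $\int_{V_T^{\angler}}\phi^{\angler}\,du$ is the standard unramified local Siegel--Weil computation.
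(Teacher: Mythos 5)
Your argument is essentially correct, but it proceeds along a genuinely different route from the paper. The paper first disintegrates the inner $u$-integral along the moment map $h$, writing it as $\int_{\Herm_n^{\reg}(F)} \psi(\tr(b(\wt T - T)))\,M(\wt T)\,d\wt T$ with $M(\wt T):=\int_{V_{\wt T}^{\angler}}\phi^{\angler}$, shows $M$ is continuous at regular $\wt T$ and integrable, and then concludes by Fourier inversion on the vector space $\Herm_n(F)$: $\alpha_T(r,\phi)=\wh{\wh M}(-T)=M(T)$. You instead truncate the $b$-integral to $\varpi^{-N}\Herm_n(O_F)$, apply orthogonality of characters to turn the truncated double integral into $q^{Nn^2}$ times the volume of the tube $\{u\in\supp\phi^{\angler}: (u,u)\in T+\varpi^N\Herm_n(O_F)^\vee\}$, and identify the $N\to\infty$ limit with the fiber integral via the implicit function theorem and the gauge-form relation $\alpha=h^*\beta\wedge\omega$. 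These are two faces of the same Fourier-analytic fact; your version is more elementary and makes the role of the nonsingularity of $T$ (submersivity of $h$ along $V_T^{\angler}$, hence exact stabilization of the scaled tube volumes by local constancy of $\phi^{\angler}$) completely explicit, whereas the paper's version packages the same content into the continuity of $M$ at $T$ plus abstract Fourier inversion.

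One claim in your Step 3 is false and should be removed: the inner $u$-integral $I(b)=\int\psi(\tr(b(u,u)))\phi^{\angler}(u)\,du\cdot\psi(-\tr(bT))$ is \emph{not} absolutely integrable over $\Herm_n(F)$ in general, even for nonsingular $T$. Already for $n=1$, $\phi=\charfun_{O_F}$ and $t$ a unit, one has $|I(b)|\asymp q^{-k}$ on the shell $\val(b)=-k$, so $\int_{\val(b)=-k}|I(b)|\,db$ is bounded below uniformly in $k$ and the integral of $|I|$ diverges. Fortunately this does not damage your proof: the integral defining $\alpha_T(r,\phi)$ is to be read as the improper limit of the truncations over $\varpi^{-N}\Herm_n(O_F)$, and your Step 2 computation shows these truncations actually \emph{stabilize} for large $N$ (the tube volume is exactly $q^{-Nn^2}$ times a constant once $N$ is large, by local constancy and the constant-rank normal form for $h$ near the compact set $V_T^{\angler}\cap\supp\phi^{\angler}$). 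So you should justify the interchange purely through stabilization of the truncated integrals rather than through absolute convergence.
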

\begin{proof}
The degenerate subspace  
\begin{equation*}
    \{\bu\in (V^{[r]})^n \mid \det (\bu,\bu) = 0\}
\end{equation*}  
is Zariski closed and has measure zero. Similarly, the degenerate subspace $\Herm_n(F) \setminus \Herm_n^{\reg}(F)$ has measure zero. Therefore, we have  
\begin{align*}
    \alpha_T(r, \phi) &= \int_{\Herm_n(F)}\int_{(V^{\angler})_{\reg}^n} \psi\Bigl(\tr\bigl(b((\bu,\bu) - T)\bigr)\Bigr) \phi^{\angler}(\bu) \, d\bu \, db, \\
    &= \int_{\Herm_n(F)}\int_{\Herm_n^{\reg}(F)}\int_{V_T^{\angler}} \psi(\tr(b(\wt{T} - T))) \phi^{\angler}(\bu_{\wt{T}}) \, d\bu_{\wt{T}} \, d\wt{T} \, db.
\end{align*}

For any $T \in \Herm_n^\reg(F)$, define  
\begin{equation*}
    M(T) = M(T, \phi) := \int_{V_T} \phi^{\angler}(\bu) \, d\bu = \int_{\U(V)} \phi^{\angler}(g\bu_T) \, dg,
\end{equation*}  
where $\bu_T$ is any vector in $V_T$, and $dg = dg^{\mathrm{self}}$.  

The function $M(T, \phi)$ is continuous on $\Herm_n^{\reg}(F)$ and extends to $\Herm_n(F)$ (as a $L^1$-function or a distribution). Indeed, for any $\wt{T}_1$ and $\wt{T}_2$ in a small neighborhood $U \subset \Herm_n^{\reg}(F)$, the continuity of the map $h: (V^{\angler})^n \to \Herm_n(F)$ ensures that we can find $\bu_{\wt{T}_1}$ and $\bu_{\wt{T}_2}$ in a small neighborhood of the preimage $h^{-1}(U)$. Since the function  
\begin{equation*}
    \bu \mapsto \int_{\U(V^{\angler})} \phi^{\angler}(g\bu) \, dg
\end{equation*}  
is continuous, it follows that $M(T)$ is continuous.  

Since $\phi^{\angler}$ has compact support, $M(T)$ is a Schwartz function on $\Herm_n(F)$. We have  
\begin{align*}
    \alpha_T(r, \phi) &= \int_{\Herm_n(F)}\int_{\Herm_n^{\reg}(F)} \psi(\tr(b(\wt{T} - T))) M(\wt{T}) \, d\wt{T} \, db, \\
    &= \int_{\Herm_n(F)} \psi(\tr(-bT)) \wh{M}(-b) \, db = \wh{\wh{M}}(-T) = M(T),
\end{align*}  
where in the last line, $\wh{M}(b)$ denotes the Fourier transform of $M(T)$, and the final equality follows from the Fourier inversion formula. The proof is completed by evaluating these integrals.  
\end{proof}
\subsection{A recursion formula}  
In this subsection, we establish a recursion formula for weighted local density functions, which will play a crucial role in proving the analytic cancellation law.  
We begin with a lemma that is well known to experts.  

\begin{lemma}\label{lem:lattice-orth}  
Let $\varpi L^\sharp \subset L \subset L^\sharp$ be a vertex lattice, and let $u \in V$ be a vector. Then the following statements hold:  
\begin{altenumerate}  
\item If $\val(u_i,u_i)=0$ and $u \in L$, then  
\begin{equation*}
    L = \langle u \rangle \obot L_\flat \quad \text{and} \quad L^\sharp = \langle u \rangle \obot L_\flat^\sharp.
\end{equation*}

\item If $\val(u_i,u_i)=-1$ and $u \in L^\sharp$, then  
\begin{equation*}
    L = \langle \varpi u \rangle \obot L_\flat \quad \text{and} \quad L^\sharp = \langle u \rangle \obot L_\flat^\sharp.
\end{equation*}
\end{altenumerate}
\end{lemma}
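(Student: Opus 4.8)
The statement is a purely lattice-theoretic decomposition, so the plan is to argue directly with the structure of vertex lattices over the unramified extension $F/F_0$, using the compatibility of the hermitian form with reduction modulo $\varpi$. I will treat the two cases separately, but the mechanism is the same: produce a rank-one orthogonal direct summand spanned by (a multiple of) $u$ and let $L_\flat$ be its orthogonal complement inside $L$.

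\emph{Case (1): $\val(u,u)=0$ and $u\in L$.} Here $(u,u)\in O_F^\times$. The plan is to show $\langle u\rangle$ is a direct summand of $L$ that is orthogonally complemented. Define the projection $p_\flat\colon V\to V$, $p_\flat(x)=x-\tfrac{(x,u)}{(u,u)}u$, onto $\langle u\rangle^\perp$; since $(u,u)$ is a unit, $p_\flat$ maps $L$ into itself (for $x\in L$ we have $(x,u)\in O_F$ because $u\in L\subseteq L^\sharp$, so $(x,u)/(u,u)\in O_F$). This gives $L=\langle u\rangle\obot L_\flat$ with $L_\flat=L\cap\langle u\rangle^\perp$. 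For the dual statement, one checks that $\langle u\rangle^\sharp$ inside $\langle u\rangle\otimes F$ is again $\langle u\rangle$ (because $(u,u)$ is a unit), and orthogonal direct sums commute with taking hermitian duals, so $L^\sharp=\langle u\rangle^\sharp\obot L_\flat^\sharp=\langle u\rangle\obot L_\flat^\sharp$. I would spell out that $L_\flat$ is automatically a vertex lattice of type $t(L)$ in $\langle u\rangle^\perp$, since $\varpi L^\sharp\subseteq L\subseteq L^\sharp$ restricts to the orthogonal complement.

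\emph{Case (2): $\val(u,u)=-1$ and $u\in L^\sharp$.} Now $(u,u)=\varpi^{-1}\epsilon$ for a unit $\epsilon$, so $(\varpi u,\varpi u)=\varpi\epsilon'$ has valuation $1$ and $\varpi u\in\varpi L^\sharp\subseteq L$. The plan is to show $\langle\varpi u\rangle$ is an orthogonal direct summand of $L$ and $\langle u\rangle$ is the corresponding summand of $L^\sharp$. Again use the projection $p_\flat(x)=x-\tfrac{(x,u)}{(u,u)}u$ onto $\langle u\rangle^\perp$. For $x\in L^\sharp$: $(x,u)\in O_F$ since $u\in L^\sharp$ and $x\in L^\sharp$... wait, one needs $x\in L$ paired with $u\in L^\sharp$, giving $(x,u)\in O_F$, hence $\tfrac{(x,u)}{(u,u)}u=\varpi\tfrac{(x,u)}{\epsilon}u\in\varpi O_F\cdot u$, so $p_\flat$ maps $L$ into $L+\langle\varpi u\rangle=L$. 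This yields $L=\langle\varpi u\rangle\obot L_\flat$. For $x\in L^\sharp$, we have $(x,u)\in\varpi^{-1}O_F$ (pairing two elements of $L^\sharp$ in a vertex lattice; more precisely $(L^\sharp,L^\sharp)\subseteq\varpi^{-1}O_F$ because $\varpi L^\sharp\subseteq L$ forces $(\varpi L^\sharp,L^\sharp)\subseteq O_F$), so $\tfrac{(x,u)}{(u,u)}u\in O_F\cdot u$, giving $p_\flat(L^\sharp)\subseteq L^\sharp$, hence $L^\sharp=\langle u\rangle\obot L_\flat^\sharp$ where one verifies the complement is indeed the hermitian dual of $L_\flat$ by the same ``orthogonal sums commute with duals'' principle, noting $\langle\varpi u\rangle^\sharp=\langle u\rangle$ since $(\varpi u,u)=\epsilon$ is a unit while $(\varpi u,\varpi u)=\varpi\epsilon$.

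\emph{Main obstacle.} None of the individual steps is deep, but the point requiring care is verifying that the orthogonal complement $L_\flat$ one extracts is genuinely a \emph{vertex lattice of the same type} and that its hermitian dual (taken inside $\langle u\rangle^\perp$) is exactly $L_\flat^\sharp=L^\sharp\cap\langle u\rangle^\perp$ — i.e. that the decompositions of $L$ and $L^\sharp$ are compatible and the dual of an orthogonal summand is the orthogonal summand of the dual. This is where the unramifiedness of $F/F_0$ and the precise valuation hypotheses on $(u,u)$ are used: they guarantee that $\langle u\rangle$ (resp. $\langle\varpi u\rangle$) is \emph{unimodular} (resp. of the appropriate elementary type) so that self-duality splits off cleanly, with no contribution to the type coming from the rank-one piece. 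I would present this as a short lemma: if $M=\langle v\rangle\obot M_\flat$ is an orthogonal decomposition of lattices in a hermitian space, then $M^\sharp=\langle v\rangle^\sharp\obot M_\flat^\sharp$, and then apply it with $v=u$ (case 1) or $v=\varpi u$ (case 2), reading off the dual of the rank-one piece from the valuation of its norm.
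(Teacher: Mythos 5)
Your proposal is correct and follows essentially the same route as the paper's proof: a Gram--Schmidt projection $x \mapsto x - \frac{(x,u)}{(u,u)}u$, with the valuation hypotheses on $(u,u)$ and the containments $u \in L$ (resp.\ $u \in L^\sharp$, $\varpi u \in \varpi L^\sharp \subseteq L$) guaranteeing integrality of the coefficient so that the projection preserves $L$ and $L^\sharp$. The only cosmetic difference is that in case (1) you deduce the statement for $L^\sharp$ from the general fact that hermitian duals commute with orthogonal direct sums (using $\langle u\rangle^\sharp = \langle u\rangle$), whereas the paper simply reruns the projection argument on $L^\sharp$; both are valid.
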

\begin{proof}  
The proof follows from the Gram–Schmidt process.  
For case (1), let $x \in L$ be any vector. Then we can write  
\begin{equation*}
    x = \frac{(x,u)}{(u,u)} u + \left(x - \frac{(x,u)}{(u,u)} u\right).
\end{equation*}  
Since $x \in L \subseteq L^\sharp$ and $\val(u,u) = 0$, we have $\frac{(x,u)}{(u,u)} \in O_F$.  
Thus, applying the Gram–Schmidt process, we obtain the orthogonal complement $\langle u \rangle^\perp \subset L$. The same argument applies to $L^\sharp$.  

For case (2), one can directly use the dual hermitian space (see \cite[after Proposition 5.3]{ZZhang24}), but we provide a direct proof here.  
Since $u \in L^\sharp$, it follows that $\varpi u \in \varpi L^\sharp \subseteq L$. Moreover, since $(\varpi u, u)$ has unit length, we conclude that $u \in L^\sharp$ and $\varpi u \in L$ are primitive vectors.  

For any $x \in L$, consider the decomposition  
\begin{equation*}
    x = \frac{(x,u)}{(u,u)} u + \left(x - \frac{(x,u)}{(u,u)} u\right) 
    = \frac{(x,u)}{\varpi (u,u)} \varpi u + \left(x - \frac{(x,u)}{\varpi (u,u)} \varpi u\right).
\end{equation*}  
Since $x \in L$ and $u \in L^\sharp$, we have $(x,u) \in O_F$. Given that $\val(u,u) = -1$, we obtain $\frac{(x,u)}{\varpi (u,u)} \in O_F$, and hence the Gram–Schmidt process applies.  

For any $x \in L^\sharp$, consider the decomposition  
\begin{equation*}
    x = \frac{(x,u)}{(u,u)} u + \left(x - \frac{(x,u)}{(u,u)} u\right) 
    = \frac{(x, \varpi u)}{\varpi (u,u)} u + \left(x - \frac{(x, \varpi u)}{\varpi (u,u)} u\right).
\end{equation*}  
Since $\varpi u \in L$ and $x \in L^\sharp$, we have $(x, \varpi u) \in O_F$. As $\val(u,u) = -1$, we conclude that $\frac{(x, \varpi u)}{\varpi (u,u)} \in O_F$, so the Gram–Schmidt process applies.
\end{proof}

\begin{theorem}\label{prop:recursion}  
Fix $a+b=n$. Let $T \in \Herm_n^{\reg}(F)$ be a hermitian matrix such that the weighted local density function $\alpha_T(r, \phi^{[t]}_{a,b})$, where $\phi^{[t]}_{a,b}$ is defined in \eqref{equ:special phi ab}, is not identically zero.  

\begin{altenumerate}  

\item Define  
\begin{equation*}
    T_\flat := T_2 - t_1^{-1} t_{21} t_{12}, \quad \text{where} \quad 
    T = \begin{pmatrix}
        t_1 & t_{12} \\ 
        t_{21} & T_2
    \end{pmatrix},
\end{equation*}  
where $T_2$ is an $(n-1) \times (n-1)$ matrix.  
Suppose $t \neq n$ and $\val(t_1) = 0$. Then  
\begin{equation*}
    \alpha_T(r, \phi^{[t]}_{a,b}) = \alpha_{t_1}(r, \charfun_{L^{[t]}}) \alpha_{T_\flat}(r, \phi^{[t]}_{a-1,b}),
\end{equation*}  
where $\phi^{[t]}_{a-1,b} \in \CS(V_\flat^{n-1})$ is a Schwartz function in the hermitian space $V_\flat$ of dimension $n-1$. The space $V_\flat$ is split (resp. non-split) if $V$ is split (resp. non-split).  

\item Define  
\begin{equation*}
    T_\flat := T_1 - t_2^{-1} t_{12} t_{21}, \quad \text{where} \quad 
    T = \begin{pmatrix}
        T_1 & t_{12} \\ 
        t_{21} & t_2
    \end{pmatrix},
\end{equation*}  
where $T_1$ is an $(n-1) \times (n-1)$ matrix.  
Suppose $t \neq 0$ and $\val(t_2) = -1$. Then  
\begin{equation*}
    \alpha_T(r, \phi_{a,b}^{[t]}) = \alpha_{t_2}(r, \charfun_{L^{[t],\sharp}}) \alpha_{T_\flat}(r, \phi^{[t-1]}_{a,b-1}),
\end{equation*}  
where $\phi^{[t-1]}_{a,b-1} \in \CS(V_\flat^{n-1})$ is a Schwartz function in the hermitian space $V_\flat$ of dimension $n-1$. The space $V_\flat$ is split (resp. non-split) if $V$ is non-split (resp. split).  

\end{altenumerate}  
\end{theorem}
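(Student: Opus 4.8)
The plan is to reduce all three density functions to integrals over fibres of $h$ via Theorem~\ref{thm:bertini}, and then to peel off one coordinate using the orthogonal splittings of Lemma~\ref{lem:lattice-orth}. I will describe part~(1); part~(2) is the mirror image, peeling off the \emph{last} coordinate and invoking Lemma~\ref{lem:lattice-orth}(2). Write $L^{\angler}:=L^{[t]}\oplus L_{r,r}\subseteq V^{\angler}=V\oplus V_{r,r}$; since $L_{r,r}$ is self-dual this is a vertex lattice of type $t$ with dual $L^{[t],\sharp}\oplus L_{r,r}$, and $\phi^{\angler}=(\charfun_{L^{\angler}})^{\otimes a}\otimes(\charfun_{L^{\angler,\sharp}})^{\otimes b}$ whose first slot is $\charfun_{L^{\angler}}$ (note $a\geq 1$ is implicit, as the right-hand side features $\phi^{[t]}_{a-1,b}$). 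Since $\det T=t_1\det T_\flat$, the matrix $T_\flat$ is again nonsingular, so Theorem~\ref{thm:bertini} applies to all three densities; moreover the hypothesis $\alpha_T(r,\phi^{[t]}_{a,b})\not\equiv 0$ forces the existence, for some $r$, of some $\bu\in V^{\angler}_T$ supported by $\phi^{\angler}$, so in particular all off-diagonal entries $t_{1j}$ lie in $O_F$.

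Starting from $\alpha_T(r,\phi^{[t]}_{a,b})=\int_{V_T^{\angler}}\phi^{\angler}(\bu)\,d\bu$, I would stratify by the first vector $u_1$. On the support of $\phi^{\angler}$ we have $u_1\in L^{\angler}$ and $(u_1,u_1)=t_1$ with $\val(t_1)=0$, so Lemma~\ref{lem:lattice-orth}(1) gives $L^{\angler}=\langle u_1\rangle\obot M$ and $L^{\angler,\sharp}=\langle u_1\rangle\obot M^\sharp$, where $M:=L^{\angler}\cap\langle u_1\rangle^\perp$ is a type-$t$ vertex lattice of the $(n-1+2r)$-dimensional space $W:=\langle u_1\rangle^\perp\subseteq V^{\angler}$. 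Writing $u_j=c_j u_1+u_j'$ with $u_j'\in W$ for $2\leq j\leq n$, the moment relations pin down $c_j=\overline{t_{1j}}\,t_1^{-1}\in O_F$ and, by the Schur complement, $(u_j',u_k')=(T_\flat)_{jk}$; since $c_j u_1\in\langle u_1\rangle$ (self-dual), the splittings give $u_j\in L^{\angler}\iff u_j'\in M$ and $u_j\in L^{\angler,\sharp}\iff u_j'\in M^\sharp$. Hence $\phi^{\angler}(\bu)=\charfun_{L^{\angler}}(u_1)\cdot\psi^W_{a-1,b}(\bu')$, where $\psi^W_{a-1,b}:=(\charfun_M)^{\otimes(a-1)}\otimes(\charfun_{M^\sharp})^{\otimes b}$ and $\bu'=(u_2',\dots,u_n')$ ranges over $W_{T_\flat}$. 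I then integrate first over $\bu'\in W_{T_\flat}$ and then over $u_1\in(V^{\angler})_{t_1}$. The inner integral $\int_{W_{T_\flat}}\psi^W_{a-1,b}\,d\bu'$ is independent of $u_1$, since any two admissible $u_1$ produce pairs $(W,M)$ that are mutually isometric (type-$t$ vertex lattices in a fixed unramified hermitian space being unique up to isometry) and the gauge-form measure is isometry-invariant. By the structure theory of vertex lattices (a type-$s$ vertex lattice is an orthogonal sum of a unimodular lattice and a $\varpi$-modular lattice of rank $s$, and a unimodular lattice of rank $\geq 2r$ splits off $L_{r,r}$) together with the discriminant relation $\chi(\mathrm{disc}\,W)=\chi(\mathrm{disc}\,V)\chi(t_1)=\chi(\mathrm{disc}\,V)$, one identifies $(W,M)\cong(V_\flat\oplus V_{r,r},\,L^{[t]}_\flat\oplus L_{r,r})$ with $V_\flat$ the $(n-1)$-dimensional space in the same class as $V$; so the inner integral equals $\alpha_{T_\flat}(r,\phi^{[t]}_{a-1,b})$ by Theorem~\ref{thm:bertini}, while the outer integral $\int_{(V^{\angler})_{t_1}}\charfun_{L^{\angler}}(u_1)\,du_1=\alpha_{t_1}(r,\charfun_{L^{[t]}})$ (with $\charfun_{L^{[t]}}$ regarded in $\CS(V^1)$), yielding the asserted product.

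The step I expect to be the genuine obstacle is the measure bookkeeping in this two-stage integration: one must verify that the self-dual/gauge-form measure $d\bu$ on $V_T^{\angler}$ decomposes as $du_1\otimes d\bu'$ with no residual Jacobian under $\bu\leftrightarrow(u_1,\bu')$, and that the normalizing constants $c(\alpha,\psi),c(\beta,\psi)$ of \eqref{not:gauge form} match across the three densities. This is where $\val(t_1)=0$ enters a second time — a nonzero valuation would contribute a power of $|t_1|_{F_0}$ — and it amounts to unwinding the identity $\alpha=h^*\beta\wedge\omega$ along the block decompositions $V^n\cong V\times V^{n-1}$ and $\Herm_n\cong\Herm_1\times F^{n-1}\times\Herm_{n-1}$, i.e. the telescoping of local densities carried out in \cite[\S10]{KR14} and \cite[\S10]{Li-Zhang22}. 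Everything else — the lattice splittings, the identification of $(W,M)$, and the two invocations of Theorem~\ref{thm:bertini} — is routine. Part~(2) runs verbatim with $L^{\angler}=\langle\varpi u_n\rangle\obot M$, $L^{\angler,\sharp}=\langle u_n\rangle\obot M^\sharp$ from Lemma~\ref{lem:lattice-orth}(2), where now $M$ has type $t-1$ and the relation $\chi(\mathrm{disc}\,V_\flat)=\chi(\mathrm{disc}\,V)\chi(t_2)=-\chi(\mathrm{disc}\,V)$ (because $\val(t_2)=-1$) flips the isometry class, as asserted.
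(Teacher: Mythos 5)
Your proposal is correct and follows essentially the same route as the paper: both rest on Theorem~\ref{thm:bertini} to pass to the fibre integral, on Lemma~\ref{lem:lattice-orth} to split the lattice orthogonally along $u_1$, and on the integrality of the Gram–Schmidt coefficients $t_1^{-1}\overline{t_{1j}}$ extracted from the nonvanishing hypothesis. The only (cosmetic) difference is that the paper first reduces to block-diagonal $T$ via the unipotent substitution $\bu\mapsto A\bu$ with $A=\begin{psmallmatrix}1&-t_1^{-1}t_{12}\\0&1_{n-1}\end{psmallmatrix}$ and then factors the diagonal integral, whereas you carry out the same Gram–Schmidt step directly inside the stratified integration; the measure bookkeeping you flag is likewise left implicit in the paper's diagonal computation.
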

\begin{proof}  
We first consider the case where $T \in \Herm_n^{\reg}(F)$ is of the form  
\begin{equation*}
    T = \begin{pmatrix} t_1 & 0 \\ 0 & T_2 \end{pmatrix},
\end{equation*}  
where $T_2 \in \Herm_{n-1}^{\reg}(F)$ is an $(n-1) \times (n-1)$ matrix.  
It is straightforward to verify that:  
\begin{align*}
    \alpha_T(r, \varphi^{[t]}_{a,b}) &= \int_{V^{\angler}_T} \varphi_{a,b}^{[t],\angler}  
    = \int_{V_T^{\angler}} \left(\charfun_{L^{[t],\angler}}\right)^{\otimes a} \otimes \left(\charfun_{L^{[t],\angler,\sharp}}\right)^{\otimes b} \, dx \\  
    &= \int_{\substack{x\in V^{\angler} \\ (x_1, x_1) = t_1}} \charfun_{L^{[t],\angler}} \int_{\substack{x_2\in (\langle x_1 \rangle^{\perp})^{n-1} \\ (x_2, x_2) = T_2}}  
    \left(\charfun_{(L^{[t],\angler})^\perp}\right)^{\otimes a-1} \otimes \left(\charfun_{(L^{[t],\angler})^{\perp,\sharp}}\right)^{\otimes b} \, dx_2 \, dx_1 \\  
    &= \alpha_{t_1}(r, \charfun_{L^{[t]}}) \alpha_{T_2}(r, \varphi_{a-1,b}^{[t]}).
\end{align*}  
The last equality follows from Lemma \ref{lem:ind-lattice}. A similar computation applies to case (2).  

Now we prove the general case. For case (1), by assumption, we can find infinitely many values of $r$ such that $\alpha_T(r, \varphi_{a,b}^{[t]}) \neq 0$. Fix one such $r$, then there exists a collection of vectors $\bv = (v_i) \in (V^{\angler})^{n}$ such that $(\bv, \bv) = T$.  

Define  
\begin{equation*}
    A = \begin{pmatrix} 1 & -t_1^{-1}t_{12} \\ 0 & 1_{n-1} \end{pmatrix} \in \SL_n(F),
\end{equation*}  
so that  
\begin{equation*}
    \prescript{t}{}{\ov{A}} T A = \begin{pmatrix} t_1 & 0 \\ 0 & T_2 - t_1^{-1} t_{21} t_{12} \end{pmatrix} = \begin{pmatrix} t_1 & 0 \\ 0 & T_\flat \end{pmatrix}.
\end{equation*}  

We claim that $\varphi_{a,b}^{[t],\angler}(A \bu) = \varphi_{a,b}^{[t],\angler}(\bu)$ for any $\bu \in V^{\angler}_T$.  
Checking each factor, we reduce to showing that for $\Lambda = L^{[t]}$ or $L^{[t],\sharp}$ and $\bv=(v_i) \in \Lambda^n$, the map  
\begin{equation*}
    \Lambda\lr \Lambda,\quad x \mapsto x - \frac{(v_1, v_i)}{(v_1, v_1)} v_1,\quad i\in\{1,\cdots,n\}
\end{equation*}  
are well-defined and are automorphisms of $\Lambda$.  
In fact, we only need to prove that the vector  
\begin{equation*}
    \frac{(v_1, v_i)}{(v_1, v_1)} v_1 \in \Lambda,\quad i\in\{1,\cdots,n\}
\end{equation*}  
which follows from the proof of Lemma \ref{lem:lattice-orth}. A similar argument holds for case (2).  

Thus, substituting $\bu$ by $A\bu$, and noting that $\det(A) = 1$, we obtain  
\begin{equation*}
    \alpha_T(r, \varphi) = \int_{V_T^{\angler}} \varphi^{\angler}(\bu) \, d\bu = \int_{V_{\prescript{t}{}{\ov{A}}TA}^{\angler}} \varphi^{\angler}(u) \, d\bu.
\end{equation*}  
Theorem \ref{prop:recursion} now follows from the diagonal case.
\end{proof}

\subsection{Kudla–Rapoport conjecture}  
We now formulate the Kudla–Rapoport conjecture for mixed special cycles in $\CN_n^{[h]}$.  
For $a+b=n$, we write  
\begin{equation*}
    S^{[h]}_{a,b} :=  
    \begin{cases}  
        \mathrm{diag}(1^{(n-h)}, \varpi^{(h-b)}, (\varpi^{-1})^{b}) & \text{if } h \geq b, \\  
        \mathrm{diag}(1^{(n-h)}, (\varpi^{-1})^{h}) & \text{if } h < b.  
    \end{cases}
\end{equation*}  

For any admissible Schwartz function $\phi^{[h]} \in \CS(V^n)$, it is known that there exists a polynomial $F_T(X, \phi^{[h]}) \in \BQ[X]$ such that for each integer $r \geq 0$,  
\begin{equation*}
    \alpha_T(r, \phi^{[h]}) = F_T((-q)^{-r}, \phi^{[h]}).
\end{equation*}  

Let $T = (\bu, \bu)$ be the moment matrix of a vector $\bu \in \BV^n$. Since $T$ is not represented in $V$, we have $\alpha_T(0, \phi^{[h]}) = 0$.  
In this case, we define the derivative of the weighted local density function as  
\begin{equation}\label{equ:derivative of local density}
    \alpha'_T(0, \phi^{[h]}) := -\frac{\rd}{\rd X} \Big|_{X=1} F_T(X, \phi^{[h]}).
\end{equation}

Recall that $\BV$ is the unique $F/F_0$-hermitian space of dimension $n$ that is not isomorphic to $V$.
\begin{proposition}\label{prop:unique-error-coeff}  
For any $n$ and $0 \leq h \leq n$, let $a, b$ be fixed such that $a + b = n$. Let $\phi^{[h]}_{a,b} \in \CS(V^n)$ be an admissible Schwartz function of type $h$. For $0 \leq t < h$, $t \equiv h-1 \pmod{2}$, let $\phi_{a,b}^{[t]} \in \CS(\BV^n)$ be admissible functions matching with $\phi^{[h]}_{a,b}$, see \eqref{not:local match schwartz function}. 
Then there exist unique constants $\beta_{a,b}^{[h]}(t)$ for $0 \leq t < h$ such that  
\begin{equation}\label{equ:linear system}
    \alpha_T'(0, \phi_{a,b}^{[h]}) - \sum_{\substack{0 \leq t < h \\ t \equiv h-1 \,\mathrm{mod}\, 2}} \beta_{a,b}^{[h]}(t) \alpha_T(0, \phi_{a,b}^{[t]}) = 0
\end{equation}  
for all $T = S_{a,b}^{[t]}$ such that $t \leq h$ and $t \equiv h-1 \pmod{2}$.  
\end{proposition}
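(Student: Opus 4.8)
The plan is to recognise \eqref{equ:linear system} as a square linear system and prove that its coefficient matrix is invertible. Write $\mathcal T := \{t : 0 \le t < h,\ t \equiv h-1 \ (\mathrm{mod}\ 2)\}$; since $h \not\equiv h-1 \pmod 2$, the condition ``$t \le h$'' in the statement is the same as ``$t \in \mathcal T$'', so both the unknowns $\beta^{[h]}_{a,b}(t)$ and the equations of \eqref{equ:linear system} are indexed by $\mathcal T$ --- I write $t'$ for the index of the equation at $T = S^{[t']}_{a,b}$, to distinguish it from the summation variable $t$. Because $t' \equiv h-1 \pmod 2$, a type-$t'$ vertex lattice has discriminant of the opposite parity to $V$ and hence lives in $\BV$; in particular $S^{[t']}_{a,b}$ is not represented by $V$, so that $\alpha'_{S^{[t']}_{a,b}}(0,\phi^{[h]}_{a,b})$ is defined by \eqref{equ:derivative of local density} and $\alpha_{S^{[t']}_{a,b}}(0,\phi^{[h]}_{a,b}) = 0$. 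Thus \eqref{equ:linear system} is the system $M\mathbf{c} = \mathbf{d}$ with $\mathbf{c} = (\beta^{[h]}_{a,b}(t))_{t \in \mathcal T}$, $\mathbf{d} = (\alpha'_{S^{[t']}_{a,b}}(0,\phi^{[h]}_{a,b}))_{t' \in \mathcal T}$ and $M = (\alpha_{S^{[t']}_{a,b}}(0,\phi^{[t]}_{a,b}))_{t',t \in \mathcal T}$, and the proposition is exactly the invertibility of $M$.

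To analyse $M$, I would first invoke Theorem~\ref{thm:bertini}: each $S^{[t']}_{a,b}$ is regular, so $\alpha_{S^{[t']}_{a,b}}(0,\phi^{[t]}_{a,b}) = \int_{\BV_{S^{[t']}_{a,b}}} \phi^{[t]}_{a,b}$ is $\ge 0$, and is positive precisely when there is $\bu \in \BV^n$ with $(\bu,\bu) = S^{[t']}_{a,b}$ whose coordinates lie in $L^{[t]}$ on the $\CZ$-slots and in $L^{[t],\sharp}$ on the $\CY$-slots of the weight. For $t' = t$ the standard basis of $L^{[t]}$ adapted to the weight has moment matrix $S^{[t]}_{a,b}$, so the diagonal entries $M_{t,t}$ are nonzero. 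For the off-diagonal entries --- the technical heart --- I would use that $S^{[t']}_{a,b}$ is diagonal with entries in $\{1,\varpi,\varpi^{-1}\}$ and apply the recursion of Theorem~\ref{prop:recursion} repeatedly, peeling off the unit entries through the $\CZ$-slots (part~(1)) and the $\varpi^{-1}$-entries through the $\CY$-slots (part~(2)); every rank-one factor produced is a positive constant, so that $M_{t',t}$ is a positive multiple of a representation density of one vertex-lattice Gram matrix by another. Keeping track of how the type, the rank and the $\CZ/\CY$-composition change, together with the elementary facts that a unimodular lattice represents no element of valuation $-1$ and a $\varpi$-modular lattice represents no unit, one extracts the one-sided vanishing: if $t > b$ then $M_{t',t} = 0$ unless $t' \ge t$; if $t < b$ then $M_{t',t} = 0$ unless $t' \le t$; and for $t = b$ no constraint is needed.

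Granting this, I would order $\mathcal T$ by listing the elements $< b$ in increasing order, then the elements $> b$ in decreasing order, and finally $b$ (if $b \in \mathcal T$); in this order $M$ is upper-triangular with the nonzero $M_{t,t}$ on the diagonal, hence $\det M \ne 0$, and $\mathbf c = M^{-1}\mathbf d$ gives the unique $\beta^{[h]}_{a,b}(t)$. The step I expect to be the main obstacle is the one-sided vanishing for genuinely mixed weights: one must run Theorem~\ref{prop:recursion} on a mixed Schwartz function while simultaneously tracking the type, the rank, the composition $(a,b)$ and the split/non-split type of the ambient hermitian space, and deal with the boundary value $t = 0$ (where $L^{[0],\sharp} = L^{[0]}$ and $\CZ$- and $\CY$-divisors coincide). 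The pure cases $\bw = \CZ^n$ and $\bw = \CY^n$ are much cleaner and recover the construction of \cite{Cho-He-Zhang}; the new phenomenon in the mixed case is that the constraints coming from the $\CZ$-slots and from the $\CY$-slots point in opposite directions, which is precisely why one splits $\mathcal T$ at $b$ and reverses the order on one of the two halves.
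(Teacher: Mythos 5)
Your skeleton is the same as the paper's: reduce \eqref{equ:linear system} to invertibility of $M_{t',t}=\alpha_{S^{[t']}_{a,b}}(0,\phi^{[t]}_{a,b})$, get the nonzero diagonal from Lemma \ref{lem:local density volumn}, and triangularize via one-sided vanishing of off-diagonal entries. But your vanishing pattern is not the paper's: the paper asserts the uniform claim $\alpha_{S^{[t_1]}_{a,b}}(0,\phi^{[t_2]}_{a,b})=0$ for all $t_1<t_2$, making $M$ triangular in the natural order. Where the two disagree (columns $t\le b$), yours is the correct one. For instance, take $(n,h,a,b)=(3,3,1,2)$, so $S^{[0]}_{1,2}=1_3$ and $\phi^{[2]}_{1,2}=\charfun_{L^{[2]}}\otimes\charfun_{L^{[2],\sharp}}^{\otimes 2}$ with $L^{[2]}\cong\diag(1,\varpi,\varpi)=\langle e\rangle\obot\Lambda$: the lattice $\Lambda^{\sharp}\cong\diag(\varpi^{-1},\varpi^{-1})$ contains an orthonormal pair (take $w_1=f_1+yf_2$ with $\Nm(y)=\varpi-1$, then the generator of $w_1^{\perp}\cap\Lambda^\sharp$ also has norm $1$), so $(e,w_1,w_2)$ shows $\alpha_{S^{[0]}_{1,2}}(0,\phi^{[2]}_{1,2})>0$ although $0<2$. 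So the paper's intermediate claim fails for mixed (and pure-$\CY$) weights; the proposition survives exactly because of the opposite-direction vanishing in those columns together with the reordering you propose. Your split of $\mathcal T$ at $t=b$ is therefore not pedantry but necessary.

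The one genuine gap is that your claims (A)/(B) are asserted rather than proved, and Theorem \ref{prop:recursion} is ill-suited to proving them: it has ``not identically zero'' as a hypothesis, so it cannot be run to establish vanishing. The efficient route is Theorem \ref{thm:bertini} plus Lemma \ref{lem:lattice-orth}: $M_{t',t}=\int_{\BV_{S^{[t']}_{a,b}}}\phi^{[t]}_{a,b}\ge 0$ is positive iff an adapted tuple exists, and iterated Gram--Schmidt (Lemma \ref{lem:lattice-orth}) shows a type-$t$ vertex lattice of rank $n$ contains at most $n-t$ pairwise orthogonal unit vectors, while its dual contains at most $t$ pairwise orthogonal vectors of norm valuation $-1$. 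Since the $\CZ$-block of $S^{[t']}_{a,b}$ has $\min(n-t',a)$ unit entries and the $\CY$-block has $\min(t',b)$ entries of valuation $-1$, nonvanishing of $M_{t',t}$ forces $\min(n-t',a)\le n-t$ and $\min(t',b)\le t$; for $t>b$ the first inequality yields $t'\ge t$, and for $t<b$ the second yields $t'\le t$ --- precisely your pattern, including the absence of any constraint at $t=b$. With this lemma supplied, your ordering argument closes the proof.
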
  

\begin{proof}  
By definition or Theorem \ref{prop:recursion}, we observe that for any $t_1 < t_2 < h$ with $t_i \equiv h-1 \pmod{2}$, we have  
\begin{equation*}
    \alpha_{S_{a,b}^{[t_1]}}(0, \phi_{a,b}^{[t_2]}) = 0.
\end{equation*}  
This implies that equation \eqref{equ:linear system} forms a linear system with an upper triangular coefficient matrix.  

By direct computation or by applying Lemma \ref{lem:local density volumn}, we see that the diagonal entries of the coefficient matrix are nonzero. Since an upper triangular matrix with nonzero diagonal entries is invertible, the corresponding linear system has a unique solution.
\end{proof}

\begin{definition}\label{def:error-coeff}  
For any weight vector $\bw$ of rank $n$, let $a$ be the number of $\CZ$-entries in $\bw$ and $b$ the number of $\CY$-entries in $\bw$.  
Define $\beta^{[h]}_{\bw,t} := \beta^{[h]}_{a,b}(t)$.
\end{definition}  

\begin{corollary}\label{prop:unique-error-coeff-any}  
For any weight vector $\bw$ of rank $n$, let $s \in S_n$ be a fixed permutation such that $s \cdot \bw = \bw_{a,b}:= (\CZ^a, \CY^b)$ for some $a + b = n$.  
Let $\beta^{[h]}_{\bw,t}$ be as in Definition \ref{def:error-coeff}, and set  
$S^{[t]}_\bw := s \cdot S^{[h]}_{a,b}s^{-1}$, where we view $s\in S_n=N(T)/T$ as an element in the Weyl group\footnote{$S^{[t]}_\bw$ depends on the choice of $s \in S_n$.}. Then, we have  
\begin{equation*}
    \alpha_T'(0, \phi_{\bw}^{[h]}) - \sum_{\substack{0 \leq t < h \\ t \equiv h-1 \,\mathrm{mod}\, 2}} \beta_{\bw,t}^{[h]}\alpha_T(0, \phi_{\bw}^{[t]}) = 0
\end{equation*}  
for all $T = S_{\bw}^{[t]}$ such that $t \leq h$ and $t \equiv h-1 \pmod{2}$. \qed  
\end{corollary}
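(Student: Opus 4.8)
\emph{Strategy.} The plan is to reduce the statement to Proposition~\ref{prop:unique-error-coeff} by exploiting the equivariance of the weighted local density functions under the natural $S_n$-action: simultaneously permuting the tensor factors of the admissible Schwartz function and acting by conjugation on the moment matrix. Since $\bw_{a,b}=(\CZ^a,\CY^b)$ is obtained from $\bw$ through $s$, and $S^{[t]}_{\bw}$ is by definition the $s$-conjugate of the diagonal matrix $S^{[t]}_{a,b}$ appearing in Proposition~\ref{prop:unique-error-coeff}, it suffices to transport the linear identity of that proposition along $s$.

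\emph{Step 1: equivariance.} Let $s\in S_n$ with permutation matrix $\sigma\in\GL_n(O_{F_0})$; note $\sigma$ is unitary, $\prescript{t}{}{\ov\sigma}=\sigma^{-1}$. Permuting the tensor factors of an admissible $\phi=\bigotimes_i\phi_i$ produces an admissible function $s\cdot\phi$ of the same type $t$ whose weight vector is $s\cdot\bw$ (this is independent of the chosen vertex lattice by Lemma~\ref{lem:ind-lattice}), and conjugation $T\mapsto s\cdot T$ by $\sigma$ preserves $\Herm_n(F)$. Carrying out the corresponding change of variables in the defining double integral of $\alpha_T(r,\phi)$ (Definition~\ref{def:weighted local density function})—the substitution $\bu\mapsto\bu\sigma$ on $(V^{\angler})^n$, which turns $(\bu,\bu)$ into its $\sigma$-conjugate and $\phi^{\angler}$ into $(s\cdot\phi)^{\angler}$, together with the matching conjugation of the $b$-variable over $\Herm_n(F)$, both measure-preserving because $\sigma$ is a permutation matrix—yields
$$
\alpha_T(r,\phi)=\alpha_{\,s\cdot T}(r,\,s\cdot\phi)\qquad\text{for all }r\geq 0,\ T\in\Herm_n(F).
$$
(For nonsingular $T$ this is also immediate from the Fubini-type formula $\alpha_T(r,\phi)=\int_{V_T^{\angler}}\phi^{\angler}$ of Theorem~\ref{thm:bertini}.) Since $\alpha_T(r,\phi)=F_T((-q)^{-r},\phi)$ for all $r\geq 0$ and $\{(-q)^{-r}\}_{r\geq 0}$ is infinite, this upgrades to the polynomial identity $F_{s\cdot T}(X,s\cdot\phi)=F_T(X,\phi)$ in $\BQ[X]$, hence, differentiating at $X=1$, to $\alpha'_{s\cdot T}(0,s\cdot\phi)=\alpha'_T(0,\phi)$ whenever the derivatives are defined.

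\emph{Step 2: conclusion.} Choose $s$ with $s\cdot\bw=\bw_{a,b}$, so that $s\cdot\phi^{[h]}_{\bw}=\phi^{[h]}_{a,b}$ and $s\cdot\phi^{[t]}_{\bw}=\phi^{[t]}_{a,b}$ for all relevant $t$, and $s\cdot S^{[t']}_{\bw}=S^{[t']}_{a,b}$. Applying Proposition~\ref{prop:unique-error-coeff} to $\bw_{a,b}$ gives, for each $t'\leq h$ with $t'\equiv h-1\pmod 2$,
$$
\alpha'_{S^{[t']}_{a,b}}(0,\phi^{[h]}_{a,b})-\sum_{\substack{0\leq t<h\\ t\equiv h-1\,\mathrm{mod}\,2}}\beta^{[h]}_{a,b}(t)\,\alpha_{S^{[t']}_{a,b}}(0,\phi^{[t]}_{a,b})=0.
$$
Applying the equivariance of Step 1 to each term, and recalling $\beta^{[h]}_{\bw,t}=\beta^{[h]}_{a,b}(t)$ from Definition~\ref{def:error-coeff} (so the coefficients are unchanged by the transformation), this becomes exactly the asserted identity for $\bw$ at $T=S^{[t']}_{\bw}$. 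The only delicate point—and the step I expect to be the main obstacle—is the bookkeeping of the permutation action: checking that the change of variables genuinely intertwines permutation of the tensor factors with conjugation on $\Herm_n(F)$ on the correct side, so that the matrices produced are precisely the $S^{[t]}_{\bw}$ of the statement for the chosen $s$, and verifying the invariance of the self-dual Haar measures on $\Herm_n(F)$ and $(V^{\angler})^n$ under these substitutions. Everything else is a mechanical transport of Proposition~\ref{prop:unique-error-coeff}.
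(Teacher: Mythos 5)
Your proposal is correct and is exactly the argument the paper intends (the paper states the corollary with an immediate \qed, treating it as a direct consequence of Proposition \ref{prop:unique-error-coeff} by permutation symmetry): the $S_n$-equivariance $\alpha_T(r,\phi)=\alpha_{s\cdot T}(r,s\cdot\phi)$, verified by the change of variables $\bu\mapsto\bu\sigma$, $b\mapsto\sigma b\sigma^{-1}$ in Definition \ref{def:weighted local density function}, upgraded to the polynomial $F_T(X,\phi)$ and its derivative, transports the linear system for $\bw_{a,b}$ to the one for $\bw$, with the coefficients unchanged by Definition \ref{def:error-coeff}. The bookkeeping point you flag is harmless, since the definition $S^{[t]}_\bw = s\cdot S^{[t]}_{a,b}s^{-1}$ is made precisely so that the conjugation matches the permutation of tensor factors.
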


We can now state the main theorem in the $p$-adic setting.  

\begin{theorem}[Kudla–Rapoport conjecture for unramified maximal parahorics]\label{thm:KR-conj}  
Let $\bu \in \BV^n$ be a collection of vectors that are linearly independent, and let $T = (\bu, \bu) \in \Herm^{\reg}_n(F)$ be its moment matrix. Let $\phi^{[h]} = \phi^{[h]}_{\bw} \in \CS(V^n)$ be an admissible Schwartz function, and let $\phi_{\bw}^{[t]} \in \CS(\BV^n)$ be the corresponding matched admissible Schwartz functions of type $t$ for $t < h$ and $t \equiv h-1 \pmod{2}$, see \eqref{not:local match schwartz function}. 
Then, we have:  
\begin{equation*}
    \Int(\bu, \phi_{\bw}^{[h]}) = \frac{1}{\vol(K_n^{[h]}, dg)} \Biggl(
    \alpha_T'(0, \phi_{\bw}^{[h]}) - \sum_{\substack{0 \leq t < h \\ t \equiv h-1 \,\mathrm{mod}\, 2}} \beta_{\bw,t}^{[h]} \alpha_T(0, \phi_{\bw}^{[t]})
    \Biggr).
\end{equation*}  
Here, $K_n^{[h]} = \Stab_{\U(V)}(L^{[h]}) \subset \U(V)$ is the parahoric subgroup that defines the level structure, and the measure $dg = dg^{\mathrm{self}}$ is defined at the beginning of \S \ref{sec:def weighted local density}.  
We denote the right-hand side by $\PDen(T, \phi_{\bw}^{[h]})$ or $\PDen(\bu, \phi_{\bw}^{[h]})$.  
\end{theorem}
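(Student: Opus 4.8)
The plan is to prove the theorem by induction on $n$, globalizing the local identity and using the modularity of generating series of special cycles, in the spirit of W.~Zhang's proof of the arithmetic fundamental lemma and its parahoric refinements. The base case $n=1$ is a direct computation: the geometric side is given in Example \ref{exmp:base case geo}, and one checks by hand that it equals $\PDen(T,\phi^{[h]}_\bw)$ by evaluating the weighted local density and its derivative explicitly (see \S\ref{sec:base case}). For $n\geq 2$, assume the identity holds for every unramified quadratic extension of $p$-adic fields and every hermitian space of dimension $n-1$; we must establish it for a fixed $F/F_0$, a hermitian space $V$ of dimension $n$, a weight vector $\bw$, and linearly independent $\bu\in\BV^n$ with moment matrix $T=(\bu,\bu)$.

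First I would embed the local datum into a global one as in \S\ref{sec:ASWF}: a CM extension, still denoted $F/F_0$, with a distinguished finite place $v_0$ at which the local situation is the prescribed one, an incoherent hermitian space $\BV$ nearby a global $V$ of signature $(n-1,1)$ at $\varphi_0$, and a level structure making the punctured integral model $\tCM$, the weighted special cycles $\tCZ(T,\phi)$, and the derived special cycles $\tDCZ(T,\phi)$ well-defined. Using the $p$-adic uniformization of the derived special cycles (\S\ref{sec:p adic unitofmization of special cycle}) together with the local constancy of the arithmetic intersection numbers (Theorem \ref{thm:local-constancy}) and of the analytic side, the problem reduces to the arithmetic Siegel--Weil identity for a single Fourier coefficient $\tDCZ(\bT,\phi)$, where $\bT=\begin{psmallmatrix}t_1 & t_{12}\\ t_{21}& T_\flat\end{psmallmatrix}$, $\Diff(\bT,\BV)=\{v_0\}$, $\phi_f^{\fkd}=\charfun_{L^{n,\fkd}}$ for a self-dual $L$, and $\phi_f=\phi_{1,f}\otimes\phi_{\flat,f}$ is rational-valued.

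Next I would form the generating series $\CE_{T_\flat}(\tau,\phi_1,\phi_\flat)$ of \eqref{equ:KR proof generating series}: the difference of the normalized arithmetic-intersection series $\langle\tCZ^{\bK}(\tau,\phi_1),\tCZ^{\bK}(T_\flat,\phi_\flat)\rangle$ and the Fourier--Jacobi-type series of the central derivative $\PEis(\tau,\phi)$, corrected by the coherent Eisenstein series $\Eis(\btau,\phi_{\corr})$ attached to the error coefficients $\beta^{[h]}_{\bw,t}$. Both pieces are Hilbert modular forms of weight $n$: the geometric one by modularity of the arithmetic theta series (\cite{BHKRY} with \cite{Stephan-Siddarth} when $F=\BQ$, and \cite{ZZhang24,LMZ25} when $F\neq\BQ$), the analytic one by construction. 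Imposing that $\phi_f$ be regular at two finite places (Theorem \ref{thm:sing-vanish}) kills every singular Fourier coefficient, and the archimedean arithmetic Siegel--Weil formula (\cite{Liu11-I}; see Theorem \ref{thm:archi Siegel Weil}) forces the surviving $T$ to be totally positive definite. The inductive hypothesis, combined with the geometric cancellation law (Proposition \ref{prop:geo-cancel}) and the analytic cancellation law (Corollary \ref{cor:ana-cancel}, which rests on the recursion Theorem \ref{prop:recursion}), then kills all but finitely many Fourier coefficients of $\CE_{T_\flat}$. A vanishing theorem for modular forms supported on few Fourier coefficients --- \cite[Lemma 13.6]{Zhang21} at hyperspecial level, and the double-induction argument of \cite[Lemma 15.1]{ZZhang24} (running over $n$ and the parahoric type $h$ simultaneously) in general --- forces $\CE_{T_\flat}\equiv 0$. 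Extracting the $t_1$-th Fourier coefficient, and arranging the global data so that $\phi_{1,f}$ isolates the term indexed by $\bT$, yields the arithmetic Siegel--Weil identity for $\tDCZ(\bT,\phi)$, hence the desired local identity, completing the induction.

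The main difficulty I expect lies in the parahoric-specific features. The analytic cancellation law degenerates when $h=n$: there it holds only for the full automorphic form, not coefficient by coefficient (see the proof of Theorem \ref{thm:ana-cancel}), so the ``almost all coefficients vanish'' step must be reorganized as a statement about the whole generating series, with careful tracking of how the correction terms $\beta^{[h]}_{\bw,t}\,\alpha_T(0,\phi^{[t]}_\bw)$ propagate through the recursion. One must also check that the error coefficients pinned down by the linear system of Proposition \ref{prop:unique-error-coeff} are exactly those realized by the coherent Eisenstein series $\Eis(\btau,\phi_{\corr})$, and set up the two-variable induction of \cite[Lemma 15.1]{ZZhang24} so that its base cases --- $h=0$ (the hyperspecial case of \cite{LZ21}) and $n=1$ --- feed correctly into the machinery while the choice of global data remains flexible enough to isolate $\bT$ after the regularity and positivity reductions. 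Coordinating the local recursion, the global double induction, and the matching of error terms is, I expect, the technical heart of the argument.
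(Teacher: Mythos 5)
Your proposal is correct and follows essentially the same route as the paper: induction on $n$, globalization via local constancy and $p$-adic uniformization, modularity of the (error-corrected) generating series, regularity to kill singular terms, the geometric and analytic cancellation laws to kill almost all nonsingular coefficients, the double-induction vanishing lemma of \cite[Lemma 15.1]{ZZhang24}, and isolation of the coefficient $\bT$. The only small imprecision is in your description of the double induction: in the paper it runs not over $(n,h)$ but over the $v_0$-valuation of $\xi$ for the \emph{pair} of series $(\CE_{T_\flat}(\tau,\phi_{1\CZ},\phi_\flat),\,\CE_{T_\flat}(\tau,\phi_{1\CY},\phi_\flat))$ linked by the Weil-representation dual relation of Proposition \ref{prop:dual-relation}, with the boundary cases supplied by Lemma \ref{lem:cancel}.
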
  

We will prove Theorem \ref{thm:KR-conj} in \S \ref{sec:proof}.  
Next, we reformulate the analytic side of Theorem \ref{thm:KR-conj} to establish connections with previous works \cite{LZ21, Cho-He-Zhang} and to prove the analytic cancellation law.  

\begin{lemma}\label{lem:local density volumn}  
For any $a + b = n$, we have  
\begin{equation*}
    \vol(K^{[h]}_n, dg) = \alpha_{S_{a,b}^{[h]}}(0, \phi_{a,b}^{[h]}).
\end{equation*}  
\end{lemma}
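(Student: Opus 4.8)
The plan is to express both sides as the volume of the same subset of $\U(V)$. First, since $V^{\langle 0\rangle}=V$ and $\phi^{\langle 0\rangle}=\phi$, I would apply Theorem~\ref{thm:bertini} with $r=0$ to the nonsingular matrix $T=S_{a,b}^{[h]}$; combined with the identity $M(T,\phi)=\int_{\U(V)}\phi(g\bu_T)\,dg$ (for $dg=dg^{\mathrm{self}}$ and any $\bu_T\in V_T$) established in the course of that proof, this gives
\[
\alpha_{S_{a,b}^{[h]}}(0,\phi_{a,b}^{[h]})=\int_{V_T}\phi_{a,b}^{[h]}(\bu)\,d\bu=\int_{\U(V)}\phi_{a,b}^{[h]}(g\bu_0)\,dg
\]
for any chosen $\bu_0\in V_T=h^{-1}(S_{a,b}^{[h]})$ (nonempty by the construction below). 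Thus it suffices to pick $\bu_0$ lying in the support of $\phi_{a,b}^{[h]}$ and to prove that $\{g\in\U(V):\phi_{a,b}^{[h]}(g\bu_0)=1\}$ equals $K_n^{[h]}=\Stab_{\U(V)}(L^{[h]})$; the right-hand side then becomes $\int_{\U(V)}\charfun_{K_n^{[h]}}\,dg=\vol(K_n^{[h]},dg)$.

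For the choice of $\bu_0$ I would take an orthogonal $O_F$-basis $e_1,\dots,e_n$ of $L^{[h]}$ with $(e_i,e_i)=1$ for $i\le n-h$ and $(e_i,e_i)=\varpi$ for $i>n-h$ (available in the unramified case after rescaling), so that $L^{[h],\sharp}=\langle e_1,\dots,e_{n-h},\varpi^{-1}e_{n-h+1},\dots,\varpi^{-1}e_n\rangle$, and then set $\bu_0=(e_1,\dots,e_{n-b},\varpi^{-1}e_{n-b+1},\dots,\varpi^{-1}e_n)$ if $b\le h$ and $\bu_0=(e_1,\dots,e_{n-h},\varpi^{-1}e_{n-h+1},\dots,\varpi^{-1}e_n)$ if $b\ge h$. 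A Gram-matrix computation gives $h(\bu_0)=S_{a,b}^{[h]}$ in each case, the first $a=n-b$ components lie in $L^{[h]}$ and the last $b$ in $L^{[h],\sharp}$ (using $a\le n-h$ when $b\ge h$), so $\phi_{a,b}^{[h]}(\bu_0)=1$, and the components are linearly independent, so $i_{\bu_0}\colon\U(V)\simto V_T$ is an isomorphism. The inclusion $K_n^{[h]}\subseteq\{g:\phi_{a,b}^{[h]}(g\bu_0)=1\}$ is clear, since an isometry preserving $L^{[h]}$ also preserves $L^{[h],\sharp}$. For the reverse inclusion, suppose $\phi_{a,b}^{[h]}(g\bu_0)=1$: when $b\le h$, then $ge_i\in L^{[h]}$ for $i\le n-b$ and $ge_i=\varpi\cdot g(\varpi^{-1}e_i)\in\varpi L^{[h],\sharp}\subseteq L^{[h]}$ for $i>n-b$, so $gL^{[h]}\subseteq L^{[h]}$; when $b\ge h$, the components of $\bu_0$ form a basis of $L^{[h],\sharp}$ and are all sent by $g$ into $L^{[h],\sharp}$, so $gL^{[h],\sharp}\subseteq L^{[h],\sharp}$, hence $gL^{[h]}\subseteq L^{[h]}$ upon taking $\sharp$. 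In either case $g$ is an isometry with $gL^{[h]}\subseteq L^{[h]}$, and expressing $g$ in a basis of $L^{[h]}$ shows its determinant lies in $O_F$ and has norm $1$, hence is a unit, so $gL^{[h]}=L^{[h]}$ and $g\in K_n^{[h]}$. Plugging this into the first paragraph completes the proof.

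The step I expect to be the real obstacle is the lattice-theoretic identification of the support in the second paragraph, and in particular the case split $b\le h$ versus $b\ge h$: the matrix $S_{a,b}^{[h]}$ changes shape precisely at $b=h$, so $\bu_0$ must be manufactured from a basis of $L^{[h]}$ when the $\CY$-entries are few and from a basis of $L^{[h],\sharp}$ when they are many, so that in each regime the components of $\bu_0$ (or their $\varpi$-multiples) genuinely generate the lattice one needs $g$ to preserve. The two supporting facts — the normalization $\int_{V_T}(\,\cdot\,)\,d\bu=\int_{\U(V)}(\,\cdot\,)(g\bu_T)\,dg^{\mathrm{self}}$ coming from the proof of Theorem~\ref{thm:bertini}, and that a norm-preserving $g$ with $gL\subseteq L$ satisfies $gL=L$ — are routine and can be invoked directly.
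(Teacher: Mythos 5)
Your proposal is correct and follows essentially the same route as the paper: both convert $\alpha_{S_{a,b}^{[h]}}(0,\phi_{a,b}^{[h]})$ into an orbital integral over $\U(V)$ via Theorem~\ref{thm:bertini} and then identify the support of $g\mapsto\phi_{a,b}^{[h]}(g\bu_0)$ with $\Stab_{\U(V)}(L^{[h]})$, with your explicit diagonal basis playing the role of the paper's appeal to Lemma~\ref{lem:lattice-orth}(2) (and you additionally write out the case $h<b$ that the paper leaves to the reader). The only blemish is that in the $b\ge h$ case taking $\sharp$ of $gL^{[h],\sharp}\subseteq L^{[h],\sharp}$ reverses the inclusion and yields $L^{[h]}\subseteq gL^{[h]}$ rather than $gL^{[h]}\subseteq L^{[h]}$, but your determinant argument upgrades either inclusion to equality, so the conclusion stands.
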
  

\begin{proof}  
We prove the case when $h \geq b$ and leave the case $h < b$ to the reader.  
Let $\bu = (u_1, \dots, u_n) \in L^a \oplus L^{\sharp,b}$ be a collection of vectors with moment matrix  
\begin{equation*}
    (\bu, \bu) = S^{[h]}_{a,b} =
    \begin{pmatrix}
        1_{n-h} & & \\  
        & \varpi 1_{h-b} & \\  
        & & \varpi^{-1} 1_b  
    \end{pmatrix}.
\end{equation*}  

By Lemma \ref{lem:lattice-orth}(2), we have $\varpi u_{a+1}, \dots, \varpi u_n \in L$, hence  
\begin{equation*}
    \bu' := (u_1, \dots, u_a, \varpi u_{a+1}, \dots, \varpi u_{a+b}) \in L^n.
\end{equation*}  
Furthermore, $\bu'$ forms a basis of $L$ as it spans a type $h$ sublattice in $L$.  
By Theorem \ref{thm:bertini}, we can rewrite the integral as  
\begin{equation*}
    \alpha_{S_{a,b}^{[h]}}(0, \phi_{a,b}^{[h]}) = \int_{V_{S_{a,b}^{[h]}}} \phi^{[h]}_{a,b}(u) \, du = \int_{\U(V)} \phi^{[h]}_{a,b}(g^{-1} \bu) \, dg.
\end{equation*}  

We claim that the support of the function $g \mapsto \phi^{[h]}_{a,b}(g^{-1} \bu)$ is exactly $\Stab_{\U(V)}(L^{[h]})$, which will complete the proof.  
Indeed, for any $g \in \Stab_{\U(V)}(L^{[h]})$, we clearly have $g^{-1} \bu \in L^a \oplus L^{\sharp,b}$. Conversely, suppose $g \in \U(V)$ satisfies  
\begin{equation*}
    g^{-1} \bu = (g^{-1} u_1, \dots, g^{-1} u_n) \in L^a \oplus L^{\sharp,b}.
\end{equation*}  
By Lemma \ref{lem:lattice-orth}(2), we see that
\begin{equation*}
    g^{-1} \bu' = (g^{-1} u_1, \dots, g^{-1} u_a, g^{-1} \varpi u_{a+1}, \dots, g^{-1} \varpi u_{a+b}) \in L^n
\end{equation*}  
forms a basis of $L$. Therefore,  we have
\begin{equation*}
    g^{-1} L = g^{-1} \Span_{O_F}(\bu') = \Span_{O_F}(g^{-1} \bu') = L,
\end{equation*}  
which completes the proof.
\end{proof}

\begin{corollary}
The right hand side of Theorem \ref{thm:KR-conj} can be rewritten as:
\begin{equation}\label{equ:KR second form}
\PDen(T,\phi_{\bw}^{[h]})=\frac{1}{\alpha_{S^{[h]}_\bw}(0,\phi^{[h]})}\Biggl(
\alpha_T'(0,\phi_{\bw}^{[t]})-\sum_{\substack{0\leq t< h\text{ and}\\ t\equiv h-1\,\mathrm{mod}\, 2}} \beta_{\bw,t}^{[h]}\alpha_T(0,\phi_{\bw}^{[t]})
\Biggr).
\end{equation}
In particular, for $\CN_{2n}^{[n]}$, Theorem \ref{thm:KR-conj} is identical to \cite[Conjecture 1.6]{Cho22}. For any $\CN_n^{[h]}$ with $\phi_{\bw}^{[h]}=\phi^{[h]}_{n,0}=\charfun_{L^n}$, Theorem \ref{thm:KR-conj} is identical to \cite[Conjecture 1.3]{Cho-He-Zhang}.\qed
\end{corollary}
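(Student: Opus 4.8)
The plan is a bookkeeping comparison of definitions; the only real inputs are Lemma~\ref{lem:local density volumn} and the permutation-invariance of the weighted local density function. First I would establish \eqref{equ:KR second form}: since the right-hand side of Theorem~\ref{thm:KR-conj} carries the prefactor $\vol(K_n^{[h]},dg)^{-1}$, it suffices to show $\vol(K_n^{[h]},dg)=\alpha_{S^{[h]}_\bw}(0,\phi^{[h]}_\bw)$. Pick $s\in S_n$ with $s\cdot\bw=\bw_{a,b}=(\CZ^a,\CY^b)$, so that $S^{[h]}_\bw=s\cdot S^{[h]}_{a,b}s^{-1}$ and $\phi^{[h]}_\bw$ differs from $\phi^{[h]}_{a,b}$ only by a permutation of the $n$ tensor slots according to $s$. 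Permuting the $n$ copies of $V$ (hence of $V^{\angler}$) accordingly is a measure-preserving change of variables that carries the fiber $V_{S^{[h]}_\bw}$ bijectively onto $V_{S^{[h]}_{a,b}}$ and $\phi^{[h]}_\bw$ onto $\phi^{[h]}_{a,b}$; by Theorem~\ref{thm:bertini} this yields $\alpha_{S^{[h]}_\bw}(0,\phi^{[h]}_\bw)=\alpha_{S^{[h]}_{a,b}}(0,\phi^{[h]}_{a,b})$, and the latter equals $\vol(K_n^{[h]},dg)$ by Lemma~\ref{lem:local density volumn}. This gives \eqref{equ:KR second form}.

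For the two specializations I would unwind the definitions on both sides. When $\phi^{[h]}_\bw=\phi^{[h]}_{n,0}=\charfun_{L^n}$ we have $\bw=\CZ^n$, so the left-hand side is the intersection number of pure $\CZ$-cycles, and the right-hand side of \eqref{equ:KR second form} is, by the construction recalled in Remark~\ref{rem:error terms} (which reproduces \cite[Definition~1.2]{Cho-He-Zhang} in this case), exactly the quantity $\alpha^{\modi}_T(0,\phi^{[h]}_\bw)/\alpha_{S^{[h]}_n}(0,\phi^{[h]})$ appearing on the analytic side of \cite[Conjecture~1.3]{Cho-He-Zhang}, with the coefficients $\beta^{[h]}_{\bw,t}$ identified via the uniqueness in Proposition~\ref{prop:unique-error-coeff}. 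For $\CN_{2n}^{[n]}$ and arbitrary $\bw$ I would instead apply \eqref{equ:relation of weighted local density} to rewrite every weighted local density $\alpha_T(r,\phi^{[t]}_\bw)$ as an ordinary hermitian representation density, keeping track of the normalizing factor $|\Nm_{F/F_0}\det(S_\Lambda^0)|_{F_0}$ and of $\vol(K_{2n}^{[n]},dg)$ through Lemma~\ref{lem:local density volumn}; after this substitution the right-hand side of \eqref{equ:KR second form} becomes the analytic side of \cite[Conjecture~1.6]{Cho22}.

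The only place where genuine care is needed---and what I expect to be the main (if routine) obstacle---is matching the normalization of the error coefficients: the $\beta^{[h]}_{\bw,t}$ here are defined as the unique solution of the linear system of Proposition~\ref{prop:unique-error-coeff} phrased in terms of weighted local densities, whereas the constants in \cite{Cho22,Cho-He-Zhang} are phrased in terms of representation densities. Since in both cases the coefficients solve a linear system with the same upper-triangular, invertible shape, it is enough to check that the corresponding diagonal entries and right-hand sides agree under \eqref{equ:relation of weighted local density}; this must be done with care, using the same change-of-variables and permutation-invariance as above to pin down all constant factors.
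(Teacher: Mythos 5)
Your proposal is correct and follows the same route the paper intends: the identity \eqref{equ:KR second form} is exactly the substitution $\vol(K_n^{[h]},dg)=\alpha_{S^{[h]}_\bw}(0,\phi^{[h]}_\bw)$ furnished by Lemma \ref{lem:local density volumn} (together with the harmless permutation-invariance of the weighted density needed to pass from $S^{[h]}_{a,b}$ to $S^{[h]}_\bw$), and the comparisons with \cite{Cho22} and \cite{Cho-He-Zhang} are the definition-unwinding via \eqref{equ:relation of weighted local density} that the paper leaves implicit. The paper states the corollary with no written proof, so your write-up simply makes explicit what it takes for granted.
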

By combining Lemma \ref{lem:local density volumn} with Theorem \ref{prop:recursion}, we deduce the following analytic cancellation law.  

\begin{theorem}\label{thm:ana-cancel}  
Let $T \in \Herm_n^{\reg}(F)$ be a hermitian matrix that represents the hermitian form of $\BV$.  

\begin{altenumerate}  
\item Consider the following partition, where $T_2$ is a $(n-1)\times (n-1)$ matrix:  
\begin{equation*}
    T = \begin{pmatrix} t_1 & t_{12} \\ t_{21} & T_2 \end{pmatrix}, \quad  
    T_\flat = T_2 - t_1^{-1} t_{21} t_{12}.
\end{equation*}  

\begin{altenumerate2}  
\item If $h \neq n$ and $\val(t_1) = 0$, then  
\begin{equation*}
    \PDen(T, \phi^{[h]}_{a,b}) = \PDen(T_{\flat}, \phi^{[h]}_{a-1,b}).
\end{equation*}  

\item If $h = n$ and $\val(t_1) = 1$, then  
\begin{equation*}
    \PDen(T, \phi^{[n]}_{a,b}) = \PDen(T_{\flat}, \phi^{[n-1]}_{a-1,b}).
\end{equation*}  
\end{altenumerate2}  

\item Consider the following partition, where $T_1$ is a $(n-1)\times (n-1)$ matrix:
\begin{equation*}
    T = \begin{pmatrix} T_1 & t_{12} \\ t_{21} & t_2 \end{pmatrix}, \quad  
    T_\flat = T_1 - t_2^{-1} t_{12} t_{21}.
\end{equation*}  

\begin{altenumerate2}  
\item If $h \neq 0$ and $\val(t_2) = -1$, then  
\begin{equation*}
    \PDen(T, \phi^{[h]}_{a,b}) = \PDen(T_{\flat}, \phi^{[h-1]}_{a,b-1}).
\end{equation*}  

\item If $h = 0$ and $\val(t_2) = 0$, then  
\begin{equation*}
    \PDen(T, \phi^{[0]}_{a,b}) = \PDen(T_{\flat}, \phi^{[0]}_{a,b-1}).
\end{equation*}  
\end{altenumerate2}  
\end{altenumerate}  
\end{theorem}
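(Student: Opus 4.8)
The plan is to pass to the ratio form \eqref{equ:KR second form}, writing $\PDen(T,\phi^{[h]}_{a,b})$ as the quotient of the modified density
$\alpha^{\modi}_T(0,\phi^{[h]}_{a,b}):=\alpha'_T(0,\phi^{[h]}_{a,b})-\sum_{t}\beta^{[h]}_{a,b}(t)\,\alpha_T(0,\phi^{[t]}_{a,b})$
by the volume factor $\alpha_{S^{[h]}_{a,b}}(0,\phi^{[h]}_{a,b})$ (Lemma \ref{lem:local density volumn}), and to reduce both numerator and denominator one variable at a time using the recursion of Theorem \ref{prop:recursion}, which peels off the first coordinate in part (1) and the last coordinate in part (2).

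First I would upgrade Theorem \ref{prop:recursion} to an identity of polynomials in $X=(-q)^{-r}$: since $V^{\angler}$ is split for $r\gg 0$, every nonsingular hermitian matrix is represented there, so the stated factorization holds at infinitely many $r$ and hence, for each type $s$,
$F_T(X,\phi^{[s]}_{a,b})=R_s(X)\cdot F_{T_\flat}(X,\phi^{[s']}_{a-1,b})$
as polynomials, where $s'=s$ in part (1) and $s'=s-1$ in part (2), and $R_s(X)$ is the rank-one prefactor obtained from peeling off a vector of the prescribed norm (namely $F_{t_1}(X,\charfun_{L^{[s]}})$, resp.\ $F_{t_2}(X,\charfun_{L^{[s],\sharp}})$).

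For the cases with $h\neq n$, i.e.\ 1(i), 2(i) and 2(ii), the key point is that each $R_s(1)$ is nonzero for the types $s\le h$ that occur: by Lemma \ref{lem:lattice-orth} the relevant (anti-)vertex lattice splits off a rank-one orthogonal summand containing a vector of the prescribed norm, and in the unramified setting such a summand exists except for a self-dual lattice against an entry of odd valuation, a configuration which does not occur here. Since $\bu\in\BV^n$, the matrix $T$ represents $\BV$ but not $V$, and the split-off rank-one summand is exactly what carries the Hasse obstruction, so $T_\flat$ represents the nearby space $\BV_\flat$ but not $V_\flat$; hence $F_{T_\flat}(1,\cdot)=0$ on the Schwartz functions that live on $V_\flat$. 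Differentiating the factorizations at $X=1$ by Leibniz and using these vanishings gives $\alpha'_T(0,\phi^{[h]}_{a,b})=R_h(1)\,\alpha'_{T_\flat}(0,\phi^{[h']}_{a-1,b})$ and $\alpha_T(0,\phi^{[t]}_{a,b})=R_t(1)\,\alpha_{T_\flat}(0,\phi^{[t']}_{a-1,b})$, while the same factorization applied to $S^{[h]}_{a,b}$ (whose first, resp.\ last, entry is the scalar being peeled) together with Lemma \ref{lem:local density volumn} gives $\alpha_{S^{[h]}_{a,b}}(0,\phi^{[h]}_{a,b})=R_h(1)\,\alpha_{S^{[h']}_{a-1,b}}(0,\phi^{[h']}_{a-1,b})$. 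It remains to check the transformation law $R_h(1)\,\beta^{[h']}_{a-1,b}(t')=R_t(1)\,\beta^{[h]}_{a,b}(t)$ for the error coefficients; I would get this by running the same Leibniz computation on the defining linear system of Proposition \ref{prop:unique-error-coeff} evaluated at the matrices $S^{[s]}_{a,b}$: after dividing through by $R_h(1)$ it becomes precisely the defining system for the reduced $\beta$'s evaluated at $S^{[s']}_{a-1,b}$, and the transformation law follows from the uniqueness (triangular invertibility) there. Plugging everything back, all prefactors cancel in the quotient and $\PDen(T,\phi^{[h]}_{a,b})=\PDen(T_\flat,\phi^{[h']}_{a-1,b})$; error terms $\phi^{[t]}_{a,b}$ that vanish identically drop out of both sides without affecting anything.

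The hard part will be part 1(ii), the one genuinely exceptional case $h=n$ with $\val(t_1)=1$. Here Theorem \ref{prop:recursion} does not apply directly, and one first extends it: writing $u_1=\varpi u_1'$ with $u_1'\in L^{[n],\sharp}$ of norm valuation $-1$ and applying Lemma \ref{lem:lattice-orth}(2) to the vertex lattice $L^{[n]}$ — which now splits off a rank-one summand whose complement has type $n-1$ — yields $F_T(X,\phi^{[s]}_{a,b})=R_s(X)\,F_{T_\flat}(X,\phi^{[s-1]}_{a-1,b})$. The obstruction is that $R_s(1)$ can now vanish, most visibly at $s=0$, where $L^{[0]}$ is self-dual and represents no vector of odd norm valuation; consequently the term-by-term Leibniz reduction breaks down and the identity $\PDen(T,\phi^{[n]}_{a,b})=\PDen(T_\flat,\phi^{[n-1]}_{a-1,b})$ can fail coefficient by coefficient. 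The remedy is to work only with the whole combination $\alpha^{\modi}_T(0,\phi^{[n]}_{a,b})$: its recursion produces a linear combination of $\alpha'_{T_\flat}(0,\phi^{[n-1]}_{a-1,b})$ and the $\alpha_{T_\flat}(0,\phi^{[s]}_{a-1,b})$ which, by the vanishing of $\alpha^{\modi}_{S^{[s']}_{a,b}}(0,\phi^{[n]}_{a,b})$ together with the factorization at those matrices, vanishes at every reduced evaluation point $S^{[s']}_{a-1,b}$; by the uniqueness in Proposition \ref{prop:unique-error-coeff} this combination must equal $\alpha^{\modi}_{T_\flat}(0,\phi^{[n-1]}_{a-1,b})$ up to the volume factor, giving the identity for the entire analytic function. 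Tracking exactly which prefactors $R_s$ vanish at $X=1$ and confirming that the needed vanishing at the reduced points is inherited is the delicate bookkeeping I expect to be the crux.
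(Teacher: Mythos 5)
Your treatment of cases 1(i), 2(i) and 2(ii) is essentially the paper's own argument: apply Theorem \ref{prop:recursion} to each term, use that $\alpha_{T_\flat}(0,\cdot)$ vanishes on the reduced function living on $V_\flat$ to kill the Leibniz cross-term, factor the volume via Lemma \ref{lem:local density volumn}, and match the error coefficients by evaluating at $T_\flat=S^{[t]}_{a-1,b}$ (so $T=S^{[t]}_{a,b}$) and invoking the uniqueness in Proposition \ref{prop:unique-error-coeff}. That part is fine, including the observation that the $t=0$ error term in 2(i) vanishes identically.

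Case 1(ii) is where your proposal has a genuine gap. Your remedy of "working with the whole combination $\alpha^{\modi}_T$" still presupposes step (A): that \emph{each} error term $\alpha_T(r,\phi^{[t]}_{a,b})$, $t<n$, factors as $R_t(X)\,F_{T_\flat}(X,\phi^{[t-1]}_{a-1,b})$ when $\val(t_1)=1$. This factorization is not available and is false in general. The trick $u_1=\varpi u_1'$ with $u_1'\in L^\sharp$ of norm valuation $-1$ works only for the $\varpi$-modular lattice $L^{[n]}$ (where $L^{[n],\sharp}=\varpi^{-1}L^{[n]}$); for a type-$t$ vertex lattice with $t<n$, a norm-valuation-one vector $u_1\in L^{[t]}$ is not $\varpi$ times an element of $L^{[t],\sharp}$, the Gram--Schmidt coefficient $(x,u_1)/(u_1,u_1)$ need not be integral, and $\langle u_1\rangle$ need not split off orthogonally (already a self-dual lattice of rank $\ge 2$ represents odd valuations without any orthogonal splitting — so the obstruction is not that $R_s(1)=0$, but that the change of variables in the proof of Theorem \ref{prop:recursion} does not preserve $\charfun_{L^{[t]}}$). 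A second problem is step (B)/(C): for $t<n$ the test matrices $S^{[t]}_{a,b}$ have first diagonal entry $1$, not $\varpi$, so they are not obtained by bordering the reduced test matrices $S^{[t']}_{a-1,b}$ with a valuation-one scalar, and the uniqueness argument of Proposition \ref{prop:unique-error-coeff} cannot be run at those points as you propose.

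The paper closes this case by a different route: it imports the duality theorem \cite[Theorem 3.16]{Cho22}, which relates $\PDen(B,\phi^{[n]}_{2n-b,b})$ to $\PDen(B^{\vee_b},\phi^{[n]}_{b,2n-b})$ with $B^{\vee_b}$ built from $\varpi T_{22}$ and $\varpi^{-1}T_{11}$. Combining this with cases 1(i) and 2(i) yields the dual relation \eqref{equ:dual-relation}, under which peeling a valuation-one vector at type $n$ becomes peeling a valuation-zero vector at type $0$, where 1(i) applies. You would need this (or an equivalent substitute) to complete 1(ii); your remark that term-wise cancellation fails for $h=n$ is correct and is exactly the paper's Remark on the pathological case, but the fix requires the external duality input rather than bookkeeping of vanishing prefactors.
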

\begin{proof}  
By Theorem \ref{prop:recursion}, for case (1)(i), we have for any $0\leq t\leq n$,
\begin{equation*}
    \alpha_T(r, \phi^{[t]}_{a,b}) = \alpha_{t_1}(r, \charfun_{L^{[t]}}) \alpha_{T_\flat}(r, \phi^{[t]}_{a-1,b}),\quad\text{and}\quad\alpha'_T(0, \phi^{[t]}_{a,b}) = \alpha_{t_1}(0, \charfun_{L^{[t]}}) \alpha'_{T_\flat}(0, \phi^{[t]}_{a-1,b}).
\end{equation*}  
Therefore, we obtain  
\begin{align}
    \PDen(T, \phi_{a,b}^{[h]}) &= \frac{1}{\alpha_{S^{[h]}_{a,b}}(0, \phi_{a,b}^{[h]})} \Biggl(
    \alpha_T'(0, \phi_{a,b}^{[h]}) - \sum_{\substack{0 \leq t < h \\ t \equiv h-1 \,\mathrm{mod}\, 2}} \beta_{a,b}^{[h]}(t) \alpha_T(0, \phi_{a,b}^{[t]})
    \Biggr) \notag \\  
    &= \frac{\alpha_{t_1}(0, \charfun_{L^{[h]}})}{\alpha_{1}(0, \charfun_{L^{[h]}}) \alpha_{S^{[h]}_{a-1,b}}(0, \phi_{a-1,b}^{[h]})} \Biggl(
    \alpha_{T_\flat}'(0, \phi_{a-1,b}^{[h]}) - \sum_{\substack{0 \leq t < h \\ t \equiv h-1 \,\mathrm{mod}\, 2}} \frac{\alpha_{t_1}(0,1_{L^{[t]}})}{\alpha_{t_1}(0,1_{L^{[h]}})}\beta_{a,b}^{[h]}(t) \alpha_{T_\flat}(0, \phi_{a-1,b}^{[t]})
    \Biggr) \notag \\  
    &= \frac{1}{\alpha_{S^{[h]}_{a-1,b}}(0, \phi_{a-1,b}^{[h]})} \Biggl(
    \alpha_{T_\flat}'(0, \phi_{a-1,b}^{[h]}) - \sum_{\substack{0 \leq t < h \\ t \equiv h-1 \,\mathrm{mod}\, 2}} \frac{\alpha_{t_1}(0,1_{L^{[t]}})}{\alpha_{t_1}(0,1_{L^{[h]}})}\beta_{a,b}^{[h]}(t) \alpha_{T_\flat}(0, \phi_{a-1,b}^{[t]})
    \Biggr).\label{equ:pre-cancellation}
\end{align}  

Notice that \eqref{equ:pre-cancellation} has the same form as $\alpha(T_{\flat}, \phi^{[h]}_{a-1,b})$ in \eqref{equ:KR second form}, differing only in the unknown coefficients. When $T_\flat = S_{a-1,b}^{[t]}$ and $t_{12} = t_{21} = 0$, we have $T = S_{a,b}^{[t]}$, hence the unknown coefficients align according to Proposition \ref{prop:unique-error-coeff}. Case (2)(ii) follows from case (1)(i) since $L = L^\sharp$ when $h = 0$ and there is no error term to consider.  

For case (2)(i), by a similar computation, we obtain  
\begin{equation}\label{equ:pre-cancellation-II}
    \PDen(T, \phi_{a,b}^{[h]}) = \frac{1}{\alpha_{S^{[h-1]}_{a,b-1}}(0, \phi_{a,b-1}^{[h-1]})} \Biggl(
    \alpha_{T_\flat}'(0, \phi_{a,b-1}^{[h-1]}) - \sum_{\substack{0 \leq t-1 < h-1 \\ t-1 \equiv h-2 \,\mathrm{mod}\, 2}} \beta_{a,b}^{[h]}(t) \alpha_{T_\flat}(0, \phi_{a,b-1}^{[t-1]})
    \Biggr).
\end{equation}  
For the term $t = 0$, there is no vector with norm $-1$, hence the corresponding term vanishes.  

It remains to verify case (1)(ii). Let  
\begin{equation*}
    B = \begin{pmatrix}
    I_n & 0 & 0 \\  
    0 & T_{11} & T_{12} \\  
    0 & T_{21} & T_{22}
    \end{pmatrix}, \quad  
    B^{\vee_b} = \begin{pmatrix}
    \varpi T_{22} & 0 & T_{21} \\  
    0 & \varpi^{-1} I_n & 0 \\  
    T_{12} & 0 & \varpi^{-1} T_{11}
    \end{pmatrix},
\end{equation*}  
where $T_{11}$ is an $(n-b) \times (n-b)$ matrix, $T_{22}$ is a $b \times b$ matrix, and $T_{12}$ is an $(n-b) \times b$ matrix with $T_{21} = \prescript{t}{}{\ov{T_{12}}}$.  

Applying \cite[Theorem 3.16]{Cho22}\footnote{The lattice $L^\vee$ in \cite[Theorem 3.16]{Cho22} corresponds to our lattice $L$.} and rewriting the summation using $\PDen$, we obtain the equality  
\begin{equation*}\label{equ:cho duality}
    \PDen(B, \phi^{[n]}_{2n-b,b}) \cdot \alpha_{S^{[n]}_{2n-b,b}}(0, \phi^{[n]}_{2n-b,b}) =  
    \PDen(B^{\vee_b}, \phi^{[n]}_{b,2n-b}) \cdot\alpha_{S^{[n]}_{b,2n-b}}(0, \phi^{[n]}_{b,2n-b}).
\end{equation*}  
Applying case (1)(i) and (2)(i) to \eqref{equ:cho duality}, we obtain the dual relation for $\phi^{[n]}_{n-b,b}$:  
\begin{equation}\label{equ:dual-relation}
    \PDen \left( \begin{pmatrix} T_{11} & T_{12} \\ T_{21} & T_{22} \end{pmatrix}, \phi^{[n]}_{n-b,b} \right) =  
    \frac{\alpha_{S^{[n]}_{b,2n-b}}(0, \phi^{[n]}_{b,2n-b})}{\alpha_{S^{[n]}_{2n-b,b}}(0, \phi^{[n]}_{2n-b,b})}  
    \PDen \left( \begin{pmatrix} \varpi T_{22} & T_{21} \\ T_{12} & \varpi^{-1} T_{11} \end{pmatrix}, \phi^{[0]}_{b,n-b} \right).
\end{equation}  

Applying Theorem \ref{prop:recursion}, we have that  
\begin{align*}
\frac{\alpha_{S^{[n]}_{b,2n-b}}(0,\phi^{[n]}_{b,2n-b})}{\alpha_{S^{[n]}_{2n-b,b}}(0,\phi^{[n]}_{2n-b,b})}={}&\frac{\alpha_1(0,\charfun_{L^{[n]}_{2n}})\alpha_{\phi^{-1}}(0,\charfun_{L^{[n]}_{2n-1}})\alpha_{S^{[n-1]}_{b-1,2n-b-1}}(0,\phi^{[n-1]}_{b-1,2n-b-1})}{\alpha_1(0,\charfun_{L^{[n]}_{2n}})\alpha_{\phi^{-1}}(0,\charfun_{L^{[n]}_{2n-1}})\alpha_{S^{[n-1]}_{2n-b-1,b-1}}(0,\phi^{[n-1]}_{2n-b-1,b-1})}\\
={}&\frac{\alpha_{S^{[n-1]}_{b-1,2n-b-1}}(0,\phi^{[n-1]}_{b-1,2n-b-1})}{\alpha_{S^{[n-1]}_{2n-b-1,b-1}}(0,\phi^{[n-1]}_{2n-b-1,b-1})}.
\end{align*}
Applying this to \eqref{equ:dual-relation}, we have 
\begin{equation}\label{equ:dual-relation-second}
    \PDen \left( \begin{pmatrix} T_{11} & T_{12} \\ T_{21} & T_{22} \end{pmatrix}, \phi^{[n]}_{n-b,b} \right) =  
    \frac{\alpha_{S^{[n-1]}_{b-1,2n-b-1}}(0,\phi^{[n-1]}_{b-1,2n-b-1})}{\alpha_{S^{[n-1]}_{2n-b-1,b-1}}(0,\phi^{[n-1]}_{2n-b-1,b-1})} 
    \PDen \left( \begin{pmatrix} \varpi T_{22} & T_{21} \\ T_{12} & \varpi^{-1} T_{11} \end{pmatrix}, \phi^{[0]}_{b,n-b} \right).
\end{equation}  

In our case, we take (where $T_{11}$ and $t_1$ are not necessary identical)
\begin{equation*}
  T=\begin{pmatrix}
T_{11}&T_{12}\\ T_{21}& T_{22}
    \end{pmatrix}=\begin{pmatrix}
t_1&t_{12}\\ t_{21}& T_{2}
    \end{pmatrix}.
\end{equation*}
We compare equation \eqref{equ:dual-relation-second} for $\phi^{[n]}_{n-b,b}$ to equation \eqref{equ:dual-relation} for $\phi^{[n-1]}_{n-b-1,b}$.
Suppose $\val(t_1)=1$, then $\val(\varpi^{-1}t_1)=0$. Applying case (1)(i) to the right-hand side of \eqref{equ:dual-relation}, we obtain
\begin{equation*}
    \PDen(T,\phi^{[n]}_{n-b,b})=\PDen(T_{\flat},\phi^{[n-1]}_{n-b-1,b}).
\end{equation*}
This concludes the proof of case (1)(ii).  
\end{proof}

We reformulate Theorem \ref{thm:ana-cancel} to align with the geometric cancellation law Proposition \ref{prop:geo-cancel}.

\begin{corollary}\label{cor:ana-cancel}
Let $\phi^{[h]}=\phi^{[h]}_\bw\in\CS(V^{[h]})$ be an admissible Schwartz function and let $\bu=(u_1,\cdots,u_n)\in \BV^n$ be vectors that are linearly independent.
For any fixed $1 \leq i \leq n$, define:
\begin{itemize}
    \item $\BV_{\flat} := \langle u_i \rangle^\perp$ and set $u_{j,\flat}$ being the orthogonal projection of $u_j$ for $j\neq i$.
    \item $\bu_{\flat} := (u_{1,\flat}, \dots, \wh{u_{i,\flat}}, \dots, u_{n,\flat})\in \BV_\flat^{n-1}$.
    \item $\bw_\flat := (\bw_1, \dots, \wh{\bw_i}, \dots, \bw_n)\in\{\CZ,\CY\}^{n-1}$.
\end{itemize}
Suppose we are in one of the following situations:
\begin{itemize}
\item $\text{When $\bw_i=\CZ$ and }\val(u_i,u_i)=\left\{\begin{array}{ll}
0&\text{if }h\neq n;\\
1&\text{if }h=n.
\end{array}\right.$
\item $\text{When $\bw_i=\CY$ and }\val(u_i,u_i)=\left\{\begin{array}{ll}
-1&\text{if }h\neq 0;\\
0&\text{if }h=0.
\end{array}\right.$
\end{itemize}
Then we have $\PDen(\bu,\phi_{\bw}^{[h]})=\PDen(\bu_{\flat},\phi_{\bw_\flat}^{[h]})$.\qed

\end{corollary}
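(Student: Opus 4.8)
The plan is to deduce the corollary from Theorem~\ref{thm:ana-cancel}, of which it is merely a coordinate-free reformulation: all the genuine analytic content is already contained in the recursion formula Theorem~\ref{prop:recursion} and in Theorem~\ref{thm:ana-cancel} itself, and what remains is to translate the block-matrix identities there into the language of vectors and their orthogonal projections. The only step that is not purely formal is the invariance of $\PDen(\bu,\phi_{\bw}^{[h]})$ under a reordering of the entries of $\bu$, which I would dispose of first.

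For a permutation $\sigma$ of $\{1,\dots,n\}$, simultaneously reordering the $n$ vectors of $\bu$ and the $n$ tensor factors of $\phi_{\bw}^{[h]}=\bigotimes_i\phi_i$ replaces the moment matrix $T=(\bu,\bu)$ by a conjugate $P^{-1}TP$ by a permutation matrix $P$ and replaces $\phi_{\bw}^{[h]}$ by $\phi_{\sigma\bw}^{[h]}$; an elementary change of variables in Definition~\ref{def:weighted local density function} then gives $\alpha_{P^{-1}TP}(r,\phi_{\sigma\bw}^{[h]})=\alpha_T(r,\phi_{\bw}^{[h]})$ for every $r\ge 0$, hence the same identity for the derivative at $X=1$. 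Since the error coefficients $\beta^{[h]}_{\bw,t}$ depend only on the numbers $a,b$ of $\CZ$'s and $\CY$'s in $\bw$ (Definition~\ref{def:error-coeff}), and $S^{[h]}_{\bw}$ transforms in the same way (Corollary~\ref{prop:unique-error-coeff-any}), it follows that $\PDen(\bu,\phi_{\bw}^{[h]})$ is invariant under such reorderings. Therefore I may permute the coordinates so that the distinguished index $i$ is the first coordinate when $\bw_i=\CZ$ and the last coordinate when $\bw_i=\CY$, and simultaneously arrange $\bw$ to be of the standard form $(\CZ^a,\CY^b)$ used in Theorem~\ref{thm:ana-cancel}.

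With this normalization in place, the key observation is that the Schur complement appearing in Theorem~\ref{thm:ana-cancel} is exactly the moment matrix of the projected vectors. Taking for instance $\bw_i=\CZ$ with $i$ the first coordinate and writing $T=\begin{psmallmatrix}t_1&t_{12}\\ t_{21}&T_2\end{psmallmatrix}$ with $t_1=(u_1,u_1)$, the Gram--Schmidt identity $(u_{j,\flat},u_{k,\flat})=(u_j,u_k)-(u_j,u_1)(u_1,u_1)^{-1}(u_1,u_k)$ identifies $(\bu_{\flat},\bu_{\flat})$ with $T_\flat=T_2-t_1^{-1}t_{21}t_{12}$, and $\val(t_1)=\val(u_1,u_1)$ turns the standing hypothesis on $\val(u_i,u_i)$ into the hypothesis $\val(t_1)\in\{0,1\}$ of Theorem~\ref{thm:ana-cancel}(1); the case $\bw_i=\CY$ is entirely parallel, using the bottom-right block and Theorem~\ref{thm:ana-cancel}(2). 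Both sides of the asserted identity are well defined, since $\bu\in\BV^n$ linearly independent forces $T$ to represent $\BV$, and $\bu_\flat\in\BV_\flat^{n-1}$ with $\BV_\flat=\langle u_i\rangle^\perp$ forces $T_\flat$ to represent $\BV_\flat$. It then remains to invoke the appropriate case of Theorem~\ref{thm:ana-cancel} --- its case (1)(i) when $\bw_i=\CZ$ and $h\ne n$, (1)(ii) when $\bw_i=\CZ$ and $h=n$, (2)(i) when $\bw_i=\CY$ and $h\ne 0$, and (2)(ii) when $\bw_i=\CY$ and $h=0$ --- and, using the identification of $T_\flat$ with $(\bu_\flat,\bu_\flat)$ and undoing the reordering, to rewrite the resulting equality as $\PDen(\bu,\phi_{\bw}^{[h]})=\PDen(\bu_{\flat},\phi_{\bw_\flat}^{[h]})$ (with the type index as recorded in the relevant case of Theorem~\ref{thm:ana-cancel}, matching the geometric cancellation law Proposition~\ref{prop:geo-cancel}).

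I do not anticipate a serious obstacle: once Theorem~\ref{thm:ana-cancel} is in hand the argument is essentially bookkeeping. The two points deserving care are the permutation-equivariance of $\PDen$ --- whose only delicate aspect is that the error coefficients $\beta^{[h]}_{\bw,t}$ transform correctly, which is precisely why they were packaged in terms of the unordered data $(a,b)$ --- and the drop of the type index from $h$ to $h-1$ in the two boundary cases ($\bw_i=\CZ$ with $h=n$, and $\bw_i=\CY$ with $h\neq 0$), where the reduction lands in $\CN_{n-1}^{[h-1]}$ rather than $\CN_{n-1}^{[h]}$.
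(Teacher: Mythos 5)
Your proposal is correct and follows the same route the paper intends: the corollary is stated with \qed precisely because it is the coordinate-free reformulation of Theorem \ref{thm:ana-cancel}, and your two supporting observations (permutation-equivariance of $\PDen$, which works because the $\beta^{[h]}_{\bw,t}$ and $S^{[h]}_{\bw}$ are packaged via the unordered data $(a,b)$, and the identification of the Schur complement $T_\flat$ with the moment matrix $(\bu_\flat,\bu_\flat)$) are exactly the bookkeeping the paper suppresses. Your reading of the type index on the right-hand side — $h$ in the generic cases, dropping to $h-1$ when $\bw_i=\CZ$ with $h=n$ or $\bw_i=\CY$ with $h\neq 0$, as dictated by the corresponding cases of Theorem \ref{thm:ana-cancel} and by Proposition \ref{prop:geo-cancel} — is the correct one.
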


Combining Proposition \ref{prop:geo-cancel} and Corollary \ref{cor:ana-cancel}, we obtain the following cancellation law.

\begin{proposition}\label{lem:cancellation law}
Let $\phi^{[h]} = \phi^{[h]}_\bw \in \CS(V^{[h]})$ be an admissible Schwartz function, and let $\bu = (u_1, \dots, u_n) \in \BV^n$ be vectors that are linearly independent.
For any fixed $1 \leq i \leq n$, define:
\begin{itemize}
    \item $\BV_{\flat} := \langle u_i \rangle^\perp$ and set $u_{j,\flat}$ being the orthogonal projection of $u_j$ for $j\neq i$.
    \item $\bu_{\flat} := (u_{1,\flat}, \dots, \wh{u_{i,\flat}}, \dots, u_{n,\flat})\in \BV_\flat^{n-1}$.
    \item $\bw_\flat := (\bw_1, \dots, \wh{\bw_i}, \dots, \bw_n)\in\{\CZ,\CY\}^{n-1}$.
\end{itemize}
Suppose we are in one of the following situations:
\begin{itemize}
\item $\text{When $\bw_i=\CZ$ and }\val(u_i,u_i)=\left\{\begin{array}{ll}
0&\text{if }h\neq n;\\
1&\text{if }h=n.
\end{array}\right.$
\item $\text{When $\bw_i=\CY$ and }\val(u_i,u_i)=\left\{\begin{array}{ll}
-1&\text{if }h\neq 0;\\
0&\text{if }h=0.
\end{array}\right.$
\end{itemize}
Then the equality $\Int(\bu,\phi_{\bw}^{[h]})=\PDen(\bu,\phi_{\bw}^{[h]})$ follows from
$\Int(\bu_{\flat},\phi_{\bw_\flat}^{[h]})=\PDen(\bu_{\flat},\phi_{\bw_\flat}^{[h]})$.\qed
\end{proposition}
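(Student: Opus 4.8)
The plan is to deduce this purely formally, by concatenating the geometric cancellation law with the analytic one; the statement amounts to saying that the Kudla--Rapoport identity is compatible with these two parallel reductions, so no new computation is involved. The only thing demanding genuine care is the bookkeeping of the reduced data on the two sides.

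First I would invoke Proposition~\ref{prop:geo-cancel}: under exactly the stated hypotheses on the pair $(\bw_i,\val(u_i,u_i))$, it gives $\Int(\bu,\phi_{\bw}^{[h]})=\Int(\bu_{\flat},\phi_{\bw_\flat}^{[h']})$, an arithmetic intersection number on the lower-dimensional Rapoport--Zink space $\CN_{n-1}^{[h']}$ attached to the orthogonal complement $\BV_\flat=\langle u_i\rangle^{\perp}$, where the reduced type $h'$ (equal to $h$ or $h-1$, according as the $u_i$-direction contributes $0$ or $1$ to the type) and the split/non-split character of $V_\flat$ are the ones recorded there. This reduction is underpinned by Lemma~\ref{lem:lattice-orth}, which splits the ambient vertex lattice as $L^{[h]}=\langle u_i\rangle\obot L_\flat$ or $\langle\varpi u_i\rangle\obot L_\flat$ (and likewise for the dual), and thereby identifies the restriction of each remaining divisor $\CZ(u_j,\phi_j)$ with $\CZ(u_{j,\flat},\phi_{j,\flat})$ on the embedded copy of $\CN_{n-1}^{[h']}$.

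Next I would invoke Corollary~\ref{cor:ana-cancel}, valid under the same hypotheses, which gives the parallel identity $\PDen(\bu,\phi_{\bw}^{[h]})=\PDen(\bu_{\flat},\phi_{\bw_\flat}^{[h']})$ on the density side. The crucial point is that this reduction, which goes through Theorem~\ref{prop:recursion}, uses the very same orthogonal decomposition furnished by Lemma~\ref{lem:lattice-orth}; consequently the reduced Schwartz function $\phi_{\bw_\flat}^{[h']}$, the reduced hermitian space $V_\flat$ together with its Hasse invariant, and the reduced type $h'$ agree literally with those appearing on the geometric side. One should also check that $\bu_\flat$ remains a collection of linearly independent vectors --- since $u_i$ has nonzero norm, $\langle u_i\rangle^{\perp}\cap\Span_F(\bu)$ is $(n-1)$-dimensional and is spanned by the projections $u_{j,\flat}$ --- so that the lower-dimensional Kudla--Rapoport identity $\Int(\bu_\flat,\phi_{\bw_\flat}^{[h']})=\PDen(\bu_\flat,\phi_{\bw_\flat}^{[h']})$ assumed in the hypothesis is applicable. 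Chaining the three equalities
\[
\Int(\bu,\phi_{\bw}^{[h]})=\Int(\bu_{\flat},\phi_{\bw_\flat}^{[h']})=\PDen(\bu_{\flat},\phi_{\bw_\flat}^{[h']})=\PDen(\bu,\phi_{\bw}^{[h]})
\]
then yields the claim.

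The argument carries no serious obstacle: it is a formal concatenation of Proposition~\ref{prop:geo-cancel} and Corollary~\ref{cor:ana-cancel}. The one point I would be most careful to state cleanly is the matching of the reduced data, i.e.\ that ``$\phi_{\bw_\flat}^{[h']}$'' names literally the same Schwartz function on literally the same space $V_\flat$ in both reductions; since both trace back to the single decomposition of Lemma~\ref{lem:lattice-orth}, this is automatic. Alternatively, one could reorganize the proof by first fixing that orthogonal decomposition once and then reading off the geometric and analytic consequences side by side, which makes the coincidence of the reduced data manifest.
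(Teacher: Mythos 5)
Your proposal is correct and is exactly the paper's argument: the proposition is obtained by concatenating Proposition~\ref{prop:geo-cancel} with Corollary~\ref{cor:ana-cancel}, both of which rest on the single orthogonal decomposition of Lemma~\ref{lem:lattice-orth}, so the reduced data on the geometric and analytic sides coincide and the three equalities chain together. Your explicit tracking of the reduced type $h'$ (equal to $h$ or $h-1$ according to the case) is in fact slightly more careful than the paper's uniform notation in the statements of those two results.
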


The following results will be used in Proposition \ref{prop:dual-relation}, where we analyze the error terms after applying the Fourier transform.
\begin{proposition}\label{prop:Fourier-transform-for-error}
For any integers $0\leq h\leq n$ and $0\leq a \leq n-1$, we have the equality:
$$
\beta^{[h]}_{a+1,b-1}(t)=\frac{\vol(L^{[t]})}{\vol(L^{[h]})}\cdot \beta^{[h]}_{a,b},
$$
where $\vol(L) = |L^\vee : L|^{1/2}$ denotes the volume of the vertex lattice L in the hermitian space $V$ or $\BV$, computed with respect to the self-dual Haar measure.
\end{proposition}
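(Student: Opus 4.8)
There are two natural routes. The conceptual one exploits the fact that, by definition, $\phi^{[t]}_{a+1,b-1}$ is proportional to the partial Fourier transform of $\phi^{[t]}_{a,b}$ in the $(a{+}1)$-st slot, the proportionality constant being a power of $\vol(L^{[t]})$ (since $\widehat{\charfun_{L^{[t],\sharp}}}=\vol(L^{[t],\sharp})^{\pm1}\charfun_{L^{[t]}}$). Granting the behaviour of the arithmetic density $\PDen(-,\phi)$ under a partial Fourier transform of $\phi$ — the companion fact exploited in Proposition~\ref{prop:dual-relation} — and using that the defining equations of the $\beta$'s are themselves Fourier-equivariant, the coefficients must rescale by exactly $\vol(L^{[t]})/\vol(L^{[h]})$. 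I sketch instead a self-contained argument through the linear system of Proposition~\ref{prop:unique-error-coeff}.

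Fix $h$ with $1\le h\le n$ (the case $h=0$ is vacuous) and $a+b=n$ with $b\ge1$. By Proposition~\ref{prop:unique-error-coeff}, $\bigl(\beta^{[h]}_{a,b}(t)\bigr)_t$ is the unique solution of the triangular system with coefficient matrix $M=\bigl(\alpha_{S^{[t']}_{a,b}}(0,\phi^{[t]}_{a,b})\bigr)_{t',t}$ and right-hand side $v=\bigl(\alpha'_{S^{[t']}_{a,b}}(0,\phi^{[h]}_{a,b})\bigr)_{t'}$, the indices $t,t'$ running over the admissible set $0\le\cdot<h$, $\cdot\equiv h-1\pmod 2$; write $M',v'$ for the analogous data of $(a+1,b-1)$, and $D=\diag\bigl(\vol(L^{[t']})\bigr)_{t'}$. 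A direct computation using invertibility of $M$ shows that the asserted identity follows once one has the \emph{coherent scaling relations}: for all admissible $t'$, all $t\in\{0,\dots,h-1\}\cup\{h\}$, and all $r\ge0$,
\begin{equation}\label{equ:cohscal-fterror}
\alpha_{S^{[t']}_{a+1,b-1}}\!\bigl(r,\phi^{[t]}_{a+1,b-1}\bigr)=\frac{\vol(L^{[t']})}{\vol(L^{[t]})}\,\alpha_{S^{[t']}_{a,b}}\!\bigl(r,\phi^{[t]}_{a,b}\bigr).
\end{equation}
Indeed, the instances $t<h$ at $r=0$ say $M'=DMD^{-1}$, while the instance $t=h$, compared at the level of the polynomial in $X=(-q)^{-r}$ and differentiated at $X=1$ (legitimate since the scalar in \eqref{equ:cohscal-fterror} is independent of $r$), says $v'=\vol(L^{[h]})^{-1}Dv$; hence $M'\beta'=v'$ forces $D^{-1}\beta'=\vol(L^{[h]})^{-1}\beta$, i.e.\ $\beta^{[h]}_{a+1,b-1}(t)=\tfrac{\vol(L^{[t]})}{\vol(L^{[h]})}\beta^{[h]}_{a,b}(t)$. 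One uses throughout that $\vol(L^{[s]})/\vol(L^{[s']})$ depends only on $s-s'$ and not on the ambient dimension, which is immediate from $\vol(L)=|L^\vee:L|^{1/2}$.

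I would prove \eqref{equ:cohscal-fterror} by induction on $n$, and for fixed $n$ on $b$. Since $S^{[t']}$ is diagonal, one may move the differing $(a{+}1)$-st slot to the last position; the remaining $(n-1)$-variable Schwartz function, namely $\phi^{[t]}_{a,b-1}$, is then the same on both sides. If $b\ge2$ both moment matrices still carry a slot of valuation $-1$ attached to a $\CY$-component; peeling it off by Theorem~\ref{prop:recursion}(2) produces the \emph{same} one-dimensional factor on the two sides and reduces \eqref{equ:cohscal-fterror} to its rank-$(n-1)$ instance, with $b$ replaced by $b-1$ and $(t',t)$ by $(t'-1,t-1)$, which holds by the inductive hypothesis since the volume ratio is stable under this shift. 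If $b=1$, then because $t'\le h-1\le n-1$ the matrix $S^{[t']}$ has a slot of valuation $0$ carrying a $\CZ$-component, and — provided $h\ne n$, so that the relevant lattice is not $\varpi$-modular — Theorem~\ref{prop:recursion}(1) peels it off, producing a common factor and reducing \eqref{equ:cohscal-fterror} to its rank-$(n-1)$ instance with the \emph{same} $(t',t)$. For the column $t=0$ (present only when $h$ is odd) the right-hand side of \eqref{equ:cohscal-fterror} involves $\phi^{[0]}_{a,b}=\charfun_{L^{[0],n}}$ with $L^{[0]}$ integral, so at $r=0$ both sides vanish unless the relevant moment matrix is integral and equivalent to $1_n$, in which case $S^{[t']}_{a,b}=S^{[t']}_{a+1,b-1}=1_n$ and \eqref{equ:cohscal-fterror} is trivial. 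The base case $n=1$ is the explicit rank-one density computation of \S\ref{sec:base case} (cf.\ Example~\ref{exmp:base case geo}), which also fixes the volume normalization.

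The case left aside, $h=n$, is where I expect the real difficulty: for the column $t=h=n$ the lattice $L^{[n]}$ is $\varpi$-modular, so it has no vector of valuation $0$ or $1$ that splits off and Theorem~\ref{prop:recursion}(1) does not apply — precisely the exceptional phenomenon met in the proof of Theorem~\ref{thm:ana-cancel}(1)(ii). I would handle it as there: use the self-duality of maximal-level weighted densities, \cite[Theorem 3.16]{Cho22}, which exchanges the regularized densities for $\phi^{[n]}_{a,b}$ and $\phi^{[n]}_{b,a}$ under $B\mapsto B^{\vee_b}$, together with Theorem~\ref{prop:recursion}(1),(2) — exactly as in the derivation of \eqref{equ:dual-relation}--\eqref{equ:dual-relation-second} — to trade the $\varpi$-modular slot for an honest slot of valuation $-1$; this reduces \eqref{equ:cohscal-fterror} for $\phi^{[n]}_{a,b}$ to the $h=0$ case for $\phi^{[0]}_{b,a}$, which is already covered by the induction. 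Assembling all cases yields $\beta^{[h]}_{a+1,b-1}(t)=\tfrac{\vol(L^{[t]})}{\vol(L^{[h]})}\,\beta^{[h]}_{a,b}(t)$ for every admissible $t$.
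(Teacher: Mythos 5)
Your linear-algebra reduction from the ``coherent scaling relations'' to the proposition is clean, but that relation is strictly stronger than the proposition and is where the argument breaks. The concrete problem is the range $t'<b$: there $S^{[t']}_{a,b}=S^{[t']}_{a+1,b-1}=\diag(1^{(n-t')},(\varpi^{-1})^{t'})$, which has only $t'$ (not $b$) diagonal entries of valuation $-1$, so $b-t'>0$ of the $\CY$-slots sit over \emph{unit} diagonal entries. Your inductive step asserts that for $b\ge2$ ``both moment matrices still carry a slot of valuation $-1$ attached to a $\CY$-component'' --- already false for $t'=0$ --- and Theorem~\ref{prop:recursion}(2) cannot peel off a $\CY$-slot lying over a unit entry, so the induction does not close. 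Worse, in exactly this range the asserted term-by-term identity looks wrong: take $n=2$, $h=1$, $t'=0$, $t=1$ and compare $(a,b)=(0,2)$ with $(1,1)$. Both moment matrices equal $1_2$, and on $V_{1_2}^{\angler}$ the constraint $(u,u)=1$ forces the $V$-component of any $u\in L^{[1],\sharp\,\angler}$ into $L^{[1]}$ (for $\diag(1,\varpi^{-1})$ the two summands of the norm have different parities of valuation, so integrality of the norm forces the $\varpi^{-1}$-coordinate into $\varpi O_F$); by Theorem~\ref{thm:bertini} the integrands, hence the densities $\alpha_{1_2}(r,\phi^{[1]}_{1,1})$ and $\alpha_{1_2}(r,\phi^{[1]}_{0,2})$, then coincide, whereas your relation demands the nontrivial factor $\vol(L^{[0]})/\vol(L^{[1]})$. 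So the density-level statement cannot be correct as formulated; only an identity at the level of the solved-for coefficients $\beta$ (where the various normalizing factors recombine) can survive.

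This is also why the paper's proof takes a genuinely different route: it never compares individual densities across $(a,b)$. Instead it extracts from \eqref{equ:pre-cancellation} and \eqref{equ:pre-cancellation-II} recursions of the form $\beta^{[h]}_{a,b}(t)=c\cdot\beta^{[h]}_{a-1,b}(t)$ and $\beta^{[h]}_{a,b}(t)=c'\cdot\beta^{[h-1]}_{a,b-1}(t-1)$ relating the rank-$n$ coefficients to rank-$(n-1)$ ones, inducts on $n$ (base case $n=1$ from Proposition~\ref{prop:hironaka-computation}), and treats the single configuration these recursions cannot reach --- $h=n$, comparing $(n-1,1)$ with $(n,0)$ --- by quoting the closed formula for $\beta^{[n]}_{n,0}(t)$ from \cite{Cho-He-Zhang}, rather than re-running the duality of \cite[Theorem~3.16]{Cho22} as you propose. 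To salvage your approach you would need to replace the term-by-term scaling by a statement that holds only after solving the triangular system (equivalently, prove the recursions for the $\beta$'s directly), which is essentially what the paper does.
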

\begin{proof}
There is nothing to prove when $h=0$. To proceed, assume $h\neq 0$.
We prove the claim by induction. For the base case $n=1$ and $h=1$, the statement follows directly from Proposition \ref{prop:hironaka-computation} in the next subsection. 
Assuming the statement holds for all dimension $\leq n-1$, we want to prove it for dimension $n$.
By \eqref{equ:pre-cancellation} and \eqref{equ:pre-cancellation-II}, we see that:
\begin{enumerate}
\item When $h\neq n$, and $1\leq a\leq n$, we have
$$
\beta^{[h]}_{a,b}(t)=\frac{\alpha_{t_1}(0,1_{L^{[t]}})}{\alpha_{t_1}(0,1_{L^{[h]}})}\beta^{[h]}_{a-1,b}(t).
$$
\item When $h\neq 0$, and $0\leq a\leq n-1$, we have 
$$
\beta^{[h]}_{a,b}(t)=\frac{\alpha_{t_1}(0,1_{L^{[t],\sharp}})}{\alpha_{t_1}(0,1_{L^{[h],\sharp}})}\beta^{[h-1]}_{a,b-1}(t-1).
$$
\end{enumerate}
In particular, by induction, the assertion holds in all cases except when $h=n$ and $a=n-1$. In this exceptional case, we have the identity $\beta^{[n]}_{n-1,1}(t)=\beta^{[n-1]}_{n-1,0}(t-1)$, so it suffices to compare $\beta^{[n-1]}_{n-1,0}(t-1)$ with $\beta^{[n]}_{n,0}(t)$. 
By \cite[Corollary 6.7]{Cho-He-Zhang}, we have
\begin{equation*}
\beta^{[n]}_{n,0}(t)=\frac{(-q)^{\frac{(n-t)(n+t+1)}{2}}}{1-(-q)^{-(n-t)}}\cdot \frac{\alpha(S^{[n]}_{n,0},S^{[n]}_{n,0})}{\alpha(S^{[t]}_{n,0},S^{[t]}_{n,0})}
\end{equation*}
where the representation density functions relate to weighted local density functions via \eqref{equ:compare two local density}. The assertion then follows from the identity $\vol(L^{[t]})=q^{t}$.
\end{proof}

\subsection{Base Case}\label{sec:base case}  
We verify Theorem \ref{thm:KR-conj} for the case $n = 1$ as the starting point for the induction in \S \ref{sec:proof}. To apply the result from \cite{Hironaka98}, we recall and use the representation density function from \cite{KR14}.  

For hermitian matrices $S$ and $T$ of sizes $m$ and $n$, respectively, the \emph{representation density function} is defined as
\begin{equation*}
    \alpha(S, T) = \lim_{k \rightarrow \infty} (q^{-k})^{n(2m-n)} \#\left| A_{\varpi^k}(S, T) \right|,
\end{equation*}  
where $q$ is the cardinality of the residue field of $O_{F_0}$, and the term in the limit is defined as
\begin{equation*}
    A_{\varpi^k}(S, T) :=  
    \left\{
    x \in M_{m,n}(O_F / \varpi^k O_F) \mid S[x] \equiv T \mod \varpi^k \Herm_n(O_F)^\vee
    \right\}.
\end{equation*}  

By \cite[Proposition A.6]{Kudla97}, the representation density function relates to the weighted local density function defined in Definition \ref{def:weighted local density function} via the relation
\begin{equation}\label{equ:compare two local density}
    \alpha_T(r, \charfun_{L^n}) = |\mathrm{Nm}_{F/F_0} \det S_0|^{n/2} \alpha(S_r, T).
\end{equation}

We define the derivative of the representation density function $\alpha'(S, T)$ analogously to \eqref{equ:derivative of local density}, see \cite[\S 9]{KR11}.  By direct computation using the formula in \cite{Hironaka98}, we obtain the following result:  
\begin{proposition}\label{prop:hironaka-computation}
\begin{altenumerate}
\item For any integer $\lambda\geq 0$, we have 
$$
\alpha\bigl((1)_{1\times 1},(\varpi^\lambda)_{1\times 1}\bigr)=\left\{\begin{array}{ll}
1-(-q)^{-1}&\lambda\text{ even};\\
0&\lambda\text{ odd}.
\end{array}\right.
$$
Moreover, for any odd $\lambda\geq 0$, we have
$
\alpha'\bigl((1)_{1\times 1},(\varpi^{\lambda})_{1\times 1}\bigr)=(1-(-q)^{-1})\frac{\lambda+1}{2}.
$
\item For any integer $\lambda\geq 0$, we have
$$
\alpha\bigl((\varpi)_{1\times 1},(\varpi^\lambda)_{1\times 1}\bigr)=\left\{\begin{array}{ll}
0&\lambda\text{ even};\\
1+q&\lambda\text{ odd}.
\end{array}\right.
$$
Moreover, for any even $\lambda\geq 0$, we have $\alpha'\bigl((\varpi)_{1\times 1},(\varpi^{\lambda})_{1\times 1}\bigr)=(1+q)\frac{\lambda}{2}+1$.
\item For any integer $\lambda \geq -1$, we have
$$
\alpha\bigl((\varpi^{-1})_{1\times 1},(\varpi^\lambda)_{1\times 1}\bigr)=\left\{\begin{array}{ll}
0&\lambda\text{ even};\\
1+q^{-1}&\lambda\text{ odd}.
\end{array}\right.
$$
Moreover, for any even $\lambda\geq 0$, we have $\alpha'\bigl((\varpi^{-1})_{1\times 1},(\varpi^{\lambda})_{1\times 1}\bigr)=(1+q^{-1})\frac{\lambda}{2}+q^{-1}.$
\end{altenumerate}
\end{proposition}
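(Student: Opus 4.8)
The plan is to reduce all three quantities to representation densities of rank-one hermitian forms and to evaluate these in closed form. Recall that the derivative $\alpha'(S,T)$ of a representation density is defined, following \cite[\S 9]{KR11} (analogously to \eqref{equ:derivative of local density}), by $\alpha'(S,(\varpi^\lambda)) = -\frac{\rd}{\rd X}\big|_{X=1} F_{S,\lambda}(X)$, where $F_{S,\lambda}$ is the polynomial with $F_{S,\lambda}((-q)^{-r}) = \alpha(S_r,(\varpi^\lambda))$ and $S_r := S \oplus 1_r$ is the $r$-fold unimodular deformation; correspondingly $\alpha(S,(\varpi^\lambda)) = F_{S,\lambda}(1)$. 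Since the hyperbolic hermitian plane over the unramified extension $F/F_0$ is isometric to the identity form $1_2$, one has $S_r \cong \diag(s,1,\dots,1)$ with $r$ trailing $1$'s. So it suffices, for each $s \in \{1,\varpi,\varpi^{-1}\}$, to compute $\alpha(\diag(s,1,\dots,1),(\varpi^\lambda))$ explicitly as a function of $r$ and then read off its value and derivative at $(-q)^{-r}=1$.

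First I would dispose of the values, i.e.\ the case $r=0$. Here $\alpha(S,(\varpi^\lambda)) = \lim_k q^{-k}\,\#\{x \in O_F/\varpi^k : s\,x\bar x \equiv \varpi^\lambda \bmod \varpi^k\}$, and writing $x = \varpi^j u$ with $u \in O_F^\times$ reduces the count to counting units $u$ with $u\bar u$ in a prescribed congruence class, which is governed by the surjectivity of the norm map $O_F^\times \to O_{F_0}^\times$ and the order $q^{j-1}(q+1)$ of its kernel modulo $\varpi^j$. Since $\val(x\bar x)$ is always even, the parity of $\lambda$ relative to $\val(s)$ forces vanishing in exactly the asserted cases, and the surviving counts yield the constants $1-(-q)^{-1}$, $1+q$ and $1+q^{-1}$.

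The substantive step is computing $F_{S,\lambda}(X)$ for general $r$. I would analyse a representation $x$ of $\varpi^\lambda$ by $\diag(s,1,\dots,1)$ by peeling off the largest power $\varpi^j$ dividing $x$: after a geometric summation over $0 \le j \le \lfloor\lambda/2\rfloor$ this reduces to counting \emph{primitive} representations, which modulo $\varpi$ is a count of (an)isotropic vectors of non-degenerate hermitian forms over $\BF_{q^2}$ --- the passage from the residue field to $O_F/\varpi^k$ being controlled by the fact that the hermitian norm form is a submersion away from its isotropic cone, so that the relevant fibres over $O_{F_0}/\varpi^k$ have size a fixed power of $q$. Substituting the standard value $q^{2m-1} + (-1)^m(q^m - q^{m-1})$ for the number of isotropic vectors of a rank-$m$ hermitian form over $\BF_{q^2}$ produces the explicit polynomials $F_{S,\lambda}(X)$; one checks $F_{S,\lambda}(1)=0$ in the relevant parity (so that the derivative is defined) and computes $-F_{S,\lambda}'(1)$, obtaining the asserted linear-in-$\lambda$ expressions. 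Equivalently, this is a direct specialisation of Hironaka's closed-form evaluation of local densities of hermitian forms \cite{Hironaka98} to the pair $(\diag(s,1,\dots,1),(\varpi^\lambda))$. Part (3), with $s = \varpi^{-1}$, may alternatively be deduced from part (2) (or part (1)) via the relation between a vertex lattice and its hermitian dual --- i.e.\ by rescaling the form by $\varpi$ --- which also accounts for the shift $\lambda \mapsto \lambda+1$ in the vanishing pattern there; cf.\ Lemma \ref{lem:lattice-orth}.

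The only real difficulty is bookkeeping rather than ideas. One must keep careful track of normalisations: the self-dual Haar measures; the factor $|\Nm_{F/F_0}\det S_0|_{F_0}$ relating the weighted density $\alpha_T(r,\cdot)$ to representation densities in \eqref{equ:compare two local density} and \eqref{equ:relation of weighted local density}; the precise deformation used (whether the rank of $S$ grows by $r$ or by $2r$, and accordingly whether the polynomial variable is $(-q)^{-r}$); and the dual-lattice twist $\Herm_1(O_F)^\vee$ in the definition of $A_{\varpi^k}(S,T)$, which is trivial here because $n=1$ and $p \neq 2$. One must also treat the low boundary values $\lambda = 0$ in parts (1)--(2) and $\lambda \geq -1$ in part (3) on the same footing. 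With all of this pinned down, each of the three statements becomes a finite computation, and as a consistency check the outputs should match the geometric base cases recorded in Example \ref{exmp:base case geo} via the $n=1$ instance of the Kudla--Rapoport conjecture.
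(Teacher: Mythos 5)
Your proposal is correct and takes essentially the same route as the paper: the paper's proof is precisely a specialisation of Hironaka's closed-form evaluation (in the reformulation of \cite[Theorem 3.2]{Sankaran17}) to $\alpha(\diag(s,1,\dots,1),(\varpi^\lambda))$ as a polynomial in $(-q)^{-k}$, followed by evaluation and differentiation at $1$, and it handles part (3) by the same rescaling identity $\alpha(\diag(\varpi^{-1},1_k),(\varpi^\lambda))=\alpha(\diag(\varpi 1_k,1),(\varpi^{\lambda+1}))$ that you invoke. Your additional sketch of an elementary primitive-vector/isotropic-count derivation is a self-contained substitute for citing Hironaka but does not change the substance of the argument.
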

\begin{proof}
We refer the reader to \cite[Theorem 3.2]{Sankaran17} for the relevant notation and statements. A straightforward computation shows that
\begin{equation*}
\prod_j I_j((\alpha),(\lambda))=\left\{\begin{array}{ll}
1&\alpha=0;\\
(-q)^{\alpha-1}(1+(-q))&1\leq \alpha\leq \lambda;\\
(-q)^\lambda&\alpha=\lambda+1.
\end{array}\right.
\end{equation*}
By \cite[Theorem 3.2]{Sankaran17}, for $\lambda=(\lambda)\in\BZ^1_{\succ 0}$ and $\xi=(\xi_i)\in\BZ^{1+k}_{\succ 0}$, we have
\begin{multline*}
\alpha\bigl((\mathrm{diag}(\varpi^{\xi_1},\cdots,\varpi^{\xi_{1+k}})),(\varpi^{\lambda})\bigr)\\
=1+(1+(-q)^{-1})\cdot\sum_{1\leq \alpha\leq \lambda}\Bigl((-1)^{\alpha}(-q)^{-k\alpha+\langle \xi', (1^\alpha)\rangle}\Bigr)+(-1)^{\lambda+1}(-q)^{-1}(-q)^{-k(\lambda+1)+\langle \xi', (1^{\lambda+1})\rangle},
\end{multline*}
where $\xi'$ denotes the conjugate of $\xi$ as Young tableau, and $\langle \xi',\mu'\rangle:=\sum_{i\geq 1}\xi'_i\mu'_i$.

In the case (1), we choose $\xi=(0^{1+k})$ and we have 
$$
\alpha\Bigl(
1_{1+k},(\varpi^{\lambda})_{1\times 1}
\Bigr)=1+(1+(-q)^{-1})\sum_{1\leq \alpha\leq \lambda}((-1)^\alpha (-q)^{-k\alpha})+(-1)^{\lambda+1}(-q)^{-(k+1)(\lambda+1)+\lambda}.
$$

In the case (2), we choose $\xi=(1,0^k)$, and we have
$$
\alpha\Biggl(
\begin{pmatrix}
\varpi &\\&1_k
\end{pmatrix}
,(\varpi^\lambda)_{1\times 1}\Biggr)=1+(1+(-q))\sum_{1\leq \alpha\leq \lambda}\Bigl((-1)^\alpha(-q)^{-k\alpha}\Bigr)+(-1)^{\lambda+1}(-q)^{-k(\lambda+1)}.
$$

In the case (3), we choose $\xi=(1^k,0)$, and we have
$$
\alpha\Biggl(
\begin{pmatrix}
\varpi 1_k &\\&1
\end{pmatrix}
,(\varpi^\lambda)_{1\times 1}\Biggr)=1+(1+(-q)^{-1})\sum_{1\leq \alpha\leq \lambda}\Bigl( (-1)^\alpha(-q)^{-k(\alpha-1)}\Bigr)+(-1)^{\lambda+1}(-q)^{-1}(-q)^{-k\lambda},
$$
and we have 
$$
\alpha\Biggl(
\begin{pmatrix}
\varpi^{-1} &\\&1_k
\end{pmatrix}
,(\varpi^\lambda)_{1\times 1}\Biggr)=\alpha\Biggl(
\begin{pmatrix}
\varpi 1_k &\\&1
\end{pmatrix}
,(\varpi^{\lambda+1})_{1\times 1}\Biggr).
$$
All assertions now follow from direct computation.
\end{proof}

\begin{corollary}\label{cor:KR for n=1}  
Theorem \ref{thm:KR-conj} holds for $n = 1$.  
\end{corollary}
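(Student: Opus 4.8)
The plan is to prove Corollary~\ref{cor:KR for n=1} by an explicit case-by-case computation, matching the geometric formulas of Example~\ref{exmp:base case geo} with the analytic formulas of Proposition~\ref{prop:hironaka-computation}. Since $n=1$ forces $0\le h\le 1$ and the weight vector $\bw$ is a single symbol, there are only three cases to treat: $(h,\bw)=(0,\CZ)$, $(h,\bw)=(1,\CZ)$ and $(h,\bw)=(1,\CY)$; when $h=0$ the lattice $L^{[0]}$ is self-dual, so $\charfun_{L^{[0]}}=\charfun_{L^{[0],\sharp}}$ and the symbols $\CZ,\CY$ yield the same function. In each case I would write $T=(u,u)=(\varpi^{\lambda})$ and use the fact that $\BV_1^{[0]}$ is the non-split hermitian space (forcing $\lambda$ odd) while $\BV_1^{[1]}$ is split (forcing $\lambda$ even); in particular $T$ is never represented by the nearby space $V$, so $\PDen(u,\phi^{[h]}_{\bw})$ is well defined.

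For $h=0$ there is no error term. By Lemma~\ref{lem:local density volumn} and the dictionary \eqref{equ:compare two local density} one has $\vol(K_1^{[0]},dg)=\alpha_{S^{[0]}_{1,0}}(0,\charfun_{L^{[0]}})=\alpha\bigl((1),(1)\bigr)=1-(-q)^{-1}$ by Proposition~\ref{prop:hironaka-computation}(1) (the normalization factor $|\Nm_{F/F_0}\det S_0|_{F_0}$ is trivial here since $L^{[0]}$ is unimodular). For $T=(\varpi^{\lambda})$ with $\lambda$ odd the same inputs give $\alpha'_T(0,\charfun_{L^{[0]}})=\alpha'\bigl((1),(\varpi^{\lambda})\bigr)$, which equals $(1-(-q)^{-1})\tfrac{\lambda+1}{2}$ for $\lambda\ge 0$ and vanishes for $\lambda<0$ (there the relevant polynomial is identically zero, since an integral hermitian lattice cannot represent a value of negative valuation). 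Dividing, $\PDen(u,\charfun_{L^{[0]}})$ equals $\tfrac{\lambda+1}{2}$ for $\lambda\ge0$ and $0$ otherwise, which is $\Int(u,\charfun_{L^{[0]}})$ by Example~\ref{exmp:base case geo}(1).

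For $h=1$ and $\bw=\CZ$ (so $\lambda$ is even) there is one error term, indexed by $t=0$. Its coefficient $\beta^{[1]}_{\CZ,0}$ is determined by the single equation $\alpha'_{S^{[0]}_{1,0}}(0,\charfun_{L^{[1]}})=\beta^{[1]}_{\CZ,0}\,\alpha_{S^{[0]}_{1,0}}(0,\charfun_{L^{[0]}})$ of Proposition~\ref{prop:unique-error-coeff}, which one evaluates with Proposition~\ref{prop:hironaka-computation}(1)--(2) to get an explicit rational number (consistent with the constant of Remark~\ref{rem:error terms} and \cite[Proposition A.5]{Cho22}, up to the normalization in \eqref{equ:compare two local density}). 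One then reads off $\vol(K_1^{[1]},dg)=\alpha_{S^{[1]}_{1,0}}(0,\charfun_{L^{[1]}})$ and $\alpha'_T(0,\charfun_{L^{[1]}})$, $\alpha_T(0,\charfun_{L^{[0]}})$ from Proposition~\ref{prop:hironaka-computation}, and assembles: the error term is chosen precisely so that $\PDen(u,\charfun_{L^{[1]}})$ vanishes at $T=S^{[0]}_{1,0}$ (the case $\lambda=0$), vanishes automatically for $\lambda<0$, and equals $\tfrac{\lambda}{2}$ for $\lambda\ge2$ --- which is $\Int(u,\charfun_{L^{[1]}})$ by Example~\ref{exmp:base case geo}(2). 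Finally, for $h=1$, $\bw=\CY$: since $h=n=1$, Proposition~\ref{prop:special-div}(4) gives $\CY(u)=\CZ(\varpi u)$, so $\Int(u,\charfun_{L^{[1],\sharp}})=\Int(\varpi u,\charfun_{L^{[1]}})$; on the analytic side the identity $\charfun_{L^{[1],\sharp}}(x)=\charfun_{L^{[1]}}(\varpi x)$ (valid because $\varpi L^{[1],\sharp}=L^{[1]}$), a change of variables, and Proposition~\ref{prop:Fourier-transform-for-error} reduce $\PDen(T,\charfun_{L^{[1],\sharp}})$ to $\PDen(\varpi^2 T,\charfun_{L^{[1]}})$, so this case follows from the $\bw=\CZ$ case (alternatively one invokes \cite[Theorem 3.16]{Cho22} as in the proof of Theorem~\ref{thm:ana-cancel}(1)(ii)).

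The step I expect to be the main obstacle is purely bookkeeping: keeping the normalization constants straight. Concretely, one must correctly reconcile the self-dual Haar measure normalization implicit in $\vol(K_1^{[h]},dg)$ with the factors $|\Nm_{F/F_0}\det S_0|_{F_0}$ appearing when passing between the weighted local density $\alpha_T(r,\phi)$ and the representation density $\alpha(S,T)$ of \cite{Hironaka98,Sankaran17} via \eqref{equ:compare two local density} (these factors ultimately cancel in the expression for $\PDen$, but only if applied consistently across all terms), and one must use the split/non-split dichotomy for $\BV_1^{[h]}$ versus its nearby space $V$ correctly, since it fixes the parity of $\lambda=\val(u,u)$ and hence which branch of Example~\ref{exmp:base case geo} and which subcase of Proposition~\ref{prop:hironaka-computation} is in force. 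Once these conventions are pinned down, each of the three cases is a one-line comparison of rational functions of $q$.
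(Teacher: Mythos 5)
Your overall strategy is the same as the paper's: the paper's proof of Corollary~\ref{cor:KR for n=1} is precisely ``combine Example~\ref{exmp:base case geo} with Proposition~\ref{prop:hironaka-computation}'' together with one worked case, and your treatment of $(h,\bw)=(0,\CZ)$ and $(h,\bw)=(1,\CZ)$ reproduces that computation correctly (including the determination of $\beta^{[1]}_{\CZ,0}$ from the single normalization condition at $T=S^{[0]}_{1,0}=(1)$ and the resulting value $\PDen((\varpi^{\lambda}),\charfun_{L^{[1]}})=\lambda/2$).

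The gap is in your third case, $(h,\bw)=(1,\CY)$. The claimed reduction $\PDen(T,\charfun_{L^{[1],\sharp}})=\PDen(\varpi^{2}T,\charfun_{L^{[1]}})$ via the substitution $x\mapsto\varpi x$ does not go through: the weighted density $\alpha_T(r,\phi)$ is an integral over the augmented space $V^{\angler}=V\oplus V_{r,r}$ with the fixed self-dual lattice $L_{r,r}$ in the second factor, and rescaling only the $V$-coordinate does not carry $\charfun_{L^{[1],\sharp}\oplus L_{r,r}}$ to $\charfun_{L^{[1]}\oplus L_{r,r}}$ while replacing $T$ by $\varpi^{2}T$. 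Concretely, Proposition~\ref{prop:hironaka-computation}(2) and (3) show that the relevant polynomials are genuinely different: $\alpha'\bigl((\varpi),(\varpi^{\lambda+2})\bigr)=(1+q)\tfrac{\lambda+2}{2}+1$ whereas $\alpha'\bigl((\varpi^{-1}),(\varpi^{\lambda})\bigr)=(1+q^{-1})\tfrac{\lambda}{2}+q^{-1}$, and these are not related by a $\lambda$-independent constant; moreover the error term for weight $\CY$ is normalized at $S^{[0]}_{0,1}$ with the matched function $\charfun_{L^{[0],\sharp}}$ on $\BV$, which also does not transform under your rescaling. The fallback you offer, \cite[Theorem~3.16]{Cho22} as in Theorem~\ref{thm:ana-cancel}(1)(ii), is not available either, since that duality concerns $\CN_{2n}^{[n]}$ (even rank) and cannot be invoked for $n=1$. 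So the $\CY$ case must be computed directly from Proposition~\ref{prop:hironaka-computation}(3) — this is exactly why part (3) is included in that proposition — and it is not a formal replay of the $\CZ$ case: the geometric answer $\Int(u,\charfun_{L^{[1],\sharp}})=\tfrac{\lambda}{2}+1$ carries an extra additive constant, and producing it on the analytic side forces you to pin down the normalization of $\alpha_T(r,\charfun_{L^{[1],\sharp}})$ \emph{not} by applying \eqref{equ:compare two local density} with the same exponent to both $\charfun_{L^{[1]}}$ and $\charfun_{L^{[1],\sharp}}$ (doing so makes the two evaluations of $\vol(K_1^{[1]},dg)$ in Lemma~\ref{lem:local density volumn}, via $(a,b)=(1,0)$ and $(0,1)$, disagree by a factor of $q$), but by going back to Theorem~\ref{thm:bertini} and Lemma~\ref{lem:local density volumn} directly. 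Until that case is written out with consistent normalizations, the corollary is not proved.
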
  

\begin{proof}  
This follows from Example \ref{exmp:base case geo} and Proposition \ref{prop:hironaka-computation}.  For instance,  let $\lambda \geq -1$ be any even integer. Then, by the definition given in \eqref{equ:KR second form}, we have  
\begin{equation*}
    \PDen\bigl((\varpi^\lambda)_{1\times 1}, \charfun_{L^{[1]}}\bigr) =  
    \frac{1}{\alpha\bigl((\varpi)_{1\times 1}, (\varpi)_{1\times 1}\bigr)}  
    \Bigl( \alpha'\bigl((\varpi)_{1\times 1}, (\varpi^\lambda)_{1\times 1}\bigr)  
    - \beta^{[0],\prime} \alpha\bigl((1)_{1\times 1}, (\varpi^{\lambda})_{1\times 1}\bigr) \Bigr).
\end{equation*}  
Here, $\beta^{[0],\prime} = \beta^{[0]}/q$ is the modified term according to the relation in \eqref{equ:compare two local density}.  
By Proposition \ref{prop:unique-error-coeff}, we have  
\begin{equation*}
    \beta^{[0],\prime} = \frac{\alpha'((\varpi)_{1\times 1}, (\varpi^{1})_{1\times 1})}{\alpha((1)_{1\times 1}, (\varpi^{1})_{1\times 1})} = 1-(-q)^{-1}.
\end{equation*}  
Moreover, we compute
\begin{equation*}
    \PDen(\varpi^\lambda, \charfun_{L^{[1]}})=\frac{1}{1+q}\Bigl(
    \alpha'\bigl((\varpi)_{1\times 1},(\varpi^{\lambda})\bigr)-1
    \Bigr) = \frac{\lambda}{2}.
\end{equation*}  
This matches the geometric side as described in Example \ref{exmp:base case geo}.
\end{proof}

\subsection{Local Whittaker functions}\label{sec:local weil}
In this subsection, we recall the local Whittaker function and Siegel-Weil formula.
We change our local setup and let $F/F_0$ be a quadratic extension of local fields. 
If $F_0$ is archimedean, we take $F/F_0=\BC/\BR$.
If $F_0$ is non-archimedean, we assume $F/F_0$ is unramified.

Let $W$ be a $2n$-dimensional split hermitian space over $F$, with unitary group $G_n=\U(W)$. 
Let $V$ be a hermitian space of dimension $n$, 
which we assume to be positive definite when $F/F_0=\BC/\BR$.
Let $\chi_V$ be the quadratic character associated to $V$ and let $\psi$ be a fixed additive character. Then we have a Weil representation $\omega$ of $G_n\times \U(V)$ on $\CS(V^n)$, such that for each $\phi\in\CS(V^n)$ and $\bu\in V^n$, we have:
\begin{align*}
\omega(m(\ba))\phi(\bu)&=\chi(m(\ba))|\det \ba|_F^{n/2}\phi(\bu\cdot \ba),&m(\ba)=
\begin{psmallmatrix}
\ba&\\
&\prescript{t}{}{\ov{\ba}}
\end{psmallmatrix}\in M_n(F_0),\\
\omega(n(\bb))\phi(\bu)&=\psi(\tr \bb(\bu,\bu))\phi(\bu),&n(\bb)=\begin{psmallmatrix}
1_n& \bb \\ & 1_n
\end{psmallmatrix}\in N_n(F_0),\\
\omega_\chi(w_n)\phi(\bu)&=\gamma_{V}^n\cdot\widehat \phi(\bu),&w_n=\begin{psmallmatrix}
 & 1_n\\
  -1_n & \\
\end{psmallmatrix},\\
\omega(h)\phi(\bu)&=\phi(h^{-1}\cdot\bu),& h\in \U(V).
\end{align*}
Here $\gamma_{V}$ is the local Weil constant (see \cite[(10.3)]{KR14}), and $\widehat\phi$ is the Fourier transform of $\phi$ using the self-dual Haar measure on $V^n$ with respect to $\psi\circ\tr_{F/F_0}$.

There is an intertwining map  
\begin{equation*}
    \lambda: \CS(V^n) \to I_n(0, \chi_V), \quad  
    \lambda(\phi)(g) = \omega(g) \phi(0),
\end{equation*}  
where $I_n(s, \chi_V)$ denotes the principal series representation associated with $G_n$.  
We extend $\lambda(\phi)$ to a Siegel–Weil section $\lambda(\phi)(s) \in I_n(s, \chi_V)$.  

For any $T \in \Herm_n(F)$, we define the \emph{local Whittaker function} for $G_n$ as  
\begin{equation*}
    W_T(s, g, \phi) = \int_{\Herm_n(F)} \lambda(\omega(g) \phi)(w_n^{-1} n(b), s) \psi(-\tr(Tb)) \, db.
\end{equation*}  

Fix basis elements $\alpha \in \wedge^{2n^2}(V^n)^*$ and $\beta \in \wedge^{n^2}(\Herm_n)^*$. By \eqref{not:gauge form}, we have a form $\nu$ of degree $n^2$ on $V^n_\reg$ along with two measures and constants:  
\begin{equation*}
    d_{\alpha}x = c(\alpha, \psi) dx, \quad  
    d_{\beta}b = c(\beta, \psi) db.
\end{equation*}  

Let $T \in \Herm_n(F)$. For any Schwartz function $\phi \in \CS(V^n)$, we define the \emph{local orbital integral}  
\begin{equation*}
    \Orb_T(\phi) := \int_{V_T} \phi(x) \, d_\nu x,
\end{equation*}  
where $d_\nu x$ is the measure on $V_T$ determined by the restriction of $\nu$ to $V_T$.  

\subsection{Non-archimedean local Siegel-Weil formula}

We recall the non-archimedean local Siegel–Weil formula, following the formalism of \cite[p. 39]{KR14}. While \textit{loc. cit.} focuses on the case $F_0 = \BQ_p$, all results extend directly to general $p$-adic fields $F_0$.  

\begin{theorem}[Non-archimedean local Siegel-Weil formula]\label{thm:nonarchi local SW}  
Suppose $F/F_0$ is a quadratic extension of $p$-adic fields.  
Let $T \in \Herm_n^{\reg}(F)$ be a hermitian matrix, and let $\phi \in \CS(V^n)$ be a Schwartz function. Then, we have the equality  
\begin{equation*}
    W_T(h, 0, \phi) = C(V, \alpha, \beta, \psi) \cdot \Orb_T(\omega(h) \phi),
\end{equation*}  
where $C(V, \alpha, \beta, \psi)$ is a constant independent of $T$, given by  
\begin{equation*}
    C(V, \alpha, \beta, \psi) = \frac{\gamma(V)^n c(\beta, \psi)}{c(\alpha, \psi)}.
\end{equation*}  
\end{theorem}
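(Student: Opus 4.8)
The plan is to establish the cleaner identity
\begin{equation*}
W_T(1,0,\Phi)=C(V,\alpha,\beta,\psi)\cdot\Orb_T(\Phi)\qquad\text{for all }\Phi\in\CS(V^n),
\end{equation*}
with $1$ the identity element, from which Theorem~\ref{thm:nonarchi local SW} follows upon substituting $\Phi=\omega(h)\phi$. Since $T\in\Herm_n^{\reg}(F)$ both sides are absolutely convergent, and since every object entering the argument below --- the Weil representation formulas of \S\ref{sec:local weil}, the gauge forms $\alpha,\beta,\nu$ and the volume constants $c(\alpha,\psi),c(\beta,\psi)$ of \eqref{not:gauge form}--\eqref{not:gauge form 3}, and the Weil index $\gamma(V)$ --- is defined uniformly over any non-archimedean local field, the whole proof is insensitive to $F_0$; this is the only point in which the statement goes beyond \cite[\S 10]{KR14}.

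First I would unfold the Whittaker integral. By definition of the standard Siegel--Weil section, $\lambda(\Phi)(g,0)=(\omega(g)\Phi)(0)$, so
\begin{equation*}
W_T(1,0,\Phi)=\int_{\Herm_n(F)}\bigl(\omega(w_n^{-1})\,\omega(n(b))\,\Phi\bigr)(0)\,\psi(-\tr(Tb))\,db.
\end{equation*}
Using $\omega(n(b))\Phi(x)=\psi(\tr b(x,x))\Phi(x)$ and the factorization $w_n^{-1}=m(-1_n)w_n$, which together with $\omega_{\chi}(w_n)\Psi=\gamma(V)^n\widehat{\Psi}$ gives $\omega(w_n^{-1})\Psi(x)=\gamma(V)^n\widehat{\Psi}(-x)$ (the character value being trivial because $\chi_V$ is unramified), and finally evaluating the Fourier transform at the origin, one obtains
\begin{equation*}
W_T(1,0,\Phi)=\gamma(V)^n\int_{\Herm_n(F)}\int_{V^n}\psi\bigl(\tr b((x,x)-T)\bigr)\,\Phi(x)\,dx\,db,
\end{equation*}
where $dx$ and $db$ are the self-dual Haar measures.

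The next step is to interchange the two integrals and perform the $b$-integration. The interchange is justified exactly as in the proof of Theorem~\ref{thm:bertini}: the non-regular loci in $V^n$ and in $\Herm_n(F)$ are null, $\Phi$ is compactly supported, and $T$ is regular. After the interchange the inner integral over $\Herm_n(F)$ becomes, by Fourier inversion, the Dirac measure along $V_T=h^{-1}(T)$, so it remains to identify the resulting measure on $V_T$. For this one uses that the defining property $\alpha=h^{*}\beta\wedge\nu$ makes $d_\alpha x$ disintegrate over $h$ as $\int_{V^n}f\,d_\alpha x=\int_{\Herm_n(F)}\bigl(\int_{V_{\widetilde T}}f\,d_\nu x\bigr)d_\beta\widetilde T$; converting to self-dual measures via $d_\alpha x=c(\alpha,\psi)\,dx$ and $d_\beta b=c(\beta,\psi)\,db$ shows that the fiberwise delta distribution is $\tfrac{c(\beta,\psi)}{c(\alpha,\psi)}\,d_\nu x$. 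Hence $W_T(1,0,\Phi)=\gamma(V)^n\,\tfrac{c(\beta,\psi)}{c(\alpha,\psi)}\int_{V_T}\Phi\,d_\nu x=C(V,\alpha,\beta,\psi)\,\Orb_T(\Phi)$, which is the claim.

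I expect the measure bookkeeping in the last step to be the only real obstacle: one must make precise the disintegration of Haar measure along the $p$-adic analytic submersion $h\colon V^n_{\reg}\to\Herm_n^{\reg}(F)$, identify the distribution ``$\delta((x,x)-T)\,dx$'' with the geometric orbit measure $d_\nu x$, and track the normalizations carefully enough that exactly the factor $c(\beta,\psi)/c(\alpha,\psi)$ --- and no stray power of $q$ or of $|{\cdot}|_{F_0}$ --- appears. This is precisely the computation of \cite[\S 10]{KR14}; since it involves only the local constants $c(\alpha,\psi),c(\beta,\psi),\gamma(V)$, it transfers to an arbitrary $p$-adic $F_0$ without change. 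As an alternative to manipulating the value at $s=0$ directly, one may reinstate the parameter $s$, prove the corresponding identity of Siegel--Weil sections for $\Re(s)\gg0$ (where everything converges absolutely), and then specialize.
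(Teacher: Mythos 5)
Your proposal is correct and is essentially the paper's own argument: the paper proves this theorem by citing \cite[p.~39]{KR14} (together with Proposition \ref{prop:witt-den}), and what you have written is precisely that computation — unfolding the Whittaker integral via the Weil representation to get $\gamma(V)^n\alpha_T(0,\Phi)$, then identifying $\alpha_T(0,\Phi)$ with $\tfrac{c(\beta,\psi)}{c(\alpha,\psi)}\Orb_T(\Phi)$ through the disintegration $\alpha=h^*\beta\wedge\nu$, exactly as in the Fubini argument of Theorem \ref{thm:bertini}. Your observation that every ingredient is defined uniformly over an arbitrary $p$-adic $F_0$ is the same justification the paper gives for extending \cite{KR14} beyond $\BQ_p$.
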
  

\begin{proof}  
See \cite[p. 39]{KR14}.  
\end{proof}

We also relate the local Whittaker function to weighted local density functions.  

\begin{proposition}[\protect{\cite[Proposition 10.1]{KR14}}]\label{prop:witt-den}  
Suppose $F/F_0$ is a quadratic extension of $p$-adic fields.  
Let $T \in \Herm_n^{\reg}(F)$ be a nonsingular hermitian matrix, and let $\phi \in \CS(V^n)$ be any Schwartz function. For any integer $r \geq 0$, we have  
\begin{equation*}
    W_T(1, r, \phi) = \gamma(V)^n \alpha_T(r, \phi).
\end{equation*}  

In particular,  
\begin{equation*}
    W_T(1, 0, \phi) = \gamma(V)^n \alpha_T(0, \phi) \quad \text{and} \quad  
    W'_T(1, 0, \phi) = \gamma(V)^n \alpha'_T(0, \phi) \log q^2.
\end{equation*}  
Here, $q$ denotes the cardinality of the residue field of $O_{F_0}$. \qed  
\end{proposition}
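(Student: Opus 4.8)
The plan is to follow \cite[Proposition~10.1]{KR14}: dispose first of the case $r=0$ by combining the non-archimedean local Siegel--Weil formula (Theorem~\ref{thm:nonarchi local SW}) with the Fubini-type identity of Theorem~\ref{thm:bertini}, then deduce the general-$r$ statement by a doubling argument. For $r=0$, Theorem~\ref{thm:nonarchi local SW} gives, for $T\in\Herm_n^{\reg}(F)$, the identity $W_T(1,0,\phi)=C(V,\alpha,\beta,\psi)\,\Orb_T(\phi)$ with $C(V,\alpha,\beta,\psi)=\gamma(V)^n\,c(\beta,\psi)/c(\alpha,\psi)$, where $\Orb_T(\phi)=\int_{V_T}\phi(x)\,d_\nu x$ is taken against the fiber measure attached to the gauge form $\nu$ normalized by $d_\alpha x=h^*(d_\beta b)\wedge d_\nu x$. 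On the spectral side, Theorem~\ref{thm:bertini} identifies $\alpha_T(0,\phi)$ with $\int_{V_T}\phi(u)\,du$ for the self-dual fiber measure $du$ (here $V^{\langle 0\rangle}=V$). The only remaining point is the comparison of the two fiber measures: since $d_\alpha x=c(\alpha,\psi)\,dx$ and $d_\beta b=c(\beta,\psi)\,db$ by \eqref{not:gauge form 2} and \eqref{not:gauge form 3}, we get $d_\nu x=\bigl(c(\alpha,\psi)/c(\beta,\psi)\bigr)\,du$ on $V_T$, hence $\Orb_T(\phi)=\bigl(c(\alpha,\psi)/c(\beta,\psi)\bigr)\,\alpha_T(0,\phi)$, and the constants in $C(V,\alpha,\beta,\psi)$ cancel, leaving $W_T(1,0,\phi)=\gamma(V)^n\alpha_T(0,\phi)$.

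Next I would reduce a general integer $r\ge 0$ to $r=0$. The idea is that enlarging $V$ to $V^{\angler}=V\oplus V_{r,r}$ shifts the parameter of the degenerate principal series by $r$: for a self-dual lattice $L_{r,r}\subset V_{r,r}$, tensoring with $\charfun_{L_{r,r}^n}$ identifies the Siegel--Weil section $\lambda^{V}(\phi)$ evaluated at $s=r$ with the Siegel--Weil section $\lambda^{V^{\angler}}(\phi\otimes\charfun_{L_{r,r}^n})$ evaluated at $s=0$, the split factor contributing exactly the Iwasawa factor that carries out the analytic continuation, and the quadratic character being unchanged because $V_{r,r}$ is split. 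Hence $W_T(1,r,\phi)$ equals the $s=0$ Whittaker integral attached to $\U(V^{\angler})$; meanwhile $\alpha_T(r,\phi)=\alpha_T(0,\phi\otimes\charfun_{L_{r,r}^n})$ is immediate from Definition~\ref{def:weighted local density function}, and $\gamma(V^{\angler})=\gamma(V)$ because the Weil constant is multiplicative for orthogonal sums and $\gamma(V_{r,r})=1$ for a split space. Applying the case $r=0$, now to $V^{\angler}$, yields $W_T(1,r,\phi)=\gamma(V)^n\alpha_T(r,\phi)$.

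Finally, the displayed consequences: $W_T(1,0,\phi)=\gamma(V)^n\alpha_T(0,\phi)$ is the case $r=0$. For the derivative, both $s\mapsto W_T(1,s,\phi)$ and $s\mapsto\gamma(V)^nF_T((-q)^{-s},\phi)$ are entire functions built from $q^{\pm s}$ that, by the two steps above, agree at every non-negative integer, hence coincide; differentiating at $s=0$ and using the chain rule to pass from $\tfrac{d}{ds}$ to $\tfrac{d}{dX}\big|_{X=1}$ --- which introduces the Jacobian $\log q^2$ in the normalization of \cite{KR14} --- gives $W'_T(1,0,\phi)=\gamma(V)^n\alpha'_T(0,\phi)\log q^2$. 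I expect the main obstacle to be the section identification in the second step: fixing the normalization of $I_n(s,\chi_V)$, verifying that the choice of $L_{r,r}$ is irrelevant, and checking the multiplicativity $\gamma(V\oplus V_{r,r})=\gamma(V)$; this, together with the measure bookkeeping in the $r=0$ step, is exactly what is carried out in \cite[\S 10]{KR14}.
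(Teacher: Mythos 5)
The paper gives no argument for this proposition at all --- it is quoted verbatim from \cite[Proposition 10.1]{KR14} with a \qed --- so the only meaningful comparison is with the proof in the cited reference, and your reconstruction is sound. Two remarks. First, your route to the case $r=0$ (local Siegel--Weil formula plus the Fubini identity of Theorem \ref{thm:bertini}, with the $c(\alpha,\psi)/c(\beta,\psi)$ cancellation) is correct but roundabout: the identity $W_T(1,0,\phi)=\gamma(V)^n\alpha_T(0,\phi)$ falls out in one line by unfolding the Whittaker integral with the explicit Weil-representation formulas, since $\lambda(\phi)(w_n^{-1}n(b),0)=\omega(w_n^{-1}n(b))\phi(0)=\gamma(V)^{\pm n}\int_{V^n}\psi(\tr b(x,x))\phi(x)\,dx$, and integrating against $\psi(-\tr Tb)\,db$ gives exactly the double integral defining $\alpha_T(0,\phi)$; this is what \cite{KR14} does, and it avoids any dependence on Theorem \ref{thm:nonarchi local SW} (which in \cite{KR14} is itself \emph{deduced} from Proposition 10.1 by the very measure comparison you perform --- within this paper's logic your argument is not circular, but it inverts the order of the reference). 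Second, you correctly identify the crux of the general-$r$ case as the section-interpolation identity $\Phi_\phi(\,\cdot\,,r)=\lambda^{V^{\angler}}(\phi\otimes\charfun_{L_{r,r}^n})$; this does hold because the Weil representation on $\CS((V\oplus V_{r,r})^n)$ factors as a tensor product and $\lambda^{V_{r,r}}(\charfun_{L_{r,r}^n})$ is the normalized spherical section of $I_n(r,\chi)$ (here $\psi$ unramified and $L_{r,r}$ self-dual are used), and $\gamma(V^{\angler})=\gamma(V)$ by multiplicativity with $\gamma(V_{r,r})=1$. The one point left genuinely vague is the factor $\log q^2$ in the derivative formula: it is not a mysterious ``Jacobian of \cite{KR14}'' but comes from the paper's normalization $I_n(s,\chi)=\mathrm{Ind}(\chi|\cdot|_F^{s+n/2})$ with the $F$-norm, so the interpolation variable is $q_F^{-s}=q^{-2s}$ and $\tfrac{\rd}{\rd s}\big|_{s=0}$ produces $-2\log q=-\log q^2$ per power of $X$; together with the fact that $W_T(1,s,\phi)$ is a polynomial in $q_F^{-s}$ for nonsingular $T$ (needed to upgrade agreement at integers to an identity of functions), this closes the argument.
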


\section{Symmetric spaces and special cycles}\label{sec:symmetric space}
In this section, we review the archimedean theory of hermitian symmetric spaces and their special cycles.  
We then introduce Kudla's Green currents and recall the local arithmetic Siegel–Weil formula.

\subsection{Hermitian Symmetric spaces}\label{sec:sym-space}
Let $F/F_0 = \BC/\BR$, and let $n \geq 2$ be a positive integer.  
Let $(V, h)$ be a non-degenerate hermitian space over $\BR$ of dimension $n$, with signature $(n-1,1)$.  
Let $\U(V)$ denote the unitary group.  

We fix an orthogonal decomposition $V = V^+ \oplus V^-$,  
where $V^+$ and $V^-$ are positive and negative definite, respectively.  
Define  
\begin{equation*}
    K = \U(V^+) \times \U(V^-)
\end{equation*}  
to be a maximal compact subgroup of $\U(V)$.  

Let $\BP(V)$ denote the space of $1$-dimensional complex subspaces of $V$.  
Define the \emph{hermitian symmetric space}
\begin{equation} \label{eq:def_unitary_D}
    \CD = \{ z \in \BP(V) : h|_z < 0 \}
\end{equation}  
as the open subset of $\BP(V)$ consisting of negative definite lines.  
It is clear that $\CD$ carries a $\U(V)$-invariant complex structure, and we have  
$\CD \simeq \U(V) / K$.

There is another symmetric space associated with the analytic side.  
Let $W$ be the standard split $\BC/\BR$-skew-hermitian space of dimension $2n$.  
The hermitian symmetric domain for $G_n = \U(W)$ is the \emph{hermitian upper half-space}, given by  
\begin{align}
    \BH_n(\BC) ={}& \left\{ \btau \in \mathrm{Mat}_n(\BC) : \frac{1}{2i} (\btau - \prescript{t}{}{\ov{\btau}}) > 0 \right\} \notag \\
    ={}& \left\{ \btau = \bx + i \by : \bx \in \Herm_n(\BC), \by \in \Herm_n^+(\BC) \right\}. \label{equ:upper half plane}
\end{align}  

For any partition $\sum_{i=1}^{m} n_i = n$, we define an embedding  
\begin{equation*}
    \prod_{i=1}^{m} \BH_{i}(\BC) \to \BH_n(\BC), \quad  
    (\tau_i) \mapsto \btau =  
    \begin{pmatrix}
        \tau_1 & & & \\  
        & \tau_2 & & \\  
        & & \ddots & \\  
        & & & \tau_m  
    \end{pmatrix}.
\end{equation*}

\subsection{Special cycles and Green currents}\label{sec:archi-cycle}
Let $u \in V$ be \emph{any nonzero vector}.  
For any $z \in \CD$, we decompose $u$ orthogonally with respect to $z$ as  
\begin{equation*}
    u = u_z + u_{z^\perp},  
\end{equation*}  
where $u_z \in z$ and $u_{z^\perp} \perp z$.  
Define the function  
\begin{equation*}
    R(u, z) = -(u_z, u_z).
\end{equation*}  
The special cycle (often denoted by $\CD_u$) is then defined as  
\begin{equation*}
    \CZ(u) := \{ z \in \CD : z \perp u \} = \{ z \in \CD : R(u, z) = 0 \}.
\end{equation*}  
The cycle $\CZ(u)$ is nonempty if and only if $(u, u) > 0$, in which case it forms an analytic divisor on $\CD$.  

For $\tau = x + i y \in \BH_1(\BC)$, \emph{Kudla's Green function} is defined as  
\begin{equation*}
    \CG^\bK(u, y, z) = -\Ei(-2\pi y R(u, z)),\quad\text{where}\quad \Ei(x) = -\int_{1}^\infty \frac{e^{xt}}{t} dt.
\end{equation*}  
The function $\CG^\bK(u, y, -)$ is smooth on $\CD \setminus \CZ(u)$ and has a logarithmic singularity along $\CZ(u)$.  If $\CZ(u)$ is empty, the function is then smooth on $\CD$.
By \cite[Proposition 4.9]{Liu11-I}, it satisfies the $(1,1)$-current equation for $\CZ(u)$:  
\begin{equation}\label{equ:green-current-cycle}
    \rd \rd^c [\CG^\bK(u, y)] + \delta_{\CZ(u)} = [\omega_{\KM}(u, y)]\footnote{Here, we recall the standard differential operators:  
\begin{equation*}
    \rd = \partial + \bar\partial, \quad  
    \rd^c = \frac{1}{4\pi i} (\partial - \bar\partial), \quad  
    \rd \rd^c = -\frac{1}{2\pi i} \partial \bar\partial.
\end{equation*}  },
\end{equation}  
where $\omega_{\KM}(u, -) = e^{2\pi (u, u)} \varphi_\mathrm{KM}(u, -)$, and  
$\varphi_\mathrm{KM}(-,-) \in \CS(V) \otimes (A^{1,1}(\CD))^{\U(V)}$  
is the \emph{Kudla–Millson Schwartz form} (\cite{Kudla-Millson86}).  
We emphasis that the relation \eqref{equ:green-current-cycle} holds for \emph{any} nonzero vectors $u\in V$, see \cite[Proposition 11.1]{Kudla97}.
When $u=0$, we set $\CG^{\bK}(0,y,z)=-\log |y|$.

Recall the following fact. Let $Z_1$ and $Z_2$ be analytic cycles in a smooth manifold $X$ of codimensions $d_1$ and $d_2$, respectively, that intersect properly.  
Let $G_1$ and $G_2$ be Green currents on $X$ satisfying  
\begin{equation*}
    \rd \rd^c G_i + \delta_{Z_i} = [\omega_i], \quad i = 1,2.
\end{equation*}  

When $G_2 = [g_2]$ is the current defined by a smooth form on $X\setminus Z_2$, the \emph{star product} is given by  
\begin{equation}\label{equ:star-prod}
    G_1 \star G_2 = \delta_{Z_1} \wedge G_2 + G_1 \wedge \omega_2.
\end{equation}  

Let $\bu = (u_1, \dots, u_r) \in V^r$ be a set of linearly independent vectors.  
Define the intersection of special cycles as  
\begin{equation*}
    \CZ(\bu) = \CZ(u_1) \cap \dots \cap \CZ(u_r).
\end{equation*}  

Let $\btau = \mathrm{diag}(\tau_1, \dots, \tau_r) = \bx + i\by \in \BH_r(\BC)$.  
\emph{Kudla's Green current} is defined via the star product:  
\begin{equation*}
    \CG^\bK(\bu, \by) \coloneqq [\CG^\bK(u_1, y_1)] \star \cdots \star [\CG^\bK(u_r, y_r)].
\end{equation*}  

It satisfies the $(r,r)$-current equation for $\CZ(\bu)$ (see also \cite[Theorem 2.7.1]{Garcia-Sankaran}):  
\begin{equation*}
    \rd \rd^c \CG^\bK(\bu, \by) + \delta_{\CZ(\bu)} =  
    [\omega_{\KM}(u_1, y_1) \wedge \cdots \wedge \omega_{\KM}(u_r, y_r)].
\end{equation*}  

\begin{remark}  
In \cite{Garcia-Sankaran}, Garcia and Sankaran generalize Kudla's Green function to take values for all $\btau \in \BH_n(\BC)$.  
However, this level of generality is not needed for our purposes.  
\end{remark}  

\subsection{Archimedean local Siegel-Weil formula}
We now relate local Whittaker functions to their classical counterparts and recall the Siegel–Weil formula.  
Let $\omega$ denote the Weil representation, and let $W_T(s, g, \phi)$ be the local Whittaker function defined in \S \ref{sec:local weil}.  
The hermitian symmetric space for $G_n = \U(W)$ is the hermitian upper half-space given in \eqref{equ:upper half plane}.  

For any $\btau=\bx+i\by\in\BH_n(\BC)$, the \emph{classical Whittaker function} is defined as  
\begin{equation*}
    W(\btau, s, \phi) := \chi_V(\det \ba)^{-1} \det(\by)^{-n/2} \cdot W(g_\btau, s, \phi),  
    \quad g_\btau := n(\bx) m(\ba) \in G_n(\BR),
\end{equation*}  
where $\ba \in \GL_n(\BC)$ satisfies $\by = \ba^t \cdot \ov{\ba}$.

For the remainder of this paper, we fix $\phi\in\CS(\BV^n)$ (in the archimedean place) to be the \emph{standard Gaussian function}:  
\begin{equation}\label{equ:standard-gaussian}
    \phi(\bu) = \psi(-2\pi \tr (\bu, \bu)) \in \CS(\BV^n),
\end{equation}  
where $\BV$ is a positive definite hermitian space of dimension $n$.  
The archimedean local (arithmetic) Siegel–Weil formula is as follows:  

\begin{theorem}\label{thm:archi Siegel Weil}
Let $T \in \Herm_n^{\reg}(\BC)$ be a nonsingular hermitian matrix, and define $q^T := \psi(\tr T\btau)$.  
\begin{altenumerate}  
\item For $\mathrm{sign}(T) = (a, b)$, we have  
\begin{equation*}
    \ord_{s=0} W_T(\btau, s, \phi) \geq b.
\end{equation*}  

\item When $\mathrm{sign}(T) = (n, 0)$ is positive definite,  
\begin{equation*}
    W_T(\btau, 0, \phi) = \gamma_V \frac{(2\pi)^{n^2}}{\Gamma_n(n)} q^T.
\end{equation*}  

\item When $\mathrm{sign}(T) = (n-1,1)$, the Whittaker function vanishes identically, and  
\begin{equation*}
    W'_T(\btau, 0, \phi) = \Int(T, \by) \cdot \gamma_V \frac{(2\pi)^{n^2}}{\Gamma_n(n)} q^T.
\end{equation*}  
Here, $\Int(T, \by) = \int_{\CD} \CG^\bK(\bu, \by)$ is the \emph{local height function}, where $\bu \in V^n$ is a basis with $T = (\bu, \bu)$.  
By \cite[Proposition 4.10]{Liu11-I}, this local height function is independent of the choice of $\bu$.  
\end{altenumerate}  
\end{theorem}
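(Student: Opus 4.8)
The plan is to deduce all three statements from two ingredients: the explicit evaluation of the archimedean Siegel--Whittaker integral attached to the Gaussian $\phi$ of \eqref{equ:standard-gaussian} (due to Shimura, in the normalization of \cite{Kudla-Rapoport-Yang}), and the Kudla--Millson current equation \eqref{equ:green-current-cycle}; this is the route taken for unitary groups by Liu \cite{Liu11-I} and, in full generality, by Garcia--Sankaran \cite{Garcia-Sankaran}. A preliminary reduction brings everything to the base point $\btau = i\,1_n$: in the classical normalization of \S\ref{sec:symmetric space} the factor $q^T = \psi(\tr(T\btau))$ accounts for the real part $\bx$, while the transformation of $W(g,s,\phi)$ under the Siegel Levi $m(\ba)$ relates $W_T(i\by,s,\phi)$ to $W_{\prescript{t}{}{\ov{\ba}}T\ba}(i\,1_n,s,\phi)$ (with $\by=\ba\,\prescript{t}{}{\ov{\ba}}$), up to an elementary factor depending only on $\det\ba$ and $s$. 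Since $\prescript{t}{}{\ov{\ba}}T\ba$ and $T$ have the same signature this is harmless for (1) and (2), and it is compatible with the $\GL_n(\BC)$-equivariance of $\Int(T,\by)$ recorded in \cite[Proposition~4.10]{Liu11-I}, which is what is needed for (3).

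For (1) and (2) I would invoke Shimura's closed formula: up to an explicit exponential and a ratio of complex matrix Gamma functions, $W_T(i\,1_n,s,\phi)$ is Shimura's confluent hypergeometric function of argument $2\pi T$. From this one reads off that $s\mapsto W_T(i\,1_n,s,\phi)$ vanishes at $s=0$ to order equal to the number of negative eigenvalues of $T$, hence $\ord_{s=0}W_T(\btau,s,\phi)\geq b$; the crude bound ``order $\geq 1$'' when $b\geq 1$ also follows from the archimedean analogue of Theorem~\ref{thm:nonarchi local SW}, namely $W_T(g,0,\phi)=C(V)\cdot\Orb_T(\omega(g)\phi)$, since $\Orb_T(\omega(g)\phi)=0$ whenever $V_T=\emptyset$, i.e.\ whenever $T$ is not positive definite. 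When $T>0$ the same formula is non-vanishing at $s=0$ and evaluates to $\gamma_V\,(2\pi)^{n^2}\Gamma_n(n)^{-1}\,e^{-2\pi\tr T}$; restoring the $\btau$-dependence gives (2). (Alternatively (2) follows directly from $W_T(g,0,\phi)=C(V)\,\Orb_T(\omega(g)\phi)$ by computing the Gaussian orbital integral, with $\Gamma_n(n)$ entering through $\vol(\U(n))$.)

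For (3) we have $b=1$, so $W_T(\btau,0,\phi)=0$ by (1), and the assertion is the identification of the leading coefficient with $\Int(T,\by)$ times the constant of (2). Here I would follow Kudla's regularized-theta-integral method as implemented in \cite{Liu11-I}. Differentiating the Siegel--Weil section at $s=0$ and using the intertwining property of the Weil representation expresses $\tfrac{\partial}{\partial s}\big|_{s=0}W_T(\btau,s,\phi)$ as a (regularized) integral over $\CD$; the Kudla--Millson Schwartz form $\varphi_{\KM}$ and the Gaussian are linked by an explicit differential (Maass--Shimura-type) operator, so that $\omega(g_\btau)\phi$ simultaneously controls $\omega_{\KM}(u_i,y_i)$ and $\CG^\bK(u_i,y_i)$. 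One then integrates the $(n,n)$-current equation obtained by iterating \eqref{equ:green-current-cycle} over $\CD$: for a basis $\bu$ with $\det T\neq0$ one has $\CZ(\bu)=\bigcap_i\CZ(u_i)=\emptyset$, so the $\delta$-term disappears, and after regularization the resulting identity exhibits $\int_\CD\CG^\bK(\bu,\by)=\Int(T,\by)$ as precisely the coefficient attached to the first-derivative term, while the ``value'' contribution $\int_\CD\omega_{\KM}(\bu,\by)$ is consistent with the vanishing in (1). The constant $\gamma_V(2\pi)^{n^2}\Gamma_n(n)^{-1}$ is then forced to be the one of (2) by the compatibility of the two sides of \eqref{equ:green-current-cycle}.

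The main obstacle lies entirely in (3): $\CD$ is non-compact, so the convergence of $\int_\CD\CG^\bK(\bu,\by)$, the use of Stokes' theorem in the current identity, and the interchange of $\partial_s$ with the $\CD$-integral near $s=0$ all require a careful truncation and regularization argument. This analysis is the technical core of \cite{Liu11-I} and \cite{Garcia-Sankaran}; rather than reproduce it, I would cite it and verify only that the normalizations used here --- the Weil representation conventions of \S\ref{sec:local weil}, the Gaussian \eqref{equ:standard-gaussian}, and Kudla's Green function $\CG^\bK$ with $\CG^\bK(u,y)=-\Ei(-2\pi y R(u,z))$ --- agree with theirs, so that the explicit constants in (2) and (3) come out exactly as stated.
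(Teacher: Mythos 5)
Your proposal is correct and ends up in essentially the same place as the paper: the paper's proof of Theorem \ref{thm:archi Siegel Weil} consists of citing \cite[Proposition 4.5]{Liu11-I} for parts (1) and (2) and \cite[Theorem 4.17]{Liu11-I} for part (3) (with \cite{Garcia-Sankaran} as an alternative), which is precisely where your argument defers the analytic core. The additional detail you supply --- Shimura's confluent hypergeometric evaluation for (1)--(2), the reduction to the base point via the Levi action, and the regularized current identity with $\CZ(\bu)=\emptyset$ for (3) --- is an accurate sketch of what those references actually prove.
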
  

\begin{proof}  
Parts (1) and (2) follow from \cite[Proposition 4.5]{Liu11-I}, while part (3) is given in \cite[Theorem 4.17]{Liu11-I}. Garcia and Sankaran give a different proof in \cite{Garcia-Sankaran}.
\end{proof}

\part{Global theory}\label{part:global}

\section{Shimura varieties and their integral models}\label{sec:RSZ}
We now turn to the global theory. In this section, we review the RSZ Shimura varieties introduced in \cite{KR14,RSZ,RSZ-AGGP} and generalized in \cite{LMZ25}, along with their archimedean and non-archimedean uniformizations.

\subsection{Group-theoretic data}\label{sec:gp data}
Let $F/F_0 $ be a totally imaginary quadratic extension of a totally real number field with non-trivial Galois involution $x \mapsto \ov{x}$. Assume $F_0\neq \BQ$.
We write $\BA$, $\BA_0$ for the adele rings of $F$ and $F_0$, resp. We systematically use a subscript $f$ for the ring of finite adeles, and a superscript $p$ for the adeles away from the prime number $p$.

We fix a CM type $\Phi$ for $F$ with a distinguished element $\varphi_0\in\Phi$, and let $F_\infty=F\otimes_{\BQ}\BR\simeq \BC^\Phi$.
We fix a hermitian $F$-vector space $(V,h)$ of dimension $n$ such that, for $\varphi\in\Phi$,
\begin{equation*}
    \mathrm{sign}(V_\varphi)=\left\{\begin{array}{ll}(n-1,1)&\text{if }\varphi=\varphi_0,\\
    (n,0)&\text{if }\varphi\neq \varphi_0.\end{array}\right.
\end{equation*}
Let $G=\mathrm{U}(V)$ be the unitary group of $V$, viewed as an algebraic group over $F_0$. Following \cite[\S 3]{RSZ}, we consider the following reductive groups over $\mathbb Q$, where $c(g)$ is the similitude factor of $g$:
\begin{align*}
    Z^\BQ:={}&\{z\in\Res_{F/\BQ}\BG_m\mid \Nm_{F/F_0}(z)\in\BG_m\},\\
    G^\BQ:={}&\{g\in\Res_{F_0/\BQ}\mathrm{GU}(V)\mid c(g)\in\BG_m\},\\
    \wt{G}:={}&Z^\BQ\times_{\BG_m}G^\BQ\xrightarrow{\sim}Z^\BQ\times \Res_{F_0/\BQ}G.
\end{align*}
The last isomorphism is given by $(z,g)\mapsto (z, z^{-1}g)$. Consider the RSZ Shimura datum (\cite[\S3.2]{RSZ}),
$$
( \wt{G}, \{ h_{\wt{G}} \} ) = ( Z^{\BQ}, h_{\Phi}) \times ( \Res_{F_0/\BQ}G, \{ h_{\Res_{F_0/\BQ}G} \}). 
$$
For any neat compact open subgroup $\wt K=K_{Z^{\BQ}} \times K_{G} \leq \widetilde{G}(\mathbb A_f)$, we consider the \emph{RSZ unitary Shimura variety} associated to $( \widetilde{G}, \{ h_{\widetilde{G}} \} )$ with level $\wt{K}$
\[
\wt M:=\mathrm{Sh}_{\wt{K}}( \widetilde{G}, \{ h_{\widetilde{G}} \} )\to \Spec E,
\]
where $E$ is the reflex field. By \cite[(3.4)]{RSZ}, $E$ can be described through
\begin{align*}
\Gal(\ov{\BQ}/E)={}&\{\sigma\in \Gal(\ov{\BQ}/\BQ)\mid  \sigma\circ \varphi \in \Phi \text{ and }\mathrm{sign}(V_{\sigma\circ \varphi}) = \mathrm{sign}(V_\varphi)\text{ for all $\varphi\in \Phi$}\}.
\end{align*}
The goal of this section is recall the definition of the (punctured) integral model $\CM\to \Spec O_E$ and its uniformization.

\subsection{Integral models of toric Shimura varieties}
We first recall the integral model for $\mathrm{Sh}_{K_{Z^{\BQ}}}(Z^{\BQ},h_\Phi)$, see \cite[\S 3.1]{RSZ}. 
Let $\fkd \in \BZ_{>1}$ be a product of primes such that the neat level $K_{Z^\BQ}$ factors as
\begin{equation*}
K_{Z^\BQ}=K_{Z^\BQ,\fkd}\times K_{Z^\BQ}^{\fkd}
\end{equation*}
where $K_{Z^\BQ}^{\fkd}$ is the maximal compact subgroup in $Z^\BQ(\BA^\fkd_f)$.

As an auxiliary datum, we also fix a hermitian $F_\fkd/F_{0,\fkd}$-module $\BW_\fkd$ of rank $1$. Following \cite[\S 3.2]{RSZ-AGGP} and \cite[Definition 3.6, \S 4.1]{RSZ}, fixing an ideal $\fka$ of $O_{F_0}$, we define
$$
\CM^{\fka,}_0{}' \lr \Spec O_{E_\Phi}[\fkd^{-1}]
$$
as the functor sending a scheme $S$ over $O_{E_\Phi}[\fkd^{-1}]$ to the set of isomorphism classes of tuples $(\CA_0, \iota_0, \lambda_0, \eta_0)$ where 
\begin{enumerate}
 \item $\CA_0$ is an abelian scheme over $S$ of dimension $[F_0 : \mathbb Q]$.
\item $\iota_0: O_{F}[\fkd^{-1}]\to \End(\CA_0)[\fkd^{-1}]$ is an $O_F[\fkd^{-1}]$-action that is \emph{$(\Phi, \Phi)$-strict}, see \cite[\S 6.2]{LMZ25}.
\item $\lambda_0 \in \Hom(\CA_0, \CA_0^\vee)[\fkd^{-1}]$ is a \emph{prime-to-$\fkd$ polarization}, meaning it is a quasi-isogeny such that locally on $S$ some $\fkd^\BZ$-multiple is a polarization.
\item We require that $\iota_0$ and $\lambda_0$ are compatible in the sense that for all $a \in O_F^\fkd$, we have
$$ \lambda^{-1}_0 \circ \iota_0(a)^{\vee} \circ \lambda_0 = \iota_0(\overline{a}).$$
\item We require that $\ker(\fkd^k \lambda_0)[\fkd^{-1}] = \CA_0[\fka][\fkd^{-1}]$, where $k\geq 0$ is chosen (locally on $S$) such that $\fkd^k\lambda_0\in \Hom(\CA, \CA^\vee)$ (and the space is independent of the choice of $k$).
\item $\ov \eta_0$ is a $K_{Z^{\mathbb Q}_\fkd}$-level structure in the sense of \cite[C.3]{Liu-ARTF} using $\BW_0$.
\end{enumerate}
One may always choose $\fka$ and $\BW_0$ such that $\CM_0^{\fka,}{}'$ is non-empty, and we fix such choices. Then $\CM_0^{\fka,}{}'$ is representable by a finite etale $O^\fkd_{E_\Phi}$-scheme. As in \cite[\S 3.4]{RSZ}, its generic fiber $(\CM_0^{\fka,}{}')_\BQ$ is a disjoint union of copies of the Shimura variety $\mathrm{Sh}_{K_{Z^{\BQ}}}(Z^{\BQ},h_{\Phi})$. We work with a fixed copy of this union, and let $\CM_0^\fka\subseteq \CM_0^{\fka,}{}'$ be its closure. This is an open and closed subscheme of $\CM_0^{\fka,}{}'$ which provides the finite, \'etale integral model of $\mathrm{Sh}_{K_{Z^{\BQ}}}(Z^{\BQ},h_{\Phi})$.

\subsection{Punctured integral model}\label{sec:RSZ-integral}
Now we define the punctured integral model of $\mathrm{Sh}_{\wt K}(\wt G, \{h_{\wt G}\})$. Let $L$ be an $O_F$-lattice in $V$ and let $\fkd \in \BZ_{>1}$ be chosen such that:
\begin{itemize}
    \item If $v$ is a finite place of $F_0$ that is ramified in $F$, then $v\mid \fkd$.
    \item For any finite non-split place $v\nmid \fkd$ of $F_0$, the localization $L_v$ in $V_v=V \otimes_{F_0} F_{0, v}$ is a vertex lattice, i.e. $\varpi L_v^{\sharp}\subseteq L_v \subseteq L_v^\sharp$, where $\varpi_v\in F_{0,v}$ is a uniformizer.
    \item The level $K_G=K$ factors as 
    \begin{equation*}
        K=K_\fkd\times K^\fkd
    \end{equation*}
    where $K_\fkd\subset G(F_{0,\fkd})$ is any open compact subgroup and where $K^\fkd=\mathrm{Stab}(\wh{L}^\fkd)$ is the stabilizer of the completion $\wh{L}$ in $G(\BA_{0,f}^\fkd)$.
    \item The level $K_{Z^\BQ}$ factors similarly as
    \begin{equation*}
        K_{Z^\BQ}=K_{Z^\BQ,\fkd}\times K_{Z^\BQ}^{\fkd}
    \end{equation*}
    where $K_{Z^\BQ,\fkd}\subset Z^\BQ(\BQ_\fkd)$ is any open compact subgroup and where $K_{Z^\BQ}^{\fkd}$  is the maximal compact subgroup in $Z^\BQ(\BA^\fkd_f)$.
\end{itemize}

\begin{definition}
The \emph{punctured integral model} of the RSZ Shimura variety
$$
\tCM \lr \Spec O_E[\fkd^{-1}]
$$ 
is the functor that takes an $O_E[\fkd^{-1}]$-scheme $S$ to the set of isomorphism classes of tuples
$$(\CA_0, i_0, \lambda_0,\ov{\eta}_0; \CA, \iota, \lambda,\ov{\eta})$$
that are of the following form:
\begin{enumerate}
\item  $(\CA_0, \iota_0, \lambda_0, \ov{\eta}_0) \in \CM_0^\fka(S)$.
\item $\CA$ is an abelian scheme over $S$ of dimension $n[F_0:\mathbb Q]$.
\item $\iota: O_{F}[\fkd^{-1}] \to \End(\CA)[\fkd^{-1}]$ is an $O_F[\fkd^{-1}]$-action that is of signature $(n-1,1)$ in the sense of \cite[Definition 6.2]{LMZ25}.

\item $\lambda\in\Hom(\CA,\CA^\vee)[\fkd^{-1}]$ is a prime-to-$\fkd$ polarization.
\item We require $\iota$ and $\lambda$ to be compatible in the sense that for $a \in O_{F}[\fkd^{-1}]$ we have $ \lambda^{-1} \circ \iota(a)^{\vee} \circ \lambda= \iota(\overline{a})$. We also require that for every place $v \nmid \fkd$ of $F$, corresponding to a prime ideal $\fkp_v \subset O_F[\fkd^{-1}]$, that $\ker(\lambda)[\fkp_v^\infty]$ is annihilated by $\fkp_v$ and of order $|L^\vee_v/L_v|$.

\item $\ov{\eta}$ is a $K_{G,\fkd}$-orbit of isometries of hermitian $\prod_{p\mid\fkd}\BQ_p$-sheaves over $S$: 
\begin{equation*}
    \eta:V_{\fkd}(\CA_0,\CA)\overset{\sim}{\lr}V(F_{0,\fkd}).
\end{equation*}
Here, the notation is as follows. First,
\begin{equation*}
V_{\fkd}(\CA_0,\CA):=\prod_{p\mid\fkd}V_p(\CA_0,\CA)\quad\text{and}\quad
V_p(\CA_0,\CA)=\Hom_{F\otimes_{\BQ}\BQ_p}(V_p(\CA_0),V_p(\CA)),
\end{equation*}
where $V_p(\CA_0)$ and $V_p(\CA)$ denote the rational Tate modules at $p$; these are $\mathbb Q_p$-sheaves on $S$. 
Here, $V_\fkd(A_0, A)$ is made into a hermitian $F_\fkd$-vector space by
$$(x,y) := \lambda_0^{-1}\circ x^\vee\circ \lambda \circ y \in \End(V_\fkd(\CA_0)) = F_\fkd.$$
Second, we consider
\begin{equation*}
V(F_{0,\fkd}):=\prod_{p\mid\fkd}\BQ_p\otimes_{\BQ}V=\prod_{v\mid\fkd}F_{0,v}\otimes_{F_0}V
\end{equation*}
as locally constant $\prod_{p\mid\fkd}\BQ_p$ sheaf on $S$.
\end{enumerate}
An isomorphism between two such tuples $(\CA_0, i_0, \lambda_0,\ov{\eta}_0; \CA, \iota, \lambda,\ov{\eta}) \cong (\CA_0', i_0', \lambda_0',\ov{\eta}_0'; \CA', \iota', \lambda',\ov{\eta}')$ is given by an isomorphism $(\CA_0, i_0, \lambda_0,\ov{\eta}_0) \cong (\CA_0', i_0', \lambda_0', \ov{\eta}'')$, and a prime-to-$\fkd$ isogeny $\CA \to \CA'$ which induces isomorphisms $\CA[p^\infty] \cong \CA'[p^\infty]$ $(p \nmid \fkd)$ compatible with $\iota$ and $\iota'$, $\lambda$ and $\lambda'$, $\overline{\eta}$ and $\ov{\eta}'$.
\end{definition}

\begin{proposition}[\protect{\cite[Proposition 6.2]{LMZ25}}]
$\wt \CM$ is representable by a flat projective $O_E[\fkd^{-1}]$-scheme (recall that $F_0\neq \BQ$).\qed
\end{proposition}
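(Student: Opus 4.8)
The plan is to establish three properties of $\tCM$ in turn: it is representable by a quasi-projective $O_E[\fkd^{-1}]$-scheme, it is flat, and it is proper; projectivity then follows formally from properness plus quasi-projectivity. For the first, I would use that $\CM_0^{\fka}$ is finite étale over $O_{E_\Phi}[\fkd^{-1}]$, so via the morphism recording $(\CA_0,\iota_0,\lambda_0,\ov\eta_0)$ everything is reduced to the relative moduli of tuples $(\CA,\iota,\lambda,\ov\eta)$. Forgetting $\iota$ and the $\fkd$-level structure, the moduli of abelian schemes of relative dimension $n[F_0:\BQ]$ with a polarization of fixed degree (prescribed by $\prod_{v\nmid\fkd}|L_v^\vee/L_v|$) and a rigidifying level structure is representable by a quasi-projective $\BZ[\fkd^{-1}]$-scheme by Mumford's geometric invariant theory, the neatness of $\wt K$ ensuring there are no nontrivial automorphisms. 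Then imposing the $O_F[\fkd^{-1}]$-action $\iota$, its compatibility with $\lambda$, and the Kottwitz signature condition on $\Lie\CA$ are each relatively representable by closed immersions (for fixed $a$, "$\iota(a)\in\End(\CA)$" is closed, and the compatibility and the characteristic-polynomial identity are closed conditions on the base); the condition on the order and $\fkp_v$-torsion of $\ker(\lambda)[\fkp_v^\infty]$ is open and closed since orders of finite locally free group schemes are locally constant; and the $K_{G,\fkd}$-level structure $\ov\eta$ defines a finite étale cover because the relevant Tate modules are lisse $\BQ_p$-sheaves over $O_E[\fkd^{-1}]$-schemes. Combining, $\tCM$ is a separated scheme of finite type, indeed quasi-projective, over $O_E[\fkd^{-1}]$.

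The flatness step is where the unramifiedness of $F/F_0$ (and $p\neq 2$) is essential, and I expect it to be the main obstacle, since the parahoric model is neither smooth nor regular. I would argue via the local model diagram: $\tCM$ admits a smooth surjection onto a scheme which is smooth-locally isomorphic to the local model $M^{\mathrm{loc}}$ attached to $\mathrm{U}(V)$ and its similitude variant at the places over the residue characteristic, with vertex-lattice parahoric level. Since $F/F_0$ is unramified at every finite place $v\nmid\fkd$ and $p$ is odd, the group is reductive and splits over an unramified (hence tame) extension, and the minuscule cocharacter gives a local model that is flat over $O_E$, in fact a local complete intersection, by the theory of local models of Rapoport–Zink and Pappas–Zhu; alternatively one checks directly that the unramified unitary local model at a vertex lattice has special fibre equidimensional of dimension $n-1$, so a dimension count together with the local complete intersection property forces flatness. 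By the local model diagram both flatness and the l.c.i. property descend to $\tCM\to\Spec O_E[\fkd^{-1}]$. (This is the step that genuinely breaks in the ramified case, where the "naive" moduli problem is not flat and must be corrected.)

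For properness I would verify the valuative criterion, which is the integral refinement of the compactness of the Shimura variety $\wt M$ and is where the hypothesis $F_0\neq\BQ$ is used. Because $F_0\neq\BQ$, the CM type has an element $\varphi_1\neq\varphi_0$ at which $V_{\varphi_1}$ is positive definite, so $V$ is anisotropic over $F_0$ (an isotropic $F$-vector would be isotropic at $\varphi_1$); hence $\wt G^{\mathrm{der}}=\Res_{F_0/\BQ}\SU(V)$ is $\BQ$-anisotropic. Now take a discrete valuation ring $R$ with fraction field $K$ over $O_E[\fkd^{-1}]$, with residue characteristic $p\nmid\fkd$, and a point of $\tCM(K)$. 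Uniqueness of any extension to $\Spec R$ follows from separatedness. For existence, $\CA_{0,K}$ extends over $R$ by finiteness of $\CM_0^{\fka}$; and $\CA_K$ must have good reduction, for otherwise the $\ell$-adic monodromy operator (any $\ell\neq p$) would be a nonzero $F$-linear, $\lambda$-skew nilpotent endomorphism of the Tate module, producing a nonzero totally isotropic $F$-rational subspace of $V$ and contradicting anisotropy. Thus the Néron model of $\CA_K$ over $R$ is an abelian scheme, and $\iota,\lambda,\ov\eta$ extend by the Néron mapping property and the étaleness of the $\fkd$-level structure, so the point extends. Therefore $\tCM\to\Spec O_E[\fkd^{-1}]$ is proper, and being quasi-projective it is projective, completing the proof.
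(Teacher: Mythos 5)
Your first two steps (representability via GIT for polarized abelian schemes plus closed/open conditions for the extra PEL data, and flatness via the local model diagram for unramified vertex-lattice parahoric level) are in line with the standard approach taken in \cite{LMZ25}, and I have no substantive objection there beyond the usual caveat that the local model map is only defined once the moduli problem carries the $(\Phi,\Phi)$-strictness/Eisenstein conditions integrally. The genuine gap is in your properness argument. You claim that if $\CA_K$ had bad reduction, the $\ell$-adic monodromy operator would produce ``a nonzero totally isotropic $F$-rational subspace of $V$, contradicting anisotropy.'' But the monodromy filtration lives on $V_\ell(\CA)$ (equivalently, after applying $\Hom(V_\ell(\CA_0),-)$, on $V\otimes_{F_0}F_{0,\ell}$); it only yields an isotropic $F\otimes\BQ_\ell$-subspace of the \emph{local} hermitian space. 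Global anisotropy of $V$ does not forbid this: every hermitian space of dimension $\geq 3$ over a $p$-adic field is isotropic, so there is no contradiction, and the degeneration data give no isotropic vector in the global space $V$. (Anisotropy of $V$, which does follow from $F_0\neq\BQ$ via the definite place $\varphi_1$, is what makes the generic fiber projective, but flatness plus a proper generic fiber does not give properness of the integral model.)

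The correct obstruction to a nontrivial toric part is the Kottwitz signature condition, and this is the actual role of $F_0\neq\BQ$ at the integral level. After passing to a finite extension of the DVR (which the valuative criterion permits) so that $\CA_K$ has semiabelian reduction, a nonzero toric part $T$ with character group $X$ would make $\Lie(T)\subset\Lie(\CA_s)$ a free $O_F\otimes k$-module of some rank $d\geq 1$; hence every archimedean embedding $\varphi$ and its conjugate $\ov{\varphi}$ would occur with equal multiplicity $d$ in $\Lie(T)$. This is incompatible with the signature $(n,0)$ at any $\varphi_1\in\Phi\setminus\{\varphi_0\}$, which forces $\ov{\varphi}_1$ to act with multiplicity $0$ on all of $\Lie(\CA_s)$ --- and such a $\varphi_1$ exists precisely because $F_0\neq\BQ$. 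Making ``multiplicity of $\ov{\varphi}_1$'' meaningful in residue characteristic $p$ is exactly what the $(\Phi,\Phi)$-strict and Eisenstein conditions in the moduli problem are for, so your proof should route the degeneration argument through the Lie algebra and the signature condition rather than through $\ell$-adic monodromy and anisotropy of $V$.
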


\subsection{Basic uniformization}  

We now recall the $p$-adic uniformization of $\wt \CM$ along the basic locus.  

Let $\nu \nmid \fkd$ be a non-archimedean place of $E$, whose restriction to $F$ (resp. $F_0$) is denoted by $v$ (resp. $v_0$).  
Assume that $v_0$ is \emph{inert}. By \cite[Theorem 7.3]{LMZ25}, we have the following non-archimedean uniformization along the basic locus:  
\begin{equation}\label{equ:shimura-p-unif-comp}
    \tCM^{\wedge}_{O_{\breve{E}_\nu}} = \wt{G}^{(v_0)}(F_0) \backslash  
    \left[\CN^{[h]}_{n, O_{\breve{E}_v}} \times \wt{G}(\BA^{v_0}_{0,f}) \slash K^{v_0}_{\wt{G}}\right].
\end{equation}  

Here, the left-hand side denotes the formal completion of the base change  
$\CM \otimes O_{\breve{E}_\nu}$ along the basic locus of the geometric special fiber $\wt \CM_{\ov{\BF}_\nu}$.  
The basic locus consists of points $(A_0, A, \ov{\eta})$ such that $A[\nu^\infty]$ is a supersingular $p$-divisible group.  

On the right-hand side, $\wt{G}^{(v_0)}$ is the analogue of $\wt{G}$ for the nearby hermitian space $V^{(v_0)}$ (see \eqref{not:nearby herm}),  
where $V$ is fixed in \S \ref{sec:gp data}.  
The formal scheme $\CN_n^{[h]}$ is the (relative) Rapoport–Zink space for the quadratic extension $F_v/F_{0,v_0}$,  
where $h$ denotes the type of the vertex lattice $L_v$. 

The uniformization \eqref{equ:shimura-p-unif-comp} induces a projection onto a discrete set:  
\begin{equation}\label{equ:fiberwise projection}
    \tCM^\wedge_{O_{\breve{E}_\nu}} \to Z^{\BQ}(\BQ) \backslash (Z^{\BQ}(\BA_{\BQ,f}) \slash K_{Z^{\BQ}}).
\end{equation}  
This provides a partition of the formal scheme $\tCM^\wedge_{O_{\breve{E}_\nu}}$, where each fiber is naturally isomorphic to  
\begin{equation}\label{equ:RZ-comp}
    \CM^{\wedge}_{O_{\breve{E}_\nu}} := G^{(v_0)}(F_0) \backslash  
    \left[\CN^{[h]}_{O_{\breve{E}_\nu}} \times G(\BA^{v_0}_{0,f}) \slash K_G^{v_0}\right].
\end{equation}

\subsection{Complex uniformization}\label{sec:complex-uniformization}
Let $\nu: E \hookrightarrow \BC$ be a complex place of the reflex field $E$.  
Its restriction to $F$ (resp. $F_0$) is denoted by $v$ (resp. $v_0$).  
Define the complex manifold  $\tCM_{\nu}:=\tCM\otimes_{O_E[\fkd^{-1}],\nu}\BC$ via $\nu$.  

Let $V^{(v_0)}$ be the ``nearby'' hermitian space defined in \eqref{not:nearby herm},  
i.e., the unique space that is positive definite at all archimedean places except at $v_0$,  
where it has signature $(n-1,1)$, and is locally isomorphic to $V$ at all non-archimedean places,  
where $V$ is fixed in \S \ref{sec:gp data}.  

Let $G^{(v_0)}$ (resp. $\wt{G}^{(v_0)}$) be the unitary group (resp. unitary similitude group), viewed as a $F_0$-algebraic group (resp. $\BQ$-algebraic group), associated with $V^{(v_0)}$.  
Let $\CD^{(v_0)}$ be the hermitian symmetric space for $V^{(v_0)}(F_{v})$ defined in \eqref{eq:def_unitary_D},
which is the Grassmannian of negative definite $\BC$-lines in $V^{(v_0)} \otimes_{F,v} \BC$.  

Then, we have the following complex uniformization:  
\begin{equation*}
    \wt \CM_{\nu} = \wt{G}^{(v_0)}(\BQ) \backslash  
    \left[ \CD^{(v_0)} \times \wt{G}(\BA_{0,f}) \slash K_{\wt{G}} \right].
\end{equation*}  

We obtain a partition via the projection  
\begin{equation}\label{equ:complex projection}
    \wt \CM_{\nu} \to Z^{\BQ}(\BQ) \backslash (Z^{\BQ}(\BA_{\BQ,f}) \slash K_{Z^{\BQ}}),
\end{equation}  
where each fiber is naturally isomorphic to  
\begin{equation*}
    \CM_{\nu} := G^{(v_0)}(F_0) \backslash  
    \left[ \CD^{(v_0)} \times G(\BA_{0,f}) \slash K_G \right].
\end{equation*}  
Here, we fix an isomorphism $G^{(v_0)}(\BA_{0,f}) \simeq G(\BA_{0,f})$.

\section{Kudla-Rapoport special cycles}\label{sec:KR cycle in global}
In this section, we define weighted Kudla–Rapoport special cycles and their derived variants.  
We establish their complex and $p$-adic uniformizations and conclude by introducing their Green currents in the complex fibers.  

\subsection{Weighted special cycles}\label{sec:global KR cycle}
We keep the notation from \S \ref{sec:RSZ}.  
For any point $(A_0, A, \eta) \in \wt \CM(S)$, consider the hermitian form $h'(-,-)$ on  
$\Hom_{O_{F}}(A_0, A)[\fkd^{-1}]$ given by  
\begin{equation*}
    h'(u_1, u_2) = \lambda_0^{-1} \circ u_2^\vee \circ \lambda \circ u_1  
    \in \End_{O_{F}}(A_0)[\fkd^{-1}] \simeq O_{F}[\fkd^{-1}].
\end{equation*}  
By \cite[Lemma 2.7]{KR14}, this is totally positive definition. The polarization $\lambda: A \to A^\vee$ induces an injection  
\begin{equation*}
    \Hom_{O_F}(A_0, A)[\fkd^{-1}] \hookrightarrow  
    \Hom_{O_F}(A_0, A^\vee)[\fkd^{-1}].
\end{equation*}  

We first define global admissible Schwartz functions and associated special cycles, extending the local definition given in Definition \ref{def:local special cycle}.  
\begin{definition}  
Recall from \S \ref{sec:RSZ-integral} that we have fixed an integral lattice $L \subset V$.  
For a weight vector $\bw \in \{\CZ, \CY\}^m$, we construct the Schwartz function  
$\phi^\fkd(\bw) \in \CS(V(\BA_{0,f}^{\fkd})^m)$ such that  
\begin{equation*}
    \phi^{\fkd}(\bw) = \prod_i \phi^{\fkd}(w_i),\quad\text{where}\quad\phi^{\fkd}(w_i) =  
    \begin{cases}  
        \charfun_{L^{[h],\fkd}}, & \text{if } w_i = \CZ; \\  
        \charfun_{L^{[h],\sharp,\fkd}}, & \text{if } w_i = \CY.  
    \end{cases}  
\end{equation*}  

A Schwartz function $\phi^{\fkd} \in \CS(V(\BA_{0,f}^{\fkd})^m)$ is called \emph{admissible}  
if there exists a weight vector $\bw$ such that $\phi^{\fkd} = \phi^{\fkd}(\bw)$,  
cf. Definition \ref{def:local special cycle}.  
\end{definition}  

\begin{definition}\label{defn:special-cycle}  
Fix an integer $0 < m \leq n$.  
Let $\phi^\fkd \in \CS(V(\BA_{0,f}^{\fkd})^m)$ be an admissible Schwartz function.  
Let $T \in \Herm_m(F)$ be \emph{any} hermitian matrix, and let  
$\mu \in V(F_{0,\fkd})^m / K_{\fkd}$ be a right $K_{\fkd}$-orbit.  
The \emph{weighted Kudla–Rapoport cycle} $\wt\GZ(T, \mu, \phi^\fkd)$  
associates to each scheme $S$ over $O_E[\fkd^{-1}]$ the groupoid of tuples $(A_0, \iota_0, \lambda_0, \ov{\eta}_0, A, \iota, \lambda, \ov{\eta}, \bu)$, where:  
\begin{itemize}  
\item $(A_0, \iota_0, \lambda_0, \ov{\eta}_0, A, \iota, \lambda, \ov{\eta}) \in \wt \CM(S)$.  
\item $\bu \in \Hom_{O_F}(A_0, A)^m \otimes_{\BZ} \BQ$ satisfies $h'(\bu, \bu) = T$  
      and $\ov{\eta}(\bu) \in \mu \subset V(F_{0,\fkd})^m$.  
\item For each $1 \leq i \leq m$, we impose the following conditions:  
    \begin{itemize}  
        \item If $\phi_{f,i}^{\fkd} = \charfun_{L^{\fkd}}$, then $u_i \in \Hom_{O_{F}}(A_0, A)[\fkd^{-1}]$.  
        \item If $\phi_{f,i}^{\fkd} = \charfun_{L^{\sharp, \fkd}}$, then $u_i \in \Hom_{O_{F}}(A_0, A^\vee)[\fkd^{-1}]$.  
    \end{itemize}  
\end{itemize}  

A morphism between two objects  
$(A_0, \iota_0, \lambda_0, \ov{\eta}_0, A, \iota, \lambda, \ov{\eta}, \bu)$  
and  
$(A'_0, \iota'_0, \lambda'_0, \ov{\eta}'_0, A', \iota', \lambda', \ov{\eta}', \bu')$  
consists of an isomorphism  
$
(A_0, \iota_0, \lambda_0, \ov{\eta}_0) \xrightarrow{\sim} (A_0', \iota_0', \lambda_0', \ov{\eta}_0')
$
in $\CM_{0}^{\fka}(S)$ and an $O_F$-linear prime-to-$\fkd$ isogeny  
$\varphi: A \to A'$, compatible with $\lambda$ and $\lambda'$,  
with $\ov{\eta}$ and $\ov{\eta}'$, and satisfying $\bu' = \bu \circ \varphi$.  
\end{definition}

We recall some basic properties of special cycles.  

\begin{proposition}\label{prop:special-cycle-decomp}  
\begin{altenumerate}  
\item The morphism $i: \tCZ(T, \mu, \phi^\fkd) \to \wt \CM$ is representable, finite, and unramified.  

\item When $m = 1$ and $t > 0$ is totally positive, the morphism $i$ defines an étale locally Cartier divisor.  

\item Let $T_i \in \Herm_{m_i}(F)$ for $i = 1,2$.  
Decompose $\mu_1 \times \mu_2 = \coprod_i \mu_{[i]} \subset V(F_{0,\fkd})^{m_1 + m_2}$  
into $K_{\fkd}$-orbits.  
Then, the scheme-theoretic fiber product decomposes as  
\begin{equation*}
    \tCZ(T_1, \mu_1, \phi^\fkd_1) \times_{\tCM} \tCZ(T_2, \mu_2, \phi^\fkd_2)  
    = \coprod_{\substack{T = {\begin{psmallmatrix} T_1 & * \\ * & T_2 \end{psmallmatrix}} \\  
    T \in \Herm_m(F)}} \coprod_i \GZ(T, \mu_{[i]}, \phi^\fkd),
\end{equation*}  
where $m = m_1 + m_2$ and  
$\phi^\fkd = \phi^\fkd_1 \otimes \phi^\fkd_2 \in \CS(V(\BA_{0,f}^{\fkd})^m)$.  
\end{altenumerate}    
\end{proposition}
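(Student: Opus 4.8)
\emph{Part (1).} For abelian schemes over a base $S$, the functor of $O_F$-linear homomorphisms $\underline{\Hom}_{O_F}(A_0,A)$ (and its prime-to-$\fkd$ variant) is representable by a scheme that is unramified and separated over $S$, since homomorphisms of abelian schemes are rigid, and each of its connected components is finite over $S$. The cycle $\tCZ(T,\mu,\phi^\fkd)$ is obtained by pulling back a product of copies of this functor to $\wt\CM$ and imposing the closed condition $h'(\bu,\bu)=T$, the integrality conditions at the primes dividing $\fkd$ (i.e.\ $u_i\in\Hom_{O_F}(A_0,A)[\fkd^{-1}]$ or $u_i\in\Hom_{O_F}(A_0,A^\vee)[\fkd^{-1}]$ according to the $i$-th factor of $\phi^\fkd$), and the open-and-closed condition $\ov\eta(\bu)\in\mu$; hence $i$ is representable and unramified. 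Finiteness follows from properness (since $\wt\CM\to\Spec O_E[\fkd^{-1}]$ is projective and $\tCZ$ is closed in a projective $\wt\CM$-scheme) together with quasi-finiteness: over any geometric point the form $h'$ on $\Hom_{O_F}(A_0,A)[\fkd^{-1}]$ is totally positive definite by \cite[Lemma 2.7]{KR14}, so only finitely many $\bu$ satisfy $h'(\bu,\bu)=T$ and $\ov\eta(\bu)\in\mu$. This is the argument of \cite[\S 5]{KR11}, \cite{RSZ}, \cite[\S 6]{LMZ25}.

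\emph{Part (2).} Let $m=1$ and $t\in F_0$ totally positive. By part (1) and unramifiedness, after an \'etale localization on $\wt\CM$ around a geometric point $\ov x$ in the image of $\tCZ(t,\mu,\phi^\fkd)$ we may assume $\tCZ(t,\mu,\phi^\fkd)$ is a single closed subscheme, namely the locus over which the rigid special quasi-homomorphism $u$ attached to $\ov x$ (of norm $t$, in $\Hom_{O_F}(A_0,A)[\fkd^{-1}]$, resp.\ $\Hom_{O_F}(A_0,A^\vee)[\fkd^{-1}]$) lifts. By Grothendieck--Messing theory (and its $\fkd$-adic Serre--Tate, resp.\ prime-to-$\fkd$ display-theoretic, incarnations) the obstruction to lifting $u$ across a square-zero thickening with ideal $I$ is a map $\omega_{A_0}\to \ell\otimes I$, where $\ell$ is the rank-one subquotient of $D(A)$ (resp.\ $D(A^\vee)$) singled out by the signature $(n-1,1)$ condition and $\omega_{A_0}$ is invertible because $A_0$ is $(\Phi,\Phi)$-strict of dimension $[F_0:\BQ]$. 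Hence, \'etale-locally, $\tCZ(t,\mu,\phi^\fkd)$ is the vanishing locus of a single section $f$ of an invertible sheaf on $\wt\CM$. Finally $f$ is a non-zero-divisor: this is checked after the $p$-adic and complex uniformizations of \S\ref{sec:RSZ}--\S\ref{sec:KR cycle in global}, where the cycle becomes the Rapoport--Zink divisor $\CZ(u)$ or $\CY(u)$ (a Cartier divisor by Proposition \ref{prop:special-div}(1)), resp.\ the proper analytic divisor $\{z: z\perp u\}\subset\CD^{(v_0)}$ of \S\ref{sec:archi-cycle} (proper since $(u,u)=t\neq 0$), so $f$ does not vanish on any associated point of the Cohen--Macaulay scheme $\wt\CM$. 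The main point, and the only genuinely non-formal step in the whole proposition, is this identification of the lifting obstruction as a section of an explicit invertible sheaf — which uses the full force of the $(\Phi,\Phi)$-strictness of $A_0$ and the signature condition on $A$, handled uniformly across the $\fkd$-adic and prime-to-$\fkd$ places — together with the verification via the local uniformizations that it is a non-zero-divisor.

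\emph{Part (3).} This is a direct unwinding of Definition \ref{defn:special-cycle}. An $S$-point of $\tCZ(T_1,\mu_1,\phi^\fkd_1)\times_{\tCM}\tCZ(T_2,\mu_2,\phi^\fkd_2)$ consists of a point $(A_0,\iota_0,\lambda_0,\ov\eta_0,A,\iota,\lambda,\ov\eta)\in\wt\CM(S)$ together with $\bu_1,\bu_2$ satisfying $h'(\bu_i,\bu_i)=T_i$, the respective integrality conditions at $\fkd$, and $\ov\eta(\bu_i)\in\mu_i$. Setting $\bu=(\bu_1,\bu_2)\in\Hom_{O_F}(A_0,A)^{m_1+m_2}\otimes\BQ$, its moment matrix is $h'(\bu,\bu)=\begin{psmallmatrix} T_1 & h'(\bu_1,\bu_2)\\ h'(\bu_2,\bu_1) & T_2\end{psmallmatrix}$, whose off-diagonal block ranges over $\mathrm{Mat}_{m_1\times m_2}(F)$, while $\ov\eta(\bu)\in\mu_1\times\mu_2$. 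Sorting $S$-points by the value $T$ of the moment matrix and by the $K_\fkd$-orbit $\mu_{[i]}\subseteq\mu_1\times\mu_2$ of $\ov\eta(\bu)$, and observing that $\bu$ then satisfies exactly the conditions cutting out $\GZ(T,\mu_{[i]},\phi^\fkd)$ with $\phi^\fkd=\phi^\fkd_1\otimes\phi^\fkd_2$, yields the claimed scheme-theoretic decomposition; compare \cite[\S 2]{KR14}.
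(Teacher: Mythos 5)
Your proof is correct and follows essentially the same route as the paper, whose own proof of this proposition is given almost entirely by citation: part (1) via quasi-compactness/rigidity of the Hom scheme together with the total positivity of $h'$ (cf.\ \cite[Proposition 2.9]{KR14}), part (2) via the deformation-theoretic identification of the special divisor as the zero locus of a section of a line bundle of modular forms plus a non-zero-divisor check reducible to the generic fiber (cf.\ \cite[Proposition 5.22(3)]{Liu-ARTF} and \cite[\S 6.2]{LMZ25}), and part (3) by unwinding the moduli description. Your write-up fills in the content of those citations faithfully, including the point that the $\charfun_{L^{\sharp,\fkd}}$ case requires the line bundle $\omega_{\CY}$ rather than $\omega_{\CZ}$.
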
  

\begin{proof}  
Following the argument in \cite[Lemma A.2.2]{RChenIII} (see also \cite[Proposition 2.7.2]{AHM17}),  
the morphism $i$ is quasi-compact.  
Then, part (1) follows from \cite[Proposition 2.9]{KR14},  
see also \cite[Proposition 5.22(1)]{Liu-ARTF}.  

For part (2), the key idea originates from \cite[Proposition 3.5]{KR11},  
while a complete proof using the line bundle of modular forms can be found in  
\cite[Proposition 5.22(3)]{Liu-ARTF}.  
In our setting, these line bundles are constructed in \cite[\S 6.2]{LMZ25}  
via the Eisenstein condition (see also equation \eqref{equ:line bundle of modular form}).  
We note that proving $\tCZ(t, \mu, \charfun_{L^{\sharp,\fkd}})$  
is a Cartier divisor requires the line bundle of modular forms for $\omega_\CY$,  
see \cite[\S 5]{He-Luo-Shi-regular}.  

Part (3) follows directly from the moduli description,  
observing that in the summation on the right-hand side,  
all but finitely many terms vanish.  
\end{proof}  

\subsection{Derived special cycle}\label{sec:derived special cycle}
In this subsection, we define derived special cycles over the RSZ integral model.  
Since the integral model $\wt\CM \to \Spec O_E[\fkd^{-1}]$ is generally not regular, it requires extra care in the definition. 
Instead of working with the $K$-group of perfect complexes, we consider the $K$-group of coherent sheaves (which we refer to as the $G$-group).  

An alternative approach to constructing derived special cycles is through the higher Gysin map in the Chow group,  
as described in \cite{Fulton-Intersection} and \cite[\href{https://stacks.math.columbia.edu/tag/0FBI}{0FBI}]{stacks-project}.  
For further discussion, see \cite[Remark 6.16]{LMZ25}. 
To prove the local Kudla–Rapoport conjecture, we may carefully choose the global embedding so that $\wt{\CM}$ is regular, see Remark \ref{rmk:globalization-unramified}.

Let $X$ be a noetherian scheme, and let $G_0(X)$ denote the $K$-group of coherent sheaves on $X$.  
For each $r \geq 0$, define $G_0(X)_r \subseteq G_0(X)$ as the subgroup generated by coherent $\CO_X$-modules $\CF$  
whose support has dimension at most $r$.  

Given a finite morphism $Y \to X$, let $G_0^Y(X)$ be the subgroup generated by coherent $\CO_X$-modules  
whose support is contained in the image of $Y$.  
For a comprehensive treatment of intersection pairing in the setting of finite morphisms,  
we refer the reader to \cite[Appendix A]{Howard-Madapusi-22};  
see also \cite[Appendix B]{Zhang21}.

\begin{definition}\label{def:derived-special-cycle}  
Let $\Herm_m^{\heartsuit}(F)$ denote the set of all hermitian matrices with totally definite diagonal entries.  

\begin{altenumerate}  
\item Let $0 < m \leq n$ be an integer, and consider $T \in \Herm^{\heartsuit}_m(F)$,  
an admissible Schwartz function $\phi^{\fkd}\in\CS(V(\BA^{\fkd}_{0,f})^m)$, and $\mu \in V(F_{0,\fkd})^m / K_{\fkd}$.  
Suppose $T$ has main diagonal entries $t_1, \dots, t_m$, and choose a vector  
$(v_1, \dots, v_m) \in \mu$.  
We define the \emph{derived Kudla–Rapoport cycle}  
$\tDCZ(T, \mu, \phi^{\fkd}) \in G_0(\tCZ(T, \mu, \phi^{\fkd}))$ as  
\begin{equation*}
    \CO_{\tCZ(t_1, v_1 K_{\fkd}, \phi^{\fkd}_1)}  
    \Dotimes_{\CO_{\tCM}} \cdots  
    \Dotimes_{\CO_{\tCM}}  
    \CO_{\tCZ(t_m, v_m K_{\fkd}, \phi^{\fkd}_m)}  
    \big|_{\tCZ(T, \mu, \phi^{\fkd})}  
    \in G_0(\tCZ(T, \mu, \phi^{\fkd})).
\end{equation*}  

By Proposition \ref{prop:special-cycle-decomp} and \cite[Proposition 23.58]{GW-AGII},  
together with the fact that all morphisms are finite and unramified,  
the derived tensor product is well-defined and lies in $G_0(\wt\CM)_{n-m}$.  
Furthermore, this derived tensor product naturally carries a module structure over  
\begin{equation*}
    \CO_{\tCZ(t_1, v_1 K_{\fkd}, \phi^{\fkd}_1)}  
    \otimes_{\CO_{\tCM}} \cdots  
    \otimes_{\CO_{\tCM}}  
    \CO_{\tCZ(t_m, v_m K_{\fkd}, \phi^{\fkd}_m)}.
\end{equation*}  
We apply the idempotent corresponding to $\CO_{\tCZ(T, \mu, \phi^{\fkd})}$  
(see Proposition \ref{prop:special-cycle-decomp}(3))  
to obtain the restriction of the complex to $\tCZ(T, \mu, \phi^{\fkd})$.  Hence we see that $\tDCZ(T, \mu, \phi^{\fkd}) \in G_0(\tCZ(T, \mu, \phi^{\fkd}))_{n-m}$.

\item A Schwartz function $\phi_f \in \CS(V(\BA_{0,f})^m)^{K_G}$ is called \emph{admissible}  
if $\phi_f^{\fkd}$ is admissible, i.e.,  
$\phi_f^{\fkd} = \phi_f^{\fkd}(\bw)$ for some weighted vector $\bw \in \{\CZ, \CY\}^m$.  
For any $T \in \Herm^{\heartsuit}_{m}(F)$,  
we define the \emph{derived averaged weighted special cycle} as  
\begin{equation}\label{equ:defn derived special cycles}
    \tDCZ(T, \phi_f)  
    = \sum_{\mu \in V_T(F_{0,\fkd}) / K_{\fkd}}  
    \phi_{\fkd}(\mu) \tDCZ(T, \mu, \phi^{\fkd})  
    \in \bigoplus_{\substack{\mu \in V(F_{0,\fkd})^m / K_{\fkd} \\ \phi^{\fkd} \text{ admissible}}}  
    G_0(\tCZ(T, \mu, \phi^{\fkd}))_{n-m}.
\end{equation}  
Since the Schwartz function $\phi_{\fkd}$ is $K_{\fkd}$-invariant and has compact support,  
the summation in \eqref{equ:defn derived special cycles} is finite,  
so the right-hand side of \eqref{equ:defn derived special cycles}  
can be replaced by a finite direct sum.  
\end{altenumerate}  
\end{definition}  

\begin{proposition}\label{prop:derived-special-cycle-decomp}  
Let $T_i \in \Herm^{\heartsuit}_{m_i}(F)$ be hermitian matrices for $i \in \{1,2\}$,  
and set $m = m_1 + m_2$.  
For any cosets $\mu_i \in V(F_{0,\fkd})^{m_i} / K_{\fkd}$ for $i \in \{1,2\}$,  
the \emph{derived intersection}  
\begin{equation*}  
    \tDCZ(T_1, \mu_1, \phi^{\fkd}_1) \cap^{\BL} \tDCZ(T_2, \mu_2, \phi^{\fkd}_2)  
\end{equation*}  
is defined as the restriction of the derived tensor product  
\begin{equation*}  
    \tDCZ(T_1, \mu_1, \phi^{\fkd}_1) \Dotimes_{\CO_{\tCM}} \tDCZ(T_2, \mu_2, \phi^{\fkd}_2)  
\end{equation*}  
from $G_0(\tCM)$ to  $G_0(\tCZ(T_1, \mu_1, \phi^{\fkd}_1) \times_{\tCM} \tCZ(T_2, \mu_2, \phi^{\fkd}_2))$.
Alternatively, it is the restriction of  
\begin{equation*}
    \CO_{\tCZ(t_1, v_1 K_{\fkd}, \phi^{\fkd}_1)}  
    \Dotimes_{\CO_{\tCM}} \cdots  
    \Dotimes_{\CO_{\tCM}}  
    \CO_{\tCZ(t_m, v_m K_{\fkd}, \phi^{\fkd}_m)}
\end{equation*}  
to $\tCZ(T_1, \mu_1, \phi^{\fkd}_1) \times_{\tCM} \tCZ(T_2, \mu_2, \phi^{\fkd}_2)$,  
where $t_i$ are the diagonal entries of $T_1$ and $T_2$. These two constructions agrees in the derived category, then we pass it into $G$-group.
In particular, it lies in the $m$-th filtration.

\begin{altenumerate}  
\item Decompose  
    $\mu_1 \times \mu_2 = \coprod_i \mu_{[i]} \subset V(F_{0,\fkd})^{m}$  
    into $K_{\fkd}$-orbits, we obtain the decomposition:  
    \begin{equation*}
        \tDCZ(T_1, \mu_1, \phi^{\fkd}_1) \cap^{\BL} \tDCZ(T_2, \mu_2, \phi^{\fkd}_2)  
        = \sum_{\substack{T =  
        \begin{psmallmatrix} T_1 & * \\ * & T_2 \end{psmallmatrix} \\  
        T \in \Herm_{m}^{\heartsuit}(F)}}  
        \sum_i \tDCZ(T, \mu_{[i]}, \phi^{\fkd}).
    \end{equation*}  

\item For $i \in \{1,2\}$, let $\phi_{i} \in \CS(V(\BA_{0,f})^{m_i})$ be an admissible Schwartz function.  
    Then, we have  
    \begin{equation*}
        \tDCZ(T_1, \phi_{1}) \cap^{\BL} \tDCZ(T_2, \phi_{2})  
        = \sum_{\substack{T =  
        \begin{psmallmatrix} T_1 & * \\ * & T_2 \end{psmallmatrix} \\  
        T \in \Herm^{\heartsuit}_{m}(F)}}  
        \tDCZ(T, \phi),
    \end{equation*}  
where $\phi=\phi_1\otimes\phi_2\in \CS(V(\BA_{0,f})^m)$.
\end{altenumerate}  
\end{proposition}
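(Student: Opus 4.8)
The plan is to reduce the statement to the scheme-level structure theory of Proposition~\ref{prop:special-cycle-decomp} together with the standard behavior of derived tensor products of Koszul-type complexes, the one genuinely delicate point being that $\tCM$ is not regular. First I would record the key local input: by Proposition~\ref{prop:special-cycle-decomp}(2), for each $i$ the divisor $\tCZ(t_i, v_i K_\fkd, \phi^\fkd_i) \hookrightarrow \tCM$ is, \'etale locally on $\tCM$, an effective Cartier divisor. Here one uses that the diagonal entries of $T_1$ and $T_2$, which lie in $\Herm^\heartsuit$, are totally definite, so that each such cycle is either empty --- in which case every assertion below holds trivially, since all terms vanish --- or the relevant $t_i$ is totally positive and the cited result applies (in the $\CY$-divisor case via the line bundle of modular forms $\omega_\CY$ of \cite{He-Luo-Shi-regular}). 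Consequently, \'etale locally, $\CO_{\tCZ(t_i, v_i K_\fkd, \phi^\fkd_i)}$ admits the two-term locally free resolution $[\CO_{\tCM} \xrightarrow{s_i} \CO_{\tCM}]$, so every derived tensor product occurring below is computed by a bounded Koszul-type complex of coherent sheaves. In particular $\Dotimes_{\CO_{\tCM}}$ on these objects is associative and symmetric, lands in coherent sheaves of the expected support dimension, and commutes both with restriction to open-and-closed subschemes and with passage to $G_0$. This is the step I expect to cost the most care, since over a non-regular base boundedness and associativity must be justified by hand; everything afterwards is bookkeeping.

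Granting this, the identification of the two descriptions of $\tDCZ(T_1,\mu_1,\phi^\fkd_1) \cap^{\BL} \tDCZ(T_2,\mu_2,\phi^\fkd_2)$ is formal. By Definition~\ref{def:derived-special-cycle}(1), each $\tDCZ(T_i,\mu_i,\phi^\fkd_i)$ is the $m_i$-fold product of the individual divisor sheaves $\CO_{\tCZ(t_j, v_j K_\fkd, \phi^\fkd_j)}$, restricted via the idempotent attached to $\CO_{\tCZ(T_i,\mu_i,\phi^\fkd_i)}$; by associativity of $\Dotimes_{\CO_{\tCM}}$, the product of the two is then the full $m$-fold product over all $m = m_1 + m_2$ divisors, restricted via the product of the two idempotents, and that product is precisely the idempotent attached to the scheme-theoretic fiber product $\tCZ(T_1,\mu_1,\phi^\fkd_1) \times_{\tCM} \tCZ(T_2,\mu_2,\phi^\fkd_2)$ (both factors being finite over $\tCM$). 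One takes the auxiliary vector $(v_1,\dots,v_m)$ on the right-hand side to be the concatenation of those chosen for $\mu_1$ and $\mu_2$; the diagonal of any $T = \begin{psmallmatrix} T_1 & * \\ * & T_2 \end{psmallmatrix}$ is the concatenation of the diagonals of $T_1$ and $T_2$, so $T \in \Herm^\heartsuit_m(F)$ automatically, and the two stated descriptions of the derived intersection agree in the derived category, hence in $G_0$.

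For part~(1), I would feed in Proposition~\ref{prop:special-cycle-decomp}(3): the fiber product decomposes as the finite disjoint union $\coprod_T \coprod_i \GZ(T,\mu_{[i]},\phi^\fkd)$ over $T$ of block form $\begin{psmallmatrix} T_1 & * \\ * & T_2 \end{psmallmatrix}$ and over the $K_\fkd$-orbits $\mu_{[i]}$ refining $\mu_1 \times \mu_2$. Under this decomposition the idempotent of the fiber product is the sum of the idempotents of the pieces, so restricting the $m$-fold derived tensor product to each piece $\GZ(T,\mu_{[i]},\phi^\fkd)$ yields, again by Definition~\ref{def:derived-special-cycle}(1), exactly $\tDCZ(T,\mu_{[i]},\phi^\fkd)$; summing over the pieces gives the claimed identity, the filtration statement being the one already recorded in Definition~\ref{def:derived-special-cycle}(1) (via \cite[Proposition~23.58]{GW-AGII} and the finiteness and unramifiedness of all morphisms in sight). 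Part~(2) then follows by expanding $\tDCZ(T_i,\phi_i) = \sum_{\mu_i} \phi_{i,\fkd}(\mu_i)\,\tDCZ(T_i,\mu_i,\phi^\fkd_i)$, using bilinearity of $\cap^{\BL}$ on the $G$-groups, applying part~(1) term by term, and combining $\phi_\fkd(\mu) = \phi_{1,\fkd}(\mu_1)\,\phi_{2,\fkd}(\mu_2)$ for $\mu \subset \mu_1 \times \mu_2$ with $\phi = \phi_1 \otimes \phi_2$; compact support of the $\phi_{i,\fkd}$ makes all the sums finite.
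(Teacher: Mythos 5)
Your proposal is correct and follows essentially the same route as the paper's own (much terser) proof: the paper likewise reduces everything to the Cartier-divisor statement of Proposition \ref{prop:special-cycle-decomp}, the resulting Koszul-type computation of the derived tensor products via \cite[Proposition 23.58]{GW-AGII}, and the scheme-theoretic decomposition of the fiber product, with part (2) following formally from part (1). Your write-up merely makes explicit the associativity/boundedness justification over the non-regular base and the bookkeeping with idempotents and $K_\fkd$-orbits that the paper leaves implicit.
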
  
\begin{proof}  
Part (1) follows directly from the definition by applying Proposition \ref{prop:special-cycle-decomp}(2)  
repeatedly, along with \cite[Proposition 23.58]{GW-AGII}  
(or equivalently, using \cite[Definition 23.52 and Definition 23.57]{GW-AGII}).  
Moreover, all indices in the summation belong to $\Herm_m^{\heartsuit}(F)$  
since $T_i \in \Herm_{m_i}^{\heartsuit}(F)$.  
Part (2) follows immediately from part (1).  
\end{proof}  

\begin{remark}  
For any Schwartz function $\phi_{\fkd} \in \CS(V(F_{0,\fkd}))$, we define  
\begin{equation*}
    \tCZ(0, \phi_{\fkd} 1_{\wh L^{\fkd,n}}) := -\phi_{\fkd}(0) \omega_{\CZ},  
    \quad \text{and} \quad  
    \tCZ(0, \phi_{\fkd} 1_{\wh L^{\fkd,\sharp,n}}) := -\phi_{\fkd}(0) \omega_{\CY},
\end{equation*}  
where  
\begin{equation}\label{equ:line bundle of modular form}  
    \omega_\CZ = (\Lie A)_0 \otimes (\Lie A^\vee)_1,  
    \quad \text{and} \quad  
    \omega_\CY = (\Lie A^\vee)_0 \otimes (\Lie A)_1  
\end{equation}  
are line bundles of modular forms.  
Using these definitions, we can construct derived special cycles  
for hermitian matrices whose diagonal entries are either zero  
or totally positive definite (for all other hermitian matrices, the special cycles are empty.).  

An open problem remains to establish linear invariance relations  
analogous to those in \cite[\S 5]{Howard-linear}, see also \cite[\S 3.6]{RChenI}.
\end{remark}

\subsection{Special cycles in the formal neighborhood of the basic locus}\label{sec:p adic unitofmization of special cycle}
We consider the restriction of the special cycles to the formal completion of $\tCM$ along the basic locus.  
Let $\nu \nmid \fkd$ be a non-archimedean place of $E$,  
whose restriction to $F$ (resp. $F_0$) is denoted by $v$ (resp. $v_0$).  
Assume that $v_0$ is inert.  

Let $\CN_n^{[h]} = \CN^{[h]}_{n, F_{v} / F_{0,v_0}} \to \Spf O_{\breve{F}_{v}}$  
be the relative Rapoport-Zink space, where $h$ denotes the type of the vertex lattice  
$L_{v_0} \subset V(F_{0,v_0})$.  
Consider its base change  
\begin{equation*}
    \CN^{[h]}_{n,O_{\breve{E}_v}} = \CN^{[h]}_n \wh{\otimes}_{O_{\breve{F}_{v}}} O_{\breve{E}_\nu}.
\end{equation*}  
Recalling from \eqref{equ:shimura-p-unif-comp}, we have the uniformization:
\begin{equation*}
    \CM^{\wedge}_{O_{\breve{E}_\nu}}  
    = G^{(v_0)}(F_0) \backslash  
    \left[ \CN^{[h]}_{n,O_{\breve{E}_\nu}} \times G(\BA^{v_0}_{0,f}) / K^{v_0}_G \right].
\end{equation*}  

From \S \ref{sec:non-archi-cycle},  
for each $\bu \in (V^{(v_0)}(F_{0,v_0}))^m$,  
we have the local weighted special cycle $\CZ(\bu, \phi_{v_0})$  
and the local derived special cycle $\DCZ(\bu, \phi_{v_0})$  
on $\CN_n^{[h]}$.  

For a pair $(\bu, g) \in ( V^{(v_0)}(F_0))^m \times G(\BA_{0,f}^{v_0})) / K_G^{v_0}$  
with $h'(\bu, \bu) \in \Herm^{\reg}_{m}(F_0)$,  
we define the closed formal subscheme on  
$\CN^{[h]}_{n,O_{\breve{E}_\nu}} \times G(\BA^{v_0}_{0,f}) / K^{v_0}_G$ by  
\begin{equation}\label{equ:KR-unif}
    \CZ(\bu, g, \phi_{v_0})_{K_G^{v_0}} = \CZ(\bu, \phi_{v_0}) \times 1_{gK^{v_0}_G}.
\end{equation}  
We then consider the sum  
\begin{equation*}
    \sum \CZ(\bu', g', \phi_{v_0})_{K_G^{v_0}},
\end{equation*}  
where the sum runs over $(\bu', g')$ in the $G^{(v_0)}(F_0)$-orbit  
of the pair $(\bu, g)$ under the diagonal action of $G^{(v_0)}(F_0)$ on  
\begin{equation*}
    \left[V^{(v_0)}(F_0))^m \times G(\BA^{v_0}_{0,f})\right] / K^{v_0}_G.
\end{equation*}  
Since this sum is $G^{(v_0)}(F_0)$-invariant,  
it descends to a cycle on $\CM^{\wedge}_{O_{E_\nu}}$ in \eqref{equ:shimura-p-unif-comp},  
which we denote by $[\CZ(\bu, g, \phi_{v_0})]_{K^{v_0}_G}$.  
Similarly, we define $[\DCZ(\bu, g, \phi_{v_0})]_{K^{v_0}_G}$ in the $G$-group of $[\CZ(\bu, g, \phi_{v_0})]_{K^{v_0}_G}$.

\begin{proposition}\label{prop:KR-p-unif}  
Let $T \in \Herm_{m}^{+}(F)$.  
The restriction of the special cycle $\tCZ(T, \phi)$  
to each fiber $\CM^{\wedge}_{O_{\breve{E}_\nu}}$  
of the projection \eqref{equ:RZ-comp}  
is given by the finite sum  
\begin{equation}  
    \sum_{(\bu, g) \in G^{(v_0)}(F_0) \backslash  
    \left[V^{(v_0)}_T(F_0) \times G(\BA^{v_0}_{0,f})\right] / K_G^{v_0}}  
    \phi^{v_0}(g^{-1} \bu) \cdot [\CZ(\bu, g, \phi_{v_0})]_{K^{v_0}_G},
\end{equation}  
viewed as a cycle on \eqref{equ:shimura-p-unif-comp}.  

Similarly, the restriction of the derived special cycles $\tDCZ(T, \phi)$  
is given by  
\begin{equation}  
    \sum_{(\bu, g) \in G^{(v_0)}(F_0) \backslash  
    \left[V^{(v_0)}_T(F_0) \times G(\BA^{v_0}_{0,f})\right] / K_G^{v_0}}  
    \phi^{v_0}(g^{-1} \bu) \cdot [\DCZ(\bu, g, \phi_{v_0})]_{K^{v_0}_G}.
\end{equation}  
\end{proposition}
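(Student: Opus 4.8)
The plan is to deduce the statement from the basic uniformization \eqref{equ:shimura-p-unif-comp}--\eqref{equ:RZ-comp} of \cite[Theorem 7.3]{LMZ25}, together with the moduli description of the global cycle $\tCZ(T,\mu,\phi^{\fkd})$ in Definition \ref{defn:special-cycle} and of the local divisors $\CZ(u),\CY(u)$ in Definition \ref{def:special divisor}. First I would fix a fiber $\CM^{\wedge}_{O_{\breve{E}_\nu}}$ of the projection \eqref{equ:RZ-comp} and recall that, after choosing a framing, an $S$-point lying over the basic locus is a pair $(X,g)$ with $X=(X,\iota,\lambda,\rho)\in\CN_n^{[h]}(S)$ and $g\in G(\BA_{0,f}^{v_0})/K_G^{v_0}$, where $\rho$ identifies the completion of $X$ with a deformation of $A[\nu^\infty]$. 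Dieudonn\'e theory at $v_0$, together with the prime-to-$v_0$ level structure $\ov{\eta}$, then produces a canonical isometry $\Hom_{O_F}(A_0,A)\otimes_{\BZ}\BQ\xrightarrow{\sim}V^{(v_0)}$ whose $v_0$-component is identified with $\BV=\Hom^{\circ}_{O_F}(\BE,\BX_n^{[h]})$ via $\rho$; this isometry is canonical only up to the diagonal action of $\Aut^{\circ}(\BX_n^{[h]},\iota,\lambda)\cong G^{(v_0)}(F_0)$ on $(V^{(v_0)})^m\times G(\BA_{0,f}^{v_0})/K_G^{v_0}$, which accounts for the quotient in the uniformization formula.

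Next I would translate the conditions of Definition \ref{defn:special-cycle} under this dictionary. A point of $\tCZ(T,\mu,\phi^{\fkd})$ over the basic locus is a pair $(X,g)$ as above together with $\bu\in\Hom_{O_F}(A_0,A)^m\otimes\BQ$ satisfying $h'(\bu,\bu)=T$, $\ov{\eta}(\bu)\in\mu$, and, for each $i$, $u_i\in\Hom_{O_F}(A_0,A)[\fkd^{-1}]$ if $\phi^{\fkd}_i=\charfun_{L^{\fkd}}$, resp.\ $u_i\in\Hom_{O_F}(A_0,A^\vee)[\fkd^{-1}]$ if $\phi^{\fkd}_i=\charfun_{L^{\sharp,\fkd}}$. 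Under the isometry above, $\bu$ becomes some $\bu'\in V^{(v_0)}_T(F_0)$; the prime-to-$v_0$ integrality conditions together with $\ov{\eta}(\bu)\in\mu$ become the single requirement $\phi^{v_0}(g^{-1}\bu')\neq 0$, the nonzero value recording the multiplicity; and at $v_0$ the condition $u_i\in\Hom_{O_F}(A_0,A)$ unwinds --- via the relative dual $X^\vee$ and the polarizations $\lambda,\lambda_\BX$ --- to the statement that $\rho_X^{-1}\circ u_i$ extends to a homomorphism $\CE_S\to X$, while $u_i\in\Hom_{O_F}(A_0,A^\vee)$ unwinds to the statement that $\rho_{X^\vee}^{-1}\circ\lambda_\BX\circ u_i$ extends to $\CE_S\to X^\vee$. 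By Definition \ref{def:special divisor} these are exactly the defining conditions of $\CZ(u_i)$ and of $\CY(u_i)=\CZ(u_i,\charfun_{L^{[h],\sharp}})$ respectively, so the restriction of $\tCZ(T,\mu,\phi^{\fkd})$ to this fiber equals $\CZ(\bu',\phi_{v_0})\times 1_{gK_G^{v_0}}$ as in \eqref{equ:KR-unif}. Forming the $G^{(v_0)}(F_0)$-orbit sum so that the cycle descends, then summing over $\mu$ against $\phi_\fkd(\mu)$ and over the finitely many remaining orbits --- finite since $T$ is positive definite and $\phi^{v_0}$ has compact support --- gives the asserted formula for $\tCZ(T,\phi)$.

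For the derived cycle I would show that forming $\tDCZ$ commutes with restriction to $\CM^{\wedge}_{O_{\breve{E}_\nu}}$. By Definition \ref{def:derived-special-cycle}, $\tDCZ(T,\phi)$ is obtained from the derived tensor product $\CO_{\tCZ(t_1,v_1K_{\fkd},\phi^{\fkd}_1)}\Dotimes_{\CO_{\tCM}}\cdots\Dotimes_{\CO_{\tCM}}\CO_{\tCZ(t_m,v_mK_{\fkd},\phi^{\fkd}_m)}$ by restricting to the component $\tCZ(T,\mu,\phi^{\fkd})$ via the idempotent of Proposition \ref{prop:special-cycle-decomp}(3) and then summing with weights. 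Since the completion morphism $\tCM^{\wedge}_{O_{\breve{E}_\nu}}\to\tCM$ is flat, passing to the formal completion is exact and commutes with derived tensor products; combined with the first part, the restriction of each $\CO_{\tCZ(t_i,v_iK_{\fkd},\phi^{\fkd}_i)}$ to a fiber of \eqref{equ:RZ-comp} is a sum over the relevant orbits of the structure sheaves of the local divisors $\CZ(u_i,g,\phi_{i,v_0})$, and the idempotent decomposition of Proposition \ref{prop:special-cycle-decomp}(3) matches the analogous decomposition of the scheme-theoretic intersection on the Rapoport--Zink side. Because the $\CZ(u_i,\phi_{i,v_0})$ are Cartier divisors on $\CN_n^{[h]}$ (Proposition \ref{prop:special-cycle-decomp}(2)) and all structural morphisms are finite and unramified, the derived tensor product on $\CN_n^{[h]}\times G(\BA_{0,f}^{v_0})/K_G^{v_0}$, pushed into the $G$-group, equals $\DCZ(\bu',\phi_{v_0})$ placed at $1_{gK_G^{v_0}}$ in the sense of Definition \ref{def:local special cycle}; descending along $G^{(v_0)}(F_0)$ and summing over $\bu'$ and $\mu$ as before yields the claimed formula for $\tDCZ(T,\phi)$.

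The step I expect to be the main obstacle is the dictionary in the second paragraph: one must check carefully, within the RSZ moduli formalism and using the definitions of the relative dual $X^\vee$ and the relative polarization, that the integrality of each $u_i$ in $\Hom_{O_F}(A_0,A)$ or $\Hom_{O_F}(A_0,A^\vee)$ --- both away from $v_0$, where it becomes a value of $\phi^{v_0}$, and at $v_0$, where it becomes the extension condition --- corresponds to membership in $\CZ(u_i)$ or $\CY(u_i)$ with the polarizations $\lambda$ and $\lambda_\BX$ inserted in exactly the right places, and that passing to $G^{(v_0)}(F_0)$-orbits yields the cycle with the correct multiplicities and no overcounting. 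Granting this, the derived assertion follows formally from flatness of completion, the Cartier property of the local divisors, the finiteness and unramifiedness of the structural maps, and the compatibility of the decompositions in Proposition \ref{prop:special-cycle-decomp}.
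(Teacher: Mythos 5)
Your proposal is correct and follows essentially the same route as the paper, which itself only cites the standard uniformization argument of \cite[Proposition 7.4]{Zhang21} (and, for the derived part, the computation in Proposition \ref{prop:inert-local-int}); your write-up is a fleshed-out version of exactly that argument — the Dieudonn\'e-theoretic dictionary translating the integrality conditions of Definition \ref{defn:special-cycle} into the extension conditions of Definition \ref{def:special divisor}, descent along the diagonal $G^{(v_0)}(F_0)$-action, and flatness of formal completion to commute the derived tensor products. The only cosmetic imprecision is identifying $\Aut^{\circ}(\BX_n^{[h]},\iota,\lambda)$ with $G^{(v_0)}(F_0)$ rather than with the local group $\U(\BV)(F_{0,v_0})$ into which $G^{(v_0)}(F_0)$ embeds, but this does not affect the argument.
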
  

\begin{proof}  
This follows the same reasoning as the basic uniformization  
of special divisors in \cite[Proposition 7.4]{Zhang21}.  
For the derived special cycles, see also Proposition \ref{prop:inert-local-int}.  
\end{proof}

\subsection{Special cycles in the complex uniformization}\label{sec:special cycle complex uniformization}
Let $\nu: E \to \BC$ be a complex place of the reflex field,  
and let $V^{(v_0)}$ be the nearby hermitian space.  
Recall the notations for complex uniformization from \S \ref{sec:complex-uniformization}.  

For each $\bu \in V^{(v_0),m}(F_0)$ with totally positive norm,  
we define the special cycle $\CZ^{(v_0)}(\bu) \subset \CD^{(v_0)}$  
as the space of negative definite $\BC$-lines perpendicular to $\bu$  
(see \S \ref{sec:archi-cycle}).  

For a pair $(\bu, g) \in V^{(v_0),m}(F_0) \times G(\BA_{0,f}) / K_G$, we define the analytic submanifold
\begin{equation*}
    \CZ(\bu, g)_{K_G} = \CZ^{(v_0)}(\bu) \times 1_{g K_G} \subset \CD^{(v_0)} \times G(\BA_{0,f}).
\end{equation*}
We then consider the formal sum  
\begin{equation*}
    \sum \CZ(\bu', g')_{K_G}
\end{equation*}  
taken over $(\bu', g')$ in the $G^{(v_0)}(F_0)$-orbit of the pair $(\bu, g)$  
under the diagonal action of $G^{(v_0)}(F_0)$ on  
$V^{(v_0),m}(F_0) \times G(\BA_{0,f}) / K_G$.  
Since this sum is $G^{(v_0)}(F_0)$-invariant,  
it descends to a cycle on the quotient $\CM_\nu$, which we denote by $[\CZ(\bu, g)]_{K_G}$.  

\begin{proposition}\label{prop:complex uniformization of special cycle}  
Let $T \in \Herm^{+}_{m}(F)$.  
The restriction of the special cycle $\tCZ_{\nu}(T, \phi):=\tCZ(T, \phi)\otimes_{E, \nu} \BC$  
to each fiber $\CM_{\nu}$ of the projection \eqref{equ:complex projection} is given by  
\begin{equation*}\pushQED{\qed}  
    \sum_{(\bu, g) \in G^{(v_0)}(F_0) \backslash (V^{(v_0)}_T(F_0) \times G(\BA_{0,f}) / K_G)}  
    \phi(g^{-1} \bu) \cdot [\CZ(\bu, g)]_{K_G}.  
\qedhere\popQED
\end{equation*}  
\end{proposition}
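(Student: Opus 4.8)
The plan is to deduce this from the complex uniformization of $\tCM_\nu$ recalled in \S\ref{sec:complex-uniformization}, the moduli description of the weighted special cycles in Definition \ref{defn:special-cycle}, and the classical analytic dictionary between abelian varieties over $\BC$ and polarized rational Hodge structures. Since the projection \eqref{equ:complex projection} has discrete target, I would first reduce to a fiberwise statement: fix a point of $Z^\BQ(\BQ)\backslash(Z^\BQ(\BA_{\BQ,f})/K_{Z^\BQ})$ and identify the corresponding fiber of $\tCZ_\nu(T,\phi)$ with a cycle on $\CM_\nu=G^{(v_0)}(F_0)\backslash[\CD^{(v_0)}\times G(\BA_{0,f})/K_G]$; the statement for $\tCM_\nu$ then follows by reassembling over the discrete set, exactly as for the ambient space.

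Next I would unwind the moduli problem over $S=\Spec\BC$. A point of $\tCZ(T,\mu,\phi^\fkd)$ is a point of $\tCM$ together with $\bu\in\Hom_{O_F}(\CA_0,\CA)^m\otimes\BQ$ with $h'(\bu,\bu)=T$, with $u_i$ integral away from $\fkd$ in the lattice prescribed by $\phi^\fkd_i$, and with $\ov\eta(\bu)\in\mu$. Choosing a lift $(z,g)\in\CD^{(v_0)}\times\wt G(\BA_{0,f})/K_{\wt G}$ of the underlying point of $\tCM_\nu$, the uniformization identifies $\Hom_{O_F}(\CA_0,\CA)\otimes\BQ\cong\Hom_F(H_1(\CA_0,\BQ),H_1(\CA,\BQ))$ with its form $h'$ with $V^{(v_0)}$ with its hermitian form, in a way that records the away-from-$\fkd$ level structure in $g^\fkd$ (via the integral $O_F$-module $\Hom_{O_F}(\CA_0,\CA)[\fkd^{-1}]$, which is rigidified by the polarization condition) and the $\fkd$-adic level structure $\ov\eta$ in $g_\fkd$. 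I would then verify that, under this dictionary, the condition that each component of $\bu$ underlie a genuine homomorphism $\CA_0\to\CA$ is precisely that $\bu$ be orthogonal to the negative definite line attached to $z$, i.e.\ $z\in\CZ^{(v_0)}(\bu)$ in the notation of \S\ref{sec:archi-cycle} and \eqref{eq:def_unitary_D}, while the integrality and level constraints become $g^{-1}\bu\in\supp\phi^\fkd$ away from $\fkd$ and $g_\fkd^{-1}\bu\in\mu$ at places dividing $\fkd$. Summing over $\mu$ with weight $\phi_\fkd$ and recalling that $\phi^\fkd$ is $\{0,1\}$-valued, the fiber of $\tCZ_\nu(T,\phi)$ becomes the sum over $(\bu,g)\in V^{(v_0)}_T(F_0)\times G(\BA_{0,f})/K_G$ of $\phi(g^{-1}\bu)\cdot(\CZ^{(v_0)}(\bu)\times 1_{gK_G})$.

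Finally I would note that this formal sum is invariant under the diagonal $G^{(v_0)}(F_0)$-action on $V^{(v_0)}_T(F_0)\times G(\BA_{0,f})/K_G$ — the hermitian form, hence $\phi$, is $G^{(v_0)}(F_0)$-invariant and the group carries $\CZ^{(v_0)}(\bu)$ to $\CZ^{(v_0)}(\gamma\bu)$ — so it descends to the asserted cycle $\sum\phi(g^{-1}\bu)\cdot[\CZ(\bu,g)]_{K_G}$ with $(\bu,g)$ indexed by $G^{(v_0)}(F_0)\backslash(V^{(v_0)}_T(F_0)\times G(\BA_{0,f})/K_G)$; finiteness of the sum follows from $T\in\Herm^+_m(F)$ by the same argument as in Proposition \ref{prop:special-cycle-decomp}. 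I expect the main obstacle to be the identification of the ``genuine homomorphism'' locus with the orthogonality locus $z\perp\bu$: this requires spelling out how the Hodge filtration on $H_1(\CA,\BQ)$ is determined by $z\in\CD^{(v_0)}$ together with the signature $(n-1,1)$ condition at $v_0$, and checking compatibility with $\iota$ and $\lambda$. This is the analytic counterpart of the defining conditions for $\CZ(u)$ and $\CY(u)$ in Definition \ref{def:special divisor}, and the whole argument runs parallel to the non-archimedean Proposition \ref{prop:KR-p-unif}, whose proof follows \cite[Proposition 7.4]{Zhang21}.
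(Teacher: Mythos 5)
Your outline is correct and is precisely the standard argument that the paper leaves implicit: Proposition \ref{prop:complex uniformization of special cycle} is stated with an immediate \verb|\qed| and no proof, the intended justification being exactly the unwinding of Definition \ref{defn:special-cycle} over $\Spec\BC$ against the complex uniformization of \S\ref{sec:complex-uniformization}, in parallel with the non-archimedean Proposition \ref{prop:KR-p-unif}. The one substantive verification you flag --- that a quasi-homomorphism $u$ lifts to a genuine homomorphism $\CA_0\to\CA$ if and only if $u$ respects Hodge filtrations, which at $\varphi\neq\varphi_0$ is automatic by the signatures $(1,0)$ and $(n,0)$ and at $\varphi_0$ reduces to $z\perp u$ --- is indeed the crux, and it is the standard computation from \cite[\S 3]{KR14}; your remaining steps (descent by $G^{(v_0)}(F_0)$-invariance, absorption of the $\CZ$/$\CY$ distinction and the $\fkd$-adic level into the value $\phi(g^{-1}\bu)$, finiteness from positive definiteness of $T$) are all sound.
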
  

\subsection{Green currents of special cycles}\label{sec:green of cycles}
Let $\nu$ be an archimedean place of $E$, with its restriction to $F$ (resp. $F_0$)  
denoted by $v$ (resp. $v_0$).  
Since all constructions in this subsection depend only on $v_0$,  
in \S \ref{sec:local interection of KR cycles}, we will freely use the notation $\CG^?_{v_0}$ rather than $\CG^?_{\nu}$ for $? \in \{\bK, \bB\}$.  

For a pair $(\bu, g) \in V^{(v_0),m}(F_0) \times G(\BA_{0,f}) / K_G$, we define  
\begin{equation*}
    \CG_\nu^{\bK}(\bu, \by_\infty, z, g)_{K_G}  
    = \CG^{\bK}(\bu, \by_{v_0}, z) \times 1_{g K_G}, \quad z \in \CD^{(v_0)},
\end{equation*}
where $\btau_{\infty} = \mathrm{diag}(\tau_1, \dots, \tau_r) = \bx_{\infty} +i \by_{\infty} \in \BH_n(F_{\infty})$,  
with $\tau_i \in \BH_1(F_{\infty})$.  
Here, $\CG^{\bK}(\bu, \by_{v_0})$ is Kudla’s Green function  
for the analytic special cycle $\CZ(\bu)$, as defined in \S \ref{sec:archi-cycle}.  

Consider the sum  
\begin{equation*}
    \sum \CG_\nu^{\bK, (v_0)}(\bu', \by_{\infty}, z, g')_{K_G},
\end{equation*}
taken over $(\bu', g')$ in the $G^{(v_0)}(F_0)$-orbit of the pair $(\bu, g)$  
under the diagonal action of $G^{(v_0)}(F_0)$  
on $V^{(v_0),m}(F_0) \times G(\BA_{0,f}) / K_G$.  
Since this sum is $G^{(v_0)}(F_0)$-invariant,  
it descends to a current on the quotient,  
which we denote by $[\CG^{\bK}(\bu, \btau_{v_0}, g)]_{K_G}$.  
A parallel construction holds for the automorphic Green function $\CG^{\bB}(\bu)$,  
which we will discuss in \S \ref{sec:generating}.  

For each $T \in \Herm_m^{\reg}(F)$ and  $\phi = \phi^{\fkd}(\bw) \otimes \phi_{\fkd} \in \CS(V(\BA_{0,f}))^{K_G}$,  where $\phi_\fkd \in \CS(V(F_{0,\fkd}))^{K_{G,\fkd}}$,  we define a current ${\tCG}_\nu^{\bK}(T, \by_\infty, z, \phi)$ on $\tCM_{\nu}$  such that its restriction to each fiber $\CM_{\nu}$  of the projection \eqref{equ:complex projection} is  
\begin{equation}\label{equ:KR_complex-unif}
    \CG_\nu^{\bK}(T, \by_\infty, z, \phi):=\sum_{(\bu, g) \in G^{(v_0)}(F_0) \backslash (V^{(v_0)}_T(F_0) \times G(\BA_{0,f}) / K_G)}  
    \phi(g^{-1} \bu) \cdot [\CG_\nu^{\bK}(\bu, \by_{\infty}, z, g)]_{K_G}.
\end{equation}

For any linearly independent $\bu \in V(\BR)^r$,  recall from \S \ref{sec:archi-cycle} that we have  the Kudla-Millson Schwartz form  
$\omega_{\KM}(\bu, \by) \in (\CS(V(\BR))^r \otimes A^{(r,r)}(\CD^{(v_0)}))^{\U(V)(\BR)}$.  
Similarly, we construct $[\omega_{\KM,\nu}(\bu, \by_\infty, g)]_{K_G}$  
and define a form $\wt\omega_{\KM,\nu}(\bu, \by_{\infty}, \phi)$  on $\wt\CM_{\nu}$ whose restriction to each fiber $\CM_\nu$ of the projection is  
\begin{equation*}
    \omega_{\KM,\nu}(\bu, \by_{\infty}, \phi):=\sum_{(\bu, g) \in G^{(v_0)}(F_0) \backslash (V^{(v_0)}_T(F_0) \times G(\BA_{0,f}) / K_G)}  
    \phi(g^{-1} \bu) \cdot [\omega_{\KM,\nu}(\bu, \by_{\infty}, g)]_{K_G}.
\end{equation*}

From the construction, we obtain  
\begin{equation*}\label{equ:green function for special cycles}
    \rd\rd^c \CG_\nu^{\bK}(T, \by_{\infty}, \phi)  
    + \delta_{\GZ(T, \phi)_{\nu}}  
    = \omega_{\KM,\nu}(T, \by_{\infty}, \phi),
\end{equation*}
showing that $\CG_\nu^{\bK}(T, \by_{\infty}, \phi)$  is a Green current for the special cycle $\CZ_\nu(T, \phi)$.  The initial construction of this current is due to Kudla \cite{Kudla97}.  

Note that the form $\omega_{\KM,\nu}(T, \by_\infty, \phi)$  is the $T$-coefficient of the theta function  $\Theta_{\KM,\nu}(\btau_\infty, \phi)$ considered in \cite{Kudla-Millson-90}.  For any $\phi \in \CS(V(\BA_f)^r)^{K_G}$, it is defined by  
\begin{equation*}
    \Theta_{\KM,\nu}(\btau_\infty, \phi)  := \sum_{\bu \in V^{(v_0)}}  \phi_{\KM,\nu}(\btau_\infty, \bu) \phi(\bu)  \in A^{(r,r)}(\CM_{\nu}),
\end{equation*}
where $\phi_{\KM,\nu} = \bigotimes_{w_0 \in \Sigma_{F_0,\infty}} \phi_{w_0}$,  such that $\phi_{v_0}$ is the Kudla-Millson Schwartz form  and $\phi_{w_0}$ are standard Gaussian functions defined in \eqref{equ:standard-gaussian} for $w_0 \neq v_0$.  
It is a weight-$n$ automorphic function of $\bH(\BA_0)$,  with the level determined by $\phi$.  

\begin{theorem}[{\cite[Theorem 4.10]{Garcia-Sankaran}}]\label{thm:star-prod-cycles}
Let $\CG_\nu^{\bK}(T_1,\by_1,\phi_{1})$ and $\CG_\nu^{\bK}(T_2,\by_2,\phi_{2})$  
be Green currents associated with the special cycles $\GZ_\nu(T_1,\phi_{1})$  
and $\GZ_\nu(T_2,\phi_{2})$.  
Let $\phi=\phi_{1}\otimes\phi_{2}$ be a non-singular Schwartz function.  
Assume that $T_1$ and $T_2$ are non-degenerate  
and that $\CZ_\nu(T_1,\phi_{1})$ and $\CZ_\nu(T_2,\phi_{2})$ intersect properly.  
Recalling \eqref{equ:star-prod}, we define the star product as follows:
\begin{equation*}
\CG_\nu^{\bK}(T_1,\by_1,\phi_{1})\star\CG_\nu^{\bK}(T_2,\by_2,\phi_{2})
:=\CG_\nu^{\bK}(T_1,\by_1,\phi_{1})\wedge\delta_{\GZ(T_2,\phi_{2})}
+\CG_\nu^{\bK}(T_2,\by_2,\phi_{2})\wedge \omega_{\KM,\nu}(T_1,\by_1,\phi_{1}).
\end{equation*}
Under these assumptions, we obtain
\begin{equation*}
\CG_\nu^{\bK}(T_1,\by_1,\phi_{1})\star\CG_\nu^{\bK}(T_2,\by_2,\phi_{2})
\equiv\sum_{\substack{T=
    \begin{psmallmatrix}T_1&*\\ *& T_{2}\end{psmallmatrix}}}
    \CG_\nu^{\bK}\left(T,\begin{pmatrix}\by_1&\\&\by_2\end{pmatrix},\phi\right) 
    \mod \mathrm{im}\,\partial +\mathrm{im}\,\ov{\partial}.
\end{equation*}
Here, the summation ranges over all $T\in\Herm_n(F)$.
\end{theorem}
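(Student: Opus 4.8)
The plan is to reduce the identity to the complex uniformization and then invoke the star-product formula of Garcia and Sankaran. Since both sides are currents on $\tCM_\nu = \tCM\otimes_{O_E[\fkd^{-1}],\nu}\BC$ for an archimedean place $\nu$ of $E$, I would first use the projection \eqref{equ:complex projection} to work fiber by fiber on the quotient manifold $\CM_\nu = G^{(v_0)}(F_0)\backslash[\CD^{(v_0)}\times G(\BA_{0,f})/K_G]$. On each fiber the currents $\CG_\nu^{\bK}(T_i,\by_i,\phi_i)$, the Kudla--Millson forms $\omega_{\KM,\nu}(T_i,\by_i,\phi_i)$, and the delta-currents $\delta_{\GZ_\nu(T_i,\phi_i)}$ are, by \eqref{equ:KR_complex-unif}, the definition of $\omega_{\KM,\nu}$, and Proposition \ref{prop:complex uniformization of special cycle}, the $G^{(v_0)}(F_0)$-invariant sums of the elementary pieces $[\CG^{\bK}(\bu_i,\by_i,g_i)]_{K_G}$, $[\omega_{\KM}(\bu_i,\by_i,g_i)]_{K_G}$, $[\GZ_\nu(\bu_i,g_i)]_{K_G}$, each weighted by $\phi_i(g_i^{-1}\bu_i)$, with $\bu_i$ running over $V^{(v_0)}_{T_i}(F_0)$.

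Next I would expand the star product of these two orbit sums using the definition given in the statement,
\begin{equation*}
\CG_\nu^{\bK}(T_1,\by_1,\phi_{1})\star\CG_\nu^{\bK}(T_2,\by_2,\phi_{2})
=\CG_\nu^{\bK}(T_1,\by_1,\phi_{1})\wedge\delta_{\GZ_\nu(T_2,\phi_{2})}
+\CG_\nu^{\bK}(T_2,\by_2,\phi_{2})\wedge \omega_{\KM,\nu}(T_1,\by_1,\phi_{1}),
\end{equation*}
and observe that, by the properness hypothesis, the wedge products of elementary pieces see only pairs $(\bu_1,g_1),(\bu_2,g_2)$ with $g_1K_G=g_2K_G$. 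For such a pair the concatenated tuple $\bu=(\bu_1,\bu_2)$ has moment matrix $T=\begin{psmallmatrix}T_1 & *\\ * & T_2\end{psmallmatrix}$, the Schwartz weight factors as $\phi(g^{-1}\bu)=\phi_1(g^{-1}\bu_1)\phi_2(g^{-1}\bu_2)$ with $\phi=\phi_1\otimes\phi_2$, and the $G^{(v_0)}(F_0)$-orbit of $(\bu,g)$ is precisely the index appearing in $\CG_\nu^{\bK}(T,\mathrm{diag}(\by_1,\by_2),\phi)$. Grouping the contributions by the resulting block matrix $T$, the statement is reduced to the congruence of currents on $\CD^{(v_0)}$
\begin{equation*}
[\CG^{\bK}(\bu_1,\by_1)]\star[\CG^{\bK}(\bu_2,\by_2)]\equiv[\CG^{\bK}((\bu_1,\bu_2),\mathrm{diag}(\by_1,\by_2))]\mod \mathrm{im}\,\partial+\mathrm{im}\,\ov{\partial},
\end{equation*}
valid for arbitrary (not necessarily linearly independent) tuples $\bu_1,\bu_2$, where the right-hand side uses the extension of Kudla's Green current to linearly dependent systems.

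This last congruence is exactly \cite[Theorem 4.10]{Garcia-Sankaran}: when $\bu_1$ consists of a single vector it is the inductive definition of $\CG^{\bK}$ in \S\ref{sec:archi-cycle} together with the current equation \eqref{equ:green-current-cycle}, and the general case follows from the associativity and graded commutativity of the star product modulo $\partial$- and $\ov{\partial}$-exact currents; I would cite it directly. The remaining bookkeeping is routine: the orbit sums are locally finite because $T_1$ and $T_2$ are non-degenerate, so the relevant sets of $\bu_i$ are discrete; one then reassembles the Fourier coefficients of the right-hand side and descends from $\CM_\nu$ back to $\tCM_\nu$ along the finite projection \eqref{equ:complex projection}. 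The genuinely delicate point — and the one I expect to be the main obstacle — is the degenerate locus: even with $T_1,T_2$ non-degenerate, the combined matrix $T$ can be singular, so $\CG^{\bK}(T,\cdot,\phi)$ on the right must be interpreted via the Garcia--Sankaran extended Green current, and the assertion that the star product still equals a combination of such currents modulo exact terms is the substantive analytic input that we are borrowing from \cite{Garcia-Sankaran}.
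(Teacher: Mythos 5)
Your proposal is correct and matches the paper's treatment: the paper simply cites \cite[Theorem 4.10]{Garcia-Sankaran} for the general statement and remarks that, under the regularity hypotheses used in its applications, only nonsingular $T$ occur and the identity follows directly from the definitions, exactly as in your term-by-term unwinding of the orbit sums over the complex uniformization. Your identification of the singular-$T$ terms as the one genuinely analytic input borrowed from Garcia--Sankaran is also the point the paper implicitly concedes by restricting to the regular case.
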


In our applications, we will always assume that $T_i$ is positive definite for $i\in\{1,2\}$  
and that $\phi$ is regular at two places over $\fkd$ (see Theorem \ref{thm:sing-vanish}).  
Under these conditions, the sum is taken over all $T\in\Herm_n^{\reg}(F)$,  
and Theorem \ref{thm:star-prod-cycles} follows directly from the definitions.

\begin{proposition}\label{prop:omega series}
\begin{altenumerate}
\item For any positive definite Hermitian matrix $T_\flat\in\Herm_{n-1}^+(F)$, the integral 
\begin{equation*}
\int_{\CM^{\mathrm{rel}}_{\nu,\BC}} \CG_\nu^{\bK}(T_\flat,\tau_\flat,\phi_\flat)\wedge \omega_{\KM,\nu}(\xi, \tau,\phi_1)
\end{equation*}
converges absolutely.

\item Let $\phi_1\in\CS(V(\BA_{0,f}))^{K_G}$ be invariant under $K_{\bH} \subset \bH(\BA_{0,f})$  
with respect to the Weil representation.  
Then, for any $\tau_\flat\in\BH_{n-1}(F_{\infty})$ and $h_f\in\bH(\BA_{0,f})$,  
the generating series
\begin{equation*}
\omega_{T_\flat,\nu}(\tau,h_f,\phi_1,\phi_\flat)
=\sum_{\xi\in F_0}
\int_{\CM^{\mathrm{rel}}_{\nu,\BC}} \CG_\nu^{\bK}(T_\flat,\tau_\flat,\phi_\flat)\wedge \omega_{\KM,\nu}(\xi,\tau,\omega(h_f)\phi_1) q^\xi
\end{equation*}
lies in $\CA_{\infty}(\bH(\BA_0),K_{\bH},n)$.
\end{altenumerate}
\end{proposition}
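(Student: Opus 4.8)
\emph{Proof strategy for Proposition \ref{prop:omega series}.}
The plan is to identify the series $\omega_{T_\flat,\nu}(\tau,h_f,\phi_1,\phi_\flat)$ with the integral of the rank-one Kudla--Millson theta form against the \emph{fixed} Green current $\CG^{\bK}_\nu(T_\flat,\tau_\flat,\phi_\flat)$, and then to read off its automorphy from that of the theta form. For part (1): since $T_\flat\in\Herm_{n-1}^+(F)$ is totally positive definite of full rank $n-1$, while $\CM^{\mathrm{rel}}_{\nu,\BC}$ is an $(n-1)$-dimensional compact complex manifold (compactness using $F_0\neq\BQ$), the cycle $\CZ_\nu(T_\flat,\phi_\flat)=\CZ(u_{1,\flat})\cap\cdots\cap\CZ(u_{n-1,\flat})$ is a finite set of closed points. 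By \cite[Proposition 4.9, Proposition 4.10]{Liu11-I} and \cite[Theorem 2.7.1, Theorem 4.10]{Garcia-Sankaran} (cf. \eqref{equ:green-current-cycle} and Theorem \ref{thm:star-prod-cycles}), Kudla's Green current $\CG^{\bK}_\nu(T_\flat,\tau_\flat,\phi_\flat)$, obtained as the star product of the one-variable Green functions, is a Green current of log type for $\CZ_\nu(T_\flat,\phi_\flat)$: it is represented by an $L^1_{\mathrm{loc}}$ form of bidegree $(n-2,n-2)$ whose only singularities, of log-log type, lie along the finite set $\CZ_\nu(T_\flat,\phi_\flat)$. The Kudla--Millson form $\omega_{\KM,\nu}(\xi,\tau,\phi_1)\in A^{(1,1)}(\CM^{\mathrm{rel}}_{\nu,\BC})$ is smooth, hence bounded on the compact manifold $\CM^{\mathrm{rel}}_{\nu,\BC}$, so $\CG^{\bK}_\nu(T_\flat,\tau_\flat,\phi_\flat)\wedge\omega_{\KM,\nu}(\xi,\tau,\phi_1)$ is an $L^1$ top-degree form and its integral converges absolutely.

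For part (2), write $\Theta_{\KM,\nu}(\tau,\phi)=\sum_{\xi\in F_0}\omega_{\KM,\nu}(\xi,\tau,\phi)q^\xi$ for the $r=1$ case of the theta form recalled before Theorem \ref{thm:star-prod-cycles}. By the Kudla--Millson construction \cite{Kudla-Millson86,Kudla-Millson-90}, this series converges absolutely and locally uniformly in $\tau$ together with all its derivatives, uniformly over the compact $\CM^{\mathrm{rel}}_{\nu,\BC}$; moreover, regarded adelically as a function $g\mapsto\Theta^{\mathrm{ad}}_{\KM,\nu}(g,\phi_1)$ on $\bH(\BA_0)$ with values in closed $(1,1)$-forms on $\CM^{\mathrm{rel}}_{\nu,\BC}$, it is smooth, left $\bH(F_0)$-invariant, of parallel weight $n$ under $\prod_{w_0\mid\infty}\SO(2,\BR)$, and right $K_\bH$-invariant because $\phi_1$ is $K_\bH$-invariant under the Weil representation. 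Combining part (1) applied with $\omega(h_f)\phi_1$ in place of $\phi_1$ with this uniform convergence, we may interchange $\sum_\xi$ with $\int_{\CM^{\mathrm{rel}}_{\nu,\BC}}$, obtaining
\begin{equation*}
\omega_{T_\flat,\nu}(\tau,h_f,\phi_1,\phi_\flat)=\int_{\CM^{\mathrm{rel}}_{\nu,\BC}}\CG^{\bK}_\nu(T_\flat,\tau_\flat,\phi_\flat)\wedge\Theta_{\KM,\nu}(\tau,\omega(h_f)\phi_1),
\end{equation*}
i.e. the pairing of the fixed ($\tau$- and $h_f$-independent) $L^1$ current $\CG^{\bK}_\nu(T_\flat,\tau_\flat,\phi_\flat)$ against the adelic theta form.

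Pairing against a current that does not depend on $g$ commutes with the left $\bH(F_0)$-, right $K_\bH$-, and $\prod_{w_0\mid\infty}\SO(2,\BR)$-actions, so the resulting scalar function of $g$ is left $\bH(F_0)$-invariant, right $K_\bH$-invariant and of parallel weight $n$. Smoothness in $g$ follows because at the finite places $h_f\mapsto\omega(h_f)\phi_1$ is locally constant (smoothness of the Weil representation on $\CS(V(\BA_{0,f}))$), while in the archimedean directions one differentiates under the integral sign, the integrand being dominated by the fixed $L^1$ form of part (1) times the locally uniform bounds on $\Theta_{\KM,\nu}$ and its derivatives. Hence $g\mapsto\int_{\CM^{\mathrm{rel}}_{\nu,\BC}}\CG^{\bK}_\nu(T_\flat,\tau_\flat,\phi_\flat)\wedge\Theta^{\mathrm{ad}}_{\KM,\nu}(g,\phi_1)$ lies in $\CA_\infty(\bH(\BA_0),K_\bH,n)$, and by strong approximation $\bH(\BA_0)=\bH(F_0)\bH(F_{0,\infty})K_\bH$ the displayed generating series is its classical avatar under the adelic--classical dictionary of the notation section, which gives (2). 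The only genuinely analytic input here is the assertion that the star-product current $\CG^{\bK}_\nu(T_\flat,\tau_\flat,\phi_\flat)$ is of log type, hence $L^1$; this is the expected main obstacle, but since $T_\flat$ is totally positive definite the underlying cycle is $0$-dimensional and the ambient Shimura variety is compact, so the Garcia--Sankaran analysis applies verbatim with no cusp or boundary complications, and the remainder is dominated convergence plus Kudla--Millson modularity.
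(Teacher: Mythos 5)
Your proof is correct and follows essentially the same route as the paper's own (three-sentence) argument: zero-dimensionality of $\CZ_\nu(T_\flat,\phi_\flat)$ together with the integrability of the logarithmic singularities of Kudla's star-product Green current on the compact complex fiber for part (1), and absolute (locally uniform) convergence plus modularity of the Kudla--Millson theta series for part (2). The only slip is cosmetic: the singularities of the star-product current are of logarithmic (not ``log-log'') type, as the paper itself states, which does not affect the $L^1$ conclusion.
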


\begin{proof}
The series is well-defined since $\CZ_\nu(T_\flat,\phi_\flat)$  consists of finitely many discrete points in the complex fiber and Kudla's Green currents have only logarithmic singularities at these points.  Moreover, the generating series converges because $\omega_{\KM,\nu}$  is defined as the product of the Kudla-Millson series  with a function that has exponential decay in $\xi$ (see \eqref{equ:green-current-cycle}).  The modularity of the generating series follows directly  from the modularity of the Kudla-Millson series.
\end{proof}

\section{Eisenstein series of unitary groups}\label{sec:Eisenstein series}
In this section, we review the Siegel Eisenstein series and their Fourier-Jacobi expansion. Let $\BV$ be the \emph{incoherent} $\BA/\BA_0$-Hermitian space associated with $V$, as fixed in \S \ref{sec:gp data}. This means that $\BV$ is totally positive definite and satisfies $\BV_{v_0} \simeq V_{v_0}$ for all finite places $v_0 \in \Sigma_{F_0}$.

\subsection{Siegel Eisenstein series}
Let $W$ be the standard split $F/F_0$-skew-hermitian space of dimension $2n$.
Let $G_n=\U(W)$ and let $P_n(\BA_0)=M_n(\BA_0)N_n(\BA_0)$ be the standard Siegel parabolic subgroup of $G_n(\BA_0)$, where
\begin{align*}
    M_n(\BA_0)={}&\left\{
    m(a)=\begin{pmatrix}a&0\\0& \prescript{t}{}{\ov{\ba}}^{-1}\end{pmatrix}: \ba\in\GL_n(\BA)\right\},\\
    N_n(\BA_0)={}&\left\{\begin{pmatrix}1_n&\bb\\0&1_n\end{pmatrix}: b\in\Herm_n(\BA)\right\}.
\end{align*}
We define 
\begin{equation*}
    w_n=\begin{pmatrix}
        0&1_n\\ -1_n&0
    \end{pmatrix}\in G_n.
\end{equation*}

Let \(\eta: \BA^{\times}_{F_0}/F_0^\times \to \BC^\times\) be the quadratic character associated with the extension \(F/F_0\).  
Fix a character \(\chi: \BA^\times_{F} \to \BC^\times\) such that \(\chi|_{\BA^\times_{F_0}} = \eta^n\).  

We may regard \(\chi\) as a character on \(M_n(\BA)\) via \(\chi(m(a)) = \chi(\det(a))\) and extend it to \(P_n(\BA)\) by setting it to be trivial on \(N_n(\BA)\).  
The \emph{degenerate principal series} is then defined as the unnormalized smooth induction  
$$
I_n(s,\chi) := \mathrm{Ind}^{G_n(\BA)}_{P_n(\BA)} (\chi \cdot |-|_F^{s+n/2}), \quad s \in \BC.
$$

For a standard section \(\Phi(-,s) \in I_n(s,\chi)\) (i.e., a section whose restriction to the standard maximal compact subgroup of \(G_n(\BA)\) is independent of \(s\)), we define the associated \emph{Siegel Eisenstein series} by  
\[
\Eis(g,s,\Phi) := \sum_{\gamma \in P_n(F_0) \backslash G_n(F_0)} \Phi(\gamma g, s), \quad g \in G_n(\BA).
\]
This series converges for \(\Re(s) \gg 0\), admits a meromorphic continuation to the entire complex plane, and is holomorphic at \(s=0\).  
Note that \(\Eis(g,s,\Phi)\) depends on the choice of \(\chi\).  

For $T \in \Herm_n(F)$ and $g \in G_n(\BA_0)$, define  
\begin{equation*}
    \Eis_T(g, s, \Phi) = \int_{N_n(F_0) \backslash N_n(\BA)} \Eis(n(b)g, s, \Phi) \psi(-\tr(Tb)) \, dn(b).
\end{equation*}
Then, we have the decomposition  
\begin{equation} \label{equ:Eis-Fourier}
    \Eis(g, s, \Phi) = \sum_{T \in \Herm_n(F)} \Eis_T(g, s, \Phi).
\end{equation}

For $\Re(s) \gg 0$, the integral may be unfolded in the usual way. If $\det T \neq 0$ and $\Phi(s) = \otimes_{v_0} \Phi_{v_0}(s)$ is factorizable, the Fourier coefficient admits a factorization into a product:
\begin{equation*}
    \Eis_T(g, s, \Phi) = \prod_{v_0 \in \Sigma_{F_0}} W_{T, v_0}(g_{v_0}, s, \Phi_{v_0}),
\end{equation*}
where
\begin{equation*}
    W_{T, v_0}(g_{v_0}, s, \Phi_{v_0}) = \int_{\Herm_n(F_{0, v_0})} \Phi_{v_0}(w_n^{-1} n(b) g_{v_0}, s) \psi_v(-\tr(Tb)) \, dn(b)
\end{equation*}
is the \emph{local Whittaker function}, as defined in \S\ref{sec:local weil}. Furthermore, we have the equivariance property
\begin{equation*}
    W_{T, v_0}(g_{v_0}, s, \Phi_{v_0}) = W_{T, v_0}(e, s, r(g_{v_0})\Phi_{v_0}),
\end{equation*}
where $r(g_{v_0})$ denotes the action of right translation by $g_{v_0} \in G_n(F_{0, v_0})$ in the local induced representation $I_{n, v_0}(s, \chi_{v_0})$.

Recall that, for $T$ nonsingular, these local integrals have an entire analytic continuation. In other words, we have the following result:

\begin{proposition}[\cite{Kudla97}]\label{prop:Fourier-coeff-non-sing}
For $T \in \Herm_n^{\reg}(F)$, the function $\Eis_T(g, s, \Phi)$ admits a meromorphic analytic continuation and is holomorphic at $s = 0$. In particular, the following expression holds:
\begin{equation*}
    \Eis_T(g, s, \Phi) = \prod_{v_0} W_{T,v_0}(g_{v_0}, s, \Phi_{v_0})
\end{equation*}
for $s$ near $0$. The vanishing of the nonsingular coefficients at $s = 0$ is determined by the vanishing of the local factors $W_{T, v_0}(1, s, \Phi_{v_0})$.\qed
\end{proposition}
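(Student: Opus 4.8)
The plan is to reduce the assertion to local statements via unfolding, using that for $T\in\Herm_n^{\reg}(F)$ only the big Bruhat cell contributes to the $T$-th Fourier coefficient. First I would work in the range $\Re(s)\gg 0$, where $\Eis(g,s,\Phi)$ converges absolutely and locally uniformly, so that the same holds for each Fourier coefficient and one may interchange the sum over $P_n(F_0)\backslash G_n(F_0)$ with the integral over $N_n(F_0)\backslash N_n(\BA_0)$. Running through the Bruhat decomposition of $G_n$, one checks that for nonsingular $T$ only the double coset of $w_n$ gives a nonzero contribution to the $\psi(-\tr(Tb))$-twisted integral, so that
\begin{equation*}
\Eis_T(g,s,\Phi)=\int_{N_n(\BA_0)}\Phi(w_n^{-1}n(b)g,s)\,\psi(-\tr(Tb))\,dn(b),
\end{equation*}
and, when $\Phi=\otimes_{v_0}\Phi_{v_0}$ is factorizable, this integral factors as $\prod_{v_0}W_{T,v_0}(g_{v_0},s,\Phi_{v_0})$ with $W_{T,v_0}$ the local Whittaker integral of \S\ref{sec:local weil}. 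In particular the Euler product on the right converges in this range.

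Next I would invoke the local input: for $T$ nonsingular, each $W_{T,v_0}(g_{v_0},s,\Phi_{v_0})$ has an \emph{entire} analytic continuation in $s$. At the archimedean places (with the Gaussian section) this is the explicit confluent hypergeometric computation, and at the nonarchimedean places it reduces, after multiplying by the relevant normalizing $L$-factor, to a polynomial in $q_{v_0}^{\pm s}$; cf.\ \cite{Kudla97}, and the local density computations recalled in Proposition \ref{prop:witt-den} and \S\ref{sec:base case}. Carrying out the unramified evaluation at the places where all data and the localization of $T$ are unimodular shows that there the factor is holomorphic and non-vanishing near $s=0$, and that the product of these good factors is a ratio of partial $L$-functions; hence the Euler product extends to a meromorphic function of $s$ that agrees with $\Eis_T(g,s,\Phi)$ on the region of absolute convergence. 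Since $\Eis(g,s,\Phi)$ itself continues meromorphically and $\Eis_T(g,s,\Phi)$ is given by this product there, the identity $\Eis_T(g,s,\Phi)=\prod_{v_0}W_{T,v_0}(g_{v_0},s,\Phi_{v_0})$ persists as an identity of meromorphic functions, in particular for $s$ near $0$.

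Finally, holomorphy and the vanishing criterion at $s=0$ are formal consequences: near $s=0$ the product is locally finite (all but finitely many factors are units there), and each remaining factor is holomorphic at $s=0$ by the local result, so $\Eis_T(g,s,\Phi)$ is holomorphic at $s=0$ and vanishes exactly when some $W_{T,v_0}(e,s,r(g_{v_0})\Phi_{v_0})$ does, using the equivariance $W_{T,v_0}(g_{v_0},s,\Phi_{v_0})=W_{T,v_0}(e,s,r(g_{v_0})\Phi_{v_0})$. I expect the main obstacle to lie in the local part of the argument — the entire continuation of the nonsingular local Whittaker integrals and, above all, the explicit unramified evaluation that controls the behavior of the Euler product near $s=0$ — but this is classical and I would cite \cite{Kudla97} and \cite{KR14} for it rather than reproduce it.
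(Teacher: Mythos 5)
Your proposal is correct and follows essentially the same route as the paper, which states this result without proof as a citation to \cite{Kudla97}: the unfolding over the big Bruhat cell and the factorization into local Whittaker integrals for $\Re(s)\gg 0$ are already set up in the paragraphs preceding the proposition, and the remaining input — the entire continuation of the nonsingular local Whittaker functions together with the unramified evaluation controlling the Euler product near $s=0$ — is exactly the classical content of Kudla's paper that you correctly identify and cite.
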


Next, we study the central derivative of the Siegel Eisenstein series. The main argument follows from \cite[\S 6]{Kudla97}. The Fourier expansion in \eqref{equ:Eis-Fourier} is well-defined near $s = 0$, and from now on, we will restrict our attention to this region.

\begin{theorem}\label{thm:Eis-cent-deriv}
The central derivative $\PEis(g, 0, \Phi)$ is well-defined and is an automorphic form on $G_n(\BA_0)$. Moreover, we have
\begin{equation*}
    \PEis(g, 0, \Phi) = \sum_T  \frac{\rd}{\rd s} \Big|_{s=0} \Eis_T(g, s, \Phi),
\end{equation*}
which is absolutely convergent. Furthermore, for nonsingular $T$, we have 
\begin{equation*}
    \frac{\rd}{\rd s}\Big|_{s=0} \Eis_T(g, s, \Phi)  = \PEis_T(g, 0, \Phi),
\end{equation*}
i.e., the Fourier coefficients are preserved under differentiation.
\end{theorem}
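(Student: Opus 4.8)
The plan is to reduce everything to statements about the local Whittaker functions that have already been set up in \S\ref{sec:local weil}--\S\ref{sec:symmetric space}, and then to combine these with a standard argument controlling the growth of the Fourier expansion. First I would recall that by Proposition~\ref{prop:Fourier-coeff-non-sing}, for each nonsingular $T$ the coefficient $\Eis_T(g,s,\Phi)$ is holomorphic near $s=0$; hence $\frac{\rd}{\rd s}|_{s=0}\Eis_T(g,s,\Phi)$ makes sense term by term. The content of the theorem is then twofold: (i) that the termwise derivatives sum absolutely to an automorphic form, and (ii) that this sum equals the derivative of the whole Eisenstein series, i.e.\ that differentiation commutes with the Fourier expansion \eqref{equ:Eis-Fourier}.

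For (ii) in the range $\Re(s)\gg 0$ the Fourier expansion \eqref{equ:Eis-Fourier} converges uniformly on compacta together with its $s$-derivatives (the Eisenstein series is given by an absolutely convergent sum of smooth sections there), so $\frac{\rd}{\rd s}\Eis(g,s,\Phi)=\sum_T\frac{\rd}{\rd s}\Eis_T(g,s,\Phi)$ holds for $\Re(s)\gg 0$. I would then argue by meromorphic continuation: each side is a meromorphic function of $s$ (the left side by the known continuation of the Eisenstein series, the right side by Proposition~\ref{prop:Fourier-coeff-non-sing} for the nonsingular terms, and by Kudla's analysis \cite[\S6]{Kudla97} for the finitely many singular $T$ of each given ``size'', since $N_n(F_0)\backslash N_n(\BA)$ is compact and for fixed $g$ only finitely many $T$ contribute non-negligibly) and they agree on a half-plane, hence everywhere they are both defined, in particular near $s=0$. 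Specializing and then differentiating at $s=0$ gives the claimed identity; for nonsingular $T$ the coefficient is holomorphic at $s=0$ so $\frac{\rd}{\rd s}|_{s=0}\Eis_T=\PEis_T$ directly. The assertion that $\PEis(g,0,\Phi)$ is automorphic follows because it is the derivative at a point where the Eisenstein series is holomorphic (holomorphy at $s=0$ is stated just above), and the automorphy relation $\Eis(\gamma g,s,\Phi)=\Eis(g,s,\Phi)$ for $\gamma\in G_n(F_0)$ persists after differentiating in $s$; moderate growth is inherited likewise.

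The real work, and the step I expect to be the main obstacle, is the absolute convergence in (i): one must show $\sum_T\bigl|\frac{\rd}{\rd s}|_{s=0}\Eis_T(g,s,\Phi)\bigr|<\infty$, uniformly for $g$ in compacta. Here I would follow \cite[\S6]{Kudla97}: split the sum into nonsingular $T$ and singular $T$. For nonsingular $T$ the factorization $\Eis_T(g,s,\Phi)=\prod_{v_0}W_{T,v_0}(g_{v_0},s,\Phi_{v_0})$ of Proposition~\ref{prop:Fourier-coeff-non-sing} holds near $s=0$; the archimedean factors have Gaussian decay in the entries of $T$ (via the explicit formulas behind Theorem~\ref{thm:archi Siegel Weil}), the finite factors away from a fixed finite set are the local densities $\alpha_T(0,\phi_{v_0})$ which are polynomially bounded in $T$ (this is the local density polynomial $F_T(X,\phi)$ discussion), and the derivative in $s$ introduces at worst a factor like $\log|\det T|$ at the ramified/bad places — all dominated by the archimedean Gaussian. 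The singular $T$ need the separate treatment in \cite[\S6]{Kudla97}: their contribution, after the usual manipulation with the constant term along $N_n$ and lower-rank Eisenstein series, is again majorized by a convergent theta-type sum. Granting these estimates, the dominated-convergence / Weierstrass $M$-test argument lets one differentiate the Fourier series term by term at $s=0$, yielding the displayed formula and completing the proof.
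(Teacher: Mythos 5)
Your proposal takes a much harder route than the paper, and as written it has genuine gaps. The paper's proof is soft: since $b\mapsto \Eis(n(b)g,s,\Phi)$ is a smooth function on the \emph{compact} quotient $\Herm_n(\BA)/\Herm_n(F)$ depending holomorphically on $s$ in a closed disk about $s=0$, its Fourier coefficients $\Eis_T(g,s,\Phi)$ decay rapidly in $T$, uniformly for $s$ in that disk; Cauchy's integral formula then converts this into the same rapid decay for the $s$-derivatives of the coefficients, so the series of derivatives converges absolutely and termwise differentiation at $s=0$ is justified by dominated convergence/Fubini. No Whittaker-function estimates, no archimedean Gaussian decay, and no separate treatment of singular $T$ are needed.

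Concretely, the gaps in your argument are these. First, the interchange of $\frac{\rd}{\rd s}$ with $\sum_T$ via meromorphic continuation is circular: to know that $\sum_T\frac{\rd}{\rd s}\Eis_T(g,s,\Phi)$ defines a holomorphic function near $s=0$ you already need locally uniform convergence in $s$ there, which is exactly the point at issue; and the remark that ``for fixed $g$ only finitely many $T$ contribute non-negligibly'' is false --- the Fourier expansion is a genuinely infinite sum. Second, your absolute-convergence estimate is asserted only at the single point $s=0$; a bound on the sum of absolute values of the termwise derivatives at $s=0$ does not by itself show that this sum equals $\PEis(g,0,\Phi)$ --- you need control uniform in $s$ on a neighborhood of $0$, or the Cauchy-estimate device above. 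Third, the singular-$T$ contribution is not actually estimated: ``majorized by a convergent theta-type sum'' is an assertion, not an argument, and the constant-term/lower-rank manipulation you invoke organizes the singular coefficients by rank along parabolics rather than bounding the individual $\Eis_T$ and their derivatives. Each of these can be repaired, but the repair amounts to the uniform-smoothness-on-a-compact-quotient argument that the paper uses, so it is cleaner to start there.
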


\begin{proof}
Since the function $b \mapsto \Eis(n(b) g, s, \Phi)$ is smooth on $\Herm_n(\BA)/\Herm_n(F)$ and holomorphic in a neighborhood of $s = 0$, the Fourier expansion is absolutely convergent and uniform for $s$ in a closed disk around $s = 0$ (see \cite[\S 6]{Kudla97}).
Hence, it can be differentiated term by term with respect to $s$, and the resulting series for the derivative remains absolutely convergent.
By differentiating term by term with respect to $s$, evaluating at $s = 0$, and collecting the terms for nonsingular $T$, we obtain the first assertion. The second assertion follows from Fubini's theorem.
\end{proof}

\subsection{Weil representation}\label{sec:weil representation}
The Eisenstein series of interest to us arise from the Weil representations. By class field theory, we may, and will, choose an additive character $\psi: \BA_0/F_0 \to \BC^\times$ such that $\psi$ is unramified outside of $\mathrm{Spl}(F/F_0)$, the set of finite places of $F_0$ that split in $F$.

Let $\CV$ be an $\BA/\BA_0$-hermitian space of rank $n$, we say that the space $\CV$ (resp., the Schwartz function $\Phi_\phi \in \CS(\CV)$ and the Eisenstein series $\Eis(g, s, \phi)$) is \emph{coherent} if $\CV = V \otimes_{F_0} \BA_0$ for some $F/F_0$-Hermitian space $V$, and \emph{incoherent} otherwise.

Let $\CS(\CV^n)$ be the space of Schwartz functions on $\CV^n$. The fixed choice of $\chi$ and $\psi$ gives a \emph{Weil representation} $\omega=\omega_{\chi,\psi}$ of $G_n(\BA)\times \U(\CV)$ on $\CS(\CV^n)$. Explicitly, for $\phi\in\CS(\CV^n)$ and $\bu\in\CV^n$,
\begin{align*}
\omega(m(\ba))\phi(\bu)&=\chi(m(\ba))|\det \ba|_F^{n/2}\phi(\bu\cdot a),&m(\ba)=\begin{psmallmatrix}
\ba&\\&\prescript{t}{}{\ov{\ba}}
\end{psmallmatrix}
\in M_n(\BA),\\
\omega(n(\bb))\phi(\bu)&=\psi(\tr \bb(\bu,\bu))\phi(\bu),&n(\bb)=\begin{psmallmatrix}
1_n& \bb\\ &1_n
\end{psmallmatrix}
\in N_n(\mathbb{A}),\\
\omega_\chi(w_n)\phi(\bu)&=\gamma_{\CV}^n\cdot\widehat \phi(\bu),&w_n=\begin{psmallmatrix}
  & 1_n\\
  -1_n & \\
\end{psmallmatrix},\\
\omega(h)\phi(\bu)&=\phi(h^{-1}\cdot\bu),& h\in \U(\CV).
\end{align*}
Here $\gamma_{\CV}$ is the Weil constant (see \cite[(10.3)]{KR14}), and $\widehat\phi$ is the Fourier transform of $\phi$ using the self-dual Haar measure on $\CV^n$ with respect to $\psi\circ\tr_{F/F_0}$.

For $\phi \in \CS(\CV^n)$, define the function  
\begin{equation*}
    \Phi_\phi(0) := \omega(g) \phi(0), \quad g \in G_n(\BA).
\end{equation*}
Then, $\Phi_\phi \in I_n(0, \chi)$. Let $\Phi_\phi(-, s) \in I_n(s, \chi)$ be the associated standard section, known as the \emph{standard Siegel-Weil section} associated to $\phi$.

For $\phi \in \CS(\CV^n)$, we define the Eisenstein series and its Fourier coefficients as follows:
\begin{equation*}
    \Eis(g, s, \phi) := \Eis(g, s, \Phi_{\phi}),
    \quad \Eis_T(g, s, \phi) := \Eis_T(g, s, \Phi_{\phi}).
\end{equation*}
Similarly, we write  
\begin{equation*}
    W_{T, v_0}(g_{v_0}, s, \phi_{v_0})
\end{equation*}
for the local Whittaker function.

For a finite place $v_0$, we define the space of \emph{regular test functions} as
\begin{equation*}
    \CS(\CV_{v_0}^{n})_{\reg} = \{\phi_{v_0} \in \CS(\CV_{v_0}^n) \mid \phi_{v_0}(\bu) = 0 \text{ if } \det (\bu, \bu) = 0\}.
\end{equation*}

Fix a finite subset $S \subset \Sigma_{F_0,f}$ with $|S| = k > 0$ and define  
\[
    \CS(\CV_S^{n})_{\reg} = \bigotimes_{v \in S} \CS(\CV_v^n)_{\reg}.
\]  
A Schwartz function $\phi\in\CS(\CV^n)$ is \emph{regular} at $k$ place if there exists a finite subset $S\subset \Sigma_f$ such that $|S|=k$ and that $\phi_S\in\CS(\CV_S^n)_{\reg}$.
The following result is due to \cite[\S 6]{YZZ12} and \cite[2.B]{Liu11-II}:

\begin{theorem}\label{thm:sing-vanish}
Suppose $\phi \in \CS(\CV^n)$ is regular at $k$ finite places, and that $\CV_\infty$ is positive definite, and that $\phi_\infty\in\CS(\CV_\infty^n)$ is the product of standard Gaussian functions as in \eqref{equ:standard-gaussian}.

Then, for $T$ singular and $g \in P(\BA_{F_0, S}) G_n(\BA_{F_0}^S)$, we have  
\begin{equation*}
    \ord_{s=0} \Eis_T(s, g, \phi) \geq k.
\end{equation*}
\end{theorem}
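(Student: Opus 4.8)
The plan is to reduce the vanishing order statement to a purely local assertion about the Whittaker integrals at the $k$ regular places, following the strategy of \cite[\S 6]{YZZ12} and \cite[2.B]{Liu11-II}. First I would observe that since $g \in P(\BA_{F_0,S}) G_n(\BA_{F_0}^S)$ and $\Phi_\phi(-,s)$ is a standard Siegel--Weil section, the right translate $r(g)\Phi_\phi(-,s)$ is again (up to a holomorphic nonvanishing scalar coming from the action of $P$ via the Weil representation formulas for $m(\ba)$ and $n(\bb)$) a standard section attached to $\omega(g_S)\phi_S \otimes \omega(g^S)\phi^S$; crucially, $\omega(g_S)\phi_S$ remains regular at each $v \in S$ because the regularity condition $\phi_{v}(\bu)=0$ whenever $\det(\bu,\bu)=0$ is preserved by $\omega(p_v)$ for $p_v \in P(F_{0,v})$ (the operators $\omega(m(\ba))$ and $\omega(n(\bb))$ only rescale and multiply by a character, both of which respect the locus $\det(\bu,\bu)=0$). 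Thus it suffices to prove that for singular $T$ the Fourier coefficient $\Eis_T(e, s, \phi)$ vanishes to order $\geq k$ at $s=0$ for any $\phi$ regular at the $k$ places of $S$.

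Next I would set up the local-global factorization of the singular Fourier coefficient. For singular $T$ one cannot directly factor $\Eis_T$ into a product of local Whittaker integrals as in Proposition \ref{prop:Fourier-coeff-non-sing}, so instead I would use the standard device of writing $\Eis_T(e,s,\phi)$ as an integral over $N_n(F_0)\backslash N_n(\BA_0)$ and then, following \cite[\S 6]{YZZ12}, expressing it via an inner sum over the $\GL_n(F)$-orbits of matrices representing $T$ together with a ``partial'' Whittaker integral. The key point is that the contribution of each orbit is, up to the global volume factors, a product over all places $v_0$ of local integrals $W_{T,v_0}$ (now for possibly degenerate $T$, where these are defined by analytic continuation of the convergent integral for $\Re(s)\gg 0$), times a convergent sum over the orbit. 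One then analyzes the order of vanishing at $s=0$ of each local factor at the places $v \in S$.

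The core local input is then: if $\phi_v \in \CS(\CV_v^n)_{\reg}$ is a regular test function and $T$ is singular, then $W_{T,v}(e,s,\phi_v)$ vanishes at $s=0$. This is exactly the regularity mechanism: writing $W_{T,v}(e,s,\phi_v)$ as the Fourier transform at $-T$ of the orbital-type integral $M_v(\wt T, \phi_v) = \int_{(\CV_v)_{\wt T}} \phi_v\, d\bu$ (compare Theorem \ref{thm:bertini} and the computation of $M(T,\phi)$ in its proof), one sees that $M_v$ is supported on nonsingular $\wt T$ because $\phi_v$ vanishes on the singular locus; hence the value of the associated section/Whittaker integral at the degenerate matrix $T$, which at $s=0$ is governed by this $M_v$, vanishes. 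For $k$ such places this forces $\ord_{s=0}\Eis_T(e,s,\phi)\geq k$. The archimedean hypothesis — $\CV_\infty$ positive definite, $\phi_\infty$ the standard Gaussian — guarantees that the archimedean Whittaker factors are holomorphic and nonvanishing at $s=0$ for $T$ with the appropriate signature (by Theorem \ref{thm:archi Siegel Weil}), so they do not absorb any of the vanishing, and the whole argument is uniform over the (finitely many relevant) orbits.

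The main obstacle I expect is the bookkeeping for singular $T$: one must handle the non-factorizability of $\Eis_T$ carefully, controlling the convergence of the orbit sum in the region $\Re(s) > 0$ and justifying that the order of vanishing is additive over the $k$ regular places rather than merely $\geq 1$. This requires the precise form of the orbit decomposition of \cite[\S 6]{YZZ12} and a uniform lower bound on the local vanishing order at each $v \in S$ independent of which orbit one is in; once that uniformity is in hand, the global statement follows by multiplying the local contributions. I would cite \cite[\S 6]{YZZ12} and \cite[2.B]{Liu11-II} for these technical points and indicate that the unitary case is entirely parallel to the orthogonal/GSpin treatment there.
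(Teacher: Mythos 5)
Your proposal follows essentially the same route as the paper, whose proof of Theorem \ref{thm:sing-vanish} consists precisely of invoking \cite[2.B]{Liu11-II} (cf.\ \cite[\S 6]{YZZ12} and \cite[(12.3.0.2)]{LZ21}); your reduction to $g=e$ via the $P$-equivariance of regularity and the core local mechanism (regular test functions vanish on the singular orbital integrals governing each local factor, yielding one order of vanishing per place of $S$) are exactly the content of those references. The only imprecision is cosmetic: the decomposition of the singular Fourier coefficient is over Bruhat cells for the Siegel parabolic rather than over $\GL_n(F)$-orbits, but you correctly defer to the cited sources for that bookkeeping.
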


\begin{proof}
The proof in \cite[2.B]{Liu11-II} applies directly, although it is originally stated for the case when $\CV$ is even-dimensional. See also \cite[(12.3.0.2)]{LZ21}.
\end{proof}

\begin{remark}\label{rem:reg-fun-construct}
We can construct a regular test function $\phi_{v_0}$ of the form  
\begin{equation*}
    \phi_{v_0} = \bigotimes_i \phi_{i, v_0}, \quad
    \phi_{i, v_0} = 1_{D_i} \in \CS(\CV_{v_0}),
\end{equation*}
where $D_i \subset \CV_{v_0}$ are open subsets. This follows from the fact that any open subset in a non-Archimedean metric space can be expressed as a union of boxes. Furthermore, we can choose $\phi_{v_0}$ to be a $\mathbb{Q}$-valued function.
\end{remark}

\subsection{Classical Eisenstein series associated to the Shimura datum}
Recall from \S\ref{sec:sym-space} that we defined the Hermitian upper half-space for $G_n = \U(W)$, with $F_\infty = F \otimes_{\BQ} \BR \simeq \BC^\Phi$.

\begin{notation}\label{not:eisenstein}
For the remaining part of the paper, we will always assume the $F_\infty/F_{0,\infty}$-hermitian space $\CV_\infty$ is positive definite and fix $\phi_\infty\in \CS(\CV_\infty^n)$ as the product of standard Gaussian functions in \eqref{equ:standard-gaussian}.
\end{notation}

For any $\phi\in\CS(\CV_f^n)$, the \emph{classical Eisenstein series} for $\btau_\infty=\bx_\infty+i\by_\infty\in\BH(F_\infty)$ is defined as  
\begin{equation*}
    \Eis(\btau_{\infty}, s, \phi) := \chi_\infty(\det(\ba))^{-1} \det(\by_\infty)^{-n/2} \cdot \Eis(g_{\btau_\infty}, s, \phi\otimes \phi_\infty),
    \quad g_{\btau_\infty} := n(\bx) m(\ba) \in G_n(\BA_0),
\end{equation*}
where $\ba \in \GL_n(F_\infty)$ is chosen such that $\by = \ba^t \cdot \overline{\ba}$.  
See \S \ref{sec:weil representation} for notations.
Notice that $\Eis(\btau_\infty, s, \phi)$ does not depend on the choice of $\chi$, and that $g_{\btau, f} = 1 \in G_n(\BA_{0, f})$.  

The Fourier expansion \eqref{equ:Eis-Fourier} can then be rewritten as  
\begin{equation*}
    \Eis(\btau_\infty, s, \phi) = \sum_T \Eis_T(\btau_\infty, s, \phi) = \sum_T C_T(\by_\infty, s, \phi) q^T, 
    \quad q^T = \psi_\infty(\tr(T \tau)).
\end{equation*}

We write the central derivative as  
\begin{equation*}
    \PEis(\btau_\infty, \phi) := \PEis(\btau_\infty, 0, \phi), \quad
    \PEis_T(\btau_\infty, \phi) := \PEis_T(\btau_\infty, 0, \phi).
\end{equation*}
A similar notation will be used for Whittaker functions.

\begin{proposition}\label{prop:fourier-expansion-of-eis}
Assume that $\phi\in \CS(\CV^n_f)$, is regular at least at two finite places. Then:
\begin{altenumerate}
\item The Eisenstein series admits a Fourier expansion:
\begin{equation*}
    \Eis(\btau_\infty, s, \phi) = \sum_{T \in \Herm_n^{\reg}(F)} \Eis_T(\btau_\infty, s, \phi)
    = \sum_{T \in \Herm_n^{\reg}(F)} C_T(\by_\infty, s, \phi) q^T,
\end{equation*}
where $C_T(\by_\infty,s,\phi):=\Eis_T(\btau_\infty,s,\phi)/q^T$ is the $T$-th Fourier coefficient in the classical sense.
\item The central derivative of the Eisenstein series also has a Fourier expansion:
\begin{equation*}
    \PEis(\btau_\infty, \phi) = \sum_{T \in \Herm_n^{\reg}(F)} \PEis_T(\btau_\infty, \phi)
    = \sum_{T \in \Herm_n^{\reg}(F)} \PC_T(\by_\infty, \phi) q^T,
\end{equation*}
where $\PC_T(\by_\infty, \phi):=\PC_T(\by_\infty, \phi) = \PEis_T(\btau_\infty, \phi)/q^T$.
For each term, we have the expression
\begin{equation*}
    \PEis_T(\btau_\infty, \phi) = \sum_{v_0 \in \Sigma_{F_0}} \PEis_{T, v_0}(\btau_\infty, \phi), \quad
    \PEis_{T, v_0}(\btau_\infty, \phi) = W'_{T, v_0}(\btau_\infty, \phi_{v_0}) \cdot W_T(\btau_\infty, \phi^{v_0}),
\end{equation*}
where $W_{T, v_0}(\btau_\infty, \phi_{v_0})$ depends on $\btau_\infty$ only when $v_0 \mid \infty$.
\end{altenumerate}
\end{proposition}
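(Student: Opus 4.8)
The plan is to deduce Proposition \ref{prop:fourier-expansion-of-eis} by assembling four ingredients already in place: the Fourier expansion \eqref{equ:Eis-Fourier}, the vanishing of singular Fourier coefficients for regular test data (Theorem \ref{thm:sing-vanish}), the nonsingular factorization (Proposition \ref{prop:Fourier-coeff-non-sing}), and the term-by-term $s$-differentiability of the Eisenstein series (Theorem \ref{thm:Eis-cent-deriv}).

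The first step is to check the hypothesis of Theorem \ref{thm:sing-vanish} for the group element attached to $\btau_\infty$. By construction $g_{\btau_\infty}=n(\bx_\infty)m(\ba_\infty)\in G_n(\BA_0)$ has trivial components at all finite places, so for any finite set $S\subset\Sigma_{F_0,f}$ one has $g_{\btau_\infty}\in P_n(\BA_{F_0,S})G_n(\BA_{F_0}^S)$. Taking $S$ to consist of two finite places at which $\phi$ is regular, Theorem \ref{thm:sing-vanish} gives $\ord_{s=0}\Eis_T(\btau_\infty,s,\phi)\geq 2$ for every singular $T$. Since the Fourier series \eqref{equ:Eis-Fourier} converges absolutely and uniformly for $s$ in a closed disk about $0$ (Theorem \ref{thm:Eis-cent-deriv}), the singular terms contribute neither to the value nor to the first $s$-derivative at $s=0$; discarding them yields the expansions over $\Herm_n^{\reg}(F)$ asserted in (1) and, together with Theorem \ref{thm:Eis-cent-deriv}, in (2), where one sets $C_T(\by_\infty,s,\phi)=\Eis_T(\btau_\infty,s,\phi)/q^T$ and $\PC_T(\by_\infty,\phi)=\PEis_T(\btau_\infty,\phi)/q^T$. (If one wants the displayed identity for all $s$ near $0$ rather than only at $s=0$, one argues instead that in the unfolded expression for $\Re(s)\gg 0$ a regular local component of $\phi$ forces the singular coefficients to vanish identically, and then invokes analytic continuation; either route suffices for the applications.)

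For the local product formula in (2), I would fix $T\in\Herm_n^{\reg}(F)$ and apply Proposition \ref{prop:Fourier-coeff-non-sing}: for $s$ near $0$ the coefficient factors as $\Eis_T(\btau_\infty,s,\phi)=\prod_{v_0\in\Sigma_{F_0}}W_{T,v_0}(\btau_\infty,s,\phi_{v_0})$, a finite product in which, by the equivariance $W_{T,v_0}(\btau_\infty,s,\phi_{v_0})=W_{T,v_0}(e,s,r(g_{\btau_\infty,v_0})\phi_{v_0})$ together with the triviality of $g_{\btau_\infty,v_0}$ at finite $v_0$, only the archimedean factors depend on $\btau_\infty$. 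Differentiating this finite product at $s=0$ by the Leibniz rule, and using Theorem \ref{thm:Eis-cent-deriv} to pass $\tfrac{\rd}{\rd s}\big|_{s=0}$ through the (uniformly convergent) Fourier series, gives $\PEis_T(\btau_\infty,\phi)=\sum_{v_0\in\Sigma_{F_0}}\PEis_{T,v_0}(\btau_\infty,\phi)$ with $\PEis_{T,v_0}(\btau_\infty,\phi)=W'_{T,v_0}(\btau_\infty,\phi_{v_0})\cdot W_T(\btau_\infty,\phi^{v_0})$ and $W_T(\btau_\infty,\phi^{v_0}):=\prod_{w_0\neq v_0}W_{T,w_0}(\btau_\infty,\phi_{w_0})$, which is exactly the claimed identity; the assertion that $W_{T,v_0}(\btau_\infty,\phi_{v_0})$ depends on $\btau_\infty$ only for $v_0\mid\infty$ is the observation just made.

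I do not expect a serious obstacle: the one point needing care is the interchange of the infinite Fourier summation with the $s$-differentiation and with evaluation at $s=0$, and this is precisely what the uniform absolute convergence in Theorem \ref{thm:Eis-cent-deriv} provides. Thus the proof is essentially bookkeeping once the hypothesis of Theorem \ref{thm:sing-vanish} has been verified for $g_{\btau_\infty}$ and the two-regular-places assumption has been used to push the vanishing order past the first derivative.
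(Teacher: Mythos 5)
Your proposal is correct and follows essentially the same route as the paper, which proves part (1) by combining Theorem \ref{thm:sing-vanish} with the observation that $g_{\btau_\infty,f}=1$, and part (2) by combining Theorem \ref{thm:Eis-cent-deriv} with Proposition \ref{prop:Fourier-coeff-non-sing}; your write-up merely supplies the bookkeeping (verifying the hypothesis of Theorem \ref{thm:sing-vanish}, the order-$\geq 2$ vanishing of singular terms, and the Leibniz computation) that the paper leaves implicit. Your parenthetical remark on whether the singular coefficients vanish identically near $s=0$ or merely to order $\geq 2$ at $s=0$ correctly identifies the only delicate point, and the order-$\geq 2$ statement is all that is used downstream.
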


\begin{proof}
Part (1) follows from Theorem \ref{thm:sing-vanish} and the fact that $g_{\btau, f} = 1 \in G_n(\BA_{0, f})$.
Part (2) follows from Theorem \ref{thm:Eis-cent-deriv} and Proposition \eqref{prop:Fourier-coeff-non-sing}.
\end{proof}

Let $T \in \Herm_n^{\reg}(F)$ be either totally positive definite ($T \in \Herm_n^{+}(F)$) or have signature $(n-1,1)$ at one place while being positive definite at all other places ($T \in \Herm_n^{\dagger}(F)$). By Theorem \ref{thm:archi Siegel Weil}(1), only these two cases yield non-vanishing Fourier coefficients for $\Eis(\btau, \phi)$ and $\PEis(\btau, \phi)$.

\subsection{Fourier-Jacobi series}\label{sec:FJ series}
In this subsection, we recall the Fourier-Jacobi type expansion for Hermitian Siegel modular forms.
For a more detailed definitions and references, we refer the reader to \cite{Braun-I}.

Let $f(\btau): \BH_n(F_\infty) \to \BC$ be a classical Siegel modular form (not necessarily holomorphic) with a Fourier expansion that is absolutely convergent:
\begin{equation*}
    f(\btau) = \sum_{T \in \Herm_n(F)} f_T(\by) q^T, \quad q^T = \psi_\infty(\tr(T \btau)).
\end{equation*}
We write
\begin{equation*}
    \btau = \begin{pmatrix}
        \tau & a \\ b & \tau_\flat
    \end{pmatrix},
    \quad \text{and} \quad
    T = \begin{pmatrix}
        \xi & t_{12} \\ \prescript{t}{}{\ov{t}_{12}} & T_\flat
    \end{pmatrix} = T(\xi, t_{12}, T_\flat).
\end{equation*}
Then the modular form $f(\btau)$ has a Fourier-Jacobi type expansion:
\begin{equation*}
    f(\btau) = \sum_{T_\flat \in \Herm_{n-1}(F)} f^{\FJ}_{T_\flat}(\tau, a, b) q^{T_\flat}, \quad 
    q^{T_\flat} = \psi_{\infty}(\tr(T_\flat \tau_\flat)),
\end{equation*}
where
\begin{equation*}
    f^{\FJ}_{T_\flat}(\tau, a, b) = \sum_{T = \begin{psmallmatrix}
        \xi & t_{12} \\ t_{21} & T_\flat
    \end{psmallmatrix}} f_T(\by) \cdot \psi_\infty(\xi \tau + t_{12} b) \psi_{\infty}(\tr(\prescript{t}{}{\ov{t}_{12}} a)).
\end{equation*}

If we set $a = b = 0$, we obtain the function with the Fourier expansion:
\begin{equation*}
    f^{\FJ}_{T_\flat}(\tau) := f^{\FJ}_{T_\flat}(\tau, 0, 0) = \sum_{\xi \in F} \sum_{T = \begin{psmallmatrix}
        \xi & * \\ * & T_\flat
    \end{psmallmatrix}} f_T(\by) q^\xi, \quad q^\xi = \psi_\infty(\xi \tau).
\end{equation*}

Note that this function also depends on $\by_\flat$, the imaginary part of $\tau_\flat$. We will fix $\tau_\flat = \bi$ from now on.  
Since $\phi_\infty$ is the pure tensor of standard Gaussian functions, the Fourier-Jacobi expansion shows that $f^{\FJ}_{T_\flat}(\tau, a, b)$ is a weight-$n$ modular form, where $g \in G_1(F_0)$ acts by
\begin{equation*}
    g \cdot f^{\FJ}_{T_\flat}(\tau, a, b) = f^{\FJ}_{T_\flat}(g \tau, g a, b g).
\end{equation*}
In particular, the associated automorphic function of $f^{\FJ}_{T_\flat}(\tau)$ is $\bH(F_0)$-invariant.

Recall from Proposition \ref{prop:fourier-expansion-of-eis} that the Eisenstein series and its central derivative admit a Fourier expansion, which is absolutely convergent by Theorem \ref{thm:Eis-cent-deriv}:
\begin{equation*}
    \Eis(\btau, \phi) = \sum_{T \in \Herm_n^{\reg}(F)} C_T(\by, \phi) q^T, 
    \quad \text{and} \quad 
    \PEis(\btau, \phi) = \sum_{T \in \Herm_n^{\reg}(F)} \PC_T(\by, \phi) q^T.
\end{equation*}
We consider the functions
\begin{equation*}
    \FJ_{T_\flat}(\tau, \phi) = \sum_{\xi \in F} \sum_{T = \begin{psmallmatrix} \xi & * \\ * & T_\flat \end{psmallmatrix}} C_T(\by, \phi) q^\xi, 
    \quad \text{and} \quad 
    \PFJ_{T_\flat}(\tau, \phi) = \sum_{\xi \in F} \sum_{T = \begin{psmallmatrix} \xi & * \\ * & T_\flat \end{psmallmatrix}} \PC_T(\by, \phi) q^\xi.
\end{equation*}

Consider the Weil representation $\omega_1$ acting on $\CS(\BV(\BA_0)^n)$ in the first factor. By \cite[A.2]{Kudla97}, this action is compatible with the Weil representation $\omega$. In particular, for any $h \in \bH(\BA_0)$, we have
\begin{equation*}
    \omega_1(h) \cdot \FJ_{T_\flat}(\tau_\infty, \phi) = \FJ_{T_\flat}(h_\infty \tau_\infty, \omega(h_f) \phi), 
    \quad \text{and} \quad 
    \omega_1(h) \cdot \PFJ_{T_\flat}(\tau_\infty, \phi) = \PFJ_{T_\flat}(h_\infty \tau_\infty, \omega_1(h_f) \phi).
\end{equation*}

We conclude the following:

\begin{proposition}\label{prop:Fourier-Jacobi series are modular forms}
Suppose that $\phi_1 \in \CS(\CV)$ is invariant under $K_{\bH} \subset \bH(\BA_{0, f})$ under the Weil representation. Then for any $\phi = \phi_1 \otimes \phi_\flat \in \CS(\CV^n)$, the generating series $\FJ_{T_\flat}(\tau, \phi)$ and $\PFJ_{T_\flat}(\tau, \phi)$ lie in $\CA_{\infty}(\bH(\BA_0), K_{\bH}, n)$. \qed
\end{proposition}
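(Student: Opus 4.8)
The plan is to read the conclusion off from the structural facts about $\Eis(\btau,\phi)$ and $\PEis(\btau,\phi)$ already recorded in Proposition \ref{prop:fourier-expansion-of-eis} and Theorem \ref{thm:Eis-cent-deriv}, together with the Weil‑representation compatibility stated just before the proposition. First I would fix $\tau_\flat=\bi$ as in \S\ref{sec:FJ series} and note that, since $\phi$ is regular at two finite places, the Fourier expansions of $\Eis(\btau,\phi)$ and $\PEis(\btau,\phi)$ are supported on $\Herm_n^{\reg}(F)$ and, by Theorem \ref{thm:Eis-cent-deriv}, converge absolutely and locally uniformly in a neighborhood of $s=0$. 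This makes the Fourier–Jacobi regrouping a rearrangement of an absolutely convergent series: for each $T_\flat$ the sum defining $\FJ_{T_\flat}(\tau,\phi)$ (resp. $\PFJ_{T_\flat}(\tau,\phi)$), taken over all $\xi\in F_0$ and all off‑diagonal blocks compatible with a non‑degenerate $T$ of signature forced by Theorem \ref{thm:archi Siegel Weil}, converges absolutely and locally uniformly in $\tau\in\BH_1(F_\infty)$; hence it defines a smooth function on $\BH_1(F_\infty)$ with a genuine $q$‑expansion in $\tau$, and the $s$‑derivative at $s=0$ commutes with the rearranged sum, so $\PFJ_{T_\flat}(\tau,\phi)$ is indeed obtained by differentiating term by term.

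Next I would check the three properties defining $\CA_\infty(\bH(\BA_0),K_\bH,n)$. Left $\bH(F_0)$‑invariance of the automorphic function attached to $\FJ_{T_\flat}(\tau,\phi)$ (and its derivative) is exactly the statement, recorded in the discussion of $f^{\FJ}_{T_\flat}$ preceding the proposition, that the associated automorphic function of a Fourier–Jacobi coefficient with $a=b=0$ is $\bH(F_0)$‑invariant; this in turn comes from the $G_n(F_0)$‑automorphy of the Eisenstein series restricted along the block embedding $\bH=G_1\hookrightarrow G_n$. Parallel weight $n$ under $\prod_{v_0\in\Hom(F_0,\BR)}\SO(2,\BR)$ is immediate from the standing choice (Notation \ref{not:eisenstein}) that $\phi_\infty$ is the pure tensor of standard Gaussian functions, a parallel weight‑$n$ vector for the archimedean Weil representation — this is precisely the weight bookkeeping behind the Fourier–Jacobi expansion. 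Finally, right $K_\bH$‑invariance is where the hypothesis on $\phi_1$ enters: by the identities $\omega_1(h)\cdot\FJ_{T_\flat}(\tau_\infty,\phi)=\FJ_{T_\flat}(h_\infty\tau_\infty,\omega(h_f)\phi)$ and $\omega_1(h)\cdot\PFJ_{T_\flat}(\tau_\infty,\phi)=\PFJ_{T_\flat}(h_\infty\tau_\infty,\omega_1(h_f)\phi)$ recorded before the statement (which rest on \cite[A.2]{Kudla97}), the right action of $h_f\in K_\bH\subset\bH(\BA_{0,f})$ affects only the first Weil‑representation variable $\phi_1$; since $\phi_1$ is $K_\bH$‑fixed under $\omega_1$, the automorphic function is right $K_\bH$‑invariant. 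As the definition of $\CA_\infty$ imposes no finiteness condition under the center of the universal enveloping algebra, combining these three properties with the smoothness from the first step yields $\FJ_{T_\flat}(\tau,\phi),\ \PFJ_{T_\flat}(\tau,\phi)\in\CA_\infty(\bH(\BA_0),K_\bH,n)$, which is the assertion.

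The one point genuinely requiring care — hence the main obstacle — is the convergence and rearrangement bookkeeping of the first step: one must confirm that regrouping the (term‑by‑term $s$‑differentiated) Fourier expansion of the Eisenstein series into its Fourier–Jacobi form is legitimate and produces a smooth function of $\tau$. All of this is governed by the absolute, locally uniform convergence supplied by Theorem \ref{thm:Eis-cent-deriv} and by the regularity‑at‑two‑places hypothesis, which forces the relevant $T$ to be non‑degenerate, so it is essentially bookkeeping rather than a substantive difficulty; the conceptual input — translating the adelic $K_\bH$‑invariance of $\phi_1$ into classical modular invariance — is entirely provided by the Weil‑representation compatibility cited above.
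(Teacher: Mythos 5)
Your proposal is correct and follows essentially the same route as the paper, whose \qed indicates the result is read off directly from the preceding discussion in \S\ref{sec:FJ series}: absolute convergence of the Fourier expansion (Theorem \ref{thm:Eis-cent-deriv}) justifies the Fourier--Jacobi regrouping, the weight-$n$ property comes from the standard Gaussian $\phi_\infty$, left $\bH(F_0)$-invariance from the automorphy of the Eisenstein series, and right $K_\bH$-invariance from the Weil-representation compatibility together with the hypothesis on $\phi_1$. The only cosmetic difference is that you invoke regularity of $\phi$ at two places, which is not a hypothesis of the proposition and is not needed, since Theorem \ref{thm:Eis-cent-deriv} already supplies the absolute convergence for arbitrary $\phi$.
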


\part{Proof of Kudla-Rapoport conjecture}\label{part:proof}
\section{Arithmetic intersection of special cycles}\label{sec:generating}
In this section, we review the concept of almost intersection of admissible cycles, as established in \cite{Mihatsch-Zhang} and further generalized in \cite{ZZhang24,LMZ25}.
Due to the non-regularity of the integral model $\tCM$, we must instead work with the arithmetic Picard group.
\subsection{Arithmetic Picard group and admissible line bundles}
Let $E$ be a number field and let $\fkd \in \BZ_{>1}$. Let $\CX$ be a flat, proper scheme of finite type over $O_E[\fkd^{-1}]$ with smooth generic fiber $X$.

Following \cite[\S 2.1.2]{BGS94-JAMES}, a \emph{hermitian vector bundle} on $\CX$ is a pair $\wh{\CE} = (\CE, (\lVert-\rVert_{\nu})_{\nu\in\Hom(E,\BC)})$, where $\CE$ is a locally free coherent $\CO_\CX$-module, and for each complex embedding $\nu \in \Hom(E,\BC)$, the holomorphic vector bundle $\CE_\nu$ over the complex fiber $\CX_\nu$ is equipped with a hermitian metric $\lVert-\rVert_\nu$.

Let $\wh{\Pic}(\CX)=\wh{\Pic}(\CX)\otimes\BQ$ be the set of isomorphism classes of hermitian line bundles on $\CX$. The tensor product endows $\wh{\Pic}(\CX)$ with the structure of an abelian group. The identity element in $\wh{\Pic}(\CX)$ is the structure sheaf $\CO_\CX$ equipped with the trivial metric, and the inverse of the class of $\wh{\CL}$ is given by its dual bundle. For a detailed treatment, see \cite[\S 1 and 2]{GS90-Ann-I}.

Fix a Kähler metric $h_\nu$ on the complex fiber $\CX_\nu$ for every $\nu \in \Hom(E,\BC)$ that is compatible with the complex conjugation. This metric induces a Laplace operator on differential forms on $X_\nu$. Let $A^{1,1}(X_{\nu},\BR)$ denote the $\BR$-vector space of real differential forms of type $(1,1)$ on $X_\nu$.
A hermitian line bundle $\wh{\CL} = (\CL, (\lVert-\rVert_\nu)_{\nu\in \Hom(E,\BC)})$ is called \emph{admissible} if its first Chern form $c_1\bigl(\CL_{\nu},\lVert-\rVert_\nu\bigr) \in A^{1,1}(X_\nu,\BR)$, as defined in \cite[\S 2.3]{GS90-Ann-I}, is harmonic for each $\nu\in \Hom(E,\BC)$. 

Let $\wh{\Pic}^{\rm{adm}}(\CX)$ be the subgroup consisting of classes of admissible hermitian line bundles on $\CX$. According to \cite[\S 2.6]{GS90-Ann-I}, the admissible metric $\lVert-\rVert_\nu$ is unique up to a constant scalar on each factor.\footnote{In \cite[\S 2.6]{GS90-Ann-I} and its references, this constant is fixed by the choice of K\"ahler form; see \cite[\S 1, equation ($*$)]{Arakelov74}.}

Note that while \cite{GS90-Ann-I} assumes that the scheme $\CX$ is regular with $\fkd = \emptyset$, the existence and uniqueness of the admissible metric depend only on the complex fiber. Therefore, these assumptions are not relevant in this context, see \cite[\S 2]{GS92-Invent} for the most general treatment.

\begin{definition}
\begin{altenumerate}
\item For any finite place $\nu_0$ of $E$, let $\wh{\rm{Pic}}_{\nu_0}(\CX)\subset \wh{\rm{Pic}}(\CX)$ be the subgroup generated by hermitian line bundles $(\CO(-\CX_{\nu_0,i}), (h_{\nu})_{\nu\in \Hom(E,\BC)})$, where $\CX_{\nu_0,i}$ are effective Cartier divisors of $\CX$ whose underlying topological space is supported on the special fiber $\CX\otimes{\BF_{\nu_0}}$ and where $h_\nu$ is the fixed Kähler metric on the complex fiber $\CO(-\CX_{k_{\nu_0},i})_{\nu}\cong\CO_{\CX_\nu}$, for each $\nu\in \Hom(E,\BC)$. 

By definition, we have an isomorphism $$\wh{\Pic}_\nu(\CX)\cong \Pic_\nu(\CX),$$ 
where the RHS is the subgroup of $\Pic(\CX)$ generated by the same classes but without the hermitian metric.

\item Let $\wh{\Pic}_{\infty}(\CX)\subset  \wh{\rm{Pic}}(\CX)$ be the subgroup generated by $(\CO_{\CX},(c_\nu h_\nu)_{\nu\in \Hom(E,\BC)})$ where $c_\nu$ is a constant and $h_\nu$ is the Kähler metric we fixed on $\CX$ for each $\nu\in \Hom(E,\BC)$.
\end{altenumerate}
\end{definition}
We define  $\wh{\mathrm{Pic}}_{\mathrm{vert}}(\CX)$ as the subgroup of $\wh{\mathrm{Pic}}(\CX)$ generated by $\wh{\rm{Pic}}_{\nu}(\CX)$ for all finite places $\nu$ of $E$ together with the subgroup $\wh{\rm{Pic}}_\infty(\CX)$.

\begin{lemma}\label{lem:picard pair deg zero}
\begin{altenumerate}
\item The projection $\wh{\rm{Pic}}^{\rm{adm}}(\CX)\to \Pic(\CX_\eta)$ given by $(\CL,(\lVert-\rVert_\nu)_{\nu\in \Hom(E,\BC)})\mapsto \CL_\eta$ induces a left exact sequence
$$0\lr \wh{\Pic}_{\mathrm{vert}}(\CX)\lr 
\wh{\Pic}^{\rm{adm}}(\CX)\lr \Pic(\CX_\eta),$$
where $\CX_\eta$ is the generic fiber of the integral model and $\CL_\eta$ is the pull-back of the line bundle $\CL$ to the generic fiber.
\item For any finite place $\nu$ such that $\CX_\nu\otimes O_{E,\nu}$ is smooth over $O_{E,\nu}$, we have $\wh{\Pic}_{\nu}(\CX)\subset \wh{\Pic}_{\infty}(\CX)$.
\end{altenumerate}
\end{lemma}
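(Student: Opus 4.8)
\textbf{Proof proposal for Lemma \ref{lem:picard pair deg zero}.}

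The plan is to treat the two parts separately, with part (2) being essentially a normalization statement and part (1) being the structural statement. For part (1), the map $\wh{\Pic}^{\mathrm{adm}}(\CX) \to \Pic(\CX_\eta)$ is the restriction-to-generic-fiber homomorphism, which is manifestly a group homomorphism since tensor product and restriction commute. The content is exactness at $\wh{\Pic}^{\mathrm{adm}}(\CX)$, i.e.\ that the kernel is exactly $\wh{\Pic}_{\mathrm{vert}}(\CX)$. The inclusion $\wh{\Pic}_{\mathrm{vert}}(\CX) \subseteq \ker$ is immediate: a generator of $\wh{\Pic}_\nu(\CX)$ of the form $\CO(-\CX_{\nu_0,i})$ restricts to $\CO_{\CX_\eta}$ on the generic fiber (the divisor is supported on a special fiber, hence disjoint from $\CX_\eta$), and a generator of $\wh{\Pic}_\infty(\CX)$ has trivial underlying line bundle. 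For the reverse inclusion, suppose $\wh{\CL} = (\CL, (\|-\|_\nu)_\nu)$ is admissible with $\CL_\eta \cong \CO_{\CX_\eta}$. Then the underlying line bundle $\CL$, being trivial on the generic fiber of the flat proper scheme $\CX \to \Spec O_E[\fkd^{-1}]$, differs from $\CO_\CX$ by a divisor supported on finitely many special fibers; choosing a rational section trivializing $\CL_\eta$ and taking its divisor exhibits $\CL \cong \CO_\CX(\sum_i m_i \CX_{\nu_0,i})$ as a $\BQ$-combination of the generators of $\bigoplus_{\nu} \Pic_\nu(\CX)$. This identifies $\CL$, as an element of $\Pic(\CX)\otimes\BQ$, with an element of $\wh{\Pic}_{\mathrm{vert}}(\CX)$ up to metric; the remaining ambiguity is purely a choice of metric on $\CO_{\CX}$, i.e.\ a class in $\wh{\Pic}_\infty(\CX)$, because admissibility forces $c_1(\CL_\nu,\|-\|_\nu)$ to be harmonic and the underlying line bundle is already trivial, so $c_1$ is harmonic \emph{and} exact, hence zero, which by the uniqueness part of \cite[\S 2.6]{GS90-Ann-I} pins down $\|-\|_\nu$ up to a scalar constant. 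Thus $\wh{\CL} \in \wh{\Pic}_{\mathrm{vert}}(\CX)$, giving exactness.

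For part (2), let $\nu$ be a finite place such that $\CX_{O_{E,\nu}} := \CX \otimes O_{E,\nu}$ is smooth over $O_{E,\nu}$. Then the special fiber $\CX \otimes \BF_\nu$ is irreducible (being the smooth special fiber of a proper smooth scheme over a DVR with, after possibly passing to a connected component — here the generic fiber $X$ is smooth hence we may argue component by component), and the associated Cartier divisor $\CX\otimes\BF_\nu$ on $\CX_{O_{E,\nu}}$ is principal, cut out by a uniformizer $\varpi_\nu$ of $O_{E,\nu}$ pulled back via the structure morphism. Hence $\CO(-\CX\otimes\BF_\nu) \cong \CO_{\CX}$ as line bundles, and the generator of $\wh{\Pic}_\nu(\CX)$ it defines carries the fixed Kähler metric on $\CO_{\CX_{\nu'}} \cong \CO_{\CX_{\nu'}}$ for each archimedean place $\nu'$; this is precisely a class of the form $(\CO_\CX, (c_{\nu'} h_{\nu'})_{\nu'})$ with all $c_{\nu'} = 1$, which lies in $\wh{\Pic}_\infty(\CX)$. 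Since $\wh{\Pic}_\nu(\CX)$ is generated by such classes, the claimed inclusion follows.

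The main obstacle I anticipate is bookkeeping in part (1): ensuring that the argument "trivial underlying line bundle $+$ admissible metric $\Rightarrow$ element of $\wh{\Pic}_\infty(\CX)$" is airtight across all complex embeddings simultaneously, and that the decomposition of a vertical line bundle into the named generators $\CO(-\CX_{\nu_0,i})$ is valid over $O_E[\fkd^{-1}]$ rather than just over a single localization — this requires knowing that $\Pic$ of the punctured base $\Spec O_E[\fkd^{-1}]$ contributes nothing beyond what is already absorbed (which is automatic after $\otimes\BQ$ if $O_E[\fkd^{-1}]$ has finite class group, true since $O_E$ does). The metric-uniqueness input from \cite[\S 2.6]{GS90-Ann-I} should be cited verbatim rather than reproved. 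Everything else is formal manipulation of the defining exact sequences for $\wh{\Pic}$ as in \cite[\S 1, 2]{GS90-Ann-I}.
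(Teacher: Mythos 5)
The paper does not actually prove this lemma; it cites \cite[Lemma 8.2]{LMZ25}. So your write-up is an independent argument and has to stand on its own.

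Part (1) is essentially right. Two points to tighten. First, when you take the divisor of a rational section trivializing $\CL_\eta$, you cannot immediately write it as $\sum_i m_i\CX_{\nu_0,i}$ in terms of the \emph{effective} Cartier generators: on a non-regular $\CX$ the irreducible components of a vertical Cartier divisor need not themselves be Cartier. The fix is standard — add a large multiple of the full special fibers $\mathrm{div}(\varpi_{\nu_0})$ (effective Cartier by flatness) to make the divisor effective, and express $\CL$ as a difference of two effective vertical Cartier divisors. Second, since $\wh{\Pic}$ is taken with $\BQ$-coefficients, triviality of $\CL_\eta$ only means some tensor power is trivial; replace $\CL$ by that power. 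The Hodge-theoretic step (harmonic and exact implies zero, then $\rd\rd^c\log\lVert 1\rVert=0$ implies locally constant) is correct.

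Part (2) has a genuine gap. You assert that (after passing to a connected component) the special fiber is irreducible and the divisor is principal, cut out by $\varpi_\nu$. This is false in general: a connected smooth proper $O_{E,\nu}$-scheme can have disconnected special fiber. By Stein factorization the connected components of $\CX\otimes\BF_\nu$ correspond to the maximal ideals over $\nu$ of the semi-local normal ring $\Gamma(\CX_{O_{E,\nu}},\CO)$; for example $\BP^1_{O_K}$ viewed over $O_{E,\nu}$ with $K/E_\nu$ split étale has two components in its special fiber. A single such component $Z$ is an effective Cartier divisor supported on $\CX\otimes\BF_\nu$, hence a generator of $\wh{\Pic}_\nu(\CX)$, but $\CO(-Z)$ is the pullback of a possibly non-principal invertible ideal and is \emph{not} isomorphic to $\CO_\CX$; your argument only treats the full special fiber. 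The conclusion survives, but the missing step is precisely the $\BQ$-coefficient/finite-class-group observation you mention only in passing in part (1): some power $\CO(-hZ)$ is principal, generated by an element $f$ of the finite normal $O_E[\fkd^{-1}]$-algebra $\Gamma(\CX,\CO_\CX)$ whose restriction to each complex fiber is locally constant, so in $\wh{\Pic}(\CX)\otimes\BQ$ the class of $\CO(-Z)$ with the fixed metric lies in $\wh{\Pic}_\infty(\CX)$. (A cosmetic point: even for the full special fiber the transported metric differs from the trivial one by the constant $|\nu'(\varpi_\nu)|$, not by $c_{\nu'}=1$; this is harmless since $\wh{\Pic}_\infty$ allows arbitrary constants.)
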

\begin{proof}
See \cite[Lemma 8.2]{LMZ25}.
\end{proof}

Let $\wt{\CZ}_1(\CX)$ be the ($\BQ$-valued) group of $1$-cycles on $\CX$ and let $\CZ_1(\CX)$ be the quotient of $\wt{\CZ}_1(\CX)$ by the subgroup generated by $1$-cycles that are rationally trivial on a closed fiber of $\CX$.
Recall from \cite[Proposition 2.3.1]{BGS94-JAMES} that we have a well-defined intersection pairing:
\begin{equation}\label{eq: Truncated Arithmetic int pairing}
    \wh{\Pic}(\CX)\times \CZ_1(\CX)\to \BR_{\fkd}:=\BR/\mathrm{Span}_{\BQ}\{\log p:  p\mid \fkd\},
\end{equation}
see also remarks in the p.941 of \cite{BGS94-JAMES}.

Let $\wt{\CZ}_{1}(\CX)_{\deg=0}$ be the subgroup of $1$-cycles that have zero degree on each connected component of the generic fiber of $\CX$. This subgroup is the orthogonal complement of $\wh{\Pic}_{\infty}(\CX)$.  Let $\wt{\CZ}_{1}(\CX)^{\perp} \subseteq \wt{\CZ}_{1}(\CX)$ be the orthogonal complement of $\wh{\Pic}^{1}_{\text{vert}}(\CX)$. 
Let $\Pic^{\rm{adm}}(\CX_\eta)$ be the image of the map $\wh{\Pic}^{\rm{adm}}(\CX)\to \Pic(\CX_\eta)$ in Lemma \ref{lem:picard pair deg zero}.
By Lemma \ref{lem:picard pair deg zero}, the truncated arithmetic intersection pairing (\ref{equ:almost intersection}) induces a bilinear pairing:
\begin{equation}\label{equ:almost intersection}
(-,-): \, \Pic(\CX_\eta)^\text{adm}\times \wt{\CZ}_{1}(\CX)^{\perp}  \to \BR_\fkd.
\end{equation}

\subsection{Arithmetic Kudla-Rapoport special divisors}
Let $\CZ\subset \CX$ be an effective Cartier divisor with associated line bundle $\CO(-\CZ)$. On a complex fiber, let $\CG_\nu$ be a Green current associated to the complex subvariety $\CZ_\nu\subset \CX_\nu$. Then the pair $(\CZ_\nu,\CG_\nu)$ determines a hermitian line bundle $(\CO(-\CZ)_\nu,\lVert-\rVert_\nu)$ over the complex fiber (see \cite[\S 2]{GS90-Ann-I}).
By abuse of notation, let $(\CZ,(\CG_\nu)_{\nu\in\Phi})$ be the corresponding hermitian line bundle in the arithmetic Picard group.

We go back to our global notation in \S \ref{sec:RSZ} and \S \ref{sec:KR cycle in global}. 
Let $\phi\in\CS(V(\BA_{0,f}))^{K_G}$ be an admissible Schwartz function and let $K_\bH \subset \bH(\BA_{0,f})$ be any compact open subgroup under which $\phi$ is invariant via the Weil representation. For $\xi\in F_0$, we have the Kudla-Rapoport divisor $\tCZ(\xi,\phi)\to\tCM$.

For $\nu\in \Hom(E,\BC)$ and $\xi\in F_0^+$, we have the \emph{automorphic Green function} \cite{Bruinier12} $\CG_\nu^{\bf B}(\xi, y_{\infty}, \phi)=\CG^{\bf B}(\xi,\phi)$ introduced in \cite{Bruinier12}. By the work of Bruinier \cite[Corollary 5.16]{Bruinier12}, this automorphic Green function is admissible. While its definition is similar to that of the Kudla's Green function, the precise details are not essential for this paper.

For $\xi \in F_{0}^{+}$, define \emph{arithmetic Kudla--Rapoport divisors} \begin{equation*}\label{eq: Fourier coeff of arithmetic generating series}
 \tCZ^{\bB} (\xi, \phi)= (\tCZ (\xi, \phi), ( \tCG^{\bB}_{\nu}(\xi,\phi))_{\nu} ) \in \wh{\Pic}(\tCM).  
\end{equation*}

\begin{definition}\label{def:bruinier generating series}
\begin{altenumerate}
\item We take $\omega_\CZ$ and  $\omega_\CY$ as in \cite[Definition 6.7]{LMZ25} as extensions of the automorphic line bundle $\omega$ to $\tCM$. Define the arithmetic line bundle 
$$\tCZ^{\bB} (0, \phi_{\fkd}\charfun_{L^{\fkd}}):=-\phi_{\fkd}(0)(\omega_\CZ, \lVert  -  \rVert_{\mathrm{Pet}}),\quad\text{and}\quad \tCZ^{\bB} (0, \phi_{\fkd}\charfun_{L^{\sharp,\fkd}}):=-\phi_{\fkd}(0)(\omega_\CY, \lVert  -  \rVert_{\mathrm{Pet}})$$ 
where $\lVert  -  \rVert_{\mathrm{Pet}}$ denotes the natural Peterson metric. 
\item Let $\phi\in\CS(V(\BA_{0,f}))$ be any admissible Schwartz function, we have generating series 
\begin{equation*}
\tCZ^{\bB}(\tau,\phi)=\tCZ^\bB(0,\phi)+\sum_{\xi\in F_0^+}\tCZ^{\bB}(\xi,y,\phi)q^\xi,\quad q^\xi=\psi_\infty(\xi\tau).
\end{equation*}
It is defined as a formal series in the arithmetic Picard group $\wh{\Pic}(\tCM)[\![q]\!]$.
\end{altenumerate}
\end{definition}

\begin{proposition}\label{prop:modularity-bruinier}
Let $\phi_1\in \CS(V(\BA_0))^{K_G}$ be $K_{\bH}$-invariant under the Weil representation and
let $\CC\in \wt{\CZ}_1(\CX)^{\perp}$ be any $1$-cycle. The arithmetic intersection product 
\begin{equation*}
    (\tCZ^{\bB}(\tau,\phi_1),\CC)\in \BR_{\fkd}[\![q]\!]
\end{equation*}
is well-defined and is a weight-$n$ holomorphic Hilbert modular form up to a constant term (for which we use the same notation). 
More precisely, the generating function $\tau \in\BH_1(F_\infty)\mapsto   (\tCZ^{\bB}(\tau,\phi_1),\CC)$ lies in $\CA_{\hol}(\bH(\BA_0),K_{\bH},n)_{\ov{\BQ}}\otimes_{\ov{\BQ}}\BR_{\fkd,\ov{\BQ}}$.
\end{proposition}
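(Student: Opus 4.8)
The plan is to reduce the statement to the modularity of the generating series of \emph{geometric} Kudla--Rapoport divisors on the generic fiber, together with the control of the vertical and archimedean contributions afforded by the admissibility of the automorphic Green functions. First I would recall that by construction $\tCZ^{\bB}(\xi,\phi_1)$ is an \emph{admissible} hermitian line bundle for each $\xi \in F_0^+$ (Bruinier \cite[Corollary 5.16]{Bruinier12}), and that $\tCZ^{\bB}(0,\phi_1)$ is a multiple of $(\omega_\CZ,\lVert-\rVert_{\mathrm{Pet}})$ or $(\omega_\CY,\lVert-\rVert_{\mathrm{Pet}})$, which is also admissible since the Petersson metric has harmonic first Chern form. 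Hence the whole formal series $\tCZ^{\bB}(\tau,\phi_1)$ has coefficients in $\wh{\Pic}^{\mathrm{adm}}(\tCM)$, and the pairing with a cycle $\CC \in \wt\CZ_1(\tCM)^\perp$ makes sense term by term via \eqref{equ:almost intersection}, landing in $\BR_\fkd$; this gives a well-defined formal $q$-expansion in $\BR_\fkd[\![q]\!]$.

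The core step is to show this $q$-expansion is the expansion of a holomorphic Hilbert modular form of weight $n$. I would follow the now-standard argument: by Lemma \ref{lem:picard pair deg zero}, the admissible pairing factors through $\Pic^{\mathrm{adm}}(\tCM_\eta)$, i.e.\ through the image $\CZ^{\bB}(\tau,\phi_1)_\eta$ of the generating series in $\Pic(\tCM_\eta)[\![q]\!]$, which is precisely Kudla's generating series of geometric special divisors on the generic fiber $\tM$. Its modularity (with values in $\Pic(\tM)_{\ov\BQ}$ after choosing a $\ov\BQ$-structure on modular forms via $q$-expansions) is exactly the theorem of \cite{BHKRY} combined with \cite{Stephan-Siddarth} when $F_0 = \BQ$, and of \cite{ZZhang24} and \cite{LMZ25} when $F_0 \neq \BQ$ — and since we are assuming $F_0 \neq \BQ$ throughout Part \ref{part:global}, the relevant input is \cite{LMZ25}. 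Pairing a modular $\Pic(\tM)_{\ov\BQ}$-valued form with a fixed linear functional (the admissible intersection with $\CC$) preserves modularity, holomorphicity and the weight, so the resulting series lies in $\CA_{\hol}(\bH(\BA_0),K_\bH,n)_{\ov\BQ} \otimes_{\ov\BQ} \BR_{\fkd,\ov\BQ}$. The only subtlety is the constant term: the map $\wh{\Pic}^{\mathrm{adm}} \to \Pic$ is only left exact, with kernel $\wh{\Pic}_{\mathrm{vert}}$, so modularity of the geometric series recovers $(\tCZ^{\bB}(\tau,\phi_1),\CC)$ only up to a term coming from $\xi=0$; this is the meaning of ``up to a constant term'' in the statement, and it is harmless since $\CC \in \wt\CZ_1^\perp$ already kills the vertical part in positive $q$-degree.

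A second point requiring care is that the pairing is only valued in $\BR_\fkd = \BR/\Span_\BQ\{\log p : p\mid\fkd\}$, so one works with the $\ov\BQ$-structure on $\CA_{\hol}$ tensored with $\BR_{\fkd,\ov\BQ}$; one must check the coefficients genuinely lie in this tensor product, which follows because each geometric divisor class is defined over $\ov\BQ$ (indeed over $E$), the $q$-expansion principle gives the $\ov\BQ$-rationality of the modular form, and the intersection pairing \eqref{eq: Truncated Arithmetic int pairing} of a $\ov\BQ$-rational divisor class with a fixed integral cycle lands in $\BR_{\fkd,\ov\BQ}$. I expect the main obstacle to be purely bookkeeping: verifying that the admissibility hypothesis lets one replace $\tCZ^{\bB}(\tau,\phi_1)$ by its generic fiber without losing information relevant to the pairing against $\CC$ — that is, checking the compatibility between the admissible intersection pairing \eqref{equ:almost intersection} and the classical intersection on $\tM$, and that $\CC$ being orthogonal to $\wh{\Pic}_{\mathrm{vert}}$ exactly cancels the left-exactness defect — all of which is carried out in \cite[\S 8]{LMZ25}, whose argument transfers verbatim.
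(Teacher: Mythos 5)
Your proposal is correct and follows essentially the same route as the paper, which simply observes that $\tCZ^{\bB}(\tau,\phi_1)\in\wh{\Pic}^{\mathrm{adm}}(\tCM)[\![q]\!]$ and then invokes the pairing \eqref{equ:almost intersection} together with the modularity theorem of \cite[Theorem 2.4]{Mihatsch-Zhang}; you have merely unpacked what those citations contain (factoring through $\Pic^{\mathrm{adm}}(\tCM_\eta)$ via Lemma \ref{lem:picard pair deg zero}, generic-fiber modularity from \cite{ZZhang24,LMZ25}, and the $\ov{\BQ}$-rationality bookkeeping).
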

\begin{proof}
Since $\tCZ^{\bB}(\tau,\phi)\in \wh{\Pic}^{\mathrm{adm}}(\CM)[\![q]\!]$, the result follows from \eqref{equ:almost intersection} and \cite[Theorem 2.4]{Mihatsch-Zhang}.
\end{proof}

Let $Z\to \tCM$ be any finite map with one-dimensional image and zero dimensional complex fiber. The composition $G_0(Z)_{1}\to G_0^Z(\tCM)_1\to \mathrm{Gr}_1G_0^Z(\tCM)\cong \Ch_1^Z(\tCM)$ induces a natural map:
\begin{equation}\label{equ:K-gorup to 1-cycle}
    G_0(Z)_{1}\to \CZ_1(\tCM),
\end{equation}
where the middle isomorphism is due to \cite[\href{https://stacks.math.columbia.edu/tag/0FEW}{0FEW}]{stacks-project}.
In particular, let $\phi_\flat\in\CS(V(\BA_{0,f})^{n-1})$ be an admissible Schwartz function.
We define a $1$-cycle $\tDCZ(T_\flat,\phi_\flat)\in \CZ_1(\tCM)$ via Definition \ref{def:derived-special-cycle}. Hence we obtain the arithmetic intersection pairing
\begin{equation}\label{equ:intersection product}
(\tCZ^{\bB}(\tau,\phi_1),\tDCZ(T_\flat,\phi_\flat))\in \BR_{\fkd}[\![q]\!].
\end{equation}

\section{Local intersection numbers}\label{sec:local-int}
In this section, we calculate the non-archimedean local contribution of the generating series in Definition \ref{def:bruinier generating series} to the arithmetic intersection product given in equation \eqref{equ:intersection product}. We also compute the archimedean local contribution for its variant ``Kudla generating series''.

\subsection{Intersection of Kudla-Rapoport cycles}\label{sec:local interection of KR cycles}
Recall the group data from \S \ref{sec:gp data}. In particular, we have a global CM field $F/F_0$ with a fixed CM type $\Phi$, along with an $F/F_0$-hermitian space of dimension $n$ and an integral lattice $L \subset V$.

For the remainder of the paper, we impose the following assumptions on Schwartz functions $\phi=\phi_1\otimes\phi_\flat\in\CS(V(\BA_{0,f})^n)^{K_G}$:
\begin{flalign}\label{equ:assumption on schwartz function}
\begin{array}{l}
\qquad\bullet\quad\text{$\phi_{1}=\phi_{1,\fkd}\otimes \phi_{1}^{\fkd}\in\CS(V(\BA_{0,f}))$ such that $\phi_{1}^{\fkd}$ is admissible.}\\
\qquad\bullet\quad\text{$\phi_{\flat}=\phi_{\flat,\fkd}\otimes \phi_{\flat}^{\fkd}\in\CS(V(\BA_{0,f})^{n-1})$ such that $\phi_{\flat}^{\fkd}$ is admissible.}\\
\qquad\bullet\quad\text{The function $\phi=\phi_{1}\otimes\phi_{\flat}$ is regular in at least two places over $\fkd$.}
\end{array}&&\tag{$\bigstar$}
\end{flalign}

Under this assumption, for any singular $T \in \Herm_n(F)$, the derived averaged weighted special cycle $\tDCZ(T,\phi)$ vanishes by Definition \ref{def:derived-special-cycle}(2). Consequently, for any positive element $\xi \in F_0^+$ and any positive definite Hermitian matrix $T_\flat \in \Herm_{n-1}^+(F)$, the derived intersection can be expressed as
\begin{equation}\label{equ:derived intersection}
\tDCZ(\xi,\phi_{1})\cap^{\BL}\tDCZ(T_\flat,\phi_{\flat})=\sum_{\substack{T={\begin{psmallmatrix}\xi&*\\ *& T_{\flat}\end{psmallmatrix}}\\
    T\in\Herm^{\reg}_{m}(F)}}\tDCZ(T,\phi).
\end{equation}
By \cite[Lemma 2.7]{KR14}, any nonsingular matrix $T$ appearing in the summation is positive semi-definite and therefore must be positive definite.

For each $\xi\in F_0$, equation \eqref{eq: Truncated Arithmetic int pairing} defines an intersection number of $\tCZ^{\bB}(\xi,\phi_1)$ and $\tDCZ(T_\flat,\phi_\flat)$ valued in $\BR_\fkd$. For $\xi\neq 0$, the intersection is proper over the complex fiber by \cite[Lemma 2.20]{KR14}, and we can define a more specific intersection number
\begin{equation}\label{equ:lift int number}
    \Int_{T_\flat}^\bB(\xi,y,\phi_1,\phi_{\flat}):=\frac{1}{\tau(Z^{\BQ})[E:F]}\left(\tCZ^\bB(\xi,\tau,\phi_{1}),\tDCZ(T_{\flat},\phi_{\flat})\right)\in\BR,
\end{equation}
which lifts the $\BR_{\fkd}$-valued intersection number (this depends on the choice of the section $s$ in the line bundle). Here
\begin{equation}\label{equ:tau ZQ}
    \tau(Z^{\BQ})=\#(Z^{\BQ}(\BQ)\backslash Z^{\BQ}(\BA_{\BQ,f})\slash K_{Z^{\BQ}}).
\end{equation}

Moreover, the intersection number \eqref{equ:lift int number} is defined as the sum of an archimedean and a non-archimedean part, such that:
\begin{itemize}
    \item The \emph{archimedean contribution} is defined in the usual way by evaluating the Green function at the complex point of the KR cycle:
$$
\sum_{\nu\in\Sigma_{E,\infty}}\langle \tCG_\nu^\bB(\xi, y, \phi_1), \tDCZ_\nu(T_\flat, \phi_{\flat})\rangle_\nu \log q_\nu,
$$
where by definition $\log q_\nu=2$ for complex places $\nu$ (and $1$ is $\nu$ were a real place).
    \item The \emph{non-archimedean contribution} is given by the arithmetic Euler-Poincaré characteristic:
\begin{equation*}
    \chi(\tCM, \tDCZ(\xi, \phi_{1}) \cap^{\BL} \tDCZ(T_\flat, \phi_{\flat})):=\sum_{\nu\in \Sigma_{E,f}} \langle \tDCZ(\xi, \phi_{1}),\tDCZ(T_\flat, \phi_{\flat})\rangle_\nu \log q_\nu,
\end{equation*}
where the local intersection number $\langle-,-\rangle_\nu$ is defined for a non-archimedean place $\nu$ through the Euler-Poincaré characteristic of a derived tensor product on $\tCM\otimes_{O_E} O_{E_\nu}$.
\end{itemize}

With this definition of $\Int_{T_\flat}^\bB(\xi, \phi_1, \phi_{\flat})$, we obtain a well-defined place-by-place decomposition:
\begin{equation*}
\Int_{T_\flat}^\bB(\xi, \phi_1, \phi_{\flat}) = \sum_{v_0 \in \Sigma_{F_0,\infty}} \Int_{T_\flat,v_0}^\bB(\xi,\phi_1, \phi_{\flat})+\sum_{v_0 \in \Sigma^{\fkd}_{F_0,f}} \Int_{T_\flat,v_0}(\xi,\phi_1, \phi_{\flat}),
\end{equation*}
where $\Int_{T_\flat,v_0}^\bB(\xi, \phi_1, \phi_{\flat})$ is the sum of the local contributions from all $\nu \in \Sigma_{E,\infty}$ lying above $v_0$ and $\Int_{T_\flat,v_0}(\xi, \phi_1, \phi_{\flat})$ is the sum of the local contributions from all $\nu \in \Sigma_{E,f}$ lying above $v_0$.

For a finite place $v_i \in \Sigma^{\fkd}_{F_0,f}$, by the same proof as in \cite[Lemma 2.21]{KR14}, the local contribution $\Int_{T_\flat}(\xi, \tau_\infty, \phi_1, \phi_{\flat})$ is nonzero only if the quadratic extension $F/F_0$ remains non-split over $v_0$, hence 
\begin{equation*}
\Int_{T_\flat}^\bB(\xi, \phi_1, \phi_{\flat}) = \sum_{v_0 \in \Sigma_{F_0,\infty}} \Int_{T_\flat,v_0}^\bB(\xi,\phi_1, \phi_{\flat})+\sum_{v_0 \in \Sigma^{\fkd}_{F_0,\mathrm{inert}}} \Int_{T_\flat,v_0}(\xi,\phi_1, \phi_{\flat}),
\end{equation*}
where $\Sigma^{\fkd}_{F_0,\mathrm{inert}}$ denote the set of all finite places $v_0\nmid \fkd$ over $F_0$ such that $F/F_0$ is unramified.

Recall from \S \ref{sec:green of cycles} that we have Kudla's Green functions $\CG_\nu^{\bf K}(\xi,y_\infty,\phi_1)$ associated to the special divisor $\GZ_\nu(\xi,\phi_1)$ on $\CM_\nu$, where $\tau_\infty=x_\infty+i y_\infty \in \BH_1(F_{\infty})$ is a variable. For our application, we also consider 
\begin{equation*}
\Int_{T_\flat}^\bK(\xi, \tau_\infty, \phi_1, \phi_{\flat}) = \sum_{v_0 \in \Sigma_{F_0,\infty}} \Int_{T_\flat,v_0}^\bK(\xi, \tau_\infty, \phi_1, \phi_{\flat})+\sum_{v_0 \in \Sigma^{\fkd}_{F_0,\mathrm{inert}}} \Int_{T_\flat,v_0}(\xi,\phi_1, \phi_{\flat}),
\end{equation*}
where the non-archimedean contribution is still given by the arithmetic Euler-Poincaré characteristic, and the archimedean contribution is defined by evaluating the Kudla's Green function $\tCG^{\bK}_\nu(\xi,y_\infty,\phi_1)$ on $\tDCZ_\nu(T_\flat,\phi_\flat)$ rather than the automorphic Green function, which is not holomorphic in general.

Next, consider the central value of the Eisenstein series at a nonsingular $T$ and its first derivative. By the archimedean Siegel–Weil formula (Theorem \ref{thm:archi Siegel Weil}(1)), only two classes of $T$ contribute nonzero terms: either $T$ is totally positive definite ($T \in \Herm_n^{+}(F)$), or $T$ has signature $(n-1,1)$ at some place and is positive definite at all other places ($T \in \Herm_n^{\dagger}(F)$).

Based on this, we define $\Diff(T, \BV)=\Diff(T)$ as follows (recall from \S \ref{sec:Eisenstein series} that $\BV$ is the incoherent $\BA/\BA_0$-hermitian space associated to $V$):
\begin{itemize}
    \item When $T \in \Herm_n^{+}(F)$, we define $\Diff(T, \BV)$ to be the set of finite places $v_0 \in \Sigma_{F_0,f}$ such that $V_{v_0}$ does not represent $T$. This set is nonempty since $\BV$ is not totally positive definite.
    \item When $T \in \Herm_n^{\dagger}(F)$, we define $\Diff(T, \BV)$ to be the set of finite places $v_0 \in \Sigma_{F_0,f}$ such that $V_{v_0}$ does not represent $T$, together with the set of archimedean places $v_0 \in \Sigma_{F_0,\infty}$ where the base change of $T$ to the nearby hermitian space $V_{v_0}$ is positive definite.
\end{itemize}

For a more general definition using representation theory, see \cite[\S 5]{Kudla97}. By the same proof as in \cite[Lemma 9.1]{KR14}, the derivative of the local Whittaker function at a finite place $v_0$ is nonzero only if the quadratic extension $F/F_0$ remains non-split over $v_0$.

\subsection{Local intersection: inert non-archimedean places}
Now let $v_0 \in \Sigma^{\fkd}_{F_0,\mathrm{inert}}$ be a finite place of $F_0$ that is inert in $F$, and let $v$ be the unique place of $F$ lying above $v_0$.

\begin{proposition}\label{prop:inert-local-int}
For the Schwartz function $\phi=\phi_1\otimes\phi_\flat\in\CS(V(\BA_{0,f})^n)^{K_G}$ satisfying \eqref{equ:assumption on schwartz function}, we have
\begin{align*}
\Int_{T_\flat,v_0}(\xi,\phi_1, \phi_{\flat})={}& \frac{\log q^2_{v_0}}{\vol(K_G^{v_0})} 
\sum_{\bu \in G^{(v_0)}(F_0) \backslash (V^{(v_0)}_\xi \times V^{(v_0)}_{T_{\flat}})(F_0)}
\Int_{v_0}(\bu, \phi_{v_0}) \cdot \int_{G(\BA^{v_0}_{0,f})} \phi^{v_0}(h^{-1} \bu) \, dh,\\
={} & \frac{\log q^2_{v_0}}{\vol(K_G^{v_0})} 
\sum_{\substack{T = {\begin{psmallmatrix} \xi & * \\ * & T_{\flat} \end{psmallmatrix}} \in \Herm_{n}^+(F) \\ \Diff(T,\BV) = \{v_0\}}}
\Int_{v_0}(T, \phi_{v_0}) \cdot \Orb^{v_0}_T(\phi^{v_0}).
\end{align*}
Here, $\Int_{v_0}(\bu, \phi_{v_0})$ is the weighted arithmetic intersection number defined in Definition \ref{def:local special cycle}. It is independent of the choice of representative elements $\bu$ in $V_T$, and we set $\Int_{v_0}(T, \phi_{v_0}) := \Int_{v_0}(\bu, \phi_{v_0})$ for some $\bu \in V_T$.
\end{proposition}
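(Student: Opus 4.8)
The plan is to compute the $v_0$-local term $\Int_{T_\flat,v_0}(\xi,\phi_1,\phi_\flat)$ by concentrating it on the supersingular locus and then invoking the $p$-adic uniformization of Kudla--Rapoport cycles. By \eqref{equ:derived intersection} the derived intersection $\tDCZ(\xi,\phi_1)\cap^{\BL}\tDCZ(T_\flat,\phi_\flat)$ is the finite sum $\sum_T\tDCZ(T,\phi)$ over nonsingular positive definite $T=\begin{psmallmatrix}\xi&*\\ *&T_\flat\end{psmallmatrix}$, so it suffices to compute the $v_0$-contribution of each $\tDCZ(T,\phi)$. First I would run the ``$\Diff$'' argument of \cite[Lemma 2.21]{KR14}: a geometric point of $\tCM$ above a place $\nu\mid v_0$ lying in the support of $\tDCZ(T,\phi)$ carries an $O_F$-module $A$ admitting special quasi-homomorphisms with moment matrix $T$; for $T$ positive definite nonsingular this forces $A[\nu^\infty]$ to be supersingular and forces the nearby hermitian space $V^{(v_0)}$ to represent $T$. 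Consequently the $v_0$-contribution vanishes unless $\Diff(T,\BV)=\{v_0\}$, and in all cases the intersection is supported on the formal completion $\tCM^{\wedge}_{O_{\breve{E}_\nu}}$ along the basic locus. Note also that under \eqref{equ:assumption on schwartz function} the component $\phi_{v_0}=\phi_{1,v_0}\otimes\phi_{\flat,v_0}$ is an admissible Schwartz function of type $h=t(L_{v_0})$, so $\Int_{v_0}(\bu,\phi_{v_0})$ is defined as in Definition \ref{def:local special cycle}.

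\textbf{Applying the uniformization and computing Euler--Poincar\'e characteristics.} On $\tCM^{\wedge}_{O_{\breve{E}_\nu}}$ I would use the fibration \eqref{equ:fiberwise projection} onto $Z^{\BQ}(\BQ)\backslash Z^{\BQ}(\BA_{\BQ,f})/K_{Z^{\BQ}}$ together with Proposition \ref{prop:KR-p-unif} (in its derived form): on each of the fibers $\CM^{\wedge}_{O_{\breve{E}_\nu}}=G^{(v_0)}(F_0)\backslash\bigl[\CN_n^{[h]}\times G(\BA^{v_0}_{0,f})/K_G^{v_0}\bigr]$ of \eqref{equ:RZ-comp}, the restriction of $\tDCZ(T,\phi)$ is $\sum_{(\bu,g)}\phi^{v_0}(g^{-1}\bu)\,[\DCZ(\bu,g,\phi_{v_0})]_{K_G^{v_0}}$, the sum running over $G^{(v_0)}(F_0)\backslash\bigl(V^{(v_0)}_T(F_0)\times G(\BA^{v_0}_{0,f})/K_G^{v_0}\bigr)$. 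Since $T$ is nonsingular each local derived cycle $\DCZ(\bu,\phi_{v_0})$ is supported in dimension $0$, hence finite, and $\Stab_{G^{(v_0)}(F_0)}(\bu)$ is trivial (the entries of $\bu$ form a basis of $V^{(v_0)}$); therefore the Euler--Poincar\'e characteristic of the descended cycle equals $\sum\phi^{v_0}(g^{-1}\bu)\,\chi(\CN_n^{[h]},\DCZ(\bu,\phi_{v_0}))$ over the same index set. Summing over $T$ recombines the indices into $V^{(v_0)}_\xi\times V^{(v_0)}_{T_\flat}$; using that $\phi^{v_0}$ is $K_G^{v_0}$-invariant, the free sum over $g\in G(\BA^{v_0}_{0,f})/K_G^{v_0}$ becomes $\vol(K_G^{v_0})^{-1}\int_{G(\BA^{v_0}_{0,f})}\phi^{v_0}(h^{-1}\bu)\,dh$; the $\tau(Z^{\BQ})[E:F]$ fibers of \eqref{equ:fiberwise projection} cancel against the normalization in \eqref{equ:lift int number}; and summing the local factors over all $\nu\mid v_0$, with the residue-degree weights, produces the factor $\log q_{v_0}^2$. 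This yields the first displayed equality with $\Int_{v_0}(\bu,\phi_{v_0})=\chi(\CN_n^{[h]},\DCZ(\bu,\phi_{v_0}))$.

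\textbf{Rewriting as a sum over moment matrices.} For the second equality I would decompose $V^{(v_0)}_\xi\times V^{(v_0)}_{T_\flat}=\coprod_T V^{(v_0)}_T$ over $T=\begin{psmallmatrix}\xi&*\\ *&T_\flat\end{psmallmatrix}$ and note that $V^{(v_0)}_T(F_0)\neq\emptyset$ precisely when $V^{(v_0)}$ globally represents $T$. Since $V^{(v_0)}$ is positive definite at every archimedean place and locally isomorphic to $V$ away from $v_0$, for positive definite $T$ this is equivalent to $T$ being represented by $V_w$ for every finite $w\neq v_0$ and \emph{not} by $V_{v_0}$, i.e.\ to $\Diff(T,\BV)=\{v_0\}$. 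For such $T$ the isomorphism $i_\bu\colon\U(V^{(v_0)})\xrightarrow{\ \sim\ }V^{(v_0)}_T$ shows that $G^{(v_0)}(F_0)$ acts simply transitively on $V^{(v_0)}_T(F_0)$, so there is a single orbit; by definition $\int_{G(\BA^{v_0}_{0,f})}\phi^{v_0}(h^{-1}\bu)\,dh=\Orb^{v_0}_T(\phi^{v_0})$, and $\chi(\CN_n^{[h]},\DCZ(\bu,\phi_{v_0}))$ depends only on $h'(\bu,\bu)=T$ by the $\U(\BV)(F_0)$-equivariance of the special divisors $\CZ(u)$ and $\CY(u)$, so equals $\Int_{v_0}(T,\phi_{v_0})$. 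This gives the second form.

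\textbf{Main obstacle.} The genuinely geometric inputs --- the basic uniformization of cycles (Proposition \ref{prop:KR-p-unif}), local constancy, and the supersingularity of the relevant points --- are already in place, so the difficulty is rather bookkeeping in the presence of the non-regular model $\tCM$: one must verify, working in the $G$-groups $G_0(-)$ with supports that are finite over the base (where derived tensor products and Euler--Poincar\'e characteristics are well behaved), that $\CO_{\tCZ(t_1,\cdots)}\Dotimes\cdots\Dotimes\CO_{\tCZ(t_n,\cdots)}$ restricted to $\tCM^{\wedge}_{O_{\breve{E}_\nu}}$ is identified with the descent of $\CO_{\CZ(u_1,\phi_{v_0,1})}\Dotimes\cdots\Dotimes\CO_{\CZ(u_n,\phi_{v_0,n})}$, that the characteristics are additive over the $G^{(v_0)}(F_0)$-orbit decomposition, and that the base change from $O_{E_\nu}$ to $O_{\breve{E}_\nu}$ contributes only the explicit residue-degree factor entering $\log q_{v_0}^2$. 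The remaining measure- and index-counting (the $\vol(K_G^{v_0})$ normalization and the $\tau(Z^{\BQ})[E:F]$ cancellation) is routine but must be carried out carefully.
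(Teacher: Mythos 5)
Your proposal is correct and follows essentially the same route as the paper: reduction to the basic locus via the $\Diff$/supersingularity argument of \cite[Lemma 2.21]{KR14}, the $p$-adic uniformization of derived cycles, conversion of the sum over $g\in G(\BA^{v_0}_{0,f})/K_G^{v_0}$ into $\vol(K_G^{v_0})^{-1}$ times the orbital integral, and the regrouping by moment matrices using that $G^{(v_0)}(F_0)$ acts simply transitively on $V^{(v_0)}_T(F_0)$ for nonsingular $T$. The only cosmetic difference is that the paper uniformizes the two factors $\tCZ(\xi,\phi_1)$ and $\tDCZ(T_\flat,\phi_\flat)$ separately with two group variables $(g,h)$ and collapses them via the identity $\chi(\CZ(u_1,g,\phi_{1,v_0})\cap^{\BL}\DCZ(u_\flat,h,\phi_{\flat,v_0}))=\chi(\DCZ(\bu,\phi_{v_0}))\cdot 1_{K_G^{v_0}}(g^{-1}h)$, whereas you uniformize the already-intersected $\tDCZ(T,\phi)$ directly; your "main obstacle" paragraph correctly identifies exactly the bookkeeping the paper carries out (the $[E_\nu:F_v]$ residue factor, the vanishing of singular-$T$ terms via the regularity assumption, and the $\tau(Z^{\BQ})[E:F]$ normalization).
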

\begin{proof}
Following the proof of \cite[Proposition 2.22]{KR14}, the terms in \eqref{equ:derived intersection} contribute to the intersection number $\Int_{v_0}$ only when $T$ is positive definite, $\Diff(T) = \{v_0\}$, and $v_0 \nmid \fkd$. Moreover, by the argument in \cite[Lemma 2.21]{KR14}, the intersection in this case is supported in the basic locus. Consequently, we may reduce our computation to the formal completion along the basic locus using \S \ref{sec:p adic unitofmization of special cycle}. For any two $\bu_1,\bu_2\in V_T$ where $T$ is nonsingular, we can find $g\in \U(V_{v_0})$ such that $g\bu_1=\bu_2$, hence the induced automorphism on the RZ space induces an identification between $\Int_{v_0}(\bu_1,\phi_{v_0})$ and $\Int_{v_0}(\bu_2,\phi_{v_0})$.
Hence the notation $\Int_{v_0}(T,\phi_{v_0})$ is does not depends on the choice of $\bu\in V_T$\footnote{This does not relate to the linear invariance of the special cycle.}.

Fix a place $\nu$ of $E$ above $v_0$. Then, by Proposition \ref{prop:KR-p-unif}, we can compute the intersection number fiberwise using the projection \eqref{equ:fiberwise projection} and then multiply the result by the factor $\tau(Z^{\BQ})$ defined in \eqref{equ:tau ZQ}. Therefore, we focus on the intersection over the fiber $\CM^{\wedge}_{O_{\breve{E}_\nu}}$ defined in \eqref{equ:RZ-comp}. 

By Proposition \ref{prop:KR-p-unif}, the restrictions of the special cycles $\tCZ(\xi, \phi_1)$ and $\tDCZ(T_{\flat}, \phi_{\flat})$ to $\CM^{\wedge}_{O_{\breve{E}_\nu}}$ take the forms
\begin{equation*}
    \sum_{(u_1, g) \in G^{(v_0)}(F_0) \backslash \left[V^{(v_0)}_\xi(F_0) \times G(\BA^{v_0}_{0,f})\right]/K_G^{v_0}}
    \phi_1^{v_0}(g^{-1} u_1) \cdot [\CZ(u_1, g, \phi_{1,v_0})]_{K^{v_0}_G},
\end{equation*}
and
\begin{equation*}\label{equ:derived summation}
    \sum_{(u_\flat, h) \in G^{(v_0)}(F_0) \backslash \left[V^{(v_0)}_{T_{\flat}}(F_0) \times G(\BA^{v_0}_{0,f})\right]/K_G^{v_0}}
    \phi_{\flat}^{v_0}(h^{-1} u_\flat) \cdot [\DCZ(u_\flat, h, \phi_{\flat,v_0})]_{K^{v_0}_G},
\end{equation*}
respectively. 
We compute the intersection number by pulling back to the covering formal scheme $\CN^{[h]}_{n,O_{\breve{E}_\nu}} \times G(\BA^{v_0}_{0,f})/K^{v_0}_G$ in the $p$-uniformization setting, as described in \S \ref{sec:p adic unitofmization of special cycle}.

The intersection number of 
$\tCZ(\xi, \phi_1) \cap^\BL \tDCZ(T_{\flat}, \phi_{\flat})$ (restricted to $\CM^{\wedge}_{O_{\breve{E}_\nu}}$) is given by the sum
\begin{equation*}
\phi_1^{v_0}(g^{-1} u_1) \phi_{\flat}^{v_0}(h^{-1} u_\flat) \cdot \chi(
\CZ(u_1, g, \phi_{1,v_0}) \cap^\BL \CZ^\BL(u_\flat, h, \phi_{\flat,v_0})),
\end{equation*}
taken over the $G^{(v_0)}(F_0)$-orbits (under the diagonal action) of tuples $(u_1, u_\flat, g, h)$ satisfying
\begin{equation*}
    (u_1, g) \in V^{(v_0)}_\xi(F_0) \times G(\BA^{v_0}_{0,f})/K^{v_0}_G
    \quad\text{and}\quad
    (u_\flat, h) \in V^{(v_0)}_{T_\flat}(F_0) \times G(\BA^{v_0}_{0,f})/K^{v_0}_G.
\end{equation*}
Here, $\chi$ denotes the Euler–Poincaré characteristic of the derived tensor product of the corresponding complexes in the $K$-group. Its well-definedness follows from the same reasoning as in Proposition \ref{prop:derived-special-cycle-decomp}.

Let $\bu=(u_1,u_\flat)$. By \eqref{equ:KR-unif}, we obtain
\begin{equation}\label{equ:derived cup for single term}
\chi(\CZ(u_1,g,\phi_{1,v_0})\cap^{\BL}\DCZ(u_\flat,h,\phi_{\flat,v_0}))=\chi(\DCZ(\bu,\phi_{v_0})_{O_{\breve{E}_{E_\nu}}})\cdot 1_{K_G^{v_0}}(g^{-1}h).
\end{equation}
The first term of \eqref{equ:derived cup for single term} is equal to
\begin{equation*}
    [E_\nu:F_{v}]\cdot\Int_{v_0}(\bu,\phi_{v_0}).
\end{equation*}
In particular, it is invariant under the (diagonal) action of $G^{(v_0)}(F_0)$ on the product $(V^{(v_0)}_\xi\times V^{(v_0)}_{T_{\flat}})(F_0)$.

The second term of \eqref{equ:derived cup for single term} as a function 
$$(g,h)\in (G(\BA^{v_0}_{0,f})/K_G^{v_0})^2\mapsto 1_{K_G^{v_0}}(g^{-1}h)$$ 
is also invariant under the (diagonal) $G^{(v_0)}(F_0)$-action. For a fixed pair $\bu=(u_1,u_\flat)$, we obtain
\begin{equation*}\label{equ:relating to orbit integral}
\sum_{(g,h)\in (G(\BA^{v_0}_{0,f})/K_G^{v_0})}\phi^{v_0}(g^{-1}u_1,h^{-1}u_{\flat})\cdot 1_{K^{v_0}_G}(g^{-1}h)=\frac{1}{\vol(K_G^{v_0})}\int_{G(\BA^{v_0}_{0,f})}\phi^{(v_0)}(h^{-1}\bu)dh.
\end{equation*}
This concludes the first line of the assertion. To conclude the second line of the assertion, we decompose
\begin{equation*}
    V_\xi^{(v_0)}\times V_{T_\flat}^{(v_0)}=\coprod_{ T=\begin{psmallmatrix}\xi& *\\ * &T_\flat\end{psmallmatrix}} V^{(v_0)}_T,
\end{equation*}
where the index ranges over all $T\in\Herm_n(F)$, we may rewrite the summation as:
$$
\frac{\log q^2_{v_0}}{\vol(K^{v_0}_G)}\sum_{T={\begin{psmallmatrix}\xi&*\\ *&T_{\flat}\end{psmallmatrix}}}\sum_{\bu\in V^{(v_0)}_T(F_0)/G^{(v_0)}(F_0)}\Int_{v_0}(\bu,\phi_{v_0})\cdot \int_{G(\BA^{v_0}_{0,f})}\phi^{v_0}(h^{-1}\bu)dh.
$$
By the regularity assumption, if $T$ is singular, then for any $\bu\in V_T^{(v_0)}$, the local orbital integral $\int_{G(F_{0,w_0})}\phi_{w_0}(h^{-1}\bu)dh$ vanishes at a place $w_0\mid \fkd$. Hence the global orbital integral $\int_{G(\BA^{v_0}_{0,f})}\phi^{v_0}(h^{-1}\bu)dh$ vanishes for all $\bu\in V_T^{(v_0)}$ with singular $T$. By \cite[Lemma 2.7]{KR14}, we further conclude that $T\in\Herm_n^+(F)$, hence:
$$
\Int_{v_0}(\xi,\phi)=\frac{\log q^2_{v_0}}{\vol(K^{v_0}_G)}\sum_{\substack{T=\begin{psmallmatrix}\xi&*\\ *&T_{\flat}\end{psmallmatrix}\in\Herm^+_{n}(F)\\ \Diff(T,\BV)=\{v_0\}}}
\Int_{v_0}(T,\phi_{v_0})\cdot \Orb^{v_0}_T(\phi^{v_0}),
$$
where we use the fact that $\Int_{v_0}(T,\phi_{v_0})=\Int_{v_0}(\bu,\phi_{v_0})$ whose well-definedness is discussed in the beginning of the proof.
\end{proof}

\subsection{Local intersection: archimedean places}
In this part, we compute the archimedean contribution of the local intersections.
Let $v_0\in \Sigma_{F_0}$ be a archimedean place of $F_0$ and let $v$ be the unique place in $\Phi$ above $v_0$.

\begin{theorem}\label{thm:archi-int}
For the Schwartz function $\phi=\phi_1\otimes\phi_\flat\in\CS(V(\BA_{0,f})^n)^{K_G}$ satisfying \eqref{equ:assumption on schwartz function}, and $\xi\in F_0^\times$, we have
\begin{multline*}
\Int^{\bK}_{v_0}(\xi,\by_\infty,\phi)=\frac{\log q_{v_0}^2}{\vol(K_G)}\sum_{\substack{T={\begin{psmallmatrix}\xi&*\\ *&T_{\flat}\end{psmallmatrix}}\in\Herm_{n}^\dagger(F)\\ \Diff(T,\BV)=\{v_0\}}}\Int_{v_0}(T,\by_{v_0})\Orb_{T,f}(\phi)\\
-\frac{\log q_{v_0}^2}{\vol(K_G)}\int_{\CM_{v_0}}\CG_{v_0}^\bK(T_\flat,y_{\flat,v_0},\phi_\flat)\wedge \omega_{\KM,v_0}(\xi,y_{v_0},\phi_1).
\end{multline*}
Here $\btau_\infty=\begin{psmallmatrix}
	\tau & \\ & \tau_\flat
\end{psmallmatrix}=\bx_\infty+i\by_\infty\in \BH_n(F_{\infty})$, and functions in the second line of equations are defined in Proposition \ref{prop:omega series}.
Also $\Int_{v_0}(T,\by_{v_0})=\Int_{v_0}(\bu,\by_{v_0})$ is the local height function on $\CD^{(v_0)}$ (Theorem \ref{thm:archi Siegel Weil}), which is independent of the choice of $\bu$ with $(\bu,\bu)=T$.

Note that the archimedean uniformization a priory is taken for some $\nu\mid v\in \Phi$ over $E$, but the complex uniformization for the RSZ Shimura variety (\S \ref{sec:complex-uniformization}) and Kudla-Rapoport divisors and their Green functions (\S \ref{sec:KR cycle in global}) only depend on their restriction $v_0$ to $F_0$ (and the fixed CM type $\Phi$). Hence we index the functions in the main statement by $v_0$ for simplicity.
\end{theorem}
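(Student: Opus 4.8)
\textbf{Proof plan for Theorem \ref{thm:archi-int}.}
The plan is to follow the template of the non-archimedean computation in Proposition \ref{prop:inert-local-int}, replacing the Rapoport--Zink uniformization by the complex uniformization of \S \ref{sec:complex-uniformization} and the derived tensor product by the star product of Green currents. First I would unwind the definition of $\Int^{\bK}_{v_0}(\xi,\by_\infty,\phi)$: by construction it is the sum, over all archimedean places $\nu$ of $E$ above $v_0$, of the pairing $\langle \tCG^{\bK}_\nu(\xi,y_\infty,\phi_1),\tDCZ_\nu(T_\flat,\phi_\flat)\rangle_\nu\log q_\nu$. Since the Green function and the complex uniformization depend only on $v_0$ (and the fixed CM type $\Phi$), each such term equals the same expression indexed by $v_0$, so after collecting the factor $[E:F]$ and dividing by $\tau(Z^{\BQ})$ in \eqref{equ:lift int number} we may compute on a single fiber $\CM_{v_0}$ of the projection \eqref{equ:complex projection}, exactly as in the inert case. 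The factor $\log q_{v_0}^2$ appears because $v_0$ is a complex place, where $\log q_\nu = 2$.

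Next I would apply Proposition \ref{prop:complex uniformization of special cycle} and the construction in \S \ref{sec:green of cycles} to expand both sides over $G^{(v_0)}(F_0)$-orbits of pairs $(\bu,g)$, and then use the regularity of $\phi$ at two places over $\fkd$ (assumption \eqref{equ:assumption on schwartz function}) together with Theorem \ref{thm:sing-vanish} and Theorem \ref{thm:star-prod-cycles}: only nonsingular $T=\begin{psmallmatrix}\xi&*\\ *&T_\flat\end{psmallmatrix}$ contribute, and by the archimedean Siegel--Weil dichotomy (Theorem \ref{thm:archi Siegel Weil}(1)) such $T$ lies in $\Herm_n^+(F)$ or $\Herm_n^\dagger(F)$. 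The heart of the computation is the star product formula \eqref{equ:star-prod}: since $\tCG^{\bK}_\nu(\xi,y_\infty,\phi_1)$ is a current defined by a smooth form off the divisor $\CZ(\xi,\phi_1)$, its star product with the Green current for $\CZ_\nu(T_\flat,\phi_\flat)$ splits into two pieces, namely $\delta_{\CZ(T_\flat,\phi_\flat)}\wedge \CG^{\bK}(\xi,y,\phi_1)$ and $\CG^{\bK}(T_\flat,y_\flat,\phi_\flat)\wedge \omega_{\KM}(\xi,y,\phi_1)$. The first piece is a finite sum over the discrete points of $\CZ_\nu(T_\flat,\phi_\flat)$ of values of Kudla's Green function; rewriting the sum over $G^{(v_0)}(F_0)$-orbits and performing the $G(\BA^{v_0}_{0,f})$-integral converts the weighting $\phi^{v_0}$ into the prime-to-$v_0$ orbital integral $\Orb_{T,f}(\phi)$, while the value $\CG^{\bK}(\bu,\by_{v_0})=\Int_{v_0}(T,\by_{v_0})$ is, by Theorem \ref{thm:archi Siegel Weil}(3) and \cite[Proposition 4.10]{Liu11-I}, the local height function independent of the chosen $\bu$ with $(\bu,\bu)=T$; only $T\in\Herm_n^\dagger(F)$ with $\Diff(T,\BV)=\{v_0\}$ survive here, since for $T\in\Herm_n^+(F)$ the cycle $\CZ_\nu(T_\flat,\phi_\flat)$ and $\CZ_\nu(\xi,\phi_1)$ do not meet in the complex fiber. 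The second piece is exactly the integral $\int_{\CM_{v_0}}\CG^{\bK}_{v_0}(T_\flat,y_{\flat,v_0},\phi_\flat)\wedge\omega_{\KM,v_0}(\xi,y_{v_0},\phi_1)$ appearing with a minus sign, whose convergence is guaranteed by Proposition \ref{prop:omega series}(1).

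I would then assemble the two contributions, keeping careful track of the normalizing volume $\vol(K_G)$ and the sign conventions in the star product and in the definition of $\tCZ^{\bB}(0,\cdot)$ versus the Kudla generating series. The main obstacle I anticipate is bookkeeping rather than conceptual: matching the adelic orbital-integral normalization (the ratio $\vol(K_G^{v_0})$ from the $p$-adic computation versus $\vol(K_G)$ here) and verifying that, at the finitely many archimedean places, the star product identity of Theorem \ref{thm:star-prod-cycles} holds on the nose (not merely modulo $\mathrm{im}\,\partial+\mathrm{im}\,\bar\partial$) after integration against the top-degree form $\omega_{\KM}$ --- this uses that integrating an exact current against a smooth form over the compact (or suitably controlled) complex fiber $\CM_{v_0}$ gives zero, together with the properness of the intersection $\CZ_\nu(\xi,\phi_1)\cap\CZ_\nu(T_\flat,\phi_\flat)$ guaranteed by \cite[Lemma 2.20]{KR14}. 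Once these normalization points are pinned down, the two displayed terms in the statement emerge directly.
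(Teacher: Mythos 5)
Your proposal is correct and follows essentially the same route as the paper's proof: complex uniformization and orbit decomposition as in Proposition \ref{prop:inert-local-int}, restriction to $T\in\Herm_n^{\dagger}(F)$ with $\Diff(T,\BV)=\{v_0\}$ via the emptiness of $V_T^{(v_0)}$ for totally positive definite $T$, and the star product identity \eqref{equ:star-prod} together with $\int_{\CD^{(v_0)}}\CG^{\bK}(\bu,\by_{v_0})=\Int_{v_0}(T,\by_{v_0})$. One wording in your key paragraph should be tightened: the quantity being computed \emph{is} the first piece $\delta_{\CZ(u_\flat)}\wedge\CG^{\bK}(u_1,\by)$, so the sum of local heights comes from the \emph{full} star product $\CG^{\bK}(u_1)\star\CG^{\bK}(u_\flat)$ (whose $\CD^{(v_0)}$-integral is $\Int_{v_0}(T,\by_{v_0})$), and the first piece equals that star product minus the $\omega_{\KM}$-term --- your sentence attributing $\sum_T\Int_{v_0}(T,\by_{v_0})\Orb_{T,f}(\phi)$ directly to the first piece while also carrying the minus sign is internally inconsistent as written, though the intended (and correct) bookkeeping is clear.
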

\begin{proof}
Similar to the computation in Proposition \ref{prop:inert-local-int}, we have 
\begin{align*}
   \Int^\bK_{v_0}(\xi,\by_\infty,\phi)=\frac{1}{\vol(K_G)}\sum_{ \substack{T=\begin{psmallmatrix}\xi&*\\ * &T_\flat\end{psmallmatrix}\\ \in \Herm_n^{\reg}(F)}}\sum_{\bu\in V_T^{(v_0)}(F_0)/G^{(v_0)}(F_0)}\CG_{v_0}^{\bK}(u_1,\by_\infty)(\CZ^{(v_0)}(u_\flat))\cdot \int_{G(\BA_{0,f})}\phi(h^{-1}\bu)dh.
\end{align*}
While the summation initially ranges over all Hermitian matrices $T$ of the form ${\begin{psmallmatrix}\xi&*\\ * & T_\flat\end{psmallmatrix}}$,  the regularity assumption on $\phi$ ensures that only nonsingular terms contribute. Moreover, the set
 $V_T^{(v_0)}$ is non-empty if and only if $T$ represents the space $V^{(v_0)}$, hence we may restrict to $T\in\Herm_n^\dagger(F)$ satisfying $\Diff(T,\BV)=\{v_0\}$.

By Proposition \ref{prop:complex uniformization of special cycle} and our regularity assumption \eqref{equ:assumption on schwartz function} on $\phi$, the complex fibers $\GZ_{v_0}(u_1,\phi_1)$ and $\GZ_{v_0}(u_\flat,\phi_\flat)$ intersect properly (including the case where one of them are empty). We have
\begin{equation*}
    \CG_{v_0}^\bK(u_1,\by_\infty)(\CZ^{(v_0)}(u_\flat))=\int_{\CD^{(v_0)}}\CG_{v_0}^{\bK}(u_1,\by_{v_0})\wedge \delta_{\CZ^{(v_0)}(u_\flat)}.
\end{equation*}
Using the definition of the star product \eqref{equ:star-prod} and Green currents for special cycles \eqref{equ:green-current-cycle}, we have
\begin{multline}\label{equ:archimedean value}
\int_{\CD^{(v_0)}}\CG_{v_0}^{\bK}(u_1,\by_{v_0})\wedge \delta_{\CZ^{(v_0)}(u_\flat)}=\int_{\CD^{(v_0)}}\CG_{v_0}^{\bK}(u_1,y_{v_0})\star\CG_{v_0}^{\bK}(u_\flat,y_{\flat,v_0})\\
-\int_{\CD^{(v_0)}}\omega_{\KM,v_0}(u_1,y_{1,v_0})\wedge \CG_{v_0}^{\bK}(u_\flat,y_{\flat,v_0}).
\end{multline}

The summation of the first term in \eqref{equ:archimedean value} gives
\begin{equation*}
\frac{1}{\vol(K_G)}\sum_{ \substack{T=\begin{psmallmatrix}\xi&*\\ * &T_\flat\end{psmallmatrix}\\ \in \Herm_n^{\dagger}(F)}}\sum_{\bu\in V_T^{(v_0)}(F_0)/G^{(v_0)}(F_0)}\int_{\CD^{(v_0)}}\CG_{v_0}^{\bK}\left(\bu,{\begin{pmatrix}
	\tau_{v_0} & \\ & \tau_{\flat,v_0}
\end{pmatrix}}\right)\cdot \int_{G(\BA_{0,f})}\phi(h^{-1}\bu)dh.
\end{equation*}
Recall from the definition of the local height function (Theorem \ref{thm:archi Siegel Weil}) that $\int_{\CD^{(v_0)}}\CG^{\bK}(\bu,\by_{v_0})=
	\Int_{v_0}(T,\by_{v_0})$.
Taking the sum, this gives
$$
\frac{1}{\vol(K_G)}\sum_{T={\begin{psmallmatrix}\xi&*\\ *&T_{\flat}\end{psmallmatrix}}\in\Herm_{n}^\dagger(F)}\Int_{v_0}(T,\by_{v_0})\Orb_{T,f}(\phi).
$$
Taking the summation over the second terms of \eqref{equ:archimedean value} and applying the definitions of $\CG^{\bK}(T,\phi)$ from \eqref{equ:KR_complex-unif} and of $\omega_{\KM}(T,y_{1,v_0},\phi)$ after \eqref{equ:KR_complex-unif}, we obtain the desired result.
\end{proof}

\section{Generating series and modularity}\label{sec:generating modularity}
In this section, we combine the results in \S \ref{sec:Eisenstein series}, \S \ref{sec:generating} and \S \ref{sec:local-int} to construct the generating series $\CE_{T_\flat}(\tau,\phi_1,\phi_\flat)$ introduced in \eqref{equ:KR proof generating series}. 
We will also prove that it defines a holomorphic modular form valued in $\CA_{\mathrm{hol}}(\bH(\BA_0),K_{\bH},n)\otimes_{\ov{\BQ}}\BR_{\fkd,\ov{\BQ}}$, where $\BR_{\fkd,\ov{\BQ}}:=\BR_{\fkd}\otimes_{\BQ}\ov{\BQ}$.
The proof is based on the modification method introduced in \cite{Mihatsch-Zhang} for the hyperspecial level and generalized by \cite{ZZhang24} for general maximal parahoric level.

\subsection{Modified Fourier-Jacobi series}\label{sec:local-factors}
In this subsection, we modify the Fourier–Jacobi series introduced in \S \ref{sec:FJ series} to incorporate error terms. We then compare the local factors with the generating series on the geometric side.

Fix basis elements $\alpha \in \wedge^{2n^2}(V^n)^*$ and $\beta \in \wedge^{n^2}(\Herm_n)^*$. 
Recall from \eqref{not:gauge form} that there exists a form $\omega$ of degree $n^2$ on $V^n_{\reg}$ such that its pullback, $\omega_{\U(V)} = i_\bu^*\omega$, defines a gauge form on $\U(V)$, which is independent of the choice of $\bu$. This form determines a Tamagawa measure on $\U(V)(\BA_0)$ via a choice of convergence factors $\lambda_{v_0}$. 
Furthermore, recall from \eqref{not:gauge form 2} and \eqref{not:gauge form 3} that we have the constants
\begin{equation*}
    d_{\alpha,v_0}x = c_{v_0}(\alpha, \psi) dx, \quad 
    d_{\beta,v_0}b = c_{v_0}(\beta, \psi) db,
\end{equation*}
which compare with self-dual Haar measures.

Let $\phi = \phi_\fkd\otimes \phi^{\fkd}\in\CS(V(\BA_{0,f})^n)^{K_G}$ be an admissible Schwartz function of type $\bw$ satisfying \eqref{equ:assumption on schwartz function}, and let $\Sigma^{\fkd}_{\max}$ be the finite set of finite places $v_0 \nmid \fkd$ of $F_0$ where $L_{v_0}$ is not self-dual. 

Recall from \eqref{not:nearby herm} that $V^{(v_0)}$ denotes the $v_0$-nearby hermitian space of $V$. Define
\begin{equation*}
    \phi_{\corr} := \log q_{v_0}^2 \sum_{v_0 \in \Sigma^{\fkd}_{\max}} \phi_{\corr}(v_0) 
    \in \bigoplus_{v_0 \in \Sigma^{\fkd}_{\max}} \CS(V^{(v_0)}(\BA_{0,f})^n) \cdot \log q_{v_0}^2,
\end{equation*}
where $\phi_{\corr}(v_0) = \phi^{v_0} \otimes \phi_{v_0,\corr}(v_0)$ is a $\BQ$-valued Schwartz function in $\CS(V^{(v_0)}(\BA_{0,f})^n)$ such that
\begin{equation}\label{equ:corr term def}
    \phi_{v_0,\corr}(v_0) = -\frac{\gamma(V^{(v_0)}_{v_0})^n}{\gamma(V_{v_0})^n}
    \sum_{\substack{0\leq t< h\text{ and}\\ t\equiv h-1\,\mathrm{mod}\, 2}} \beta_{\phi}^{[h_{v_0}]}(t) \phi_{\bw,v_0}^{[t]} 
    \in \CS(V^{(v_0)}(F_{0,v_0})^n),
\end{equation}
where $h_{v_0}$ is the type of the vertex lattice $L$ at the place $v_0$, and where $\phi_{\bw,v_0}^{[t]}$ are admissible Schwartz functions at $v_0$ that match with $\phi_{v_0}$ and are of type $t$. 

For each $v_0$, the tensor product $\phi_{\corr}(v_0)\otimes\phi_{\infty}$ defines a Schwartz function on the coherent hermitian space $V^{(v_0)}(\BA_0)$, and gives rise to the (coherent) Eisenstein series (recall Notation \ref{not:eisenstein} for our convention):
\begin{equation*}
    \Eis(\btau, \phi_{\corr}) = \sum_{v_0 \in \Sigma_{\max}} \log q_{v_0}^2 \Eis(\btau, \phi_{\corr}(v_0)).
\end{equation*}
Let $C_{T}(\btau, \phi_{\corr}(v_0))$ denote the $T$-th Fourier coefficient of $\Eis(\btau, \phi_{\corr}(v_0))$.

\begin{lemma}
$C_{T}(\by_\infty, \phi_{\corr}(v_0)) = 0$ unless $\Diff(T, \BV) = \{v_0\}$. 
\end{lemma}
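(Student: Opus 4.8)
The plan is to reduce the claim to the factorization of Fourier coefficients of the coherent Eisenstein series $\Eis(\btau,\phi_{\corr}(v_0))$ together with the local definition of $\phi_{v_0,\corr}(v_0)$ in \eqref{equ:corr term def}, which is supported on vectors that ``look like'' they come from $V^{(v_0)}_{v_0}$. Recall that $\phi_{\corr}(v_0) = \phi^{v_0}\otimes\phi_{v_0,\corr}(v_0)\otimes\phi_\infty$ lives on the coherent hermitian space $V^{(v_0)}(\BA_0)$, which by the definition of the nearby space in \eqref{not:nearby herm} agrees with $V$ at every finite place different from $v_0$ and is positive definite at every archimedean place. Since $\phi_{\corr}(v_0)$ is a pure tensor and $T$ is nonsingular (we only consider $T\in\Herm_n^{\reg}(F)$ in the relevant Fourier expansions by Proposition \ref{prop:fourier-expansion-of-eis}), the Fourier coefficient factors as $C_T(\by_\infty,\phi_{\corr}(v_0)) = \prod_{w_0}W_{T,w_0}(\by_\infty,\phi_{\corr}(v_0)_{w_0})$ over all places $w_0$ of $F_0$, by the unfolding in Proposition \ref{prop:Fourier-coeff-non-sing}.

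First I would analyze the vanishing place by place. At each archimedean place $w_0$, the local Whittaker function of the standard Gaussian $\phi_\infty$ for the positive definite space $V^{(v_0)}_{w_0}$ is nonzero exactly when $T$ is positive definite at $w_0$, by Theorem \ref{thm:archi Siegel Weil}(2); so a nonvanishing coefficient forces $T\in\Herm_n^+(F)$. At a finite place $w_0\neq v_0$ with $w_0\nmid\fkd$, the factor is the weighted local density $\alpha_{T,w_0}$ (up to the Weil constant $\gamma(V_{w_0})^n$) evaluated on the admissible function $\phi_{w_0}$ coming from the vertex lattice $L_{w_0}\subset V_{w_0}=V^{(v_0)}_{w_0}$ via Proposition \ref{prop:witt-den}; this vanishes unless $T$ is represented by $V^{(v_0)}_{w_0}\cong V_{w_0}$, i.e.\ $w_0\notin\Diff(T,\BV)$. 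At a place $w_0\mid\fkd$, the regularity of $\phi$ (which we carry through $\phi_{\corr}(v_0)$ since $\phi^{v_0}$ retains the regular factors) kills all singular $T$, consistent with $T\in\Herm_n^{\reg}(F)$. Finally, at $w_0=v_0$: the function $\phi_{v_0,\corr}(v_0)$ is, by \eqref{equ:corr term def}, a $\BQ$-linear combination of admissible Schwartz functions $\phi_{\bw,v_0}^{[t]}$ on $V^{(v_0)}_{v_0}$, and its local Whittaker factor is a combination of $\alpha_{T,v_0}(0,\phi_{\bw,v_0}^{[t]})$; by Proposition \ref{prop:witt-den} this vanishes unless $T$ is represented by the hermitian space $V^{(v_0)}_{v_0}$. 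Combining: a nonvanishing $C_T$ forces $T$ to be represented by $V^{(v_0)}_{w_0}$ at \emph{every} place $w_0$, hence $\Diff(T,V^{(v_0)})=\emptyset$.

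The last step is to translate $\Diff(T,V^{(v_0)})=\emptyset$ into $\Diff(T,\BV)=\{v_0\}$. Since $V^{(v_0)}_{w_0}\cong\BV_{w_0}$ for all $w_0\neq v_0$ (both agree with $V_{w_0}$ at finite $w_0\neq v_0$, and at archimedean $w_0$ both are positive definite, while $\BV_{v_0}\cong V_{v_0}\not\cong V^{(v_0)}_{v_0}$ precisely because the nearby space differs from $V$ only at $v_0$), we get that $T$ is represented by $\BV_{w_0}$ for all $w_0\neq v_0$ and not represented by $\BV_{v_0}$ (because it \emph{is} represented by $V^{(v_0)}_{v_0}$, the other space in the local pair, which in the unramified/archimedean dichotomy excludes representation by $\BV_{v_0}$; here one uses that at $v_0$ there are exactly two hermitian spaces and a given $T$ is represented by exactly one of them). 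This is exactly the condition $\Diff(T,\BV)=\{v_0\}$. I expect the only mild subtlety — the ``main obstacle'' — to be a careful bookkeeping at the place $v_0$ and the archimedean places: one must check that the coherence of $V^{(v_0)}$ plus the support condition built into \eqref{equ:corr term def} does not allow $T$ of signature $(n-1,1)$-type (i.e.\ $T\in\Herm_n^\dagger(F)$) to sneak in, since such $T$ would a priori also contribute to $\Diff$ via archimedean places; this is handled by the fact that $\phi_\infty$ is the standard Gaussian at \emph{every} archimedean place of $V^{(v_0)}$ (which is totally positive definite), so only totally positive definite $T$ survive, matching the first bullet of the definition of $\Diff(T,\BV)$.
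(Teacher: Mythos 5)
Your argument is correct and is essentially the paper's own proof, worked out in more detail: the paper likewise observes that $\phi_{\corr}(v_0)$ is a Schwartz function on the coherent space $V^{(v_0)}$, which is totally positive definite at infinity and isomorphic to $V$ at every finite place except $v_0$, so a nonsingular $T$ contributes only if it is represented by $V^{(v_0)}$ everywhere locally, i.e.\ exactly when $\Diff(T,\BV)=\{v_0\}$. Your place-by-place Whittaker factorization and the bookkeeping at $v_0$ and the archimedean places simply make explicit what the paper leaves implicit.
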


\begin{proof}
By construction, $\phi_{\corr}(v_0)$ defines a Schwartz function in $V^{(v_0)}$. It is positive definite at the archimedean places and is isomorphic to $V$ at all finite places $w$ except at $v_0$. In particular, a matrix $T$ represents this space if and only if it represents $\BV^{v_0}$ but does not represent $\BV_{v_0}$.
\end{proof}

Define the \emph{modified Eisenstein series} as
\begin{equation}\label{equ:modify eisenstein series}
    \PEis^{\modi}(\btau, \phi) := \PEis(\btau, \phi) + \Eis(\btau, \phi_{\corr}).
\end{equation}
We denote its $T$-th Fourier coefficient by
\begin{equation*}
    \partial C^{\modi}_{T}(\by_\infty, \phi) = \partial C_{T}(\by_\infty, \phi) + C_{T}(\by_\infty, \phi_{\corr}).
\end{equation*}

\begin{theorem}\label{thm:local-const}
There exists a constant $c_V$, depending only on $V$ (see \eqref{equ:ASWF-constant}), such that for any nonsingular $T \in \Herm_n(F)$ with $\Diff(T, \BV) = \{v_0\}$, we have
\begin{equation*}
    c_V \frac{\PC^{\modi}_{T}(\by_\infty, \phi)}{\log q_{v_0}} =
    \begin{cases}
        \vol(K_{G,v_0}) \PDen_{v_0}(T, \phi_{v_0}) \cdot \Orb_{T}(\phi^{v_0}), & v_0 \in \Sigma_f, \\
        \Int_{v_0}(T, \by_{v_0}) \Orb_{T,f}(\phi), & v_0 \in \Sigma_\infty.
    \end{cases}
\end{equation*}
Here, $q_{v_0}$ denotes the cardinality of the residue field of $O_{F_0}$ at $v_0$ when $v_0$ is non-archimedean and equals $1$ when $v_0$ is an real embedding. 
Furthermore, $\PC^{\modi}_T = 0$ for all remaining values of $T$.
\end{theorem}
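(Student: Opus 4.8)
\textbf{Proof strategy for Theorem \ref{thm:local-const}.}

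The plan is to compute the $T$-th Fourier coefficient $\partial C^{\modi}_T(\by_\infty,\phi)$ place-by-place using the factorization of Fourier coefficients of Eisenstein series into local Whittaker functions, and then match each local factor against the geometric side using the local Siegel--Weil formulas established in \S\ref{sec:local weil} and \S\ref{sec:symmetric space}. First I would recall from Proposition \ref{prop:fourier-expansion-of-eis}(2) that for nonsingular $T$,
\begin{equation*}
\PEis_T(\btau_\infty,\phi)=\sum_{w_0\in\Sigma_{F_0}}W'_{T,w_0}(\btau_\infty,\phi_{w_0})\cdot W_T(\btau_\infty,\phi^{w_0}),
\end{equation*}
and observe that by the standard argument (as in \cite[Lemma 9.1]{KR14}) the derivative $W'_{T,w_0}(1,0,\phi_{w_0})$ vanishes unless $F/F_0$ is non-split at $w_0$ and $W_{T,w_0}(1,0,\phi_{w_0})=0$, i.e.\ unless $w_0\in\Diff(T,\BV)$. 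Since $\Diff(T,\BV)=\{v_0\}$ is a singleton, exactly one term survives, and $\PEis_T$ reduces to $W'_{T,v_0}(\btau_\infty,\phi_{v_0})\cdot\prod_{w_0\neq v_0}W_{T,w_0}(\btau_\infty,\phi_{w_0})$. The product over $w_0\neq v_0$ is $W_T(\btau_\infty,\phi^{v_0})$ which, by Theorem \ref{thm:nonarchi local SW} at the finite places, Theorem \ref{thm:archi Siegel Weil}(2) at the archimedean places, and Proposition \ref{prop:witt-den}, is a nonzero multiple of $\Orb_T(\phi^{v_0})$ (times the archimedean Gaussian factor $\gamma_V\frac{(2\pi)^{n^2}}{\Gamma_n(n)}q^T$ at each $w_0\in\Sigma_\infty\setminus\{v_0\}$, or the full archimedean orbital when $v_0$ is finite). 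This produces the constant $c_V$, which by construction collects exactly the product of these archimedean Siegel--Weil normalizing constants and the Weil-index factors; one records its precise value in \eqref{equ:ASWF-constant}.

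Next I would treat the surviving local factor $W'_{T,v_0}$ in the two cases. If $v_0$ is a finite (necessarily inert, since $\phi_{\corr}$ is supported there and $\Sigma_{\max}^{\fkd}$ excludes ramified places) place, then by Proposition \ref{prop:witt-den} we have $W'_{T,v_0}(1,0,\phi_{v_0})=\gamma(V_{v_0})^n\alpha'_{T,v_0}(0,\phi_{v_0})\log q_{v_0}^2$, and by definition of the correction Schwartz function \eqref{equ:corr term def} the Fourier coefficient $C_T(\by_\infty,\phi_{\corr}(v_0))$ contributes precisely $-\gamma(V_{v_0})^n\sum_{t}\beta^{[h_{v_0}]}_\phi(t)\alpha_{T,v_0}(0,\phi^{[t]}_{\bw,v_0})\log q_{v_0}^2$ times the same product $W_T(\btau_\infty,\phi^{v_0})$ of the remaining local Whittaker factors (here one uses that $\phi_{\corr}(v_0)^{v_0}=\phi^{v_0}$, so the prime-to-$v_0$ part is unchanged). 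Adding these two contributions and dividing by $\log q_{v_0}$, the bracket becomes $\gamma(V_{v_0})^n\log q_{v_0}^2$ times
\begin{equation*}
\alpha'_{T,v_0}(0,\phi_{v_0})-\sum_{\substack{0\leq t<h_{v_0}\\ t\equiv h_{v_0}-1\,(2)}}\beta^{[h_{v_0}]}_{\bw,t}\alpha_{T,v_0}(0,\phi^{[t]}_{\bw,v_0})=\vol(K^{[h_{v_0}]}_n,dg)\cdot\PDen_{v_0}(T,\phi_{v_0}),
\end{equation*}
which is exactly the definition in Theorem \ref{thm:KR-conj}. Recalling the Tamagawa/self-dual measure comparison from \eqref{not:gauge form 2}--\eqref{not:gauge form 3} and Lemma \ref{lem:local density volumn}, one identifies $\vol(K^{[h_{v_0}]}_n,dg)$ with $\vol(K_{G,v_0})$ up to the constants $c_{v_0}(\alpha,\psi),c_{v_0}(\beta,\psi)$, which are absorbed into $c_V$; this yields the claimed formula $c_V\,\PC^{\modi}_T/\log q_{v_0}=\vol(K_{G,v_0})\PDen_{v_0}(T,\phi_{v_0})\Orb_T(\phi^{v_0})$. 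If instead $v_0$ is archimedean, then $T\in\Herm_n^\dagger(F)$ with signature $(n-1,1)$ at $v_0$, the correction term does not contribute at archimedean places, and Theorem \ref{thm:archi Siegel Weil}(3) gives $W'_{T,v_0}(\btau_\infty,0,\phi_{v_0})=\Int_{v_0}(T,\by_{v_0})\cdot\gamma_V\frac{(2\pi)^{n^2}}{\Gamma_n(n)}q^T$; combining with the remaining factors and the same constant $c_V$ produces $c_V\,\PC^{\modi}_T/\log q_{v_0}=\Int_{v_0}(T,\by_{v_0})\Orb_{T,f}(\phi)$ (note $\log q_{v_0}=2$ for a complex place, $1$ for a real place, consistently with the $\log q_\nu$ convention in \S\ref{sec:local interection of KR cycles}). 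For all $T$ with $\#\Diff(T,\BV)\geq 2$, every term in the factorization either has a double zero (two $W_{T,w_0}$ vanish) or is killed outright, and by Theorem \ref{thm:sing-vanish} the singular $T$ contribute nothing either, giving $\partial C^{\modi}_T=0$.

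The main obstacle I anticipate is the bookkeeping of constants: pinning down $c_V$ so that a \emph{single} $V$-dependent constant simultaneously normalizes the finite-place identity (where it must reconcile $\gamma(V_{v_0})^n$, $\log q_{v_0}^2$, the measure constants $c_{v_0}(\alpha,\psi)/c_{v_0}(\beta,\psi)$, and the passage from $\vol(K^{[h]}_n,dg^{\mathrm{self}})$ to $\vol(K_{G,v_0})$) and the archimedean identity (where it absorbs $\gamma_V(2\pi)^{n^2}/\Gamma_n(n)$ at each infinite place away from $v_0$, plus the one at $v_0$ in the finite-$v_0$ case). The key point making this consistent is that $W_T(\btau_\infty,\phi^{v_0})$ and $\Orb_T(\phi^{v_0})$ (resp.\ $\Orb_{T,f}(\phi)$) differ by a product of \emph{purely local, $T$-independent} constants at the places $w_0\neq v_0$ — this is precisely the content of Theorem \ref{thm:nonarchi local SW} and Theorem \ref{thm:archi Siegel Weil}(2) — so that the ratio $W_T(\btau_\infty,\phi^{v_0})/\Orb_T(\phi^{v_0})$ is independent of $T$ and of the splitting behavior at $v_0$, depending only on $V$. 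Once this is extracted, defining $c_V$ to be its reciprocal (times the $v_0$-local normalization from Proposition \ref{prop:witt-den} and Lemma \ref{lem:local density volumn}) makes both cases drop out uniformly. I would carry out the explicit evaluation of $c_V$ in a short computation referencing \cite[\S 10]{KR14} and \cite[Proposition 4.5, Theorem 4.17]{Liu11-I}, and state the final value in \eqref{equ:ASWF-constant}.
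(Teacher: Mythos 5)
Your proposal is correct and follows essentially the same route as the paper: factor the nonsingular Fourier coefficient into local Whittaker functions via Proposition \ref{prop:fourier-expansion-of-eis}, observe that only the $v_0$-derivative survives when $\Diff(T,\BV)=\{v_0\}$, convert the surviving factor using Proposition \ref{prop:witt-den} (finite case, with the correction term supplying exactly the $\beta^{[h]}_{\bw,t}$-error terms of $\PDen$) or Theorem \ref{thm:archi Siegel Weil}(3) (archimedean case), and absorb the $T$-independent local Siegel--Weil constants at the remaining places into $c_V$ as in \eqref{equ:ASWF-constant}. The paper's proof is organized by splitting the finite case into $v_0\notin\Sigma^{\fkd}_{\max}$ and $v_0\in\Sigma^{\fkd}_{\max}$ rather than writing one formula with a possibly empty error sum, but this is only a presentational difference.
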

\begin{proof}
The last assertion follows from assumption \eqref{equ:assumption on schwartz function}. 
For any nonsingular $T \in \Herm_n^{\reg}(F)$ satisfying $\Diff(T, \BV) = \{v_0\}$ with $v_0 \in \Sigma^{\fkd}_{F_0,\infty}$, by the non-archimedean Siegel–Weil formula (Theorem \ref{thm:nonarchi local SW}) and the archimedean (arithmetic) Siegel–Weil formula (Theorem \ref{thm:archi Siegel Weil}), we have
\begin{align*}
\PEis_T(\btau_{v_0}, \phi) ={}& W_{T_{v_0}}'(\btau_{w_0},0,\phi_{v_0}) 
\prod_{w_0 \in \Sigma_\infty \setminus \{v_0\}} W_{T_{w_0}}(\btau,0,\phi_{w_0}) 
\prod_{w_0 \in \Sigma_f} W_{T_{w_0}}(1,0,\phi_{w_0}),\\
={}& \Int(T_{v_0}, \by_{v_0}) \gamma_{\BV_{v_0}} \frac{(2\pi)^{n^2}}{\Gamma_n(n)} q^{T_{v_0}} 
\prod_{w_0 \in \Sigma_\infty \setminus \{v_0\}} \gamma_{\BV_{w_0}} \frac{(2\pi)^{n^2}}{\Gamma_n(n)} q^{T_{w_0}} 
\prod_{w_0 \in \Sigma_f} C(V, \alpha, \beta, \psi)_{v_0} \Orb_{T_{w_0}}(\phi_{w_0}).
\end{align*}
Define the constant 
\begin{equation}\label{equ:ASWF-constant}
    c_V = \frac{C(V, \alpha, \beta, \psi)_\infty}{\gamma_{\BV_\infty} \left( \frac{(2\pi)^{n^2}}{\Gamma_n(n)} \right)^{[F_0:\BQ]}}.
\end{equation}
Then we have
\begin{equation*}
    c_V \partial \Eis_T(\tau, \phi) q^{-T} = \Int_{v_0}(T, \tau) \Orb_{T,f}(\phi).
\end{equation*}

Now suppose $\Diff(T, \BV) = \{v_0\}$ for $v_0 \in \Sigma^{\fkd}_{F_0,f}$. If $v_0 \notin \Sigma^{\fkd}_{\max}$, by the local Siegel–Weil formula for both non-archimedean and archimedean places, we obtain
\begin{align*}
\PEis_T(\btau, \phi) ={}& 
\prod_{w_0 \in \Sigma_{F_0,\infty}} W_{T_{w_0}}(\btau_{w_0},0,\phi_{w_0}) 
W'_{T_{v_0}}(1,0,\phi_{v_0}) 
\prod_{w_0 \in \Sigma_{F_0,f} \setminus \{v_0\}} W'_{T_{w_0}}(1,0,\phi_{w_0}),\\
={}& \gamma_{\BV_\infty} \left( \frac{(2\pi)^{n^2}}{\Gamma_n(n)} \right)^n q^T 
W'_{T_{v_0}}(1,0,\phi_{v_0}) 
\prod_{w_0 \in \Sigma_{F_0,f} \setminus \{v_0\}} C(V, \alpha, \beta, \psi)_{w_0} \Orb_{T_{w_0}}(\phi_{w_0}),\\
={}& c_V^{-1} \frac{W'_{T_{v_0}}(1,0,\phi_{v_0})}{C_{v_0}(V, \alpha, \beta, \psi)_{v_0}} \Orb_T(\phi^{v_0}).
\end{align*}

By Theorem \ref{thm:nonarchi local SW}, Proposition \ref{prop:witt-den}, and the definition of $\PDen(T, \phi^{[h]})$ in Theorem \ref{thm:KR-conj}, we obtain
\begin{equation*}
    \frac{W_T'(1,0,\phi_{v_0})}{C_{v_0}(V, \alpha, \beta, \psi)} = 
    \frac{W_T'(1,0,\phi_{v_0})}{\gamma_{V_{v_0}}^n} 
    \cdot \frac{c_{v_0}(\beta, \psi)}{c_{v_0}(\alpha, \psi)} 
    = \partial\alpha_T(\phi_{v_0}) \cdot \frac{c_{v_0}(\beta, \psi)}{c_{v_0}(\alpha, \psi)} \vol(K_{v_0}, dg_{v_0}^{\rm{self}}),
\end{equation*}
where $dg_{v_0}^{\rm{self}}$ is the Haar measure induced by self-dual Haar measures at the place $v_0$ (see \S \ref{sec:def weighted local density}). Changing to the Tamagawa measure, we conclude that
\begin{equation*}
    c_V \PEis_T(\tau, \phi) q^{-T} = \vol(K_{G,v_0}) \PDen_{v_0}(T, \phi_{v_0}) \Orb_T(\phi^{v_0}).
\end{equation*}

On the other hand, if $v_0 \in \Sigma^{\fkd}_{\max}$, we have
\begin{align*}
\Eis(\btau, \phi_{\corr}(v_0))={}&c_V^{-1} \frac{W_{T_{v_0}}(1,0,\phi_{v_0,\corr}(v_0) )}{C_{v_0}(V^{(v_0)}, \alpha, \beta, \psi)_{v_0}} \Orb_T(\phi^{v_0}).
\end{align*}
Furthermore,
$$
\frac{W_{T_{v_0}}(1,0,\phi_{v_0,\corr}(v_0) )}{C_{v_0}(V^{(v_0)}, \alpha, \beta, \psi)_{v_0}}=\frac{W_{T,v_0}(1,0,\phi_{v_0,\corr}(v_0))}{\gamma^n_{V_{v_0}^{(v_0)}}}\cdot \frac{c_{v_0}(\beta,\psi)}{c_{v_0}(\alpha,\psi)}.
$$
This will contribute the error term of $\PDen$ in Theorem \ref{thm:KR-conj}, by the way we define the correction term in equation \eqref{equ:corr term def}.
\end{proof}

\subsection{Geometric modularity of the difference Green function}
We recall the geometric modularity of the difference Green function, proved in \cite{Stephan-Siddarth} for $F_0 = \BQ$ and generalized in \cite{Mihatsch-Zhang} for totally real $F_0$.

Let $\nu\mid v\in\Phi$ be any place of $E$. Consider the difference of two Green functions:
\begin{equation*}
    \tCG_\nu^{\bf K-\bf B}(\xi, y_\infty, \phi) = 
    \begin{cases}
        \tCG_\nu^{\bf K}(\xi,y_\infty,\phi) - \tCG_\nu^{\bf B}(\xi,\phi), & \text{for } \xi \in F_{0}^+, \\
        \tCG_\nu^{\bf K}(\xi,y_\infty,\phi), & \text{otherwise.}
    \end{cases}
\end{equation*}

We define the generating series of Green functions as the archimedean part of the generating series of arithmetic special divisors ($h_f \in \bH(\BA_{0,f})$):
\begin{equation*}\label{eq: generating series Green}
    \tCG_\nu^{?}(\tau_\infty,h_f, \phi) := \sum_{\xi \in F_0} \tCG_\nu^{?}(\xi, y_\infty, \omega(h_f)\phi) q^\xi, 
    \quad ? = \bf K, \bf B, \bf K - \bf B. 
\end{equation*}

\begin{theorem}[\protect{\cite[Theorem 3.13]{Mihatsch-Zhang}}]\label{thm: Green}
Suppose that $\phi_1 \in \CS(V(\BA_{0,f}))^{K_G}$ is invariant under $K_{\bH} \subset \bH(\BA_{0,f})$ via the Weil representation. Then the generating series $\tCG_\nu^{\bf K-\bf B}(\tau_\infty,h_f, \phi_1)$, when paired with a degree-zero $1$-cycle on $\tCM_{\nu}$, lies in $\CA_{\infty}(\bH(\BA_0),K_{\bH},n)$.
\end{theorem}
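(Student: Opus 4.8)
\textbf{Proof plan for Theorem \ref{thm: Green}.} The statement to be proved is that the generating series of \emph{difference} Green functions $\tCG_\nu^{\bf K-\bf B}(\tau_\infty,h_f,\phi_1)$, paired against a degree-zero one-cycle on $\tCM_\nu$, is a (non-holomorphic) modular form of weight $n$ lying in $\CA_\infty(\bH(\BA_0),K_{\bH},n)$. The key observation is that the two constituent series have already been analyzed: Proposition \ref{prop:modularity-bruinier} shows that the $\bf B$-series $\tCZ^{\bf B}(\tau,\phi_1)$, when intersected with a one-cycle in $\wt{\CZ}_1(\CX)^\perp$, lies in $\CA_{\hol}(\bH(\BA_0),K_{\bH},n)_{\ov\BQ}\otimes_{\ov\BQ}\BR_{\fkd,\ov\BQ}$; meanwhile the geometric content of the $\bf K$-series lives in the theta integrals $\omega_{T_\flat,\nu}(\tau,h_f,\phi_1,\phi_\flat)$ from Proposition \ref{prop:omega series}(2), which lie in $\CA_\infty(\bH(\BA_0),K_{\bH},n)$ by the modularity of the Kudla--Millson form. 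So the plan is to reduce the claim for $\tCG_\nu^{\bf K-\bf B}$ to these two inputs: the difference is built purely out of archimedean data (a difference of Green currents for the \emph{same} divisor $\tCZ(\xi,\phi_1)$), hence $\rd\rd^c$ of it is a smooth form, so the difference current is represented by a smooth function whose generating series one controls directly.

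First I would recall, following \cite{Stephan-Siddarth} and \cite{Mihatsch-Zhang}, that $\tCG^{\bf K}_\nu(\xi,y_\infty,\phi_1)$ and $\tCG^{\bf B}_\nu(\xi,\phi_1)$ are both Green currents for the \emph{same} analytic divisor $\tCZ_\nu(\xi,\phi_1)$ with harmonic (admissible) Chern form in the $\bf B$-case, so by the $\partial\bar\partial$-lemma their difference is a smooth $(0,0)$-current — an honest smooth function on $\tCM_\nu$ — modulo $\mathrm{im}\,\partial+\mathrm{im}\,\bar\partial$. Concretely $\rd\rd^c\tCG^{\bf K-\bf B}_\nu(\xi,y_\infty,\phi_1) = \wt\omega_{\KM,\nu}(\xi,y_\infty,\phi_1) - \wt c_1(\omega_\CZ\text{ or }\omega_\CY)$, a smooth form. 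Then the generating series $\tCG^{\bf K-\bf B}_\nu(\tau_\infty,h_f,\phi_1)$, paired with a degree-zero one-cycle $\CC$, equals $(\tCG^{\bf K}_\nu(\tau_\infty,h_f,\phi_1),\CC) - (\tCG^{\bf B}_\nu(\tau_\infty,h_f,\phi_1),\CC)$. The $\bf B$-term is modular by Proposition \ref{prop:modularity-bruinier} (its Fourier coefficients pair against $\CC\in\wt{\CZ}_1(\CX)^\perp$, and since $\CC$ has degree zero the constant-term ambiguity from $\wh\Pic_\infty$ is killed). For the $\bf K$-term, one expresses the pairing with $\CC$ fiberwise via the complex uniformization of \S\ref{sec:complex-uniformization}, and reduces — using the star-product formula Theorem \ref{thm:star-prod-cycles} and the explicit form of $\CG^{\bf K}_\nu$ in \eqref{equ:KR_complex-unif} — to a combination of (i) the ``intersection-with-a-cycle'' contributions $\Int_{T_\flat,v_0}^{\bf K}$ computed in Theorem \ref{thm:archi-int}, whose $\xi$-generating series are governed by the modular Eisenstein/Whittaker data via Theorem \ref{thm:local-const}, and (ii) the theta integrals $\omega_{T_\flat,\nu}(\tau,h_f,\phi_1,\phi_\flat)\in\CA_\infty(\bH(\BA_0),K_{\bH},n)$ of Proposition \ref{prop:omega series}(2).

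The organizing principle is that modularity is preserved by the pairing (it is the content of \eqref{equ:almost intersection} and \cite[Theorem 2.4]{Mihatsch-Zhang} that intersecting a modular generating series of divisors with a fixed cycle produces a modular form of the same weight), and that the $\bf K$-versus-$\bf B$ discrepancy is purely archimedean, so the weight-$n$ transformation law under $\bH(\BA_{0,\infty})$ can be checked from the explicit Kudla--Millson/Gaussian archimedean local factors, which transform with weight $n$ by construction (this is the content of the $\Theta_{\KM,\nu}$ modularity recalled after \eqref{equ:green function for special cycles}). Invariance under $K_{\bH}\subset\bH(\BA_{0,f})$ follows from the stated $K_{\bH}$-invariance of $\phi_1$ under the Weil representation, together with the compatibility $\omega_1(h)\cdot\tCG^?_\nu(\tau_\infty,\phi) = \tCG^?_\nu(h_\infty\tau_\infty,\omega(h_f)\phi)$, exactly as in the passage to $\FJ_{T_\flat}$ and $\PFJ_{T_\flat}$ in \S\ref{sec:FJ series}; by strong approximation it suffices to check the differential equation (holomorphicity is \emph{not} claimed here, only that it lies in $\CA_\infty$) at $h_f=1$.

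\textbf{Main obstacle.} The delicate point is \emph{convergence and term-by-term manipulation} of the generating series of Green currents: Kudla's Green function $\CG^{\bf K}(\xi,y)$ grows and the sum over $\xi\in F_0$ and over the lattice orbits in the uniformization \eqref{equ:KR_complex-unif} must be shown to converge absolutely so that the pairing with a one-cycle can be carried out coefficient-by-coefficient and then reassembled into an automorphic form. The resolution is that the one-cycle $\tDCZ(T_\flat,\phi_\flat)$ meets the complex fiber in finitely many points (Proposition \ref{prop:omega series}(1) and the properness in \cite[Lemma 2.20]{KR14}), so Kudla's Green currents contribute only logarithmic singularities at finitely many points, and the $\omega_{\KM}$-part decays exponentially in $\xi$; this is precisely the estimate underlying Proposition \ref{prop:omega series}. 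With that in hand the difference $\tCG^{\bf K-\bf B}$ has an absolutely convergent generating series after pairing, and the modularity is inherited from the two modular inputs. The remaining bookkeeping — matching the archimedean Whittaker derivative $W'_{T,v_0}$ appearing in $\PEis$ with $\Int_{v_0}(T,\by_{v_0})$ via Theorem \ref{thm:archi Siegel Weil}(3), and absorbing the $\omega_\CZ/\omega_\CY$ contribution into the constant term — is routine given Theorems \ref{thm:archi-int} and \ref{thm:local-const}.
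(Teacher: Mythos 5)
The paper does not prove this statement: it is imported verbatim as \cite[Theorem 3.13]{Mihatsch-Zhang} (generalizing \cite{Stephan-Siddarth}), so there is no internal proof to compare against. Judged on its own terms, your proposal has a genuine gap, and in fact is circular. You propose to deduce the modularity of $\tCG^{\bK-\bB}_\nu(\tau_\infty,h_f,\phi_1)$ paired with a degree-zero cycle by writing it as the $\bB$-series pairing (Proposition \ref{prop:modularity-bruinier}) minus a $\bK$-series pairing controlled by Theorems \ref{thm:archi-int} and \ref{thm:local-const}. But Proposition \ref{prop:modularity-bruinier} is itself quoted from \cite[Theorem 2.4]{Mihatsch-Zhang}, whose proof in that reference is \emph{built on} the difference-Green-function modularity you are trying to prove; you cannot run the implication backwards. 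Worse, the two pairings do not match: Theorem \ref{thm: Green} concerns the purely archimedean evaluation of smooth functions on a degree-zero cycle in the complex fiber $\tCM_\nu$, whereas Proposition \ref{prop:modularity-bruinier} is the full arithmetic intersection pairing including all finite places. Subtracting them to isolate the archimedean difference would require controlling the non-archimedean generating series separately, which is essentially the arithmetic Siegel--Weil formula --- the main theorem of the paper, not an input to this lemma. Likewise, Theorems \ref{thm:archi-int} and \ref{thm:local-const} compute individual Fourier coefficients and match them with Whittaker data; they say nothing about the modularity in $\xi$ of $\sum_\xi \CG^{\bK}(\xi,y)(\CC)q^\xi$, which by itself is \emph{not} modular --- that failure is the entire reason the difference with $\CG^{\bB}$ is taken.

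What a correct argument needs, and what your write-up only gestures at, is the archimedean comparison itself. Since $\CG^{\bK}(\xi,y)$ and $\CG^{\bB}(\xi)$ are Green currents for the same divisor, each coefficient of the difference is a smooth function $f_\xi$ on the compact complex fiber with $\rd\rd^c f_\xi = \omega_{\KM}(\xi,y) - h_\xi$, where $h_\xi$ is the harmonic representative of the class of $\tCZ_\nu(\xi,\phi_1)$. The generating series $\sum_\xi \omega_{\KM}(\xi,y)q^\xi$ is modular (Kudla--Millson), and $\sum_\xi h_\xi q^\xi$ is modular because harmonic projection is linear and the generating series of cohomology classes of special divisors is modular. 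One then inverts $\rd\rd^c$ by the Green operator to produce a modular family of potentials $F_\xi$ with the same $\rd\rd^c$; the discrepancy $f_\xi - F_\xi$ is pluriharmonic, hence locally constant, hence killed by pairing with a degree-zero cycle. This $\rd\rd^c$-lemma-plus-Green-operator step, the convergence estimates needed to interchange the sum over $\xi$ with these operators, and the treatment of the nonpositive coefficients are the substance of \cite[\S 3]{Mihatsch-Zhang}; none of them appear in your proposal, and the machinery you substitute for them cannot replace them.
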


\subsection{Auxiliary cycles and modularity}\label{sec:auxiliary}
We now construct the auxiliary cycles and prove the modularity of the generating series in our setup. 
Following the approach in \cite{ZZhang24}, the main idea is to modify the given $1$-cycle $\CZ\in\wt{\CZ}_1(\tCM)$ by some auxiliary $1$-cycles $\CC^{\mathrm{aux}}\in\wt{\CZ}_1(\tCM)$ whose modularity is already known.
This modification ensures that the resulting cycle $\CZ + \CC^{\mathrm{aux}}$ lies in $\wt{\CZ}_1(\tCM)^\perp$, allowing us to apply the admissible intersection pairing defined in \eqref{equ:almost intersection}.

We choose certain Kudla–Rapoport special cycles as auxiliary cycles at the archimedean places.
%We fix an element $\varpi\in F_0^\times$ such that:
%\begin{itemize}
%\item It is positive at archimedean places.
%\item For any $v_0\in \Sigma_{\mathrm{max}}^{\fkd}$, $\varpi\in F_{0,v_0}$ is a uniformizer. Recall that $\Sigma^{\fkd}_{\max}$ is the set of finite places $v_0 \nmid \fkd$ of $F_0$ where $L_{v_0}$ is not self-dual
%\item For any $v_0 \in \Sigma^{\fkd}_{\mathrm{inert}} \setminus \Sigma^{\fkd}_{\mathrm{max}}$, $\varpi\in F_{0,v_0}$ is a unit.
%\item There is no requirement on the split places over $F_0$.
%\end{itemize}
%The existence of such an element follows from class field theory, see \cite[footnote 4]{LZ21}.
Let $\bw_{\triv} = (\CZ, \dots, \CZ)$ and let $T_{\triv}=1_{n-1}$ be the identity matrix of rank $n-1$.
Let $\phi_{\triv} = \phi_{\mathrm{aux},\fkd} \otimes \phi_{\mathrm{aux}}^{\fkd} \in \CS(V(\BA_{0,f})^{n-1})^K$ such that:
\begin{itemize}
\item The complex fiber $\DCZ_\nu(T_{\triv}, \phi_{\triv})$ has nonzero degree for one place $\nu\in\Phi$ over $E$.
\item The prime-to-$\fkd$ part $\phi^{\fkd}_{\mathrm{aux}}$ is admissible of weight $\bw_{\mathrm{aux}}$.
\item It is regular at two split places $v_0 \nmid \fkd$.
\end{itemize}
For such $T_{\triv}$ and $\phi_{\mathrm{aux}}$, by Proposition \ref{lem:cancellation law}, we see that:
\begin{itemize}
\item For any $v_0\in\Sigma^{\fkd}_{\max}$, the local intersection number vanishes, $\Int_{v_0}(T_{\triv},\phi_{v_0})=0$. Similarly, for the analytic side $\PDen_{v_0}(T_{\triv},\phi_{v_0})=0$. Recall here from \S \ref{sec:local-factors} that $\Sigma^{\fkd}_{\max}$ is the set of inert places $v_0\nmid \fkd$ over $F_0$ where the vertex lattice $L_{v_0}$ is not self-dual.
\item For any $v_0\in\Sigma^{\fkd}_{F_0,\mathrm{inert}}\setminus \Sigma^{\fkd}_{F_0,\mathrm{max}}$, the local intersection number $\Int_{v_0}(T_{\triv},\phi_{v_0})$ equals the analytic side $\PDen_{v_0}(T_{\triv},\phi_{v_0})$.
\end{itemize}
\begin{remark}
In \cite[\S 13]{ZZhang24} and \cite[\S 8]{LMZ25}, the auxiliary cycle at the archimedean place is chosen as a Hecke CM cycle $\mathrm{CM}(\alpha_0, \mu_0)$, where $\alpha_0$ is a maximal order at any $v_0 \in \Sigma^{\fkd}_{\mathrm{inert}}$. 
To ensure the existence of such a cycle, one must enlarge the truncation $\fkd$, as discussed in \cite[Theorem 13.5]{ZZhang24}. On the other hand, the auxiliary cycles we construct here do not require enlarging $\fkd$. This will help improve the almost modularity theorem in \cite{ZZhang24}, see Remark \ref{rmk:almost modularity}.
\end{remark}

Next, we recall the non-archimedean auxiliary cycles from \cite{ZZhang24}.

\begin{theorem}\label{thm:local modification}
For any $1$-cycle $\CZ \in \wt{\CZ}_1(\tCM)$ and any place $\nu\mid v_0 \in \Sigma^{\fkd}_{\mathrm{max}}$ of $E$, there exists a $1$-cycle $\CC_{\nu}^{\mathrm{aux}}$, supported in the basic locus of $\tCM \otimes \BF_\nu$, and a Schwartz function $\phi^{\mathrm{aux}}(\nu) \in \CS(V^{(v_0)}(\BA_{0}))$ such that:
\begin{altenumerate}
    \item The pairing of $\CZ - \CC_{\nu}^{\mathrm{aux}}$ with each vertical component of every geometric mod-$\nu$ fiber $\tCM \otimes \BF_\nu$ of $\tCM$ is zero.
    \item The arithmetic intersection pairing 
    \begin{equation*}
        (\tCZ^{\bB}(\tau, h_f, \phi_1), \CC_{\nu}^{\mathrm{aux}}) \in \BR_{\fkd}[\![q]\!]
    \end{equation*}
    is well-defined and is a weight-$n$ holomorphic Hilbert modular form given by $\Eis(\tau, 0, \phi^{\mathrm{aux}}(\nu))$ up to a constant term (for which we use the same notation).
\end{altenumerate}
\end{theorem}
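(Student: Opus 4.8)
\textbf{Proof proposal for Theorem \ref{thm:local modification}.}

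The plan is to adapt the construction of non-archimedean auxiliary cycles from \cite[\S 13]{ZZhang24} to the punctured RSZ integral model $\tCM$. First I would fix a place $\nu\mid v_0\in\Sigma^{\fkd}_{\max}$ and analyze the geometric mod-$\nu$ special fiber $\tCM\otimes\BF_\nu$, whose vertical components are controlled via the $p$-adic uniformization \eqref{equ:shimura-p-unif-comp} along the basic locus and the structure theory of the Rapoport--Zink space $\CN^{[h]}_n$ with maximal parahoric level. The starting observation is that the generating series $\tCZ^{\bB}(\tau,h_f,\phi_1)$ of arithmetic Kudla--Rapoport divisors already pairs modularly with \emph{any} $1$-cycle lying in $\wt\CZ_1(\tCM)^\perp$ (by Proposition \ref{prop:modularity-bruinier}), so the entire difficulty is to correct the given $\CZ$ by something supported in the basic locus of a single mod-$\nu$ fiber so that the corrected cycle becomes orthogonal to all vertical components there, while the correction term itself pairs modularly. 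The correction $\CC^{\mathrm{aux}}_\nu$ will be taken as a $\BQ$-linear combination of basic Kudla--Rapoport cycles $[\CZ(\bu,g,\phi_{v_0})]_{K_G^{v_0}}$ pushed forward from the uniformization \eqref{equ:RZ-comp}; its intersection with vertical cycles is a finite linear-algebra problem over the (finitely many) irreducible components of $\tCM\otimes\BF_\nu$, and solving the resulting linear system (using that the intersection matrix of vertical components with basic cycles has full rank modulo the fiber class, exactly as in \cite[\S 13]{ZZhang24}) produces the desired coefficients, which also pins down a Schwartz function $\phi^{\mathrm{aux}}(\nu)\in\CS(V^{(v_0)}(\BA_0))$ via the dictionary of Proposition \ref{prop:KR-p-unif}.

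For part (1), once $\CC^{\mathrm{aux}}_\nu$ is chosen so that $\CZ-\CC^{\mathrm{aux}}_\nu$ has zero intersection with every vertical component of $\tCM\otimes\BF_\nu$, I would invoke the fact that at all other finite places $\nu'\nmid\fkd$ the model is either smooth (so $\wh\Pic_{\nu'}\subset\wh\Pic_\infty$ by Lemma \ref{lem:picard pair deg zero}(2)) or can be handled by the same procedure, iterating over the finite set $\Sigma^{\fkd}_{\max}$; this is a finite process since only finitely many places are non-self-dual. For part (2), the key point is that $\CC^{\mathrm{aux}}_\nu$ is, after pushforward, a linear combination of special cycles of the form $[\CZ(\bu,g,\phi_{v_0})]_{K_G^{v_0}}$, and the arithmetic intersection of $\tCZ^{\bB}(\tau,h_f,\phi_1)$ with such a basic cycle can be computed term by term by the local-global procedure of \S\ref{sec:local-int}: the local contribution at $\nu$ is governed by the local Kudla--Rapoport/representation-density relation and the contributions at the other places assemble into Whittaker products, so by Theorem \ref{thm:local-const} and the non-archimedean/archimedean Siegel--Weil formulas the whole pairing equals a Fourier--Jacobi-type slice of a coherent Eisenstein series $\Eis(\tau,0,\phi^{\mathrm{aux}}(\nu))$, which is a weight-$n$ holomorphic Hilbert modular form by construction of $\phi^{\mathrm{aux}}(\nu)$ (coherent at the finite place $v_0$ where $V^{(v_0)}$ is realized). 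The modularity over $\BR_\fkd$ then follows as in Proposition \ref{prop:modularity-bruinier} together with Theorem \ref{thm: Green}.

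The main obstacle I anticipate is the linear-algebra step in part (1): showing that the span of the basic Kudla--Rapoport cycles $[\CZ(\bu,g,\phi_{v_0})]_{K_G^{v_0}}$ surjects onto the quotient of the group of vertical $1$-cycles in $\tCM\otimes\BF_\nu$ by the fiber class, so that the linear system defining $\CC^{\mathrm{aux}}_\nu$ is solvable. In the hyperspecial case of \cite{Zhang21} and the maximal parahoric case of \cite{ZZhang24} this rests on an explicit description of the reduced special fiber of $\CN^{[h]}_n$ (its Bruhat--Tits stratification and the incidence of Kudla--Rapoport divisors with the irreducible components), and one must check that the maximal parahoric level structure at an arbitrary non-self-dual inert place $v_0$ does not obstruct this; since $\tCM$ is a locally complete intersection but not regular, I would also need to verify that the relevant intersection numbers with vertical cycles are well-defined in the $\BR_\fkd$-valued pairing \eqref{eq: Truncated Arithmetic int pairing}, which is exactly where passing to $\BR_\fkd$ (discarding $\log p$ for $p\mid\fkd$) is essential. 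Everything else — the matching of Schwartz functions, the term-by-term Siegel--Weil comparison, the modularity conclusion — is a routine transcription of \cite[\S 13]{ZZhang24} and \cite[\S 8]{LMZ25} into the present weighted setup.
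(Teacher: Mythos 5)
Your proposal is correct and follows essentially the same route as the paper, whose own ``proof'' is simply a citation to the very special $1$-cycles of \cite[\S 12.4, Definition 12.14]{ZZhang24} and the corrected exposition in \cite[\S 8.3, \textbf{Step 3}]{LMZ25}: correct $\CZ$ by a rational combination of basic cycles supported in the mod-$\nu$ basic locus, solve the finite linear system against the vertical components (the spanning statement being exactly the Bruhat--Tits-stratification input you flag), and identify the resulting pairing with a coherent Eisenstein series for $V^{(v_0)}$ via the local Siegel--Weil formula. The one point you should tighten is the surjectivity claim for the span of basic Kudla--Rapoport cycles modulo the fiber class, which is precisely what the ``very special'' cycles of the cited works are engineered to guarantee; as stated, your sketch assumes it rather than proves it, but this matches the level of detail the paper itself provides.
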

\begin{proof}
The modification follows the same argument as in \cite[\S 12.4]{ZZhang24}, using the \emph{very special $1$-cycles} defined in \cite[Definition 12.14]{ZZhang24}. A detailed and corrected exposition is provided in \cite[\S 8.3, \textbf{Step 3}]{LMZ25}, we omit the proof here.
\end{proof}

\subsection{Modular forms}\label{sec:modular form}
In this subsection, we construct the auxiliary generating series 
\begin{equation*}
    \CE_{T_\flat}(\tau, \phi_1, \phi_\flat) \in \CA_{\hol}(\bH(\BA_0), K_{\bH}, n)_{\ov{\BQ}} \otimes_{\ov{\BQ}} \BR_{\fkd, \ov{\BQ}}.
\end{equation*}

Recall our global notation in \S \ref{sec:gp data}.
Let $F/F_0$ be a global CM extension with a fixed CM type $\Phi$ and a distinguished element $\varphi_0 \in \Phi$. Let $(V, h)$ be a hermitian space of dimension $n$ over $F$ such that the signature of $V$ is $(n-1,1)$ at $\varphi_0$ and $(n,0)$ otherwise. 
In this case, the associated Shimura datum $(\Res_{F_0/\BQ}G,\{h_{\Res_{F_0/\BQ}G}\})$ is of \emph{strict fake Drinfeld type} (see \cite[Example 2.3]{RSZ}). By \cite[Corollary 12.6]{ZZhang24}, the associated canonical model $\mathrm{Sh}_K(\Res_{F_0/\BQ}G,\{h_{\Res_{F_0/\BQ}G}\})$ is connected.
For each $\nu\in \Phi$, the complex fiber $\CM_{\nu}$ defined in \S \ref{sec:complex-uniformization} is identical to the complex fiber of $\mathrm{Sh}_K(\Res_{F_0/\BQ}G,\{h_{\Res_{F_0/\BQ}G}\})$. Fix a distinguished prime $p \in \BZ$.

Fix an ideal $\fkd \subset O_{F}$ containing all places above $2$ (to be enlarged later). The data specified so far can be completed to define a Shimura datum. We find a lattice $L$ and enlarge $\fkd$ so that $L_{v_0}$ are vertex lattices for any $v_0 \nmid \fkd \cup \{p\}$. We require that the level structure satisfies the assumptions in \S \ref{sec:RSZ-integral}.

Let $T_\flat \in \Herm^+_{n-1}(F)$ be a totally positive definite hermitian matrix of rank $n-1$.
Let $\nu:E\to \BC$ be any complex embedding with restriction $v\in\Phi$ over $F$ and $v_0$ over $F_0$.
By the complex uniformization of the special cycles (\S \ref{sec:special cycle complex uniformization}), we see that the degree (over $\BC$) of $\tDCZ_\nu(T_\flat, \phi_\flat)$ (resp. $\tDCZ_\nu(T_\triv, \phi_\triv)$) is identical on each geometric connected component $\tCM_{\nu}$ of the projection \eqref{equ:complex projection} since $\CM_{\nu}$ is connected. 

Since $\DCZ_\nu(T_\triv, \phi_\triv)$ has nonzero degree on $\CM_{\nu}$, we may arrange for a suitable constant $c_\infty$ such that the degree function of 
\begin{equation*}
    \tDCZ(T_\flat, \phi_\flat)_{\nu} + c_\infty \tDCZ(T_\triv, \phi_\triv)_{\nu}
\end{equation*}
is identically zero along each fiber of the projection $\tCM_{\nu} \to Z^{\BQ}(\BQ) \backslash Z^{\BQ}(\BA_{\BQ,f})/K_{Z^\BQ}$. By the uniformization, it follows that $ \tDCZ(T_\flat, \phi_\flat)_\nu + c_\infty \tDCZ(T_\triv, \phi_\triv)_\nu$ has zero degree for all $\nu\in\Phi$.

For each finite $\nu\mid v_0\in\Sigma^{\fkd}_{\max}$, by Theorem \ref{thm:local modification}, we can find modifications $\CC_\nu^{\mathrm{aux}}$ for each $\nu \in \Sigma_{\mathrm{max}}$.

To summarize, we define a suitable modification:
\begin{equation}\label{equ:aux special cycle}
    \tDCZ(T_\flat, \phi_\flat)^{\mathrm{aux}} = \tDCZ(T_\flat, \phi_\flat) + c_\infty \tDCZ(T_{\triv}, \phi_{\triv}) + \sum_{\nu\mid v_0 \in \Sigma^{\fkd}_{\mathrm{max}}} c_{\nu} \CC_{\nu}^{\mathrm{aux}},
\end{equation}
which satisfies the following properties:
\begin{enumerate}
    \item (Over $\BC$) It has degree $0$ on each connected component of every complex fiber $\tCM_{\nu}$.
    \item (Over $\BF_{\nu}$) Its pairing with each vertical component of every geometric mod-$\nu$ fiber $\tCM \otimes \BF_\nu$ of $\tCM$ is zero for all finite places $\nu \nmid \fkd$ over $E$.
\end{enumerate}
Therefore, $\tDCZ(T_\flat, \phi_\flat)^{\mathrm{aux}} \in \wt{\CZ}_1(\tCM)^{\perp}$, and we may apply \eqref{equ:almost intersection}. By Proposition \ref{prop:modularity-bruinier}, the arithmetic intersection pairing 
\begin{equation*}
    (\tCZ^{\bB}(\tau, h_f, \phi_1), \tDCZ(T_\flat, \phi_\flat)^{\mathrm{aux}}) \in \BR_{\fkd}[\![q]\!]
\end{equation*}
is well-defined and is a weight-$n$ holomorphic Hilbert modular form up to a constant term (for which we use the same notation). 

\begin{remark}\label{rmk:almost modularity}
The modification procedure applies to any $1$-cycle $\CZ \in \CZ_1(\tCM)$ satisfying the following condition: for each $v \in \Phi$, the degree of the complex fiber $\CZ_{\nu}$ is constant for all place $\nu\mid v$ over $E$.

This condition holds for Kudla–Rapoport special cycles and CM cycles. 
Let $\CS_K(\Res_{F/F_0}(\U(V)), \mu)$ be the integral model of the abelian-type unitary Shimura variety, we have a natural projection $p: \tCM \to \CS_K(\Res_{F/F_0}(\U(V)), \mu)$. Let $\CZ$ be any $1$-cycle in $\CS_K(\Res_{F/F_0}(\U(V)), \mu)$, then the pull-back $p^*\CZ$ also satisfies this condition. 
\end{remark}

Let $\btau = \begin{pmatrix} \tau & \\ & \bi \end{pmatrix} \in \BH_n(F_\infty)$,
recall from \S \ref{sec:FJ series} the Fourier–Jacobi coefficients of the Eisenstein series and its derivative:
\begin{equation*}
    \PFJ_{T_\flat}(\tau, \phi) = \sum_{\xi \in F} \sum_{T} \PC_T(\btau, \phi) q^\xi, 
    \quad
    \FJ_{T_\flat}(\tau, \phi_{\corr}) = \sum_{\xi \in F} \sum_{T} \PC_T(\btau, \phi_{\corr}) q^\xi.
\end{equation*}
Define the \emph{modified Fourier–Jacobi series} as
\begin{equation*}
    \PFJ^{\modi}_{T_\flat}(\tau, \phi) := \PFJ_{T_\flat}(\tau, \phi) + \FJ_{T_\flat}(\tau, \phi_{\corr}).
\end{equation*}
This is the Fourier–Jacobi series of the modified Eisenstein series introduced in \eqref{equ:modify eisenstein series}.

Consider also (where $c_V$ is the constant defined in \eqref{equ:ASWF-constant})
\begin{equation*}
    \Theta_{T_\flat}^{\mathrm{aux}}(\tau) := c_V \PFJ^{\modi}_{T_{\triv}}(\tau, \phi_\infty \phi_1 \phi_{\triv}) 
    - \sum_{v_0 \in \Sigma_{F_0, \infty}} \omega_{T_\triv, v_0}(\tau, \phi_1, \phi_\triv) 
    + \sum_{\nu\mid v_0 \in \Sigma^{\fkd}_{\mathrm{max}}} \Eis(\tau, 0, \phi^{\mathrm{aux}}(\nu)),
\end{equation*}
where $\omega_{T_{\mathrm{aux}}}(\tau,\phi_1,\phi_{\mathrm{aux}})$ is defined in Proposition \ref{prop:omega series}.
This expression relates to the intersection number of auxiliary cycles defined on the geometric side in \S \ref{sec:auxiliary}.

Now, consider the generating series (recall the index convention in Theorem \ref{thm:archi-int}, and note that modifications over special fibers $\CC_\nu^{\mathrm{aux}}$ has empty complex fiber):
\begin{multline}
    J_{T_\flat}(\tau) = J_{T_\flat}(\tau, \phi_1, \phi_\flat) := 
    \sum_{v_0 \in \Sigma_{F_0, \infty}} \Bigl[ \CG_{v_0}^{\bK-\bB}(\tau, \phi_1)(\DGZ_{v_0}(T_\flat, \phi_\flat)^{\mathrm{aux}}) 
    + \omega_{T_\flat, v_0}(\tau, \phi_1, \phi_\flat) \Bigr] \\
    + c_V\PFJ^{\modi}_{T_\flat}(\tau, \phi) + \Theta_{T_\flat}^{\mathrm{aux}}(\tau).
\end{multline}
By Theorem \ref{thm: Green}, Proposition \ref{prop:Fourier-Jacobi series are modular forms}, and Proposition \ref{prop:omega series}, we conclude that
\begin{equation*}
    J_{T_\flat}(\tau) \in \CA_{\infty}(\bH(\BA_0), K_{\bH}, n).
\end{equation*}

\begin{proposition}\label{prop:holo proj}
$J_{T_\flat}(\tau) \in \CA_{\hol}(\bH(\BA_0), K_{\bH}, n).$
\end{proposition}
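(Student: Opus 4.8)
## Proof proposal

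The plan is to show that $J_{T_\flat}(\tau)$, which we already know lies in $\CA_\infty(\bH(\BA_0),K_\bH,n)$, is in fact holomorphic, i.e.\ annihilated by the lowering (anti-holomorphic) operator at every archimedean place of $F_0$. The standard strategy, going back to \cite{Mihatsch-Zhang} and used in \cite{ZZhang24,LMZ25}, is \emph{not} to prove holomorphicity of each summand separately --- indeed $\CG^{\bK}$ (hence $\CG^{\bK-\bB}$) and $\PFJ^{\modi}$ are individually non-holomorphic, with explicit non-holomorphic parts coming from the derivatives of Whittaker functions at archimedean places --- but rather to exhibit a cancellation among the non-holomorphic parts. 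Concretely, I would apply the lowering operator $L_{w_0}$ (for each $w_0\in\Sigma_{F_0,\infty}$) to $J_{T_\flat}(\tau)$ and show the result vanishes identically. Since $\tau$ is a single variable and $\tau_\flat=\bi$ is fixed, there is only one operator to consider at each place.

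The key steps, in order, are: (1) Compute $L_{w_0}\,\CG^{\bK-\bB}_{w_0'}(\tau,\phi_1)$ for each pair $w_0,w_0'$; by Theorem~\ref{thm: Green} the difference Green function is already automorphic of weight $n$, and the point is that its failure to be holomorphic is governed by the Kudla--Millson form $\omega_{\KM}$ together with the archimedean derivative of the Whittaker function --- this is exactly the content of the current equation \eqref{equ:green-current-cycle} and the star-product formula (Theorem~\ref{thm:star-prod-cycles}), paired against $\DGZ_{v_0}(T_\flat,\phi_\flat)^{\mathrm{aux}}$. (2) Compute $L_{w_0}\,\omega_{T_\flat,w_0'}(\tau,\phi_1,\phi_\flat)$ using that $\omega_{\KM}$ is (up to the exponential factor $e^{2\pi(\bu,\bu)}$) a Schwartz form whose theta series is modular; the lowering operator produces a term matching the Green-function term with opposite sign. (3) Compute $L_{w_0}\,c_V\PFJ^{\modi}_{T_\flat}(\tau,\phi)$: by Proposition~\ref{prop:fourier-expansion-of-eis} and Theorem~\ref{thm:local-const}, the archimedean non-holomorphic contribution to $\PEis_T$ is $W'_{T,w_0}(\btau_\infty,0,\phi_{w_0})\cdot W_T(\btau_\infty,0,\phi^{w_0})$, and by the archimedean arithmetic Siegel--Weil formula (Theorem~\ref{thm:archi Siegel Weil}(3)) this derivative is, up to the explicit constant $c_V$, exactly $\Int_{w_0}(T,\by_{w_0})\cdot q^T$ --- whose $L_{w_0}$-image is expressible via $\omega_{\KM}$ by differentiating Kudla's Green function, again by \eqref{equ:green-current-cycle}. (4) Do the analogous computation for $L_{w_0}\Theta^{\mathrm{aux}}_{T_\flat}(\tau)$, where the auxiliary $T_{\triv}$-term and $\omega_{T_\triv,v_0}$-term are built to cancel in the same fashion, and the coherent Eisenstein series $\Eis(\tau,0,\phi^{\mathrm{aux}}(\nu))$ and $\FJ_{T_\flat}(\tau,\phi_{\corr})$ are genuinely holomorphic (coherent Siegel--Weil sections at $s=0$), so are killed by $L_{w_0}$. (5) Add everything up: the holomorphic-projection defect of the ``Green + $\omega$'' package cancels against that of the ``$c_V\PFJ^{\modi}$'' package place by place, because both are governed by the same archimedean local height $\Int_{w_0}(T,\by_{w_0})$ and the same Kudla--Millson form, with signs arranged so the sum is zero. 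Conclude $L_{w_0}J_{T_\flat}=0$ for all $w_0\mid\infty$, hence $J_{T_\flat}\in\CA_{\hol}(\bH(\BA_0),K_\bH,n)$.

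The main obstacle will be step (5): matching the two sources of non-holomorphicity requires identifying the archimedean derivative $W'_{T,w_0}(\btau_\infty,0,\phi_{w_0})$ appearing in $\partial\mathrm{Eis}$ with the derivative of Kudla's Green current evaluated on $\DGZ_{v_0}(T_\flat,\phi_\flat)^{\mathrm{aux}}$, uniformly over the (finitely many, by the regularity assumption \eqref{equ:assumption on schwartz function}) positive-definite $T$ with $\Diff(T,\BV)=\{w_0\}$ as well as handling the terms where the completed matrix has signature $(n-1,1)$ at $w_0$. One must also take care that the constant $c_V$ from \eqref{equ:ASWF-constant} is precisely the one that makes the two defects equal and opposite; this is a bookkeeping point about self-dual versus Tamagawa measures already settled in the proof of Theorem~\ref{thm:local-const}, but it must be invoked correctly. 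Everything else is a finite combination of known modularity statements (Theorems~\ref{thm: Green}, \ref{thm:local modification}, Propositions~\ref{prop:omega series}, \ref{prop:Fourier-Jacobi series are modular forms}) plus term-by-term application of $L_{w_0}$.

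I expect the cleanest writeup to package steps (1)--(3) as the statement ``the non-holomorphic part of $J_{T_\flat}$ at $w_0$ equals $-\tfrac{\log q_{w_0}^2}{\vol(K_G)}\!\sum_T \Int_{w_0}(T,\by_{w_0})\,\Orb_{T,f}(\phi)\,q^T$ plus the $T_\triv$-analogue, which then visibly cancels against $c_V\,\partial C^{\modi}_T$ by Theorem~\ref{thm:local-const}(second case)'', reducing the proof to citing the local archimedean intersection computation Theorem~\ref{thm:archi-int} and the comparison Theorem~\ref{thm:local-const}. Thus the proof is essentially: differentiate term by term, invoke Theorem~\ref{thm:archi-int} to identify the archimedean non-holomorphic contribution of the geometric side, invoke Theorem~\ref{thm:local-const} to identify that of the analytic side, observe they cancel, and note all remaining terms are holomorphic.
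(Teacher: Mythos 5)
Your proposal is correct and, in its final ``cleanest writeup'' form, is essentially the paper's own proof: the paper likewise reduces to $h_f=1$ by strong approximation, invokes Theorem~\ref{thm:archi-int} to identify the archimedean geometric contribution $\sum_{v_0}\bigl(\CG^{\bK}_{v_0}(\tau,\phi_1)(\DGZ_{v_0}(T_\flat,\phi_\flat))-\omega_{T_\flat,v_0}(\tau,\phi_1,\phi_\flat)\bigr)$ with the archimedean part of $c_V\PFJ^{\modi}_{T_\flat}$ via Theorem~\ref{thm:local-const}, so that these cancel exactly and only constant Fourier coefficients supported on $\xi\geq 0$ remain. The only cosmetic difference is that the paper does not apply the lowering operator term by term but instead exhibits the Fourier expansion $\sum_{\xi\geq 0}A_\xi q^\xi$ with $A_\xi$ constant, which (since $F_0\neq\BQ$) gives holomorphicity directly.
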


\begin{proof}
For an automorphic function $F$, the holomorphicity condition is characterized by the vanishing of the archimedean lowering operator at each point $h \in \bH(\BA_0)$. Since the differential operator commutes with the right action of $\bH(\BA_{0,f})$, by the strong approximation theorem for the special linear group, it suffices to verify holomorphicity when $h_f = 1$. This reduces our analysis to classical Hilbert modular forms.

We now show that $J_{T_\flat}(\tau)$ admits a Fourier expansion of the form (noting that $F_0 \neq \BQ$):
\begin{equation}\label{equ:holomorphic expansion}
    J_{T_\flat}(\tau) = \sum_{\xi \in F_0, \xi \geq 0} A_\xi q^\xi, \quad A_\xi \in \BC.
\end{equation}
Since $\CC_{\nu}^{\mathrm{aux}}$ in \eqref{equ:aux special cycle} are supported on the special fiber, we have
\begin{equation*}
    \CG_{v_0}^{\bK-\bB}(\tau, \phi_1)(\DGZ_{v_0}(T_\flat, \phi_\flat)^{\mathrm{aux}}) 
    = \CG_{v_0}^{\bK-\bB}(\tau, \phi_1)(\DGZ_{v_0}(T_\flat, \phi_\flat) + c_\infty \DGZ_{v_0}(T_{\triv}, \phi_{\triv})).
\end{equation*}
Since $\Eis(\tau, 0, \phi^{\mathrm{aux}}(\nu))$ is holomorphic for each $v_0 \in \Sigma^{\fkd}_{\mathrm{max}}$, it remains to show the holomorphicity of the function:
\begin{equation*}
    \sum_{v_0 \in \Sigma_{F_0, \infty}} \Bigl[ 
        \CG_{v_0}^{\bK-\bB}(\tau, \phi_1)(\DGZ_{v_0}(T_?, \phi_\flat)) 
        - \omega_{T_?, v_0}(\tau, \phi_1, \phi_\flat) 
    \Bigr] - c_V \PFJ^{\modi}_{T_?}(\tau, \phi), 
    \quad T_? = T_\flat \text{ or } T_{\triv}.
\end{equation*}

Since $\phi$ is regular at two places, for any $\xi \in F_0^\times$, the intersection $\GZ_{v_0}(\xi, \phi_1) \cap \GZ_{v_0}(T_\flat, \phi_\flat)$ is proper (and is empty when $\xi$ is not positive). The analogous statement holds for $T_{\triv}$. Consequently, we can write
\begin{multline}
    \CG_{v_0}^{\bK-\bB}(\tau, \phi_1)(\DGZ_{v_0}(T_\flat, \phi_\flat) + c_{\infty} \DGZ_{v_0}(T_\triv, \phi_\flat)) \\
    = \CG_{v_0}^{\bK}(\tau, \phi_1)(\DGZ_{v_0}(T_\flat, \phi_\flat) + c_{\infty} \DGZ_{v_0}(T_\triv, \phi_{\triv})) \\
    - \CG_{v_0}^{\bB}(\tau, \phi_1)(\DGZ_{v_0}(T_\flat, \phi_\flat) + c_\infty \DGZ_{v_0}(T_\triv, \phi_{\triv})),
\end{multline}
where all terms are well-defined. Since the automorphic Green function is independent of $\tau$, it suffices to show that the following function (and its $T_{\triv}$ variant, which follows analogously) has a Fourier expansion of the form given in \eqref{equ:holomorphic expansion}:
\begin{equation}\label{equ:archimedean terms vanishes}
    \sum_{v_0 \in \Sigma_{F_0, \infty}} \Bigl[
        \CG_{v_0}^{\bK}(\tau, \phi_1)(\DGZ_{v_0}(T_\flat, \phi_\flat)) 
        - \omega_{T_\flat, v_0}(\tau, \phi_1, \phi_\flat) 
    \Bigr] - c_V \PFJ^{\modi}_{T_\flat}(\tau, \phi).
\end{equation}

By Theorem \ref{thm:local-const}, modulo constant terms, we have:
\begin{align*}
    c_V \PFJ^{\modi}_{T_\flat}(\tau) 
    &= \sum_{v_0 \in \Sigma_{F_0, \infty}} \log q_{v_0} \sum_{\xi \in F}
    \sum_{\substack{
        T = \begin{psmallmatrix} \xi & * \\ * & T_\flat \end{psmallmatrix} \in \Herm_n^\dagger(F); \\
        \Diff(T) = \{v_0\}
    }}
    \Int_{v_0}(T, \btau) \cdot \Orb_{T,f}(\phi) q^\xi \\
    &\quad + \sum_{v_0 \in \Sigma_{F_0, f}} \log q_{v_0} \vol(K_{G,v_0}) \sum_{\xi \in F_0}
    \sum_{\substack{
        T = \begin{psmallmatrix} \xi & * \\ * & T_\flat \end{psmallmatrix} \in \Herm_n^+(F); \\
        \Diff(T) = \{v_0\}
    }}
    \PDen_{v_0}(T, \phi_{v_0}) \cdot \Orb_{T}(\phi^{v_0}) q^\xi.
\end{align*}
By the proof of Theorem \ref{thm:archi-int} (noting that the intersection is proper), we obtain:
\begin{multline*}
    \sum_{v_0 \in \Sigma_{F_0, \infty}} 
    \Bigl(\CG_{v_0}^{\bK}(\tau, \phi_1)(\DGZ_{v_0}(T_\flat, \phi_\flat)) 
    - \omega_{T_\flat, v_0}(\tau, \phi_1, \phi_\flat) \Bigr) \\
    = \sum_{v_0 \in \Sigma_{F_0, \infty}} \log q_{v_0} \sum_{\xi \in F}
    \sum_{\substack{
        T = \begin{psmallmatrix} \xi & * \\ * & T_\flat \end{psmallmatrix} \in \Herm_n^\dagger(F); \\
        \Diff(T) = \{v_0\}
    }}
    \Int_{v_0}(T, \btau) \cdot \Orb_{T,f}(\phi) q^\xi.
\end{multline*}
Therefore, all non-positive terms in the expansion in \eqref{equ:archimedean terms vanishes} cancel out, and the remaining expression takes the form given in \eqref{equ:holomorphic expansion}.
\end{proof}

Moreover, the proof of Proposition \ref{prop:holo proj} yields the following conclusion:

\begin{corollary}\label{cor:coefficient of J function}
For any $\xi > 0$, we have
\begin{multline*}
    J_{T_\flat,\xi}(\tau) - \Theta_{T_\flat}^{\mathrm{aux}}(\tau) = \sum_{v_0 \in \Sigma_{F_0, \infty}} 
    \CG_{v_0}^{\bB}(\xi, \phi_1)(\DGZ_{v_0}(T_\flat, \phi_\flat) + c_\infty \DGZ_{v_0}(T_\triv, \phi_\triv)) \\
    + \sum_{v_0 \in \Sigma_{F_0, f}} \log q_{v_0} \vol(K_{G,v_0})
    \sum_{\substack{T = \begin{psmallmatrix} \xi & * \\ * & T_\flat \end{psmallmatrix} \in \Herm_n^+(F);\\ \Diff(T) = \{v_0\}}}
    \PDen_{v_0}(T, \phi_{v_0}) \cdot \Orb_{T}(\phi^{v_0}) \\
    + \sum_{v_0 \in \Sigma_{F_0, f}} \log q_{v_0} \vol(K_{G,v_0})
    \sum_{\substack{T = \begin{psmallmatrix} \xi & * \\ * & T_\triv \end{psmallmatrix} \in \Herm_n^+(F);\\ \Diff(T) = \{v_0\}}}
    \PDen_{v_0}(T, \phi_{v_0}) \cdot \Orb_{T}(\phi^{v_0}). \qed
\end{multline*}
\end{corollary}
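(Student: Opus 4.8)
The plan is to read off the coefficient of $q^\xi$ for $\xi>0$ directly from the construction of $J_{T_\flat}(\tau)$ in \S\ref{sec:modular form}, combining Theorem~\ref{thm:archi-int} on the archimedean side with Theorem~\ref{thm:local-const} on the Eisenstein side; this is essentially the bookkeeping already carried out in the proof of Proposition~\ref{prop:holo proj}. First I would note that the very special one-cycles $\CC_\nu^{\mathrm{aux}}$ appearing in the auxiliary cycle \eqref{equ:aux special cycle} are supported in special fibers, hence have empty complex fiber and contribute nothing to any archimedean Green-current pairing; thus in the archimedean terms of $J_{T_\flat}(\tau)-\Theta_{T_\flat}^{\mathrm{aux}}(\tau)$ one may replace $\DGZ_{v_0}(T_\flat,\phi_\flat)^{\mathrm{aux}}$ by $\DGZ_{v_0}(T_\flat,\phi_\flat)+c_\infty\DGZ_{v_0}(T_\triv,\phi_\triv)$. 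For $\xi>0$ one has $\CG_{v_0}^{\bK-\bB}(\xi,\by_\infty,\phi_1)=\CG_{v_0}^{\bK}(\xi,\by_\infty,\phi_1)-\CG_{v_0}^{\bB}(\xi,\phi_1)$, and the automorphic Green function $\CG^{\bB}$ is independent of $\tau$; peeling it off produces exactly the first line of the claimed identity.

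Next I would dispose of the remaining archimedean contributions. Under the regularity hypothesis \eqref{equ:assumption on schwartz function} all the relevant special cycles intersect properly over $\BC$, so Theorem~\ref{thm:archi-int} applies: for each bottom-right block $T_?\in\{T_\flat,T_\triv\}$ it identifies the sum of the Kudla-Green-current evaluation $\CG_{v_0}^{\bK}(\xi,\phi_1)$ against the corresponding derived cycle with the matching Kudla--Millson $\omega_{T_?,v_0}$-term as
\[
\frac{\log q_{v_0}^2}{\vol(K_G)}\sum_{\substack{T={\begin{psmallmatrix}\xi&*\\ *&T_?\end{psmallmatrix}}\in\Herm_n^\dagger(F)\\ \Diff(T,\BV)=\{v_0\}}}\Int_{v_0}(T,\by_{v_0})\,\Orb_{T,f}(\phi).
\]
On the analytic side, Theorem~\ref{thm:local-const} expands the coefficient of $q^\xi$ in $c_V\PFJ^{\modi}_{T_\flat}(\tau,\phi)$ (and, through $\Theta_{T_\flat}^{\mathrm{aux}}$, in $c_V\PFJ^{\modi}_{T_\triv}$) as precisely this same archimedean sum over $\Herm_n^\dagger(F)$ together with the finite sum $\sum_{v_0\in\Sigma_{F_0,f}}\log q_{v_0}\vol(K_{G,v_0})\sum_T\PDen_{v_0}(T,\phi_{v_0})\,\Orb_T(\phi^{v_0})$ over positive-definite $T$ with $\Diff(T,\BV)=\{v_0\}$ and the given bottom-right block. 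Matching the $T_\flat$- and $T_\triv$-contributions block-by-block, the archimedean $\Int_{v_0}$-terms cancel and exactly the two finite $\PDen$-sums (over $T_\flat$-blocks and over $T_\triv$-blocks) survive; the remaining holomorphic pieces $\sum_\nu\Eis(\tau,0,\phi^{\mathrm{aux}}(\nu))$ are genuine holomorphic Eisenstein series and are accounted for within $\Theta_{T_\flat}^{\mathrm{aux}}(\tau)$ on the left-hand side.

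The only real work is bookkeeping: tracking the normalization constants ($c_V$ from \eqref{equ:ASWF-constant}, the factors $\vol(K_G)$, $\vol(K_{G,v_0})$, $\tau(Z^{\BQ})[E:F]$, and $\log q_{v_0}$ versus $\log q_{v_0}^2$), and checking that the $T_\flat$- and $T_\triv$-archimedean pieces produced by Theorem~\ref{thm:archi-int} match exactly the $\PFJ^{\modi}$-terms actually present in $J_{T_\flat}$ and in $\Theta_{T_\flat}^{\mathrm{aux}}$ -- which is why $\Theta_{T_\flat}^{\mathrm{aux}}$ is built to carry the full $T_\triv$-Fourier--Jacobi contribution and its $\omega$-correction. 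I expect no conceptual obstacle; the only delicate point is confirming that the archimedean cancellation is exact once all normalizations and the properness inputs from the regularity assumption are in place, exactly as in the proof of Proposition~\ref{prop:holo proj}.
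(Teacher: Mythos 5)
Your proposal matches the paper's argument: the corollary is presented there as a direct by-product of the bookkeeping in the proof of Proposition \ref{prop:holo proj}, and you reproduce exactly that computation --- discarding the vertical auxiliary cycles $\CC_\nu^{\mathrm{aux}}$ over $\BC$, peeling off the $\tau$-independent automorphic Green function from $\CG^{\bK-\bB}$, and cancelling the archimedean $\Int_{v_0}$-terms of the star products against the archimedean Whittaker derivatives via Theorems \ref{thm:archi-int} and \ref{thm:local-const}, so that only the $\CG^{\bB}$-evaluation and the two nonarchimedean $\PDen$-sums (for the $T_\flat$- and $T_\triv$-blocks) survive. This is the same route as the paper, and your caveat about tracking normalizations is exactly where the remaining care lies.
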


We may project $J_{T_\flat}(\tau)$ from $\CA_{\hol}(\bH(\BA_0), K_{\bH}, n)$ to 
\begin{equation*}
    \CA_{\hol}(\bH(\BA_0), K_{\bH}, n)_{\ov{\BQ}} \otimes_{\ov{\BQ}} \BR_{\fkd, \ov{\BQ}},
\end{equation*}
which we will still denote by $J_{T_\flat}(\tau)$. We define the function
\begin{equation}\label{equ:define generat series}
    \CE_{T_\flat}(\tau, \phi_1, \phi_\flat) := \frac{\vol(K_G)}{\tau(Z^{\BQ})[E:F]} \bigl( \wh{\GZ}^\bB(\tau, \phi_1), \DGZ(T_\flat, \phi_\flat)^{\mathrm{aux}} \bigr) + 2J_{T_\flat}(\tau) 
    \in \CA_{\hol}(\bH(\BA_0), K_{\bH}, n)_{\ov{\BQ}} \otimes_{\ov{\BQ}} \BR_{\fkd, \ov{\BQ}}.
\end{equation}

\begin{proposition}\label{prop:KR-gent}
Let $\xi \in F_0^\times$. The Fourier coefficients of $\CE_{T_\flat}(\tau, \phi_1, \phi_\flat)$ are given by:
\begin{multline*}
    \CE_{T_\flat, \xi}(\tau, \phi_1, \phi_\flat) := \sum_{w_0 \in \Sigma_{F_0, f}^{\fkd}} \CE_{\xi, w_0} := \\
    \sum_{w_0 \in \Sigma_{F_0, f}^{\fkd}} \vol(K_{G, w_0}) \log q^2_{w_0}
    \sum_{\substack{T = \begin{psmallmatrix} \xi & * \\ * & T_\flat \end{psmallmatrix} \in \Herm_n^+(F);\\ \Diff(T) = \{w_0\}}}
    \bigl( \Int_{w_0}(T, \phi_{w_0}) - \PDen_{w_0}(T, \phi_{w_0}) \bigr) \cdot \Orb_{T}(\phi^{w_0}) 
    \in \BR_{\fkd, \ov{\BQ}}.
\end{multline*}
\end{proposition}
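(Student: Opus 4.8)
The plan is to compute the $\xi$-th Fourier coefficient of the generating series \eqref{equ:define generat series} term by term and show that all contributions other than the asserted one cancel. Recall that $\CE_{T_\flat}(\tau,\phi_1,\phi_\flat)$ has two pieces: the arithmetic intersection pairing $\frac{\vol(K_G)}{\tau(Z^\BQ)[E:F]}(\wh\GZ^\bB(\tau,\phi_1),\DGZ(T_\flat,\phi_\flat)^{\mathrm{aux}})$ and twice the function $J_{T_\flat}(\tau)$ from \S\ref{sec:modular form}. First I would extract the $\xi$-th Fourier coefficient of the intersection pairing: by the definition \eqref{equ:lift int number} of $\Int^\bB_{T_\flat}$ and the place-by-place decomposition in \S\ref{sec:local interection of KR cycles}, for $\xi\in F_0^\times$ this coefficient splits into its archimedean contribution (expressed via the automorphic Green function $\CG^\bB_{v_0}$ evaluated on $\DGZ_{v_0}(T_\flat,\phi_\flat)^{\mathrm{aux}}$, plus the modifications $\CC_\nu^{\mathrm{aux}}$ which by Theorem \ref{thm:local modification}(2) contribute $\Eis(\tau,0,\phi^{\mathrm{aux}}(\nu))$, and the $c_\infty\DGZ_{v_0}(T_\triv,\phi_\triv)$ terms) and the non-archimedean contribution, which by Proposition \ref{prop:inert-local-int} equals $\sum_{w_0\in\Sigma^{\fkd}_{F_0,\mathrm{inert}}}\frac{\log q^2_{w_0}}{\vol(K_G^{w_0})}\sum_{T}\Int_{w_0}(T,\phi_{w_0})\Orb^{w_0}_T(\phi^{w_0})$ over positive definite $T=\begin{psmallmatrix}\xi&*\\ *&T_\flat\end{psmallmatrix}$ with $\Diff(T)=\{w_0\}$ (plus the analogous sum with $T_\triv$ coming from $c_\infty\DGZ(T_\triv,\phi_\triv)$).

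Next I would extract the $\xi$-th coefficient of $2J_{T_\flat}(\tau)$ using Corollary \ref{cor:coefficient of J function}, which for $\xi>0$ gives $J_{T_\flat,\xi}(\tau)-\Theta^{\mathrm{aux}}_{T_\flat}(\tau)$ as a sum of: the automorphic Green function term $\sum_{v_0\in\Sigma_{F_0,\infty}}\CG^\bB_{v_0}(\xi,\phi_1)(\DGZ_{v_0}(T_\flat,\phi_\flat)+c_\infty\DGZ_{v_0}(T_\triv,\phi_\triv))$, and two sums over finite places of $\log q_{v_0}\vol(K_{G,v_0})\PDen_{v_0}(T,\phi_{v_0})\Orb_T(\phi^{v_0})$, one with $T_\flat$ and one with $T_\triv$. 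The key observation is that the archimedean automorphic-Green-function terms in $2J_{T_\flat,\xi}$ exactly cancel against those in the intersection pairing coefficient (note the factor $2$ compensates the two appearances: $\CE_{T_\flat}$ contains $(\wh\GZ^\bB,\DGZ^{\mathrm{aux}})$ which unwinds to the full $\Int^\bB$ including archimedean pieces, and $J$ is added with coefficient $2$; I would track the normalizations $\tau(Z^\BQ)[E:F]$ and $\vol(K_G)$ vs $\vol(K_G^{w_0})\vol(K_{G,w_0})$ carefully so that $\vol(K_G)=\vol(K_G^{w_0})\vol(K_{G,w_0})$ converts the local measures correctly). After this cancellation, what survives is, for each finite place $w_0$, the difference $\Int_{w_0}(T,\phi_{w_0})-\PDen_{w_0}(T,\phi_{w_0})$ multiplied by $\vol(K_{G,w_0})\log q^2_{w_0}\Orb_T(\phi^{w_0})$, summed over $T$; the $T_\triv$ terms cancel entirely because, as noted in \S\ref{sec:auxiliary}, for the chosen $T_\triv$ and $\phi_\triv$ one has $\Int_{w_0}(T_\triv,\phi_{w_0})=\PDen_{w_0}(T_\triv,\phi_{w_0})$ at every relevant $w_0$ (both vanish for $w_0\in\Sigma^{\fkd}_{\max}$ by the cancellation law Proposition \ref{lem:cancellation law}, and agree otherwise), and the $\Theta^{\mathrm{aux}}_{T_\flat}(\tau)$ term is precisely designed to absorb the auxiliary Eisenstein and $\omega$-contributions.

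The one subtlety I would flag is the bookkeeping of which places occur in each sum: in Corollary \ref{cor:coefficient of J function} the finite-place sum ranges over $\Sigma_{F_0,f}$ but the regularity assumption \eqref{equ:assumption on schwartz function} together with $\Diff(T)=\{w_0\}$ forces $w_0$ to be inert and not dividing $\fkd$, so effectively over $\Sigma^{\fkd}_{F_0,\mathrm{inert}}$, matching the non-archimedean intersection sum; moreover at split places over $\fkd$ the orbital integrals vanish by regularity. I would also verify the constant $c_V$ from \eqref{equ:ASWF-constant} drops out consistently: in the intersection coefficient there is no $c_V$, but $J_{T_\flat}$ contains $c_V\PFJ^{\modi}_{T_\flat}$, and Theorem \ref{thm:local-const} shows $c_V\PC^{\modi}_T(\by_\infty,\phi)/\log q_{w_0}=\vol(K_{G,w_0})\PDen_{w_0}(T,\phi_{w_0})\Orb_T(\phi^{w_0})$ for finite $w_0$, so the analytic side is correctly normalized to match $\PDen$ without residual $c_V$. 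The main obstacle is keeping all these normalization constants ($\vol$, $\log q^2$ vs $\log q$, $c_V$, $\tau(Z^\BQ)[E:F]$, the factor $2$) consistent across the archimedean/non-archimedean split; the conceptual content—term-by-term cancellation enabled by the regularity assumption and the auxiliary-cycle construction—is straightforward once Propositions \ref{prop:inert-local-int}, Theorem \ref{thm:archi-int}, Theorem \ref{thm:local-const}, and Corollary \ref{cor:coefficient of J function} are in hand.
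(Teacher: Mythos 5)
Your proposal is correct and follows essentially the same route as the paper's (very terse) proof: lift the $\xi$-th coefficient to $\BR$, decompose the intersection pairing place by place via Proposition \ref{prop:inert-local-int}, cancel the archimedean automorphic-Green-function contributions against $J_{T_\flat,\xi}$ using Corollary \ref{cor:coefficient of J function} and Theorem \ref{thm:local-const}, and eliminate the $T_{\triv}$ terms via Lemma \ref{lem:cancel}. The normalization bookkeeping you flag is real (the paper itself writes the lift as the pairing minus $J_{T_\flat,\xi}$ while the definition carries $+2J_{T_\flat}$), but your structural argument matches the intended one.
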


\begin{proof}
As discussed in \S \ref{sec:local interection of KR cycles}, for each $\xi > 0$, there exists an element
\begin{equation*}
    \frac{\vol(K_G)}{\tau(Z^{\BQ})[E:F]} \bigl( \GZ^\bB(\xi, \phi_1), \DGZ(T_\flat, \phi_\flat)^{\mathrm{aux}} \bigr) - J_{T_\flat, \xi}(\tau) \in \BR
\end{equation*}
which lifts the Fourier coefficient $\CE_{T_\flat, \xi}(\tau, \phi_1, \phi_\flat) \in \BR_{\fkd}$. The intersection pairing can then be decomposed into local contributions as described in \S \ref{sec:local interection of KR cycles}.

The archimedean contribution to the geometric side consists of special values of automorphic Green functions
\begin{equation*}
    \CG^{\bB}(\tau, \phi_1)(\DGZ_{v_0}(T_\flat, \phi_\flat) + c_\infty \DGZ_{v_0}(T_\triv, \phi_\flat)),
\end{equation*}
which agree with the analytic side as shown in Corollary \ref{cor:coefficient of J function}.

The non-archimedean contributions to the geometric side are addressed in Proposition \ref{prop:inert-local-int}. By our construction of $T_{\triv}$, all terms involving $T_{\triv}$ vanish by Lemma \ref{lem:cancel}, leaving only the terms stated in Proposition \ref{prop:KR-gent}.
\end{proof}

\section{Globalization and the proof of Kudla-Rapoport conjecture}\label{sec:proof}
In this section, we prove Theorem \ref{thm:KR-conj} by induction. The case $n = 1$ follows from Corollary \ref{cor:KR for n=1}.  
Assuming the theorem holds for all local hermitian spaces of dimension $\leq n-1$ ($p\neq 2$), we now establish the Kudla-Rapoport conjecture for $n$ and for an integer $ 0 \leq h \leq n$, for a prime $p > 2$, and for an admissible Schwartz function of weight $\bw=(w_1,\bw_\flat)$, where $\bw_\flat\in\{\CZ,\CY\}^{n-1}$. We write this case as $\mathrm{KR}(F/F_0,n,h,\bw,\bu)$.

Recall that $V$ is the $F/F_0$-hermitian space that contains a type $h$ vertex lattice, and $\BV$ is the $F/F_0$-hermitian space that does not.
By Proposition \ref{lem:cancellation law}, we have the base case:  
\begin{lemma}\label{lem:cancel}
Assume  $p \neq 2$, and let $\phi \in \CS(V^n)$ be an admissible Schwartz function of type $h$. Let $\bu \in \BV^n$ be a set of linearly independent vectors such that for one of the entries $u_i$, the following holds:
\begin{altenumerate}
    \item If $h \neq n$,  then $\val(u_i, u_i) = 0$; if $h = n$, then $\val(u_i, u_i) = 1$.
    \item If  $h \neq 0$, then $\val(u_i, u_i) = -1$; if $h = 0$, then $\val(u_i, u_i) = 0$.
\end{altenumerate}
Then, we have  
\begin{equation*}
    \Int(\bu, \phi) = \PDen(\bu, \phi).
\end{equation*}
In other words, Theorem \ref{thm:KR-conj} holds for $\mathrm{KR}(F/F_0,n,h,\bw,\bu)$. \qed
\end{lemma}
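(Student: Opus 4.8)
The plan is to obtain the statement as a formal corollary of the cancellation law, Proposition \ref{lem:cancellation law}, combined with the induction hypothesis in the dimension.

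First I would fix the distinguished entry $u_i$. In every case in the hypothesis one has $\val(u_i,u_i)\in\{-1,0,1\}$, so $u_i$ is anisotropic; hence $\BV_\flat:=\langle u_i\rangle^\perp$ is a nondegenerate $F/F_0$-hermitian space of dimension $n-1$, and the orthogonal projection $\BV\to\BV_\flat$, $v\mapsto v-\tfrac{(v,u_i)}{(u_i,u_i)}u_i$, is well defined with kernel $\langle u_i\rangle$. I then record the two routine facts needed to feed everything into Proposition \ref{lem:cancellation law}: the projected tuple $\bu_\flat=(u_{1,\flat},\dots,\wh{u_{i,\flat}},\dots,u_{n,\flat})$ remains linearly independent (a relation $\sum_{j\neq i}c_ju_{j,\flat}=0$ lifts to $\sum_{j\neq i}c_ju_j\in\langle u_i\rangle$, i.e. a nontrivial linear relation among $u_1,\dots,u_n$, contradicting the hypothesis on $\bu$); and $\phi_{\bw_\flat}$ is an admissible Schwartz function of the appropriate weight $\bw_\flat$ and of a type that is legitimate in dimension $n-1$ — this is exactly the output packaged in Propositions \ref{prop:geo-cancel} and \ref{cor:ana-cancel}.

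Now, according to whether $\bw_i=\CZ$ or $\bw_i=\CY$, the prescribed value of $\val(u_i,u_i)$ is precisely the hypothesis of the corresponding branch of Proposition \ref{lem:cancellation law}. Applying that proposition, the desired identity $\Int(\bu,\phi)=\PDen(\bu,\phi)$ follows once we know $\Int(\bu_\flat,\phi_{\bw_\flat})=\PDen(\bu_\flat,\phi_{\bw_\flat})$; but this is an instance of Theorem \ref{thm:KR-conj} for the $(n-1)$-dimensional hermitian space $\BV_\flat$, which holds by the induction hypothesis. This finishes the proof.

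I do not anticipate a genuine obstacle: all the substance has already been isolated — the geometric cancellation law (Proposition \ref{prop:geo-cancel}), the analytic cancellation law (Corollary \ref{cor:ana-cancel}), and their combination (Proposition \ref{lem:cancellation law}). The only care required is the bookkeeping above, together with the observation that the hypothesis $p\neq 2$ is inherited by $\BV_\flat$, so that every local input invoked in dimension $n-1$ applies verbatim.
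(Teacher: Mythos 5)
Your proposal is correct and matches the paper's intended argument exactly: the paper derives Lemma \ref{lem:cancel} by invoking Proposition \ref{lem:cancellation law} (the combination of the geometric and analytic cancellation laws), with the reduction to dimension $n-1$ closed off by the standing induction hypothesis of \S\ref{sec:proof}. Your additional bookkeeping (anisotropy of $u_i$, linear independence of the projected tuple, admissibility of $\phi_{\bw_\flat}$) is exactly what the paper leaves implicit.
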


Suppose we aim to prove Theorem \ref{thm:KR-conj} for a $p$-adic extension $F/F_0$ with $p>2$, a vector $\bu \in \BV^n$, and all admissible Schwartz functions $\phi \in \CS(V(F_0)^n)$ of type $h$.  

We now change to global notation. We start with a global CM extension $F/F_0$ satisfying the following conditions:  
\begin{enumerate}[label=\textbf{CM\,\arabic*}]
    \item $F_0$ has a place $v_0$ above $p$ that is unramified in $F$, and the induced local extension $F_v/F_{0,v_0}$ is the one for which we aim to prove the Kudla-Rapoport conjecture.

    \item \label{CM2} All places $w_0 \neq v_0$ of $F_0$ lying above $p$ are split in $F$.
    \item If we want to verify the local Kudla-Rapoport conjecture over $\BQ_p$, we choose $F_0$ to be a quadratic extension of $\BQ$ such that $F_{0,v_0} = \BQ_p \times \BQ_p$ is a split place.
\end{enumerate}

We fix a CM type $\Phi$ for $F/F_0$. By enlarging $\fkd \subset \BZ_{>1}$ while ensuring that $p \nmid \fkd$, we further assume that $F_w/F_{0,w_0}$ are unramified for any finite places $w\nmid \fkd$ over $F$.

Now, we choose an $n$-dimensional $F/F_0$-hermitian vector space $V$ satisfying the following conditions:  

\begin{enumerate}[label=\textbf{HM\,\arabic*}]
    \item The signature of $V$ is $(n-1,1)$ at a unique place $\varphi_0: F_0 \to \BR$ and $(n,0)$ at all other places $\varphi\in\Phi\setminus \{\varphi_0\}$.
    \item The localization $V_{v_0}$ is isomorphic to the hermitian space we started with.
    \item \label{HM3} The space $V_p = \prod_{w_0 \mid p} V_{w_0}$ contains a lattice $L_p = \prod_{w_0 \mid p} L_{w_0}$ such that $L_{v_0}$ is a vertex lattice of type $h$. We extend this into a lattice $L^{[h]} \subset V$ (may extend $\fkd$ if necessary). Similarly, we construct $L^{[t]} \subset V^{(v_0),\fkd}$ such that it is of type $t \equiv h - 1 \pmod{2}$ at $v_0$.
\end{enumerate}	

By local constancy, we may replace $\bu\in V^n(F_{0,v_0})$ with a vector in $V^n(F_0)$ without changing the intersection number on the geometric side (Theorem \ref{thm:local-constancy}) and the corresponding modified local density function on the analytic side (by continuity). We still denote this vector by $\bu$. Let $\bT\in\Herm_n(F)$ be its moment matrix.
It is non-degenerate and totally positive definite, with $\Diff(\bT)=\{v_0\}$. 
We write
\begin{equation*}
\bT=\begin{pmatrix}
\bt&\bz\\
\prescript{t}{}{\ov{\bz}}&T_\flat
\end{pmatrix}
\end{equation*}
where $T_\flat$ is a matrix of size $(n-1)\times (n-1)$.

We further enlarge $\fkd$ so that the following conditions hold:  

\begin{enumerate}[label=\textbf{LR\,\arabic*}]
\item \label{LR1} By enlarging $\fkd$, we may assume that $L^{[h]}_{w_0}$ and $L_{w_0}^{[t]}$ are self-dual for all inert places $w_0 \in \Sigma_{F_0,\mathrm{inert}}^{\fkd}\setminus\{v_0\}$.
\item \label{LR2} By enlarging $\fkd$, we may assume that for all inert places $w_0 \in \Sigma_{F_0,\mathrm{inert}}^{\fkd}\setminus\{v_0\}$, there exists some $i \in \{2, \dots, n\}$ such that  
$$
\val_{w_0} (u_i, u_i) = 0,
$$
where $u_i$ is an entry of the vector $\bu$.
\end{enumerate}

We now complete the given data to a Shimura datum as in \S \ref{sec:gp data}, ensuring only that $p \nmid \fkd$.
Then, we construct a class of Schwartz functions.  
First, we write
\begin{equation*}
\phi^{\fkd}_{1,\CZ} := \charfun_{\wt{L}^{[h],\fkd}}  
\quad \text{and} \quad  
\phi^{\fkd}_{1,\CY} := \charfun_{\wh{L}^{[h],\sharp,\fkd}}.
\end{equation*}
They are admissible Schwartz functions in $\CS(V(\BA_{0,f}^{\fkd}))^{K^{\fkd}_G}$. Similarly, let $\phi_{\flat, v_0} \in \CS(V(\BA_{0,f}^{\fkd})^{n-1})^{K^{\fkd}_G}$ be an admissible Schwartz function of weight $\bw_\flat$.

We then construct $\mathbb{Q}$-valued Schwartz functions $\phi_{1,\fkd} \in \CS(V(F_{0,\fkd}))^{K_{G,\fkd}}$ and $\phi_{\flat,\fkd} \in \CS(V(\BA_{0,f})^{n-1})^{K_{G,\fkd}}$ such that:  

\begin{enumerate}[label=\textbf{SW\,\arabic*}]
    \item \label{SW1} The Schwartz function $\phi_{\fkd} \otimes \phi_{\flat,\fkd}$ is regular at at least two places (see Theorem \ref{thm:sing-vanish}).  
    Such a test function always exists, as we can construct a regular test function $\phi_{w_0}$ of the form  
    \begin{equation*}
        \phi_{w_0} = \bigotimes_i \phi_{i, v_0}, \quad
        \phi_{i,w_0} = \charfun_{D_i} \in \CS(V_{w_0}),
    \end{equation*}
    where $\charfun_{D_i}$ is the characteristic function of a small region $D_i$ (see Remark \ref{rem:reg-fun-construct}).  
    Moreover, we can choose $\phi_{v_0}$ to be $\mathbb{Q}$-valued.    

    \item \label{SW2} The orbit integrals $\Orb_{\bT}(\phi_{\CZ}^{v_0})$ and $\Orb_{\bT}(\phi_{\CY}^{v_0})$ are non-vanishing, where  
    \[
    \phi_? = (\phi^{\fkd}_{1,?} \phi_{1,\fkd}) \otimes \phi_\flat \in \CS(V(\BA_{0,f})^n), \quad ? \in \{\CZ, \CY\}.
    \]

    \item \label{SW3} For all positive definite Hermitian matrices $T\neq \bT$ of the form
    \[
    T = \begin{pmatrix} \bt & * \\ * & T_\flat \end{pmatrix} \in \Herm_n^+(F),\quad \Diff(T,\BV)=\{v_0\}, 
    \]
    the orbital integrals $\Orb_T(\phi_{\CZ}^{v_0})$ and $\Orb_T(\phi_{\CY}^{v_0})$ vanish.  
    Since there are only finitely many such $T$ satisfying these conditions, we can achieve this by shrinking the support of the Schwartz function at a single place so that it intersects only with $V_{\bT}$.
\end{enumerate}

We choose gauge forms $\alpha\in \wedge^{2n^2}(V^n)^*$ and $\beta\in\wedge^{n^2}(\Herm_n)^*$ such that the volumes  $\vol(K_{w_0})$ are rational numbers for all $w_0\in\Sigma_f^{\fkd}$, where the volumes are computed using the Tamagawa measure induced by $\alpha$ and $\beta$ (see \S \ref{sec:local-factors}) (this is possible, see, e.g.\cite{KR14} or \cite{Bruinier-Yang21}).

By Proposition \ref{prop:KR-gent} we have generating series 
\begin{multline*}\label{equ:YZ generating series}
\CE_{T_\flat}(\tau,\phi_{1?},\phi_\flat)=\sum_{\xi\in F_0}\sum_{w_0\in\Sigma_f^{\fkd}}\vol(K_{G,w_0})\log q^2_{w_0}\\
\sum_{{\substack{T=\begin{psmallmatrix}\xi& *\\ *& T_\flat\end{psmallmatrix}\in\Herm_n^+(F);\\\Diff(T)=\{w_0\}}}}\left(\Int_{w_0}(T,\phi_{1?}\phi_\flat)-\PDen_{w_0}(T,\phi_{1?}\phi_\flat)\right)\cdot \Orb_{T}(\phi_f^{w_0})q^\xi,\quad ?\in\{\CZ,\CY\}.
\end{multline*}

\begin{proposition}\label{prop:dual-relation}
The following dualizing relation holds:
$${ \begin{pmatrix}
0&-1\\1&0
\end{pmatrix}_{ v_0}}.\CE_{T_\flat}(\tau,\phi_{1\CZ},\phi_\flat)=\gamma_{V_{ v_0}}\vol(L_{ v_0})\CE_{T_\flat}(\tau,\phi_{1\CY},\phi_\flat).$$
\end{proposition}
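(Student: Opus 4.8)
The plan is to deduce the relation from the fact that the generating series $\CE_{T_\flat}(\tau,\phi_1,\phi_\flat)$ depends equivariantly on the first-slot Schwartz function $\phi_1$ for the Weil representation $\omega_1$, and then to evaluate that equivariance on the element $w_{v_0}:=\begin{psmallmatrix}0&-1\\1&0\end{psmallmatrix}_{v_0}\in\bH(\BA_0)$.

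First I would record the local identity $\omega_1(w_{v_0})\phi_{1\CZ}=\gamma_{V_{v_0}}\vol(L_{v_0})\,\phi_{1\CY}$. By \ref{LR1} and the choice of $L^{[h]}$, the lattice $L^{[h]}_w$ is self-dual at every finite place $w\neq v_0$, while $\phi_{1\CZ}$ and $\phi_{1\CY}$ agree at the archimedean places and at the places dividing $\fkd$; hence the two functions differ only in their $v_0$-component, which is $\charfun_{L^{[h]}_{v_0}}$ respectively $\charfun_{L^{[h],\sharp}_{v_0}}$. The explicit formula for the Weil representation recalled in \S\ref{sec:weil representation} (in rank one, acting on the first slot) shows that $\omega_1(w_{v_0})$ acts at $v_0$ by $\gamma_{V_{v_0}}$ times the Fourier transform, and Fourier inversion with the self-dual Haar measure gives $\widehat{\charfun_{L^{[h]}_{v_0}}}=\vol(L^{[h]}_{v_0})\charfun_{L^{[h],\sharp}_{v_0}}$ in the normalization of Proposition \ref{prop:Fourier-transform-for-error}. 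The same computation applied to the building blocks $\phi^{[t]}_{\bw,v_0}$ of the correction term $\phi_{\corr}$ swaps $\CZ\leftrightarrow\CY$ in the first slot and produces the factor $\vol(L^{[t]}_{v_0})$; since the error coefficients $\beta^{[h]}_{\bw,t}$ for the $\CY$-weight and the $\CZ$-weight differ precisely by $\vol(L^{[h]}_{v_0})/\vol(L^{[t]}_{v_0})$ --- which is exactly Proposition \ref{prop:Fourier-transform-for-error} --- the operator $\omega_1(w_{v_0})$ carries the correction term attached to $\phi_{1\CZ}$ to $\gamma_{V_{v_0}}\vol(L_{v_0})$ times the correction term attached to $\phi_{1\CY}$.

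Then I would check term by term that $w_{v_0}.\CE_{T_\flat}(\tau,\phi_1,\phi_\flat)=\CE_{T_\flat}(\tau,\omega_1(w_{v_0})\phi_1,\phi_\flat)$, noting that $\phi_\flat$, $\phi_\triv$, the auxiliary cycles $c_\infty\DGZ(T_\triv,\phi_\triv)$ and $\CC^{\mathrm{aux}}_\nu$, and the matrices $T_\flat,T_\triv$ are all fixed independently of the choice $\phi_{1\CZ}$ versus $\phi_{1\CY}$, so only the common first-slot data is moved. For the analytic constituents of \eqref{equ:define generat series} --- the incoherent Eisenstein derivative and the coherent Eisenstein series $\Eis(\tau,\phi_{\corr})$ inside $\PFJ^{\modi}_{T_\flat}$ and $\PFJ^{\modi}_{T_\triv}$, the Kudla--Millson theta contributions $\omega_{T_\flat,v_0}$ and $\omega_{T_\triv,v_0}$, the difference Green function $\CG^{\bK-\bB}_{v_0}(\tau,\phi_1)$, and the auxiliary Eisenstein series $\Eis(\tau,0,\phi^{\mathrm{aux}}(\nu))$ --- the equivariance under right translation by $w_{v_0}$ is the standard intertwining property of the Weil representation, as encoded in \S\ref{sec:weil representation}, Proposition \ref{prop:Fourier-Jacobi series are modular forms}, Proposition \ref{prop:omega series}, and Theorem \ref{thm: Green}. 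For the arithmetic term $(\wh\GZ^\bB(\tau,\phi_1),\DGZ(T_\flat,\phi_\flat)^{\mathrm{aux}})$ the same holds by the arithmetic modularity theorem for Kudla--Rapoport divisors (Proposition \ref{prop:modularity-bruinier}, building on \cite{BHKRY,Stephan-Siddarth,ZZhang24,LMZ25}): its pairing with the degree-zero cycle $\DGZ(T_\flat,\phi_\flat)^{\mathrm{aux}}\in\wt{\CZ}_1(\tCM)^{\perp}$ is an honest $\bH(\BA_0)$-equivariant arithmetic theta lift of $\phi_1$, so replacing $\phi_{1\CZ}$ by its translate $\omega_1(w_{v_0})\phi_{1\CZ}$ --- a scalar multiple of the admissible function $\phi_{1\CY}$ --- simply multiplies the pairing by that scalar. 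Combining this with the local identity of the previous paragraph yields $w_{v_0}.\CE_{T_\flat}(\tau,\phi_{1\CZ},\phi_\flat)=\gamma_{V_{v_0}}\vol(L_{v_0})\,\CE_{T_\flat}(\tau,\phi_{1\CY},\phi_\flat)$.

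The step I expect to require the most care is the equivariance of the arithmetic contribution: a priori $\phi_1\mapsto\tCZ(\xi,\phi_1)$ is defined geometrically only for admissible $\phi_1$ and is not manifestly the restriction of an $\SL_2(\BA_0)$-equivariant map, so one really must invoke the arithmetic modularity results to reidentify the pairing with a theta lift, and one must separately verify that the modification data of \S\ref{sec:auxiliary} respect the equivariance. A robust alternative avoiding this point is to argue coefficient-by-coefficient via Proposition \ref{prop:KR-gent}, which reduces the claim to a local dualizing identity at $v_0$ between $\Int_{v_0}$ and $\PDen_{v_0}$ for matched $\CZ$- and $\CY$-data under Fourier transform: on the analytic side this is \cite[Theorem 3.16]{Cho22} together with Proposition \ref{prop:Fourier-transform-for-error}, and on the geometric side it follows from the duality of relative Rapoport--Zink spaces that exchanges $\CZ$- and $\CY$-cycles (the same duality used in the proof of Theorem \ref{thm:ana-cancel}(1)(ii)).
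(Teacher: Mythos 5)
Your proposal is correct and follows essentially the same route as the paper: the paper's proof likewise decomposes $\CE_{T_\flat}$ into its constituents, invokes Theorem \ref{thm: Green}, Proposition \ref{prop:omega series}, Proposition \ref{prop:Fourier-Jacobi series are modular forms} (with Proposition \ref{prop:Fourier-transform-for-error} for the correction terms) and Proposition \ref{prop:modularity-bruinier} for Weil-representation equivariance of each term, and then concludes from the local identity $\begin{psmallmatrix}0&-1\\1&0\end{psmallmatrix}_{v_0}.\charfun_{L_{v_0}}=\gamma_{V_{v_0}}\vol(L_{v_0})\charfun_{L^{\sharp}_{v_0}}$. Your additional coefficient-by-coefficient fallback via Proposition \ref{prop:KR-gent} is not needed for the paper's argument but is a reasonable safeguard.
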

\begin{proof}
Recall from \eqref{equ:define generat series} that $\CE_{T_\flat}(\tau,\phi_1,\phi_\flat)$ is the summation of terms (all terms with $T_{\triv}$ vanish):
$$
\CG_{w_0}^{\bK-\bB}(\tau,\phi_{1?}),\quad \PFJ^{\modi}_{T_\flat}(\tau,\phi),\quad \omega_{T_\flat,w_0}(\tau,\phi_1,\phi_\flat),\quad \left(\wh{\GZ}^\bB(\tau,\phi_1),\DGZ(T_\flat,\phi_\flat)\right).$$ 
Each of these terms satisfies the claimed modularity via the action of the Weil representation, as established in Theorem \ref{thm: Green}, Proposition \ref{prop:omega series}, Proposition \ref{prop:Fourier-Jacobi series are modular forms} (combine with Proposition \ref{prop:Fourier-transform-for-error} for error terms) and Proposition \ref{prop:modularity-bruinier}, respectively. 
Therefore, the assertion follows from the relation in $\CS(V(\BA_{0,f}))$
\begin{equation*}
\pushQED{\qed}
\begin{pmatrix}
0&-1\\
1&0
\end{pmatrix}_{v_0}. 1_{L_{v_0}}=\gamma_{V_{v_0}}\vol(L_{v_0})\cdot 1_{L^{\sharp}_{v_0}}.
\qedhere
\end{equation*}
\end{proof}

\begin{theorem}\label{thm:vanishing generating series}
$\CE_{T_\flat}(\tau,\phi_{1?},\phi_\flat)=0$ for $?\in\{\CZ,\CY\}$.
\end{theorem}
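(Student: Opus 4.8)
\textbf{Proof strategy for Theorem \ref{thm:vanishing generating series}.}
The plan is to run the double induction argument of \cite[\S 15]{ZZhang24}, using the cancellation laws and the structure of the Fourier coefficients established above. First, recall that $\CE_{T_\flat}(\tau,\phi_{1?},\phi_\flat)$ lies in $\CA_{\hol}(\bH(\BA_0),K_{\bH},n)_{\ov\BQ}\otimes_{\ov\BQ}\BR_{\fkd,\ov\BQ}$, so it suffices to show that all its Fourier coefficients vanish; by Proposition \ref{prop:KR-gent} the $\xi$-th coefficient is
\begin{equation*}
\CE_{T_\flat,\xi}=\sum_{w_0\in\Sigma_{F_0,f}^{\fkd}}\vol(K_{G,w_0})\log q_{w_0}^2\!\!\!\sum_{\substack{T=\begin{psmallmatrix}\xi&*\\ *&T_\flat\end{psmallmatrix}\in\Herm_n^+(F)\\ \Diff(T)=\{w_0\}}}\!\!\!\bigl(\Int_{w_0}(T,\phi_{w_0})-\PDen_{w_0}(T,\phi_{w_0})\bigr)\Orb_T(\phi^{w_0}).
\end{equation*}
Because a term survives only when $\Diff(T)=\{w_0\}$, a single place, and because $\phi$ is regular at two places, only finitely many $T$ contribute, and each such $T$ is positive definite with exactly one ``bad'' place $w_0$. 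The induction hypothesis $\mathrm{KR}(F_w/F_{0,w_0},n,\cdot,\cdot)$ in dimension $\leq n-1$ together with the cancellation law (Proposition \ref{lem:cancellation law}) will be used to kill most of these terms: if $T$ has an entry $u_i$ of the appropriate valuation at $w_0$ (which holds automatically at the inert auxiliary places by \ref{LR2}, and is arranged for split places since there the conjecture is trivially true as both sides vanish or agree by the self-dual case), then $\Int_{w_0}(T,\phi_{w_0})=\PDen_{w_0}(T,\phi_{w_0})$ by Lemma \ref{lem:cancel}, so that term drops out of $\CE_{T_\flat,\xi}$.

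Next, I would carry out the reduction to a single Fourier coefficient. After the cancellation step, the only $(w_0,T)$ that can contribute to any $\CE_{T_\flat,\xi}$ are those with $w_0=v_0$ and $T$ of the form $\begin{psmallmatrix}\xi&*\\ *&T_\flat\end{psmallmatrix}$ such that no entry of the corresponding vector has the cancelling valuation — and by condition \ref{SW3} the orbital integral $\Orb_T(\phi^{v_0})$ vanishes for every such $T\neq\bT$. Hence $\CE_{T_\flat,\xi}=0$ for all $\xi$ except possibly $\xi=\bt$, where
\begin{equation*}
\CE_{T_\flat,\bt}=\vol(K_{G,v_0})\log q_{v_0}^2\bigl(\Int_{v_0}(\bT,\phi_{v_0})-\PDen_{v_0}(\bT,\phi_{v_0})\bigr)\Orb_{\bT}(\phi^{v_0}),
\end{equation*}
with $\Orb_{\bT}(\phi^{v_0})\neq 0$ by \ref{SW2}. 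Thus $\CE_{T_\flat}(\tau,\phi_{1?},\phi_\flat)$ is a \emph{single} Fourier coefficient times $q^{\bt}$. The key point is that a holomorphic Hilbert modular form of weight $n$ over a totally real field $F_0\neq\BQ$ cannot have only one nonzero Fourier coefficient supported away from $0$: this is exactly the vanishing input of \cite[Lemma 13.6]{Zhang21} (equivalently the statement that such a form with support in a single coset is zero), and its extension to general $F_0$ used in \cite{ZZhang24,LMZ25}. Applying it forces $\CE_{T_\flat,\bt}=0$, hence $\CE_{T_\flat}(\tau,\phi_{1?},\phi_\flat)=0$.

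There is one genuine subtlety that replaces the simpler ``vanishing'' step of the hyperspecial case: the induction is a \emph{double} induction on $(n,a)$ or $(n,b)$ (the number of $\CY$-entries in the weight vector), because the cancellation law for $h=n$ (Theorem \ref{thm:ana-cancel}(1)(ii), and the exceptional case flagged in the Remark after the proof strategy) only cancels the analytic side as a whole function rather than term by term. Concretely, when applying Lemma \ref{lem:cancel} to peel off an entry $u_i$ at $v_0$ we may land in the exceptional case $h=n$; there one uses Proposition \ref{prop:dual-relation}, which relates $\CE_{T_\flat}(\tau,\phi_{1\CZ},\phi_\flat)$ and $\CE_{T_\flat}(\tau,\phi_{1\CY},\phi_\flat)$ via the Weil representation action of $\begin{psmallmatrix}0&-1\\1&0\end{psmallmatrix}_{v_0}$, to transfer the problem from $\CZ$-type to $\CY$-type (trading $h=n$ for $h=0$, where there is no error term), and then runs the term-by-term cancellation in the new weight. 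Organizing this induction so that at each stage either $n$ drops or the weight becomes ``more $\CY$'' (bounding the recursion) is the main bookkeeping obstacle; the analytic cancellation laws (Theorem \ref{thm:ana-cancel}, Corollary \ref{cor:ana-cancel}) and the geometric ones (Proposition \ref{prop:geo-cancel}) are designed precisely so that the two sides of $\CE_{T_\flat,\xi}$ stay in lockstep throughout. Once the double induction is set up following \cite[\S 15]{ZZhang24}, the vanishing of $\CE_{T_\flat}$ follows as above.
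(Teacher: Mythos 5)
Your proposal assembles the right ingredients, but the main line of argument has a genuine gap at the reduction to a single Fourier coefficient. You claim that after applying the cancellation law at the places $w_0\neq v_0$, condition \ref{SW3} kills every remaining term with $T\neq\bT$, so that $\CE_{T_\flat,\xi}=0$ for all $\xi\neq\bt$. But \ref{SW3} is stated (and can only reasonably be arranged) for matrices whose top-left entry is the \emph{fixed} value $\bt$; it says nothing about $T=\begin{psmallmatrix}\xi&*\\ *&T_\flat\end{psmallmatrix}$ with $\xi\neq\bt$. Nor does Lemma \ref{lem:cancel} help at $w_0=v_0$ for general $\xi$: the diagonal entries of $T_\flat$ at $v_0$ are the norms of the original vector $\bu$ (for which we are trying to \emph{prove} the conjecture, so they have no prescribed valuations), and $\xi$ is arbitrary, so no entry need have the cancelling valuation. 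Hence many coefficients $\CE_{T_\flat,\xi}$ are not known to vanish a priori, the series is not ``a single Fourier coefficient times $q^{\bt}$'', and \cite[Lemma 13.6]{Zhang21} cannot be invoked in the way you describe.

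The double induction lemma \cite[Lemma 15.1]{ZZhang24} is not bookkeeping layered on top of this argument --- it \emph{replaces} the single-form vanishing lemma entirely. The paper applies it to the pair $(\CE_{\CZ},\,\gamma_{V_{v_0}}\vol(L_{v_0})\,\CE_{\CY})$, whose inputs are: (i) the dual relation of Proposition \ref{prop:dual-relation}; (ii) a support/level condition at $v_0$; and (iii) ``boundary conditions'', namely the vanishing of $\CE_{\CZ,\xi}$ and $\CE_{\CY,\xi}$ only for $\xi$ lying in specific $v_0$-valuation strata depending on $h$ (e.g.\ $\val_{v_0}(\xi)=0$ for $\CZ$ when $h<n$, $=1$ when $h=n$; $\val_{v_0}(\xi)=-1$ for $\CY$ when $h\neq 0$). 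For such $\xi$ the top-left entry of every contributing $T$ itself has the cancelling valuation, so Lemma \ref{lem:cancel} applies at $v_0$, while \ref{LR1}--\ref{LR2} handle all other inert places; this is exactly where the inductive hypothesis in dimension $n-1$ enters. The lemma then yields $\CE_{?}\equiv 0$ for both weights simultaneously. The isolation of the single term $\bT$ via \ref{SW2} and \ref{SW3} happens only \emph{after} the theorem, when extracting the $\bt$-th coefficient of the vanishing series in the subsequent corollary. As written, your proof does not close.
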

\begin{proof}
For notational convenience, we write $\CE_?(\tau)$ for $\CE_{T_\flat}(\tau,\phi_{1?},\phi_\flat)$, where $?\in\{\CZ,\CY\}$. Its Fourier expansion is given by
\begin{equation*}
    \CE_?(\tau)=\sum_{\xi>0}\CE_{?,\xi} q^\xi,\quad ?\in\{\CZ,\CY\}.
\end{equation*}
By Proposition \ref{prop:KR-gent}, for $\xi\in F_0^\times$, we have:
\begin{equation*}
\CE_{?,\xi}=\sum_{w_0\in\Sigma_f^{\fkd}}\frac{\log q^2_{w_0}}{\vol(K^{w_0}_G)}
\sum_{{\substack{T=\begin{psmallmatrix}\xi& *\\ *& T_\flat\end{psmallmatrix}\in\Herm_n^+(F);\\\Diff(T)=\{w_0\}}}}\left(\Int_{w_0}(T,\phi_{1?,w_0})-\PDen_{w_0}(T,\phi_{1?,w_0})\right)\cdot \Orb_{T}(\phi_{f?}^{w_0}),\quad ?\in\{\CZ,\CY\}.
\end{equation*}

Let $c_0=\gamma_{V_{v_0}}\vol(L_{v_0})$, we apply the double induction lemma from \cite[Lemma 15.1]{ZZhang24} to the pair $(f_1,f_2)=(\CE_{\CZ}(\tau),c_0\CE_{\CY}(\tau))$ with $B=\{v_0\}$. 
The dual relation \cite[Lemma 15.1(1)]{ZZhang24} follows from Proposition \ref{prop:dual-relation}, while the support condition \cite[Lemma 15.1(2)]{ZZhang24} follows from \eqref{equ:define generat series}, where we choose $K_{\bH,v_0}$ as $\Gamma(v_0):=\ker\bigl(\SL_2(O_{F_0})\to \SL_2(\BF_{v_0})\bigr)$.
It remains to verify the boundary condition \cite[Lemma 15.1(3)]{ZZhang24}, it boils down to the following cases:
\begin{altenumerate}
\item When $h=0$, we have $k_{1, v_0}=-c_{ v_0}$ and  $k_{2, v_0}=-c_{ v_0}$. It is sufficient to prove the following two claims: 
\begin{altenumerate2}
\item For $\xi$ with $ v_0(\xi)=0$, we have $\CE_{\CZ,\xi}=0$;
\item For $\xi$ with $ v_0(\xi)=0$, we have $\CE_{\CY,\xi}=0$.
\end{altenumerate2}

\item When $0<h<n$, we have $k_{1, v_0}=-c_{ v_0}$ and  $k_{2, v_0}=-c_{ v_0}+1$. It is sufficient to prove the following two claims:
\begin{altenumerate2}
\item For $\xi$ with $ v_0(\xi)=0$, we have $\CE_{\CZ,\xi}=0$;
\item For $\xi$ with $ v_0(\xi)=-1$, we have $\CE_{\CY,\xi}=0$.
\end{altenumerate2}

\item When $h=n$, we have $k_{1, v_0}=-c_{ v_0}-1$ and  $k_{2, v_0}=-c_{ v_0}+1$. It is sufficient to prove the following two claims:
\begin{altenumerate2}
\item For $\xi$ with $ v_0(\xi)=1$, we have $\CE_{\CZ,\xi}=0$;
\item For $\xi$ with $ v_0(\xi)=-1$, we have $\CE_{\CY,\xi}=0$.
\end{altenumerate2}
\end{altenumerate}
Let us verify case (2); the other cases follow by analogous arguments. By Proposition \ref{prop:KR-gent}, it suffices to show that
\begin{equation}\label{equ:Fourier coefficient in the double induction}
\sum_{{\substack{T=\begin{psmallmatrix}\xi& *\\ *&  T_\flat\end{psmallmatrix}\in\Herm_n^+(F);\\\Diff(T)=\{w_0\}}}}\vol(K_{G,w_0})\left(\Int_{w_0}(T,\phi_{?w_0})-\PDen_{w_0}(T,\phi_{?w_0})\right)\cdot \Orb_{T}(\phi_{?}^{w_0})=0,\quad ?\in\{\CZ,\CY\}
\end{equation}
for any inert places $w_0\nmid \fkd$ in case (i) and (ii). For case (i):
\begin{itemize}
\item If $w_0= v_0$, then \eqref{equ:Fourier coefficient in the double induction} follows from Lemma \ref{lem:cancel} and assumption \ref{SW3}. 
\item If $w_0\neq  v_0$, then since $w_0\nmid\fkd$, assumption \ref{LR2} implies that one of the diagonal entries of $ T_\flat$ has unit length, and we may again apply Lemma \ref{lem:cancel}.
\end{itemize}
Case (ii) follows by the same reasoning, and we complete the proof.
\end{proof}

\begin{corollary}
The Kudla-Rapoport conjecture (Theorem \ref{thm:KR-conj}) holds for $\mathrm{KR}(F_v/F_{0,v_0},n,h,\bw,\bu)$.
\end{corollary}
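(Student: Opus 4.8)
The plan is to harvest the vanishing of the generating series $\CE_{T_\flat}(\tau,\phi_{1?},\phi_\flat)$ established in Theorem \ref{thm:vanishing generating series} and extract from its Fourier coefficients the single arithmetic Siegel--Weil identity we need. Concretely, by Proposition \ref{prop:KR-gent}, for $\xi=\bt$ (the $(1,1)$-entry of the moment matrix $\bT$) and the inert place $v_0$ over $p$ that we care about, the $\bt$-th Fourier coefficient of $\CE_{T_\flat}(\tau,\phi_{1?},\phi_\flat)$ equals
\begin{equation*}
\sum_{w_0\in\Sigma_{F_0,f}^{\fkd}}\vol(K_{G,w_0})\log q_{w_0}^2\sum_{\substack{T=\begin{psmallmatrix}\bt&*\\ *&T_\flat\end{psmallmatrix}\in\Herm_n^+(F)\\ \Diff(T)=\{w_0\}}}\bigl(\Int_{w_0}(T,\phi_{1?,w_0})-\PDen_{w_0}(T,\phi_{1?,w_0})\bigr)\cdot\Orb_T(\phi_?^{w_0}),
\end{equation*}
and this vanishes. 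First I would argue that only the terms with $w_0=v_0$ survive: for any inert $w_0\neq v_0$ with $w_0\nmid\fkd$, assumption \ref{LR1} makes $L_{w_0}$ self-dual so $h_{w_0}=0$, and assumption \ref{LR2} provides a diagonal entry $u_i$ of $\bu$ with $\val_{w_0}(u_i,u_i)=0$; hence Lemma \ref{lem:cancel} gives $\Int_{w_0}(T,\phi_{1?,w_0})=\PDen_{w_0}(T,\phi_{1?,w_0})$ for every such $T$, killing the whole $w_0$-summand. For split $w_0$ the local intersection and local density both vanish by the discussion in \S\ref{sec:local interection of KR cycles}. So the sum collapses to $\vol(K_{G,v_0})\log q_{v_0}^2$ times $\sum_{T}\bigl(\Int_{v_0}(T,\phi_{1?,v_0})-\PDen_{v_0}(T,\phi_{1?,v_0})\bigr)\Orb_T(\phi_?^{v_0})=0$, where $T$ ranges over positive definite hermitian matrices of the form $\begin{psmallmatrix}\bt&*\\ *&T_\flat\end{psmallmatrix}$ with $\Diff(T)=\{v_0\}$.

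Next I would use the orbital integral conditions \ref{SW2} and \ref{SW3} to isolate $T=\bT$: there are only finitely many $T$ in the sum, and by \ref{SW3} we have arranged $\Orb_T(\phi_?^{v_0})=0$ for all of them except $T=\bT$, while by \ref{SW2} $\Orb_{\bT}(\phi_?^{v_0})\neq 0$. Dividing out the nonzero factor $\vol(K_{G,v_0})\log q_{v_0}^2\cdot\Orb_{\bT}(\phi_?^{v_0})$ yields
\begin{equation*}
\Int_{v_0}(\bT,\phi_{1?,v_0}\otimes\phi_{\flat,v_0})=\PDen_{v_0}(\bT,\phi_{1?,v_0}\otimes\phi_{\flat,v_0})
\end{equation*}
for $?\in\{\CZ,\CY\}$. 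Here the right-hand side is the modified weighted local density of Theorem \ref{thm:KR-conj}, whose error-term structure is built into the definition of $\phi_{\corr}$ in \S\ref{sec:local-factors} via the coefficients $\beta_{\bw,t}^{[h]}$. This is precisely the statement $\mathrm{KR}(F_v/F_{0,v_0},n,h,\bw,\bu)$ for the weight vector $\bw=(w_1,\bw_\flat)$ with $w_1\in\{\CZ,\CY\}$ chosen to match the admissible local Schwartz function $\phi_{1,v_0}$, and with $\bw_\flat$ the weight of $\phi_{\flat,v_0}$. Finally, since $\bw_\flat$ was an arbitrary weight vector in $\{\CZ,\CY\}^{n-1}$, $w_1$ is arbitrary in $\{\CZ,\CY\}$, and by local constancy (Theorem \ref{thm:local-constancy}) and continuity the identity for the globally chosen $\bu\in V^n(F_0)$ descends to the original local vector $\bu\in\BV^n(F_{0,v_0})$, this establishes Theorem \ref{thm:KR-conj} in dimension $n$ for all $h$, all weight vectors, and all linearly independent $\bu$, completing the induction step.

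The genuinely delicate point in this argument is not the combinatorial collapse of the Fourier coefficient but making sure all the global arrangements \ref{CM2}, \ref{HM3}, \ref{LR1}, \ref{LR2}, \ref{SW1}--\ref{SW3} can be met \emph{simultaneously} with $p\nmid\fkd$ and without creating obstructions at ramified or split places: one must choose the global hermitian space $V$ with the prescribed local behavior (signature $(n-1,1)$ at exactly one archimedean place, the given space at $v_0$, and split at the other places above $p$), then enlarge $\fkd$ enough to absorb all bad places while retaining a vector $\bu\in V^n(F_0)$ whose localization at $v_0$ is the target vector and whose localizations at the other inert places away from $\fkd$ have a unit-norm entry. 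This is exactly the kind of globalization lemma that underlies \cite[\S 13]{ZZhang24} and \cite[\S 8]{LMZ25}; the one genuinely new technical input here, which has already been supplied earlier in the excerpt, is the analytic cancellation law (Corollary \ref{cor:ana-cancel}, hence Proposition \ref{lem:cancellation law}/Lemma \ref{lem:cancel}) in the mixed $\CZ$/$\CY$ setting, together with the compatibility of the error coefficients under Fourier transform (Proposition \ref{prop:Fourier-transform-for-error}) which is what makes the dual relation Proposition \ref{prop:dual-relation} hold and thus licenses the double-induction input \cite[Lemma 15.1]{ZZhang24} behind Theorem \ref{thm:vanishing generating series}.
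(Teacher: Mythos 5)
Your proof is correct and follows essentially the same route as the paper's: take the $\bt$-th Fourier coefficient of the vanishing series from Theorem \ref{thm:vanishing generating series} via Proposition \ref{prop:KR-gent}, isolate the contribution of $v_0$, use \ref{SW3} to single out $T=\bT$, and divide out the nonzero orbital integral guaranteed by \ref{SW2}. The only (harmless) divergence is how the $v_0$-summand is isolated: you kill the terms at the other inert places $w_0\nmid\fkd$ termwise via Lemma \ref{lem:cancel} with \ref{LR1}--\ref{LR2}, whereas the paper invokes the rationality of $\vol(K_{G,w_0})$ and of $\phi$ together with $\ov{\BQ}$-linear independence of the $\log q_{w_0}$ in $\BR_{\fkd,\ov{\BQ}}$; either way one still needs $\log q_{v_0}\neq 0$ in $\BR_{\fkd}$ (i.e.\ $p\nmid\fkd$) to extract the rational coefficient at $v_0$, a point your "dividing out" step should make explicit.
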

\begin{proof}
By Theorem \ref{thm:vanishing generating series}, we have $0=\CE_{T_\flat}(\tau,\phi_{1\CZ},\phi_\flat)=\CE_{T_\flat}(\tau,\phi_{1\CY},\phi_\flat)=0$. Taking $\bt$'s Fourier coefficient, we have
\begin{equation*}
    \sum_{w_0\in\Sigma_f^{\fkd}}\vol(K_{G, v_0})\log q^2_{w_0}
\sum_{{\substack{T=\begin{psmallmatrix}\bt& *\\ *& T_\flat\end{psmallmatrix}\in\Herm_n^+(F);\\\Diff(T)=\{ v_0\}}}}\left(\Int_{w_0}(T,\phi_{? v_0})-\PDen_{ v_0}(T,\phi_{ ?v_0})\right)\cdot \Orb_{T}(\phi_f^{ v_0})=0,\quad ?\in\{\CZ,\CY\}.
\end{equation*}
By our appropriate choice of gauge form in $V^n$ and $\Herm_n$ in the assumption, the $\vol(K_{G,w_0})$ takes values in $\BQ$.
The log terms $\log q_{w_0}$ for $w_0\in\Sigma_{F_0,f}^{\fkd}$ are $\ov{\BQ}$-linearly independent. By \ref{SW1}, $\phi$ takes values in $\BQ$, hence 
\begin{equation*}
\sum_{{\substack{T=\begin{psmallmatrix}\bt& *\\ *& T_\flat\end{psmallmatrix}\in\Herm_n^+(F);\\\Diff(T)=\{v_0\}}}}\left(\Int_{v_0}(T,\phi_{? v_0})-\PDen_{ v_0}(T,\phi_{?, v_0})\right)\cdot \Orb_{T}(\phi^{v_0})=0,\quad ?\in\{\CZ,\CY\}.
\end{equation*}
By \ref{SW3} and Lemma \ref{lem:cancel}, in the summation only the term with index $\bT=h(\bu,\bu)$ is nonzero. Specifically, we have
\begin{equation*}
\left(\Int_{ v_0}(\bT,\phi_{? v_0})-\PDen_{ v_0}(\bT,\phi_{? v_0})\right)\cdot \Orb_{\bT}(\phi^{ v_0})=0,\quad ?\in\{\CZ,\CY\}.
\end{equation*}
By \ref{SW2}, $\Orb_{\bT}(\phi^{ v_0})\neq 0$, and we have
\begin{equation*}
    \Int_{ v_0}(\bT,\phi_{? v_0})=\PDen_{v_0}(\bT,\phi_{?v_0}),\quad ?\in\{\CZ,\CY\}.
\end{equation*}
Since $\phi_{\flat,v_0}$ can be chosen to be any admissible Schwartz function, this completes the proof.
\end{proof}

\begin{remark}\label{rmk:globalization-unramified}
When $E_\nu/F_{v}$ is unramified for all places $\nu\mid v\in\Sigma^{\fkd}_{\mathrm{max}}$, the integral model $\wt{\CM}\to \Spec O_E[\fkd^{-1}]$ is regular.
In the globalization procedure when proving the KR conjecture, one can always choose a global CM extension $F/F_0$ with CM type $\Phi$ and a hermitian space $V$ as described in \S \ref{sec:gp data}, with reflex field $E$ such that the associated reflex field $E/F$ is unramified over $p$. This CM type is called \emph{unramified at a prime
$p$} as defined in \cite[\S 6.2]{Mihatsch-Zhang}, and its existence is established in \cite[after (10.1)]{Mihatsch-Zhang}.
%To be more precise, the construction proceeds as follows\footnote{Based on a  personal communication with A. Mihatsch.}:
%\begin{altenumerate}
%\item Given is a local $F/F_0$. We can globalize to a CM field $\bF/\bF_0$ such that $\bF_p = F$, $\bF_{0,p} = F_0$. 

%\item Next, choose a local CM type $\Phi^u$ for the maximal unramified subfield $F^u/F_0^u$. Since $F/F_0$ is inert, $F^u/F_0^u$ is a quadratic extension and the notion of CM type for $F^u/F_0^u$ makes sense.

%\item Take the local $\Phi$ as its inverse image under $\Hom(F,  \ov{\BQ_p})\to \Hom(F^u, \ov{\BQ_p})$. 
%Then the local reflex field is $E_\Phi = F^u$.

%\item Now choose a local $\varphi_0$ and take the local $A$ as $\Phi \setminus \{\varphi_0, \ov{\varphi_0}\}$. Choose $B$ as $A \cup \{\varphi_0,  \ov{\varphi_0}\}$. Then $E_AE_B\varphi_0(F) = E_\Phi\varphi_0(F) = \varphi_0(F)$ (relates to the $(A,B)$-strictness in the Eisenstein condition, see \cite[\S 6.2]{LMZ25}). 

%\item Now choose an embedding $\alpha: \bar \BQ \to \bar \BQ_p$. Choose the global $\phi_0$, $\Phi$, $A$, and $B$ as preimage under $\alpha$ via the isomorphism $\alpha_*: \Hom(\bF, \bar \BQ) \simto \Hom(F, \bar \BQ_p)$.

%\item This defines global $\Phi$, $A$, $B$ and $\bE$. For every place $\nu$ of $E$ above $p$, $E_\nu = F$ and the local situation is as before.
%\end{altenumerate}
\end{remark}

\bibliographystyle{alpha}
\bibliography{reference}
\end{document}